\pdfoutput=1

\documentclass[12pt,a4paper]{article}
\usepackage{modnatbib}
\usepackage{amssymb,amsmath,amsthm,epsfig,verbatim,pstricks,url}
\usepackage{ifpdf}  
\newtheorem{thm}{\sc Theorem.}[section]
\newtheorem{lem}[thm]{\sc Lemma.}
\newtheorem{rem}[thm]{\sc Remark.}
\renewcommand{\theequation}{\arabic{section}.\arabic{equation}}
{{\upshape\bfseries AMS subject classifications. }\ignorespaces}{}
\newenvironment{keywords}{{\upshape\bfseries Key words. }\ignorespaces}{}

\newcommand{\Rplus}{{\mathbb R}_{>0}}
\newcommand{\Rgeq}{{\mathbb R}_{\geq 0}}

\newcommand{\R}{{\mathbb R}}

\newcommand{\bZ}{\mathbb{Z}}

\newcommand{\tr}{\operatorname{tr}}

\newcommand{\vol}{\mathcal{L}^d} 

\newcommand{\Gauss}{{\mathcal{K}}}
\newcommand{\GT}{{\Gamma_T}}
\newcommand{\GhT}{{\Gamma^h_T}}

\newcommand{\dH}[1]{\;{\rm d}{\mathcal{H}}^{#1}} 
\newcommand{\dL}[1]{\;{\rm d}{\mathcal{L}}^{#1}} 

\newcommand{\spont}{{\overline\varkappa}}
\newcommand{\bigchi}{\ensuremath{\mathrm{\mathcal{X}}}}
\newcommand{\charfcn}[1]{\bigchi_{#1}} 

\newcommand{\matVhz}{\mat V(\Gamma^0)}
\newcommand{\Vhz}{\underline{V}(\Gamma^0)}
\newcommand{\Whz}{W(\Gamma^0)}
\newcommand{\Whp}{W(\Gamma^{m+1})}
\newcommand{\Whpint}{W_{\int}(\Gamma^{m+1})}
\newcommand{\WhpVI}{W_{\leq1}(\Gamma^{m+1})}
\newcommand{\WhVI}{W_{\leq1}(\Gamma^m)}

\newcommand{\matVh}{\mat V(\Gamma^m)}
\newcommand{\Vh}{\underline{V}(\Gamma^m)}

\newcommand{\Wh}{W(\Gamma^m)}

\newcommand{\Vt}{[H^1(\Gamma(t))]^d}
\newcommand{\Wt}{H^1(\Gamma(t))}

\newcommand{\Vdeltat}{[H^1(\Gamma_\epsilon(t))]^d}
\newcommand{\Wdeltat}{H^1(\Gamma_\epsilon(t))}
\newcommand{\Vht}{\underline{V}(\Gamma^h(t))}
\newcommand{\Vhtz}{\underline{V}(\Gamma^h(0))}
\newcommand{\matVht}{\mat V(\Gamma^h(t))}

\newcommand{\Wht}{W(\Gamma^h(t))}
\newcommand{\Whtz}{W(\Gamma^h(0))}
\newcommand{\uspace}{\mathbb{U}}
\newcommand{\utimespace}{\mathbb{V}}
\newcommand{\pspace}{\mathbb{P}}
\newcommand{\Psing}{P_{\rm sing}}
\newcommand{\sigmaO}{o}
\newcommand{\kappastar}{\vec\varkappa\rule{0pt}{0pt}^\star}
\newcommand{\nabs}{\nabla_{\!s}}
\newcommand{\Id}{\rm Id}
\newcommand{\id}{\rm id}
\newcommand{\deldel}[1]{\frac{\delta}{{\delta}#1}}
\newcommand{\dd}[1]{\frac{\rm d}{{\rm d}#1}}
\newcommand{\ddt}{\dd{t}}

\newcommand{\matpartu}{\partial_t^\bullet}

\newcommand{\matpartx}{\partial_t^\circ}
\newcommand{\matpartxh}{\partial_t^{\circ,h}}
\newcommand{\matparteps}{\partial_\epsilon^0}
\newcommand{\SbulkT}{\vec{S}_{\Gamma,\Omega}^T}
\newcommand{\Sbulk}{\vec{S}_{\Gamma,\Omega}}

\newcommand{\Mbulk}{\vec{M}_{\Gamma,\Omega}}
\newcommand{\MbulkT}{(\vec{M}_{\Gamma,\Omega})^T}

\newcommand{\phasec}{\mathfrak{c}}
\newcommand{\phaseC}{\mathfrak{C}}
\newcommand{\chempot}{\mathfrak{m}}
\newcommand{\Chempot}{\mathfrak{M}}
\newcommand{\unitn}{\vec{\rm n}}

\newcommand{\ek}{e}

\def\epsilon{\varepsilon} 
\newcommand{\mat}[1]{\underline{\underline{#1}}\rule{0pt}{0pt}}

\def\vL{L\kern-0.08cm\char39}
\textwidth 455pt \oddsidemargin 0pt \evensidemargin 0pt \headsep
0pt \headheight 0pt \textheight 655pt \parskip 10pt

\begin{document}
\title{
Finite Element Approximation for the \\
Dynamics of Fluidic Two-Phase Biomembranes 
}
\author{John W. Barrett\footnotemark[2] \and 
        Harald Garcke\footnotemark[3]\ \and 
        Robert N\"urnberg\footnotemark[2]}

\renewcommand{\thefootnote}{\fnsymbol{footnote}}
\footnotetext[2]{Department of Mathematics, 
Imperial College London, London, SW7 2AZ, UK}
\footnotetext[3]{Fakult{\"a}t f{\"u}r Mathematik, Universit{\"a}t Regensburg, 
93040 Regensburg, Germany}

\date{}

\maketitle

\begin{abstract}
  Biomembranes and vesicles consisting of multiple phases can attain a
  multitude of shapes, undergoing complex shape transitions. We study a
  Cahn--Hilliard model on an evolving hypersurface coupled to
  Navier--Stokes equations on the surface and in the surrounding
  medium to model these phenomena. The evolution is driven by a
  curvature energy, modelling the elasticity of the membrane, and by a
  Cahn--Hilliard type energy, modelling line energy effects. A stable
  semidiscrete finite element approximation is introduced and, with
  the help of a fully discrete method, several phenomena occurring for
  two-phase membranes are computed.
\end{abstract} 

\begin{keywords} 
fluidic membranes, incompressible two-phase Navier--Stokes flow, 
parametric finite elements, Helfrich energy, 
spontaneous curvature, local surface area conservation, 
line energy, surface phase field model, surface Cahn--Hilliard equation,
Marangoni-type effects
\end{keywords}

\renewcommand{\thefootnote}{\arabic{footnote}}

\setcounter{equation}{0}
\section{Introduction} \label{sec:1}

In lipid bilayer membranes a large variety of different shapes and
complex shape transition behaviour can be observed. Biological
membranes are composed of several components, and lateral separation
into different phases or domains have been studied in experiments.
Mathematical models for biological membranes treat them as a
deformable inextensible fluidic surface governed by bending energies,
which involve the curvature of the membrane. If different phases occur,
these bending energies will depend on the individual phases, and the
local shape of the membrane will depend on the phase present locally.
It has also been observed that the interfacial
energy of the phase boundaries on the membrane can have a pronounced
effect on the membrane shape, and might lead to effects like budding
and fission. We refer to \cite{BaumgartHW03} for experimental studies
and to \cite{DobereinerKNSS93,Lipowsky92,JulicherL96,VeatchK03,BaumgartDWJ05} 
for further information on membranes with different fluid phases.

There has been a huge interest in the modelling
of (two-phase) biomembranes. Both equilibrium shapes, as well as the
evolution of membranes, have been studied intensively. 
However, a model taking the fluidic
behaviour of the membrane, the curvature elasticity, the 
interfacial line energy and the phase separation in a time dependent
model into account is missing so far. 
It is the goal of this paper to present such a model and --which will
be the main contribution of this paper-- to come up with a stable numerical
approximation scheme for the resulting equations. The model will be
based on an elastic bending energy of Canham--Evans--Helfrich type and
a Ginzburg--Landau energy modelling the interfacial energy. Through
their first variation these energy contributions lead to driving
forces for the evolution, which is given by a surface Navier--Stokes
system, coupled to bulk dissipation of an ambient fluid, and a
convective Cahn--Hilliard type equation, which is formulated on the
evolving membrane. The fluid part of the model goes back to
\cite{ArroyoS09}, whereas an evolution based on a
Canham--Evans--Helfrich energy coupled to a Ginzburg--Landau energy
on the surface has been studied in the context of gradient flows by
\cite{ElliottS10,ElliottS10a,ElliottS13,Helmers13,MerckerM-CRH13,Helmers15,
MerckerM-C15}. However, a coupling, which will
give the natural dynamics on the interface, is stated here for the
first time, and we will show that physically reasonable energy
dissipation inequalities hold. Here the dissipation has
contributions stemming from 
viscous friction in the bulk and on the surface, and from dissipation
due to diffusion on the membrane. 

For the elastic energy we consider the classical
Canham--Evans--Helfrich energy
\begin{equation} \label{eq:CEH}
\int_\Gamma\tfrac{1}{2}\,\alpha\,(\varkappa-\spont)^2+\alpha^G\,
\Gauss \dH{d-1}\,,
\end{equation}
where $\Gamma\subset\mathbb{R}^d$, $d=2,3$, is a hypersurface without boundary,
$\alpha>0$ and $\alpha^G$
are the bending and Gaussian bending rigidities, $\varkappa$ is the
mean curvature, $\spont$ is the spontaneous curvature,
which can be caused by local inhomogeneities within the membrane, $\Gauss$
is the Gaussian curvature and $\mathcal{H}^{d-1}$ is the
$(d-1)$-dimensional surface Hausdorff measure.
As discussed in \cite{Nitsche93}, the most general form of a curvature energy
density that is at most quadratic in the principal curvatures and is 
also symmetric in the principal curvatures has the form 
$\tfrac12\,\alpha\,\varkappa^2 + \alpha^G\,\Gauss + \alpha_1\,\varkappa +
\alpha_2$, which leads to (\ref{eq:CEH}) by choosing
$\alpha_1 = - \alpha\,\spont$ and $\alpha_2 = \tfrac12\,\alpha\,\spont^2$.
In the case $d=2$ the most general form which is at most quadratic in the 
curvature is $\tfrac12\,\alpha\,\varkappa^2 + \alpha_1\,\varkappa +
\alpha_2$. Hence throughout this paper we set $\alpha^G=0$ in the case $d=2$.

We also introduce an order parameter $\phasec$, 
which takes the values $\pm 1$ in the two different
phases, and this parameter is related to the composition of the chemical
species within the membrane. On the surface we then use a phase field model
to approximate the interfacial energy by the Ginzburg--Landau functional 
\begin{equation*}
\beta\,\int_\Gamma \tfrac{1}{2}\,\gamma\,|\nabs\,\phasec|^2+\gamma^{-1}\,
\Psi(\phasec) \dH{d-1}\,,
\end{equation*}
where $\beta>0$ is related to the line tension coefficient and $\gamma$
is a multiple of the interfacial thickness of the diffusional layer
separating the two phases. Furthermore, $\nabs$ is the surface
gradient and $\Psi$ is a double well potential.

In the different phases $\alpha$, $\spont$ and $\alpha^G$
will take different values, and we will interpolate these values
obtaining functions $\alpha(\phasec)>0$, $\spont(\phasec)$ and
$\alpha^G(\phasec)$. The total energy will hence have the form 
\begin{subequations}
\begin{align} \label{eq:E}
E(\Gamma,\phasec) & = \int_{\Gamma} b(\varkappa,\phasec) + 
\alpha^G (\phasec)\, \Gauss + \beta\,b_{CH}(\phasec) \dH{d-1}\,, 
\end{align}
where
\begin{equation} \label{eq:bendingb}
b(\varkappa,\phasec) = \tfrac12\,\alpha(\phasec) \,(\varkappa - \spont(\phasec))^2
\quad\text{and}\quad
b_{CH}(\phasec) = \tfrac12\,\gamma\,|\nabs\,\phasec|^2 + 
\gamma^{-1}\,\Psi(\phasec)\,.
\end{equation}
\end{subequations}
We recall that we assume $\alpha^G=0$ in the case $d=2$.
In the case $d=3$, and if $\alpha^G$ is constant, then the contribution 
$\int_\Gamma \alpha^G(\phasec)\,\Gauss\dH{2}$ is constant for a 
fixed topological type, which is a consequence of the Gauss--Bonnet theorem 
for closed surfaces,
\begin{equation} \label{eq:GB}
\int_{\Gamma} \Gauss \dH{2} = 2\,\pi\,m(\Gamma) \,,
\end{equation}
where $m(\Gamma) \in \bZ$ denotes the Euler characteristic of $\Gamma$.
However, if $\alpha^G$ is inhomogeneous, this term plays a role, 
which was discussed for example in
\cite{JulicherL96} in the context of two-phase membranes. 
Here we also mention that the contributions
$\frac12\,\int_\Gamma \alpha(\phasec)\,\varkappa^2 \dH{2} + 
\int_\Gamma \alpha^G(\phasec)\,\Gauss \dH{2}$ to the energy $E(\Gamma,\phasec)$ 
are positive semidefinite with respect to the principal curvatures if 
$\alpha^G(s) \in [-2\,\alpha(s),0]$ for all $s\in\R$. 
On account of the Gauss--Bonnet theorem, (\ref{eq:GB}), we hence obtain that
the energy $E(\Gamma,\phasec)$ can be bounded from below if
$\alpha^G(s) \geq \alpha^G_{\max}-2\,\alpha(s)$ for
all $s\in\R$, which will hold whenever
\begin{equation} \label{eq:alphaGbound}
\alpha_{\min} \geq \tfrac12\,(\alpha^G_{\max} - \alpha^G_{\min})\,,
\end{equation}
where $\alpha_{\min} = \min_{s\in\R} \alpha(s)$, and similarly for
$\alpha^G_{\min}$, $\alpha^G_{\max}$.
We note that this constraint is likely to have 
implications for the existence and regularity theory of 
gradient and related flows for $E(\Gamma,\phasec)$ in the case $d=3$.

The energy (\ref{eq:E}) represents a phase field approximation of a two-phase
membrane curvature energy with line tension. In the limit $\gamma\to0$ the
diffusive interface disappears and a sharp interface limit is obtained.
Sharp interface limits of phase field approaches to two-phase membranes have
been studied with the help of formal asymptotics by \cite{ElliottS10a} 
in the case of a $C^1$--limiting surface, and rigorously by
\cite{Helmers13} for axisymmetric two-phase membranes allowing for
tangent discontinuities at interfaces. Later \cite{Helmers15} also showed a
rigorous convergence result for the axisymmetric situation in the 
$C^1$--case.

We will now consider a closed membrane, which evolves in time in a
domain $\Omega$, separating the domain into regions $\Omega_+(t)$ and
$\Omega_-(t):=\Omega\setminus\overline{\Omega}_+(t)$. We hence
consider an evolving hypersurface $(\Gamma(t))_{t\in[0,T]}$, where
$T>0$ is a fixed time. We will assume that the classical Navier--Stokes
equations hold in $\Omega_-(t)$ and $\Omega_+(t)$ and on the membrane
we require the conditions
\begin{subequations}
\begin{alignat}{2}
[\vec u]_-^+ & = \vec 0 \qquad &&\mbox{on } \Gamma(t)\,, \label{eq:1a} \\ 
\rho_\Gamma \, \matpartu\,\vec u - \nabs\,.\,\mat\sigma_\Gamma & =
[\mat\sigma\,\vec\nu]_-^+ + \vec f_\Gamma 
\qquad &&\mbox{on } \Gamma(t)\,, \label{eq:1b} \\ 
\nabs\,.\,\vec u & = 0
\qquad &&\mbox{on } \Gamma(t)\,, \label{eq:1c} \\ 
\vec{\mathcal{V}}\,.\,\vec\nu &= 
\vec u\,.\,\vec \nu \qquad &&\mbox{on } \Gamma(t)\,, 
\label{eq:1d} 
\end{alignat}
\end{subequations}
where $\vec u$ is the fluid velocity, $\vec{\mathcal{V}}$ is the interface
velocity, $\vec\nu$ is a unit normal to $\Gamma(t)$, $\rho_\Gamma \in \Rgeq$
denotes the surface material density
and 
the source term $\vec f_\Gamma = -\delta E/\delta\Gamma$ is the first
variation of the total energy of $\Gamma(t)$ with respect to $\Gamma$, 
and will be stated in (\ref{eq:fGamma}) below in detail.
In addition, 
$\nabs\,.\,$ denotes the surface divergence on $\Gamma(t)$, and the surface
stress tensor is given by
\begin{equation} \label{eq:sigmaG}
\mat\sigma_\Gamma = 2\,\mu_\Gamma\,\mat D_s (\vec u) 
 - p_\Gamma\,\mat{\mathcal{P}}_\Gamma
\quad\text{on}\ \Gamma(t)\,,
\end{equation}
where $\mu_\Gamma \in \Rgeq$ is the interfacial shear viscosity and
$p_\Gamma$ denotes the surface pressure, which acts as a Lagrange
multiplier for the incompressibility condition (\ref{eq:1c}). 
Here
\begin{subequations}
\begin{equation} \label{eq:Ps}
\mat{\mathcal{P}}_\Gamma = \mat\Id - \vec \nu \otimes \vec \nu
\quad\text{on}\ \Gamma(t)\,,
\end{equation}
with $\mat\Id \in \R^{d\times d}$ denoting the identity matrix, and
\begin{equation} \label{eq:Ds} 
\mat D_s(\vec u) = \tfrac12\,\mat{\mathcal{P}}_\Gamma\,(\nabs\,\vec u + (\nabs\,\vec u)^T)\,
\mat{\mathcal{P}}_\Gamma
\quad\text{on}\ \Gamma(t)\,,
\end{equation}
\end{subequations}
where the surface gradient 
$\nabs = \mat{\mathcal{P}}_\Gamma \,\nabla = 
(\partial_{s_1}, \ldots, \partial_{s_d})$ on $\Gamma(t)$, and
$\nabs\,\vec u = \left( \partial_{s_j}\, u_i \right)_{i,j=1}^d$.
Similarly, the bulk stress tensor in (\ref{eq:1b}) is defined by
\begin{equation} \label{eq:sigma}
\mat\sigma = \mu \,(\nabla\,\vec u + (\nabla\,\vec u)^T) - p\,\mat\Id
= 2\,\mu\, \mat D(\vec u)-p\,\mat\Id\,,
\end{equation}
where 
$\mat D(\vec u):=\frac12\, (\nabla\,\vec u+(\nabla\,\vec u)^T)$ is the bulk
rate-of-deformation tensor, with $\nabla\,\vec u = \left( \partial_{x_j}\,u_i
\right)_{i,j=1}^d$. Moreover, $p$ is the bulk pressure and
$\mu(t) = \mu_+\,\charfcn{\Omega_+(t)} + \mu_-\,\charfcn{\Omega_-(t)}$,
with $\mu_\pm \in \R_{>0}$, denotes the dynamic viscosities in the two 
phases, where here and
throughout $\charfcn{\mathcal{A}}$ defines the characteristic function for a
set $\mathcal{A}$. 
Moreover, as usual, $[\vec u]_-^+ := \vec u_+ - \vec u_-$ and
$[\mat\sigma\,\vec\nu]_-^+ := \mat\sigma_+\,\vec\nu - \mat\sigma_-\,\vec\nu$
denote the jumps in velocity and normal stress across the interface
$\Gamma(t)$. Here and throughout, we employ the shorthand notation
$\vec a_\pm := \vec a\!\mid_{\Omega_\pm(t)}$ for a function 
$\vec a : \Omega \times [0,T] \to \R^d$; and similarly for scalar and
matrix-valued functions. 
In addition,
\begin{equation} \label{eq:matpartu}
\matpartu\, \zeta = \zeta_t + \vec u \,.\,\nabla\,\zeta
\end{equation}
denotes the material time derivative of $\zeta$ on $\Gamma(t)$, see e.g.\ 
\citet[p.\ 324]{DziukE13}.

The overall model is completed by the following Cahn--Hilliard dynamics on
$\Gamma(t)$
\begin{subequations}
\begin{align} \label{eq:strongCHa}
& \vartheta\, \matpartu\,\phasec = \Delta_s\,\chempot 
\,,\\
& \chempot = - \beta\,\gamma\,\Delta_s\,\phasec + \beta\,\gamma^{-1}\,\Psi'(\phasec)
+ b_{,\phasec}(\varkappa,\phasec) + (\alpha^G)'(\phasec)\,\Gauss
\,, \label{eq:strongCHb}
\end{align}
\end{subequations}
where $\chempot$ denotes the chemical potential, 
$\Delta_s = \nabs\,.\,\nabs$ is the
Laplace--Beltrami operator and $\vartheta \in \Rplus$
is a kinetic coefficient. We note here that $\chempot = \delta
E/\delta \phasec$ is the first variation of the total energy with respect to $\phasec$,
see Sections~\ref{sec:2}, \ref{sec:3} and the Appendix for more
details. 
Equation (\ref{eq:strongCHa}) is a convection-diffusion equation for
the species concentration on an evolving surface driven by the
chemical potential $\chempot$. For more information on the
Cahn--Hilliard equation we refer to \cite{Elliott89,NovickCohen08}. 
We note that the Cahn--Hilliard equation on an evolving
surface was studied by \cite{ElliottR15}, including its finite element
approximation.

It turns out that the overall model with suitable boundary conditions, e.g.\
$\vec u =0$ on $\partial\Omega$, fulfils, 
in the case where the outer forces are zero, the following dissipation identity
\begin{align}\nonumber
&\ddt \left( \tfrac12\, \int_\Omega \rho\,|\vec u|^2 \dL{d} +
\tfrac12\,\int_{\Gamma(t)} \rho_\Gamma\,|\vec u|^2 \dH{d-1}
+E(\Gamma(t),\phasec(t))\right)\\ & \quad 
+2\,\int_\Omega \mu\,|\mat D(\vec u)|^2 \dL{d} 
+ 2\,\mu_\Gamma\,\int_{\Gamma(t)} |\mat D_s(\vec u)|^2 \dH{d-1}
+\theta^{-1}\,\int_{\Gamma(t)}|\nabs\,\chempot|^2\dH{d-1} = 0\,,
\label{eq:disseq}
\end{align}
which is consistent with the second law of thermodynamics in its
isothermal formulation. 
The fourth and fifth term in (\ref{eq:disseq}) describe dissipation by
viscous friction in the bulk and on the surface, and the last term
models dissipation due to diffusion of molecules on the surface. We
also note that the introduced model conserves the volume of the bulk
phases, the surface area and the total species concentration on the
surface, i.e.\ 
\begin{equation}\label{eq:consprop}
\ddt\,|\Omega^-(t)|= \ddt\, \mathcal{H}^{d-1}(\Gamma(t)) =
\ddt\, \int_{\Gamma(t)} \phasec(t) \dH{d-1} =0\,.
\end{equation}
In particular, in contrast to other works,  no artificial Lagrange
multipliers are needed to conserve the enclosed volume, the total
surface area and the total species concentration. 

It is one of the main goals of this contribution to introduce and
analyze a numerical method that fulfils discrete variants of the
dissipation inequality and of the conservation properties
(\ref{eq:consprop}), see the results in Theorem \ref{thm:stab} and
Theorem \ref{thm:stab2}, below.

Let us now discuss related works on two-phase membranes. The
interest in two-phase membranes increased due to the fascinating work
of \cite{BaumgartHW03,BaumgartDWJ05}, as experiments seem to validate
earlier theories by \cite{Lipowsky92,JulicherL96} on
two-phase membranes, and showed an amazing multitude of complex shapes
and patterns. There have been many studies on two-phase axisymmetric
two-phase membranes, both from an analytical and from a numerical point of
view, see \cite{JulicherL96,Helmers11,Helmers13,Helmers15,ChoksiMV13,CoxL15}, 
and the references therein. However, only very
few works study general shapes of two-phase membranes from a theoretical or
computational point of view. In this context we refer to
\cite{WangD08,LowengrubRV09,DasJB09,ElliottS10,ElliottS10a,MerckerPKHWJ12,%
ElliottS13,Tu13,MerckerM-CRH13,MerckerM-C15}. But we note that none of the
above mentioned contributions considered a stability analysis for their 
numerical approximations.
We combine aspects of some of these approaches with the dynamics
studied by \cite{ArroyoS09}, and we generalize computational approaches of
the present authors for one-phase membranes, 
see e.g.\ \cite{nsns,nsnsade}, to numerically
compute evolving two-phase membranes.

The outline of the paper is as follows. In the following section we
introduce the model with all its details. In Section~\ref{sec:3} we
introduce a weak formulation, which is then discretized in space in
Section~\ref{sec:4}. We then also show that this scheme decreases the
total energy and obeys the relevant global conservation properties. In
Section~\ref{sec:5} we introduce a fully discrete scheme and show
existence and uniqueness of a fully discrete solution assuming an LBB
condition. In Section~\ref{sec:6} we comment on the methods used to
solve the fully discrete systems. In Section~\ref{sec:7} we present
several numerical computations in two and three spatial dimensions,
illustrating the properties of the numerical approach and showing the
complex interplay between the curvature functional, the
Ginzburg--Landau energy and the Navier--Stokes dynamics. In the
Appendix we finally state the details of the derivation of the model,
and we show that the weak formulation we introduce is consistent with 
the strong formulation for smooth solutions. 

\setcounter{equation}{0}
\section{Notation and governing equations} \label{sec:2}

In this section we formulate the model, which was sketched in the
Introduction, with all its details. 
Let $\Omega\subset\mathbb{R}^d$ be a given domain, where $d=2$ or $d=3$. 
We seek a time dependent interface $(\Gamma(t))_{t\in[0,T]}$,
$\Gamma(t)\subset\Omega$, 
which for all $t\in[0,T]$ separates
$\Omega$ into a domain $\Omega_+(t)$, occupied by the outer phase,
and a domain $\Omega_{-}(t):=\Omega\setminus\overline\Omega_+(t)$, 
which is occupied by the inner phase, 
see Figure~\ref{fig:sketch} for an illustration.
\begin{figure}
\begin{center}
\ifpdf
\includegraphics[angle=0,width=0.4\textwidth]{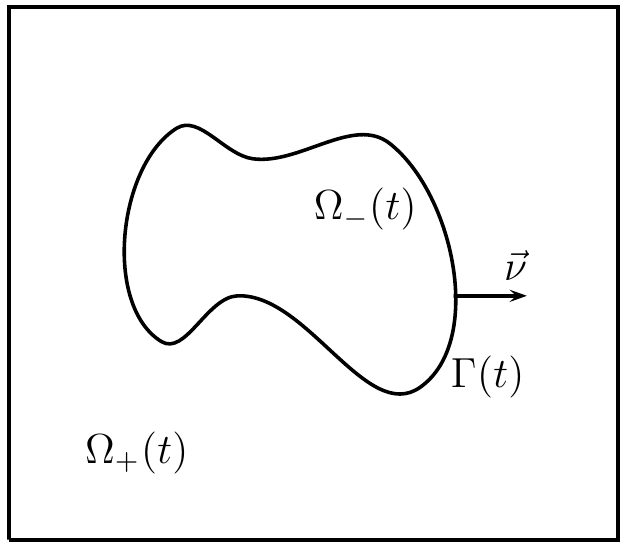}
\else
\unitlength15mm
\psset{unit=\unitlength,linewidth=1pt}
\begin{picture}(4,3.5)(0,0)
\psline[linestyle=solid]{-}(0,0)(4,0)(4,3.5)(0,3.5)(0,0)
\psccurve[showpoints=false,linestyle=solid] 
 (1,1.3)(1.5,1.6)(2.7,1.0)(2.5,2.6)(1.6,2.5)(1.1,2.7)
\psline[linestyle=solid]{->}(2.92,1.6)(3.4,1.6)
\put(3.25,1.7){{\black $\vec\nu$}}
\put(2.9,1.0){{$\Gamma(t)$}}
\put(2,2.1){{$\Omega_-(t)$}}
\put(0.5,0.5){{$\Omega_+(t)$}}
\end{picture}
\fi
\end{center}
\caption{The domain $\Omega$ in the case $d=2$.}
\label{fig:sketch}
\end{figure}%
For later use, we assume that $(\Gamma(t))_{t\in [0,T]}$ 
is an 
evolving hypersurface without boundary that is
parameterized by $\vec x(\cdot,t):\Upsilon\to\R^d$,
where $\Upsilon\subset \R^d$ is a given reference manifold, i.e.\
$\Gamma(t) = \vec x(\Upsilon,t)$. Then
\begin{equation} \label{eq:V}
\vec{\mathcal{V}}(\vec z, t) := \vec x_t(\vec q, t)
\qquad \forall\ \vec z = \vec x(\vec q,t) \in \Gamma(t)
\end{equation}
defines the velocity of $\Gamma(t)$, and
$\mathcal{V} := \vec{\mathcal{V}} \,.\,\vec{\nu}$ is
the normal velocity of the evolving hypersurface $\Gamma(t)$,
where $\vec\nu(t)$ is the unit normal on $\Gamma(t)$ pointing into 
$\Omega_+(t)$.
Moreover, we define the space-time surface
$\GT := \bigcup_{t \in [0,T]} \Gamma(t) \times \{t\}$.

Let $\rho(t) = \rho_+\,\charfcn{\Omega_+(t)} + \rho_-\,\charfcn{\Omega_-(t)}$,
with $\rho_\pm \in \Rgeq$, 
denote the fluid densities.
Denoting by $\vec u : \Omega \times [0, T] \to \R^d$ the fluid velocity,
by $p : \Omega \times [0, T] \to \R$ the pressure,
by $\mat\sigma : \Omega \times [0,T] \to \R^{d\times d}$ the stress tensor,
and by $\vec f : \Omega \times [0, T] \to \R^d$ a possible volume force,
the incompressible Navier--Stokes equations in the two phases are given by
(\ref{eq:sigma}) and
\begin{subequations}
\begin{alignat}{2}
\rho\,(\vec u_t + (\vec u \,.\,\nabla)\,\vec u)
- \nabla\,.\,\mat\sigma & = \rho\,\vec f
\qquad &&\mbox{in } 
\Omega_\pm(t)\,, \label{eq:NSa} \\
\nabla\,.\,\vec u & = 0 \qquad &&\mbox{in } \Omega_\pm(t)\,, \label{eq:NSb} \\
\vec u & = \vec g \qquad &&\mbox{on } \partial_1\Omega\,, \label{eq:NSc} \\
\mat\sigma\,\unitn & = \vec 0 
\qquad &&\mbox{on } \partial_2\Omega\,, \label{eq:NSd} 
\end{alignat}
\end{subequations}
where $\partial\Omega =\partial_1\Omega \cup
\partial_2\Omega$, with $\partial_1\Omega \cap \partial_2\Omega
=\emptyset$, denotes the boundary of $\Omega$ with outer unit normal $\unitn$.
Hence (\ref{eq:NSc}) prescribes a possibly
inhomogeneous Dirichlet condition for the velocity 
on $\partial_1\Omega$, which collapses to the standard no-slip condition
when $\vec g = \vec 0$,
while (\ref{eq:NSd}) prescribes a stress-free condition on 
$\partial_2\Omega$. Throughout this paper we assume that 
$\mathcal{H}^{d-1}(\partial_1\Omega) > 0$.
We will also assume w.l.o.g.\ that $\vec g$ is extended so that 
$\vec g : \Omega \to \R^d$. 
On the free surface $\Gamma(t)$ the conditions (\ref{eq:1a}--d) 
need to hold, recall the Introduction. 
The system (\ref{eq:NSa}--d), (\ref{eq:sigma}), (\ref{eq:1a}--d),
(\ref{eq:sigmaG}) is closed with the initial conditions
\begin{equation} \label{eq:1init}
\Gamma(0) = \Gamma_0 \,, \quad 
\rho\,\vec u(\cdot,0) = \rho\,\vec u_0 \quad \mbox{in } \Omega\,, \quad
\rho_\Gamma\,\vec u(\cdot,0) = \rho_\Gamma\,\vec u_0 \quad \mbox{on } \Gamma_0
\,,
\end{equation}
where $\Gamma_0 \subset \Omega$ and
$\vec u_0 : \Omega \to \R^d$ are given initial data satisfying
$\rho\,\nabla\,.\,\vec u_0 = 0$ in $\Omega$, 
$\rho_\Gamma\,\nabs\,.\,\vec u_0 = 0$ on $\Gamma_0$ and 
$\rho_+\,\vec u_0 = \rho_+\,\vec g$ on $\partial_1\Omega$.
Of course, in the case $\rho_- = \rho_+ = \rho_\Gamma = 0$ the 
initial data $\vec u_0$ is not needed. Similarly, in the case
$\rho_- = \rho_+ = 0$ and $\rho_\Gamma > 0$ the initial data $\vec u_0$ is only
needed on $\Gamma_0$. However, for ease of exposition, and in view of the
unfitted nature of our numerical method, we will always assume that $\vec u_0$,
if required, is given on all of $\Omega$.

It is not difficult to show that the conditions (\ref{eq:NSb}) enforce volume
preservation for the phases, while (\ref{eq:1c}) leads to the conservation of
the total surface area $\mathcal{H}^{d-1}(\Gamma(t))$, see 
(\ref{eq:areacons}) and (\ref{eq:conserved}) in Section~\ref{sec:3}
below for the relevant proofs. 
As an immediate consequence we obtain that
a sphere $\Gamma(t)$ remain a sphere, and that a sphere $\Gamma(t)$ with a 
zero bulk velocity is a stationary solution.

In addition, the source term $\vec f_\Gamma$ in (\ref{eq:1b}) is 
given by minus the first variation of the energy (\ref{eq:E}) with respect to
$\Gamma$, i.e.\
\begin{align}\label{eq:fGamma}
\vec 
f_\Gamma & = 
- \deldel{\Gamma}\,E(\Gamma,\phasec) = 
\left[-\Delta_s\,[\alpha(\phasec)\,(\varkappa - \spont(\phasec))]
-\alpha(\phasec)\,(\varkappa-\spont(\phasec)) \,|\nabs\,\vec \nu|^2
+b(\varkappa,\phasec)\,\varkappa 
\right. \nonumber \\ & \qquad \left.
- \nabs\,.\,([\varkappa\,\mat\Id + \nabs\,\vec\nu]\,\nabs\,\alpha^G(\phasec)) 
\right]\vec\nu
+ (b_{,\phasec}(\varkappa,\phasec) +
(\alpha^G)'(\phasec)\,\Gauss)\,\nabs\,\phasec 
\nonumber \\ & \qquad
+ \beta\left[ b_{CH}(\phasec)\,
\varkappa\,\vec\nu 
+ \nabs\, b_{CH}(\phasec)
- \gamma\,\nabs\,.\,( (\nabs\,\phasec)\otimes(\nabs\,\phasec)) \right]
\,,
\end{align}
where we have defined
\begin{equation} \label{eq:bc}
b_{,\phasec}(\varkappa,\phasec) = \tfrac\partial{\partial \phasec}\,b(\varkappa,\phasec)
= \tfrac12\,\alpha'(\phasec) \,(\varkappa - \spont(\phasec))^2 
- \alpha(\phasec)\,(\varkappa - \spont(\phasec))\,\spont'(\phasec)\,.
\end{equation}
Throughout we assume that $\alpha,\alpha^G \in C^1(\R)$, with 
$\alpha(s) > 0$ for all $s\in\R$. 
We refer to the appendix for a detailed derivation of
(\ref{eq:fGamma}). 
In contrast to situations where the energy density does not depend on a
species concentration, we now have tangential contributions to 
$\vec f_\Gamma$. In particular, the terms 
$(b_{,\phasec}(\varkappa,\phasec) +
(\alpha^G)'(\phasec)\,\Gauss)\,\nabs\,\phasec 
+ \beta\,\nabs\, b_{CH}(\phasec)
-\beta\,\gamma\,\nabs\,.\,( (\nabs\,\phasec)\otimes(\nabs\,\phasec))$ 
give rise to a tangential flow and
hence can induce a Marangoni-type effect.

The overall model we are going to study in this work is 
the coupled bulk and surface Navier--Stokes equations
(\ref{eq:NSa}--d), (\ref{eq:sigma}), (\ref{eq:1a}--d), (\ref{eq:sigmaG}),
(\ref{eq:1init}) together with the convective Cahn--Hilliard system
(\ref{eq:strongCHa},b) on the evolving interface, suitably supplemented with
initial conditions for $\phasec$. 
Here the double well potential $\Psi$ in (\ref{eq:bendingb}) and
(\ref{eq:strongCHb}) may be chosen,
for example, as a quartic potential
\begin{subequations}
\begin{equation} \label{eq:quartic}
\Psi(s) = \tfrac14\,(s^2 - 1 )^2\,,
\end{equation}
or as the obstacle potential
\begin{equation} 
\Psi(s):=
\begin{cases}
\textstyle \frac 12 \left(1-s^2\right)  & \mbox{if }\ |s| \leq 1,\\
\infty & \mbox{if }\ |s| > 1,\\
\end{cases} \qquad \label{eq:obener}
\end{equation}
\end{subequations}
which restricts $\phasec \in [-1,1]$.
For the analysis we will always assume that
$\Psi \in C^1(\R)$ for ease of exposition, 
but we will use (\ref{eq:obener}) for our fully discrete approximations.

As stated previously, $\varkappa$ 
in (\ref{eq:fGamma}) denotes the so-called mean curvature of $\Gamma(t)$, 
i.e.\ the sum of
the principal curvatures $\varkappa_i$, $i=1,\ldots,d-1$, of $\Gamma(t)$, 
where we have adopted the sign
convention that $\varkappa$ is negative where $\Omega_-(t)$ is locally convex.
In particular, it holds that
\begin{equation} \label{eq:LBop}
\Delta_s\, \vec\id = \varkappa\, \vec\nu =: \vec\varkappa
\qquad \mbox{on $\Gamma(t)$}\,,
\end{equation}
where $\vec\id$ is the identity function on $\R^d$.
For later use, we recall that the second
fundamental tensor for $\Gamma(t)$ is given by $\nabs\,\vec\nu$. Moreover, we
note that $-\nabs\,\vec\nu(\vec{z})$, for any $\vec{z}\in\Gamma(t)$, is a
symmetric linear map that has a zero eigenvalue 
with eigenvector $\vec\nu$, i.e.\
\begin{equation} \label{eq:Weingarten}
(\nabs\,\vec\nu)^T = \nabs\,\vec\nu \quad\text{and}\quad
(\nabs\,\vec\nu)\,\vec\nu = \vec 0\,,
\end{equation}
and the remaining $(d-1)$ eigenvalues,
$\varkappa_1,\dots,\varkappa_{d-1}$, are the principal curvatures of
$\Gamma$ at $\vec{z}$; see e.g.\ \cite[p.~152]{DeckelnickDE05}.
The mean curvature $\varkappa$ and the Gauss
curvature $\Gauss$ can now be stated as
\begin{equation}
\varkappa = - \tr{(\nabs\,\vec\nu)}
= - \nabs\,.\,\vec\nu = \sum_{i=1}^{d-1} \varkappa_i
\qquad \mbox{and} \qquad
\Gauss = \prod_{i=1}^{d-1} \varkappa_i
\,, \label{eq:secondform}
\end{equation}
which in the case $d=3$ immediately yields that
\begin{equation} \label{eq:Gauss3d}
\Gauss= \tfrac12\,(\varkappa^2 - |\nabs\,\vec\nu|^2)\,.
\end{equation}
We recall that in the case $d=2$, we always assume that 
$\alpha^G = 0$.
In the case $d=3$, on the other hand, we have from (\ref{eq:Gauss3d}) that
\begin{align}
\int_{\Gamma(t)} \alpha^G (\phasec)\, \Gauss \dH{d-1}
= \tfrac12 \int_{\Gamma(t)} \alpha^G (\phasec)\, (|\vec\varkappa|^2 - |\mat w|^2)
\dH{d-1}\,,
\label{eq:matw}
\end{align}
where $\mat w \in [\Wt]^{d\times d}$ is such that for all
$\mat\zeta \in [\Wt]^{d\times d}$
\begin{align}
\int_{\Gamma(t)} \mat w : \mat\zeta \dH{d-1}
= \int_{\Gamma(t)} \nabs\,\vec\nu : \mat\zeta \dH{d-1}
= - \int_{\Gamma(t)} \vec\nu\,.\,(\nabs\,.\,\mat\zeta) + \vec\nu\,.\,
(\mat\zeta \,\vec\varkappa) \dH{d-1} \,.
\label{eq:matwmateta}
\end{align}
Here we have recalled from \citet[Theorem~2.10]{DziukE13} that
\begin{align} \label{eq:nabszeta}
& \left\langle \nabs\,\zeta, \vec\eta \right\rangle_{\Gamma(t)}
+ \left\langle \zeta, \nabs\,.\,\vec\eta \right\rangle_{\Gamma(t)}
= \left\langle \nabs\,.\,(\zeta\,\vec\eta), 1 \right\rangle_{\Gamma(t)}
= - \left\langle \zeta\,\varkappa\,\vec\nu, \vec\eta \right\rangle_{\Gamma(t)}
\nonumber \\ & \hspace{8cm}
\quad \forall\ \zeta\in\Wt,\,\vec\eta\in\Vt\,.
\end{align}
Hence the total energy $E(\Gamma(t), \phasec(t))$, on recalling (\ref{eq:E},b), 
can be rewritten as
\begin{align} \label{eq:Enew}
E(\Gamma(t), \phasec(t)) & = \int_{\Gamma} \tfrac12\,\alpha(\phasec)\,
|\vec\varkappa - \spont(\phasec)\,\vec\nu|^2 + 
\tfrac12\,\alpha^G (\phasec)\, (|\vec\varkappa|^2 - |\mat w|^2) 
+ \beta\,b_{CH}(\phasec) \dH{d-1}\,, 
\end{align}
where $\mat w$ is given by (\ref{eq:matwmateta}), and where
$\vec\varkappa$, on recalling (\ref{eq:nabszeta}), can
be defined by
\begin{align}
& \left\langle \vec\varkappa, \vec\eta \right\rangle_{\Gamma(t)}
+ \left\langle \nabs \,\vec\id,\nabs\, \vec{\eta}\right\rangle_{\Gamma(t)}= 0 
\qquad \forall\ \vec\eta \in \Vt\,. \label{eq:side3first}
\end{align}

\setcounter{equation}{0}
\section{Weak formulation} \label{sec:3}
We begin by recalling the weak formulation of 
(\ref{eq:NSa}--d), (\ref{eq:sigma}), (\ref{eq:1a}--d), (\ref{eq:sigmaG}) 
from \cite{nsns}. To this end,
we introduce the following function spaces for a given
$\vec a \in [H^1(\Omega)]^d$:
\begin{subequations}
\begin{align}
\uspace(\vec a) & := \{ \vec\varphi \in [H^1(\Omega)]^d 
: \vec\varphi = \vec a \ \mbox{ on } \partial_1\Omega 
 \} \,,\quad
\utimespace(\vec a) :=  L^2(0,T; \uspace(\vec a)) 
\cap H^1(0,T;[L^2(\Omega)]^d)\,, \nonumber \\
\utimespace_\Gamma(\vec a) & := \{ \vec\varphi \in \utimespace(\vec a) : 
\vec\varphi \!\mid_\GT
\in [H^1(\GT)]^d \}\,. \label{eq:UVV}
\end{align}
In addition, we let $\pspace := L^2(\Omega)$ and define
\begin{equation} \label{eq:hatP}
\widehat\pspace := 
\begin{cases} 
\{\eta \in \pspace : \int_\Omega\eta \dL{d}=0 \} & \text{if\ }
\mathcal{H}^{d-1}(\partial_2\Omega) = 0\,, \\
\pspace & \text{if\ } \mathcal{H}^{d-1}(\partial_2\Omega) > 0 \,.
\end{cases}
\end{equation}
\end{subequations}
Here and throughout, $\mathcal{H}^{d-1}$ denotes the $(d-1)$-dimensional 
Hausdorff measure in $\R^d$, while $\mathcal{L}^d$ denotes the Lebesgue 
measure in $\R^d$.
Moreover, we let $(\cdot,\cdot)$ and 
$\langle \cdot,\cdot \rangle_{\partial_2\Omega}$
denote the $L^2$--inner products on $\Omega$ and $\partial_2\Omega$,
and similarly for $\langle \cdot,\cdot \rangle_{\Gamma(t)}$.

Similarly to (\ref{eq:matpartu}) we define the following time derivative that
follows the parameterization $\vec x(\cdot, t)$ of $\Gamma(t)$, rather than
$\vec u$. In particular, we let
\begin{equation} \label{eq:matpartx}
\matpartx\, \zeta = \zeta_t + \vec{\mathcal{V}} \,.\,\nabla\,\zeta
\qquad \forall\ \zeta \in H^1(\GT)\,;
\end{equation}
where we stress that this definition is well-defined, even though
$\zeta_t$ and $\nabla\,\zeta$ do not make sense separately for a
function $\zeta \in H^1(\GT)$.
On recalling (\ref{eq:matpartu}) we obtain that 
$\matpartx = \matpartu$ if $\vec{\mathcal{V}} = \vec u$ on $\Gamma(t)$.
Moreover, for later use we note that
\begin{equation} \label{eq:DElem5.2}
\ddt \left\langle \chi, \zeta \right\rangle_{\Gamma(t)}
 = \left\langle \matpartx\,\chi, \zeta \right\rangle_{\Gamma(t)}
 + \left\langle \chi, \matpartx\,\zeta \right\rangle_{\Gamma(t)}
+ \left\langle \chi\,\zeta, \nabs\,.\,\vec{\mathcal{V}} 
 \right\rangle_{\Gamma(t)}
\qquad \forall\ \chi,\zeta \in H^1(\GT)\,,
\end{equation}
see \citet[Lem.~5.2]{DziukE13}.

The weak formulation of 
(\ref{eq:NSa}--d), (\ref{eq:sigma}), (\ref{eq:1a}--d), (\ref{eq:sigmaG}),
with $E(\Gamma(t),\phasec(t))$ replaced by \linebreak
$\frac12\,\alpha\,\langle\vec\varkappa,\vec\varkappa\rangle_{\Gamma(t)}$, 
from \cite{nsns} is then given as follows.
Find $\Gamma(t) = \vec x(\Upsilon, t)$ for $t\in[0,T]$ 
with $\vec{\mathcal{V}} \in [L^2(\GT)]^d$ 
and $\vec{\mathcal{V}}(\cdot, t) \in [H^1(\Gamma(t)]^d$ for almost all 
$t \in (0,T)$, and functions 
$\vec u \in \utimespace_\Gamma(\vec g)$, $p \in L^2(0,T; \widehat\pspace)$, 
$p_\Gamma \in L^2(\GT)$, $\vec\varkappa \in [H^1(\GT)]^d$ and
$\vec f_\Gamma \in [L^2(\GT)]^d$ such that 
the initial conditions (\ref{eq:1init}) hold and such that
for almost all $t \in (0,T)$ it holds that
\begin{subequations}
\begin{align}
& \tfrac12\left[ \ddt (\rho\,\vec u, \vec \xi) + (\rho\,\vec u_t, \vec \xi)
- (\rho\,\vec u, \vec \xi_t)
+ (\rho, [(\vec u\,.\,\nabla)\,\vec u]\,.\,\vec \xi
- [(\vec u\,.\,\nabla)\,\vec \xi]\,.\,\vec u) 
+ \rho_+\left\langle \vec u \,.\,\unitn , \vec u\,.\,\vec\xi 
 \right\rangle_{\partial_2\Omega}
\right] 
\nonumber \\ & \qquad
+ 2\,(\mu\,\mat D(\vec u), \mat D(\vec \xi)) 
- (p, \nabla\,.\,\vec \xi)
+ \rho_\Gamma \left\langle \matpartx\,\vec u , \vec \xi 
 \right\rangle_{\Gamma(t)}
+ 2\,\mu_\Gamma \left\langle \mat D_s (\vec u) , \mat D_s ( \vec \xi )
\right\rangle_{\Gamma(t)}
\nonumber \\ & \qquad
- \left\langle p_\Gamma, \nabs\,.\,\vec \xi \right\rangle_{\Gamma(t)}
 = (\rho\,\vec f, \vec \xi) 
+ \left\langle \vec f_\Gamma, \vec \xi \right\rangle_{\Gamma(t)}
\qquad \forall\ \vec \xi \in \utimespace_\Gamma(\vec 0)\,,
\label{eq:weakGDa} \\
& (\nabla\,.\,\vec u, \varphi) = 0  \qquad \forall\ \varphi \in 
\widehat\pspace\,, \label{eq:weakGDb} \\
& \left\langle \nabs\,.\,\vec u, \eta \right\rangle_{\Gamma(t)}
 = 0  \qquad \forall\ \eta \in L^2(\Gamma(t))\,, \label{eq:weakGDc} \\
&\left\langle\vec{\mathcal{V}} - \vec u, \vec\chi \right\rangle_{\Gamma(t)} = 0
 \qquad\forall\ \vec\chi \in [L^2(\Gamma(t))]^d\,,
\label{eq:weakGDd}
\end{align}
\end{subequations}
as well as
\begin{subequations}
\begin{align}
& \left\langle \vec\varkappa, \vec\eta \right\rangle_{\Gamma(t)}
+ \left\langle \nabs\,\vec\id, \nabs\,\vec \eta \right\rangle_{\Gamma(t)}
 = 0  \qquad\forall\ \vec\eta \in [H^1(\Gamma(t))]^d\,, \label{eq:weakGDe} \\
& \left\langle \vec f_\Gamma, \vec \chi \right\rangle_{\Gamma(t)} =
\alpha
\left\langle \nabs\,\vec \varkappa , \nabs\,\vec \chi \right\rangle_{\Gamma(t)}
+\alpha\left\langle \nabs\,.\, \vec \varkappa, \nabs\,.\, \vec \chi 
\right\rangle_{\Gamma(t)}
+ \tfrac12\,\alpha\left\langle |\vec \varkappa|^2,\nabs\,.\,\vec \chi 
\right\rangle_{\Gamma(t)} \nonumber \\ & \hspace{4cm}
- 2\,\alpha \left\langle (\nabs\,\vec\varkappa)^T, \mat D_s(\vec{\chi})\,
 (\nabs\,\vec\id)^T \right\rangle_{\Gamma(t)}
\qquad\forall\ \vec \chi \in [H^1(\Gamma(t))]^d \,, \label{eq:weakGDf}
\end{align}
\end{subequations}
where in (\ref{eq:weakGDd}) we have recalled (\ref{eq:V}). 

For the case $\vec g = \vec 0$, it was shown in \cite{nsns} that
choosing $\vec\xi = \vec u \in \utimespace_\Gamma(\vec 0)$ in 
(\ref{eq:weakGDa}), $\varphi = p(\cdot, t) \in \widehat\pspace$ 
in (\ref{eq:weakGDb}), $\eta = p_\Gamma(\cdot, t) \in L^2(\Gamma(t))$,
$\vec\chi = \vec f_\Gamma$ in (\ref{eq:weakGDd}) and
$\vec\chi = \vec{\mathcal{V}}$ in (\ref{eq:weakGDf}) yields that
\begin{align*}
& \tfrac12\,\ddt \left( \|\rho^\frac12\,\vec u \|_0^2 
+ \rho_\Gamma \left\langle \vec u, \vec u \right\rangle_{\Gamma(t)} 
+ \alpha \left\langle \vec\varkappa, \vec\varkappa \right\rangle_{\Gamma(t)} 
 \right)
+ 2\,\| \mu^\frac12\,\mat D(\vec u)\|_0^2 
+ 2\,\mu_\Gamma\left\langle \mat D_s(\vec u), \mat D_s(\vec u) 
\right\rangle_{\Gamma(t)} 
\nonumber \\ & \hspace{3cm}
+ \tfrac12\,\rho_+ \left\langle \vec u \,.\,\unitn , |\vec u|^2 
 \right\rangle_{\partial_2\Omega}
= (\rho\,\vec f, \vec u) \,.
\end{align*}
Moreover, we recall from \cite{nsns} that it follows from 
(\ref{eq:DElem5.2}) and (\ref{eq:weakGDc},d) that
\begin{equation} \label{eq:areacons}
\ddt\, \mathcal{H}^{d-1} (\Gamma(t)) = \ddt\,\langle 1, 1 \rangle_{\Gamma(t)}
= \left\langle 1, \nabs\,.\,\vec{\mathcal{V}} \right\rangle_{\Gamma(t)}
= \left\langle 1, \nabs\,.\,\vec u \right\rangle_{\Gamma(t)}
= 0 \,,
\end{equation}
while \citet[Lemma~2.1]{DeckelnickDE05}, (\ref{eq:weakGDb},d) 
and (\ref{eq:hatP}) imply that
\begin{equation}
\frac{\rm d}{{\rm d}t} \vol(\Omega_-(t)) = 
\left\langle \vec{\mathcal{V}}, \vec\nu \right\rangle_{\Gamma(t)}
= \left\langle \vec u, \vec\nu \right\rangle_{\Gamma(t)}
= \int_{\Omega_-(t)} \nabla\,.\,\vec u \dL{d} 
=0\,. \label{eq:conserved}
\end{equation}

\subsection{The first variation of $E(\Gamma(t),\phasec(t))$}
\label{sec:32}
In this section we would like to derive a weak formulation for the 
first variation of $E(\Gamma(t), \phasec(t))$ 
with respect to $\Gamma(t) = \vec x(\Upsilon,t)$.
To this end, for a given $\vec\chi \in \Vt$ 
and for $\epsilon \in (0,\epsilon_0)$, where $\epsilon_0\in\Rplus$, let
$\vec\Phi(\cdot,\epsilon)$ be a family of transformations such that
\begin{equation} \label{eq:Gammadelta}
\Gamma_\epsilon(t) := \{ \vec\Phi(\vec z,\epsilon) : \vec z \in \Gamma(t) \} \,,
\quad\text{where}\quad
\vec\Phi(\vec z, 0) = \vec z \text{~and~}
\tfrac{\partial\vec\Phi}{\partial\epsilon}(\vec z, 0) = \vec\chi(\vec z)
\quad \forall\ \vec z \in \Gamma(t)\,.
\end{equation}
Then the first variation of $\mathcal{H}^{d-1}(\Gamma(t))$
with respect to $\Gamma(t)$ in the direction $\vec{\chi} \in \Vt$ is given by
\begin{align}
\left[ \deldel{\Gamma} \, \mathcal{H}^{d-1}(\Gamma(t))\right]
(\vec{\chi}) & = 
\dd\epsilon\,\mathcal{H}^{d-1}(\Gamma_\epsilon(t)) \mid_{\epsilon=0} 
= \lim_{\epsilon\to0} \tfrac1\epsilon\left[\mathcal{H}^{d-1}(\Gamma_\epsilon(t))
- \mathcal{H}^{d-1}(\Gamma(t)) \right] \nonumber \\ & 
= 
\left\langle \nabs\,\vec\id , \nabs \,\vec\chi \right\rangle_{\Gamma(t)} 
= \left\langle 1, \nabs \,.\,\vec\chi \right\rangle_{\Gamma(t)} 
,
\label{eq:vardet}
\end{align}
see e.g.\ the proof of Lemma~1 in \cite{Dziuk08}. 
For any quantity $w$, that is naturally defined on $\Gamma_\epsilon(t)$, we
define
\begin{align}
\matparteps\,w(\vec{z})=
\dd\epsilon w_\epsilon(\vec\Phi(\vec{z},\epsilon)) \mid_{\epsilon=0}
\qquad \forall\ \vec z \in \Gamma(t)\,,
\label{eq:deltaderiv}
\end{align}
and similarly for $\matparteps\,\vec w$ and $\matparteps\,\mat w$. 
A common example is
$\vec\nu_\epsilon$, the outer normal on $\Gamma_\epsilon(t)$.
In cases where $w \in L^\infty(\Gamma(t))$ is meaningful only on $\Gamma(t)$, 
we let $w_\epsilon \in L^\infty(\Gamma_\epsilon(t))$ be such that
\begin{equation} \label{eq:wdelta}
w_\epsilon(\vec\Phi(\vec z, \epsilon)) = w(\vec z) \qquad \forall\
\vec z \in \Gamma(t)\,,
\end{equation}
which immediately implies that for such $w$ it holds that
$\matparteps\, w=0$. Once again, we extend (\ref{eq:wdelta}) also to
vector- and tensor-valued functions.
For later use we note that
generalized variants of (\ref{eq:vardet}) also hold. 
Similarly to (\ref{eq:DElem5.2}) it holds that
\begin{equation} \label{eq:DElem5.2eps}
\left[ \deldel\Gamma \left\langle w, v \right\rangle_{\Gamma(t)} \right]
(\vec\chi)
 = \left\langle \matparteps\,w, v \right\rangle_{\Gamma(t)}
 + \left\langle w, \matparteps\,v \right\rangle_{\Gamma(t)}
+ \left\langle w\,v, \nabs\,.\,\vec\chi \right\rangle_{\Gamma(t)}
\quad \forall\ w,v \in L^\infty(\Gamma(t))\,.
\end{equation}
Similarly, it holds that
\begin{align}
\left[ \deldel{\Gamma} \left\langle \vec w, \vec\nu \right\rangle_{\Gamma(t)}
\right] (\vec{\chi}) &= 
\dd\epsilon \left\langle \vec w_\epsilon, \vec\nu_\epsilon 
\right\rangle_{\Gamma_\epsilon(t)} \mid_{\epsilon=0}
= \left\langle \matparteps\,\vec w, \vec\nu \right\rangle_{\Gamma(t)}
+ \left\langle \vec w, \matparteps\,\vec\nu \right\rangle_{\Gamma(t)}
\nonumber \\ & \hspace{4cm}
+ \left\langle \vec w\,.\,\vec\nu, \nabs\,.\,\vec\chi 
\right\rangle_{\Gamma(t)}
 \quad \forall\ \vec w \in [L^\infty(\Gamma(t))]^d\,,
\label{eq:vardetwnu}
\end{align}
where 
$\vec\nu_\epsilon(t)$ denotes the unit normal on $\Gamma_\epsilon(t)$.
In this regard, we note the following result
concerning the variation of $\vec\nu$, with respect to $\Gamma(t)$, 
in the direction $\vec\chi \in \Vt$:
\begin{align}
\matparteps \,\vec\nu
= - [\nabs\, \vec \chi]^T \,\vec\nu \quad \text{on}\quad \Gamma(t)
\quad \Rightarrow \quad
\matpartx\,\vec\nu = - [\nabs\, \vec{\mathcal{V}}]^T \,\vec\nu
\quad \text{on}\quad \Gamma(t)\,, 
\label{eq:normvar}
\end{align}
see \citet[Lemma~9]{SchmidtS10}.
Next we note that for $\vec{\eta} \in \Vt$ it holds that
\begin{align}
&
\left[ \deldel{\Gamma} \left\langle \nabs\,\vec\id,
\nabs\,\vec{\eta}\right\rangle_{\Gamma(t)} \right](\vec{\chi})
= 
\dd\epsilon \left\langle \nabs\,\vec\id,
\nabs\,\vec\eta_\epsilon\right\rangle_{\Gamma_\epsilon(t)} \mid_{\epsilon=0} 
= \left\langle \nabs\,.\,\vec \eta, \nabs \,.\,\vec \chi 
\right\rangle_{\Gamma(t)}
\nonumber \\ & \qquad \qquad
+ \sum_{l,\,m=1}^d \left[
\left\langle (\vec\nu)_l\,(\vec\nu)_m\,
 \nabs\,(\vec \eta)_m, \nabs \,(\vec \chi)_l \right\rangle_{\Gamma(t)}
- \left\langle (\nabs)_m\,(\vec \eta)_l, (\nabs)_l \,(\vec \chi)_m 
\right\rangle_{\Gamma(t)} \right]  \nonumber \\ & \qquad
 = 
 \left\langle \nabs\,\vec \eta, \nabs \,\vec \chi \right\rangle_{\Gamma(t)}
+ \left\langle \nabs\,.\,\vec \eta, \nabs \,.\,\vec \chi 
 \right\rangle_{\Gamma(t)}
- 2 \left\langle (\nabs \,\vec\eta)^T, \mat D_s(\vec \chi)\,(\nabs\,\vec\id)^T 
 \right\rangle_{\Gamma(t)},
\label{eq:secvar}
\end{align}
where $\matparteps\, \vec\eta =\vec 0$.
We refer to Lemma~2 and the proof of Lemma~3 in \cite{Dziuk08} for a proof
of (\ref{eq:secvar}). Here we observe that our notation is such that 
$\nabs\,\vec\chi = (\nabla_{\!\Gamma}\,\vec\chi)^T$, with 
$\nabla_{\!\Gamma}\,\vec\chi=\left( \partial_{s_i}\, \chi_j \right)_{i,j=1}^d$ 
defined as in \cite{Dziuk08}. Moreover, it holds, on noting (\ref{eq:Ps}), 
that
\begin{subequations}
\begin{equation} \label{eq:PGamma}
\nabs\,\vec\chi\,\,\mat{\mathcal{P}}_\Gamma = \nabs\,\vec\chi
\quad\Rightarrow\quad \mat{\mathcal{P}}_\Gamma \, (\nabs\,\vec\chi)^T
= (\nabs\,\vec\chi)^T
\end{equation}
and
\begin{equation} \label{eq:DD}
2\,(\nabs\,\vec\eta)^T : \mat D_s(\vec \chi)\,(\nabs\,\vec\phi)^T
= (\nabs\,\vec\eta)^T : [ \nabs\,\vec\chi + (\nabs\,\vec\chi)^T]\,
(\nabs\,\vec\phi)^T\,,
\end{equation}
\end{subequations}
which yields that the last term on the right hand side in (\ref{eq:secvar}) 
can be rewritten as in \cite{Dziuk08}.

As $\nabs\,\vec\id = \mat{\mathcal{P}}_\Gamma$, one can deduce from
(\ref{eq:Ps}), (\ref{eq:secvar}) and (\ref{eq:DElem5.2eps}) that for
sufficiently smooth $\vec\eta$ 
\begin{align}
\matparteps \,(\nabs\,.\,\vec \eta) & = 
\matparteps \,(\nabs\,\vec\id :\nabs\,\vec \eta)
= \nabs\,\vec\eta : \nabs\,\vec\chi -
2\,(\nabs \,\vec\eta)^T : \mat D_s(\vec \chi)\,(\nabs\,\vec\id)^T
\nonumber \\ &
= [ \nabs\,\vec\chi - 2\,\mat D_s(\vec \chi)] : \nabs\,\vec\eta
\quad \text{a.e.\ on}\quad \Gamma(t)\,, 
\label{eq:secvarlocal}
\end{align}
where $\matparteps\, \vec\eta =\vec 0$. From (\ref{eq:secvarlocal}) we can also
derive that for sufficiently smooth $w$
\begin{align}
\matparteps \,(\nabs\, w)
= [ \nabs\,\vec\chi - 2\,\mat D_s(\vec \chi)]\,\nabs\,w
\quad \text{a.e.\ on}\quad \Gamma(t)\,, 
\label{eq:secvarwlocal}
\end{align}
where $\matparteps\, w = 0$.
In addition, it follows from (\ref{eq:secvarwlocal}) 
that
\begin{align}
\matparteps\,|\nabs\,w|^2 & = 2\,\nabs\,w\,.\,\matparteps \,(\nabs\, w) 
= - 2\,\nabs\,w\,.\,(\nabs\,\vec\chi\, \nabs\, w) \nonumber \\ & 
= - 2\,(\nabs\,w \otimes \nabs\,w) : \nabs\,\vec\chi 
\quad \text{a.e.\ on}\quad \Gamma(t)\,, 
\label{eq:nabsw2}
\end{align}
where $\matparteps\, w = 0$.

\begin{rem} \label{rem:simplifications1}
We note from {\rm (\ref{eq:secvarlocal})} 
that the last term in {\rm (\ref{eq:secvar})} can be 
simplified to 
\begin{equation} \label{eq:LM3ashort}
- 2 \left\langle \nabs\,\vec\eta, \mat D_s(\vec \chi) \right\rangle_{\Gamma(t)}.
\end{equation}
However, to be consistent with our approximations in \cite{pwfade}, we prefer
the form used in {\rm (\ref{eq:secvar})}.
\end{rem}

It is straightforward to derive results for the time derivative of the
considered quantities from the collected first variations above. For example,
it follows from (\ref{eq:secvar}) that
\begin{align}
& \ddt \left\langle \nabs\,\vec\id,
\nabs\,\vec{\eta}\right\rangle_{\Gamma(t)} = 
\left\langle \nabs\,.\,\vec \eta, \nabs \,.\,\vec{\mathcal V}
 \right\rangle_{\Gamma(t)}
+ \left\langle \nabs\,\vec \eta, \nabs \,\vec{\mathcal V} 
 \right\rangle_{\Gamma(t)}
\nonumber \\ & \hspace{2cm}
- 2\left\langle (\nabs\,\vec\eta)^T, \mat D_s(\vec{\mathcal V})\,
 (\nabs\,\vec\id)^T \right\rangle_{\Gamma(t)} 
\quad \forall\ \vec\eta \in \{ \vec\xi \in [H^1(\GT)]^d 
: \matpartx\,\vec\xi = \vec0 \}\,.
\label{eq:secvar2}
\end{align}

On recalling (\ref{eq:side3first}), (\ref{eq:matwmateta}) and 
(\ref{eq:Weingarten}), 
we now consider the first variation of (\ref{eq:Enew}) subject to the 
side constraints
\begin{subequations}
\begin{align}
& \left\langle \kappastar, \vec\eta \right\rangle_{\Gamma(t)}
+ \left\langle \nabs \,\vec\id,\nabs\, \vec{\eta}\right\rangle_{\Gamma(t)}= 0 
\qquad \forall\ \vec\eta \in \Vt\,, \label{eq:side3} \\
& \left\langle \mat w^\star , \mat\zeta \right\rangle_{\Gamma(t)}
+ \tfrac12 \left\langle \vec\nu,
[\mat\zeta + \mat\zeta^T]\,\kappastar + 
\nabs\,.\,[\mat\zeta + \mat\zeta^T] \right\rangle_{\Gamma(t)} =0 
\qquad \forall\ \mat\zeta \in [\Wt]^{d\times d} \,. \label{eq:sidez}
\end{align}
\end{subequations}
Here we use the symmetric formulation in (\ref{eq:sidez}), because its
discretized form will then ensure that the discrete approximations to
$\mat w^\star$ are also symmetric, since
\begin{equation} \label{eq:wsymm}
\left\langle (\mat w^\star)^T , \mat\zeta \right\rangle_{\Gamma(t)}
= \left\langle \mat w^\star , \mat\zeta^T \right\rangle_{\Gamma(t)}
= \left\langle \mat w^\star , \mat\zeta \right\rangle_{\Gamma(t)}
\qquad \forall\ \mat\zeta \in [\Wt]^{d\times d} \,.
\end{equation}

On recalling (\ref{eq:Enew}), we define the Lagrangian 
\begin{align}
L(\Gamma, \kappastar, \vec y, \mat w^\star, \mat z,\phasec) & =
\tfrac12 \left\langle \alpha(\phasec)\,|\kappastar - \spont(\phasec)\,\vec\nu|^2,1 
\right\rangle_{\Gamma(t)} 
+ \tfrac12 \left\langle \alpha^G(\phasec) , |\kappastar|^2 - |\mat w^\star|^2
\right\rangle_{\Gamma(t)} \nonumber \\ & \quad
+ \beta \left\langle b_{CH}(\phasec), 1 \right\rangle_{\Gamma(t)}
- \left\langle \kappastar, \vec y \right\rangle_{\Gamma(t)} - 
\left\langle \nabs \,\vec\id, \nabs\,\vec y \right\rangle_{\Gamma(t)}
\nonumber \\ & \quad
-\left\langle \mat w^\star , \mat z \right\rangle_{\Gamma(t)}
- \tfrac12 \left\langle \vec\nu, 
[\mat z + \mat z^T]\,\kappastar + \nabs\,.\,[\mat z + \mat z^T]  
\right\rangle_{\Gamma(t)}  , 
\label{eq:Lag3}
\end{align}
where $\vec y\in \Vt$ and $\mat z \in [\Wt]^{d\times d}$ are
Lagrange multipliers for (\ref{eq:side3},b).
In order to compute the 
direction of steepest descent, $\vec f_\Gamma$, of 
$E(\Gamma(t), \phasec(t))$, with respect to $\Gamma(t)$ and subject to the
constraints (\ref{eq:side3},b), we set the variations of 
$L(\Gamma, \kappastar, \vec y, \mat w^\star, \mat z,\phasec)$ 
with respect to $\kappastar$, $\vec y$, $\mat w^\star$ and $\mat z$ to zero, 
and we use the variation with respect to $\phasec$ to define the
Cahn--Hilliard dynamics. 
Moreover, 
we obtain on using the formal calculus of PDE constrained optimization, see
e.g.\ \cite{Troltzsch10}, that
\begin{subequations}
\begin{align}
\left[\deldel{\Gamma}\,L\right](\vec\chi) 
& = \lim_{\epsilon\to0} \tfrac1\epsilon\left[ 
L(\Gamma_\epsilon, \kappastar_\epsilon, \vec y_\epsilon,
\mat w^\star_\epsilon, \mat z_\epsilon, \phasec_\epsilon) -
L(\Gamma, \kappastar, \vec y, \mat w^\star, \mat z,\phasec)\right] 
= - \left\langle \vec f_\Gamma, \vec\chi \right\rangle_{\Gamma(t)} , 
\label{eq:ddGLa}\\
\left[\deldel{\kappastar}\,L\right](\vec\xi) & =
\lim_{\epsilon\to0} \tfrac1\epsilon\left[ 
L(\Gamma, \kappastar + \epsilon\,\vec\xi, \vec y, \mat w^\star, \mat z,\phasec) 
- L(\Gamma, \kappastar, \vec y, \mat w^\star, \mat z,\phasec)
\right] = 0\,, 
\label{eq:ddGLb}\\
\left[\deldel{\vec y}\,L\right](\vec\eta) & =
\lim_{\epsilon\to0} \tfrac1\epsilon\left[ 
L(\Gamma, \kappastar , \vec y + \epsilon\,\vec\eta, \mat w^\star, \mat z,\phasec) 
- L(\Gamma, \kappastar, \vec y, \mat w^\star, \mat z,\phasec) \right] = 0 \,,
\label{eq:ddGLc}\\
\left[\deldel{\mat w^\star}\,L\right](\mat\phi) & =
\lim_{\epsilon\to0} \tfrac1\epsilon\left[ 
L(\Gamma, \kappastar , \vec y, \mat w^\star + \epsilon\,\mat\phi, \mat z,\phasec) 
- L(\Gamma, \kappastar, \vec y, \mat w^\star, \mat z,\phasec) \right] = 0 \,,
\label{eq:ddGLd}\\
\left[\deldel{\mat z}\,L\right](\mat\zeta) & =
\lim_{\epsilon\to0} \tfrac1\epsilon\left[ 
L(\Gamma, \kappastar , \vec y, \mat w^\star , \mat z+ \epsilon\,\mat\zeta,\phasec) 
- L(\Gamma, \kappastar, \vec y, \mat w^\star, \mat z,\phasec) \right] = 0 \,,
\label{eq:ddGLe}\\
\left[\deldel{\phasec}\,L\right](\xi) & =
\lim_{\epsilon\to0} \tfrac1\epsilon\left[ 
L(\Gamma, \kappastar , \vec y , \mat w^\star, \mat z, \phasec + \epsilon\,\xi) 
- L(\Gamma, \kappastar, \vec y, \mat w^\star, \mat z,\phasec) \right] = 
\left\langle \chempot, \xi \right\rangle_{\Gamma(t)} \,,
\label{eq:ddGLf}
\end{align}
\end{subequations}
where $\kappastar_\epsilon,\,\vec y_\epsilon \in \Vdeltat$,
$\mat w^\star_\epsilon,\,\mat z_\epsilon \in [\Wdeltat]^{d\times d}$,
$\phasec_\epsilon \in \Wdeltat$ are defined as 
in (\ref{eq:wdelta}), and where $\chempot$ defines the chemical potential.
We note that (\ref{eq:ddGLc},e) immediately yield (\ref{eq:side3},b), which
means that we can recover $\kappastar$ and $\mat w^\star$ in terms of
$\Gamma(t)$ again. In particular, combining (\ref{eq:nabszeta}) and 
(\ref{eq:side3}) yields, on recalling (\ref{eq:LBop}) that 
$\kappastar = \vec\varkappa$. In
addition, it then follows from (\ref{eq:sidez}) and (\ref{eq:matwmateta}) that
$\mat w^\star = \mat w = \nabs\,\vec\nu$.
On recalling (\ref{eq:bendingb}), (\ref{eq:DElem5.2eps})--(\ref{eq:secvar}), 
(\ref{eq:secvarlocal}) and (\ref{eq:nabsw2}) this yields that
\begin{subequations}
\begin{align}
& \left\langle \vec f_\Gamma, \vec\chi \right\rangle_{\Gamma(t)}-   
\left\langle \nabs\,\vec y, \nabs\,\vec\chi \right\rangle_{\Gamma(t)}-
\left\langle \nabs\,.\,\vec y, \nabs\,.\,\vec\chi \right\rangle_{\Gamma(t)}
+2\left\langle (\nabs\,\vec y)^T , \mat D_s(\vec \chi)\,(\nabs\,\vec\id)^T
\right\rangle_{\Gamma(t)}
\nonumber \\ & \quad 
+\tfrac12\left\langle \alpha(\phasec)\,|\vec\varkappa - \spont(\phasec)\,\vec\nu|^2
- 2\,\vec y\,.\,\vec\varkappa,\nabs\,.\,\vec\chi \right\rangle_{\Gamma(t)}
 + \left\langle \alpha(\phasec)\,\spont(\phasec)\,(\vec\varkappa -\spont(\phasec)\,\vec\nu),
[\nabs\,\vec\chi]^T\,\vec \nu \right\rangle_{\Gamma(t)}
\nonumber \\ & \quad 
+ \beta 
\left\langle 
b_{CH}(\phasec), \nabs\,.\,\vec\chi \right\rangle_{\Gamma(t)}
- \beta\,\gamma \left\langle (\nabs\,\phasec)\otimes(\nabs\,\phasec) , \nabs\,\vec\chi
\right\rangle_{\Gamma(t)} 
\nonumber \\ & \quad 
+ \tfrac12 \left\langle \alpha^G(\phasec) \,( |\vec\varkappa|^2 - |\mat w|^2 ),
\nabs\,.\,\vec\chi \right\rangle_{\Gamma(t)} 
- \left\langle \mat w: \mat z,\nabs\,.\,\vec\chi
\right\rangle_{\Gamma(t)} 
\nonumber \\ & \quad 
- \tfrac12 \left\langle \vec\nu\,.\,
([\mat z + \mat z^T]\,\vec\varkappa + \nabs\,.\,[\mat z + \mat z^T] ),
\nabs\,.\,\vec\chi \right\rangle_{\Gamma(t)} 
- \sum_{i=1}^d
\left\langle \nu_i\,\nabs\,\vec z_i, \nabs\,\vec\chi - 2\,\mat D_s(\vec \chi)
\right\rangle_{\Gamma(t)} 
\nonumber \\ & \quad 
+ \tfrac12 \left\langle [\mat z + \mat z^T]\,\vec\varkappa 
+ \nabs\,.\,[\mat z + \mat z^T] , [\nabs\,\vec\chi]^T\,\vec\nu
\right\rangle_{\Gamma(t)} 
 = 0 
\qquad \forall\ \vec\chi\in \Vt\,, \label{eq:LM3a} \\
& \left\langle \alpha(\phasec)\,(\vec\varkappa - \spont(\phasec)\,\vec\nu) 
+ \alpha^G(\phasec)\,\vec\varkappa - \tfrac12\,[\mat z + \mat z^T]\,\vec\nu
- \vec y, \vec\xi \right\rangle_{\Gamma(t)}
 = 0 \qquad
\forall\ \vec\xi \in \Vt \,, \label{eq:LM3b} \\
& \mat z = -\alpha^G(\phasec)\,\mat w \,,\label{eq:LM3c} \\
& \left\langle \vec\varkappa, \vec\eta \right\rangle_{\Gamma(t)}
+ \left\langle \nabs \,\vec\id,\nabs\, \vec{\eta}\right\rangle_{\Gamma(t)}= 0 
\qquad \forall\ \vec\eta \in \Vt\,, \label{eq:LM3d} \\
& \left\langle \mat w , \mat\zeta \right\rangle_{\Gamma(t)}
+ \tfrac12 \left\langle \vec\nu,
[\mat\zeta + \mat\zeta^T]\,\vec\varkappa + 
\nabs\,.\,[\mat\zeta + \mat\zeta^T] \right\rangle_{\Gamma(t)} =0 
\qquad \forall\ \mat\zeta \in [\Wt]^{d\times d} \,. \label{eq:LM3e}
\end{align}
\end{subequations}
The above is coupled to (\ref{eq:weakGDa}--d) subject to the initial
conditions (\ref{eq:1init}).
Here we have introduced $\vec z_i = \tfrac12\,[\mat z + \mat z^T]\,\vec\ek_i$, 
$i=1\to d$, as well as $\nu_i = \vec\nu\,.\,\vec\ek_i$, $i=1\to d$.
Finally, on recalling (\ref{eq:strongCHa}), and on using 
(\ref{eq:nabszeta}),
(\ref{eq:DElem5.2}),
(\ref{eq:matpartx}), (\ref{eq:matpartu}) and
(\ref{eq:weakGDc},d), a weak form of the Cahn--Hilliard dynamics is given by
\begin{subequations}
\begin{align}
& \vartheta\,\ddt \left\langle \phasec, \eta \right\rangle_{\Gamma(t)}
+ \left\langle \nabs\,\chempot, \nabs\,\eta \right\rangle_{\Gamma(t)} = 0
\quad \forall\ \eta \in \{ \xi \in H^1(\GT) : \matpartx\,\xi = 0 \}\,,
\label{eq:CHa} \\
& \left\langle \chempot, \xi \right\rangle_{\Gamma(t)} =
 \beta\,\gamma \left\langle \nabs\,\phasec, \nabs\,\xi \right\rangle_{\Gamma(t)}
+ \beta\,\gamma^{-1} \left\langle \Psi'(\phasec), \xi \right\rangle_{\Gamma(t)}
 \nonumber \\ & \qquad\qquad\quad
 + \tfrac12 \left\langle
\alpha'(\phasec)\,|\vec\varkappa - \spont(\phasec)\,\vec\nu|^2 
- 2\,\spont'(\phasec)\,\alpha(\phasec)\,(\vec\varkappa - \spont(\phasec)\,\vec\nu)\,.\,\vec\nu,\xi
\right\rangle_{\Gamma(t)} 
\nonumber \\ & \qquad\qquad\quad
+ \tfrac12 \left\langle (\alpha^G)'(\phasec) \,( |\vec\varkappa|^2 - |\mat w|^2 ),
\xi \right\rangle_{\Gamma(t)} 
\qquad \forall\ \xi \in \Wt\,, \label{eq:CHb} \\
& \phasec(\cdot,0) = \phasec_0 \qquad \text{on }\ \Gamma_0\,, \label{eq:CHc}
\end{align}
\end{subequations}
with $\phasec_0:\Gamma_0 \to \R$ given initial data, recall (\ref{eq:1init}). 
Here we note that (\ref{eq:CHb}) is well-posed for nonconstant $\alpha$,
$\alpha^G$ and $\spont$ only in the case $\beta > 0$, 
which is why we assume that $\beta$ is
positive throughout the manuscript.
In addition, we observe that choosing
$\eta=1$ in (\ref{eq:CHa}) yields that
\begin{equation} \label{eq:ccons}
\ddt \left\langle \phasec, 1 \right\rangle_{\Gamma(t)} = 0\,.
\end{equation}

\begin{rem} \label{rem:simplifications2}
With regards to {\rm (\ref{eq:LM3b})} we note from 
{\rm (\ref{eq:LM3c})} 
and {\rm (\ref{eq:Weingarten})},
as $\mat w = \nabs\,\vec\nu = (\nabs\,\vec\nu)^T$,
it holds that $\mat z = - \alpha^G(\phasec)\,\mat w = -\alpha^G(\phasec)\,\nabs\,\vec\nu$, 
and so $\mat z\,\vec\nu = \mat z^T\,\vec\nu = \vec 0$. 
For further simplifications we refer to the appendix.
\end{rem}

We note the following LBB-type condition:
\begin{equation} \label{eq:LBBG}
\inf_{(\varphi,\eta) \in \widehat\pspace \times L^2(\Gamma(t))} 
\sup_{\vec \xi \in \uspace_{\Gamma(t)} (\vec 0)}
\frac{( \varphi, \nabla \,.\,\vec \xi) 
+ \left\langle \eta, \nabs \,.\,\vec \xi \right\rangle_{\Gamma(t)}}
{(\|\varphi\|_0 + \| \eta \|_{0,\Gamma(t)})
\,(\|\vec \xi\|_1 + \| \mat{\mathcal{P}}_\Gamma\,\vec\xi\!\mid_{\Gamma(t)} 
\|_{1,\Gamma(t)})} 
\geq C > 0\,,
\end{equation}
which we also refer to as the LBB$_\Gamma$ condition.
Here we have defined the space $\uspace_{\Gamma(t)}(\vec 0) := \{
\vec\xi \in \uspace(\vec 0) : 
\mat{\mathcal{P}}_\Gamma\,\vec\xi\!\mid_{\Gamma(t)}
\in [H^1(\Gamma(t))]^d\}$, and let 
$\| \vec\eta \|_{1,\Gamma(t)}^2 :=
\left\langle \vec\eta,\vec\eta \right\rangle_{\Gamma(t)}
+ \left\langle \nabs\,\vec\eta,\nabs\,\vec\eta \right\rangle_{\Gamma(t)}$.
In the case that the smooth hypersurface $\Gamma(t)$ is not a sphere, 
then (\ref{eq:LBBG}) is shown to hold
if $\partial_1\Omega=\partial\Omega$ is a smooth boundary in 
\citet[p.~15]{Lengeler15preprint}. See also the discussion around
(2.11a,b) in \cite{nsns}.

Overall the weak formulation for the free boundary problem
(\ref{eq:NSa}--d), (\ref{eq:sigma}), (\ref{eq:1a}--d), (\ref{eq:sigmaG}), 
(\ref{eq:strongCHa},b), (\ref{eq:1init}), (\ref{eq:CHc}) 
that we consider in this paper is given by
\begin{equation} \label{eq:P}
\text{(P) \qquad\qquad
(\ref{eq:weakGDa}--d), 
(\ref{eq:LM3a}--e),
(\ref{eq:CHa}--c), (\ref{eq:1init}).}
\end{equation}

\begin{rem} \label{rem:2d}
We note that in the case $d=2$ we do not consider Gaussian curvature terms,
i.e.\ we assume that $\alpha^G(\phasec) = 0$. Then {\rm (\ref{eq:LM3a})} simplifies
to
\begin{align}
& \left\langle \vec f_\Gamma, \vec\chi \right\rangle_{\Gamma(t)}-   
\left\langle \nabs\,\vec y, \nabs\,\vec\chi \right\rangle_{\Gamma(t)}-
\left\langle \nabs\,.\,\vec y, \nabs\,.\,\vec\chi \right\rangle_{\Gamma(t)}
+2\left\langle (\nabs\,\vec y)^T , \mat D_s(\vec \chi)\,(\nabs\,\vec\id)^T
\right\rangle_{\Gamma(t)}
\nonumber \\ & \quad 
+\tfrac12\left\langle \alpha(\phasec)\,|\vec\varkappa - \spont(\phasec)\,\vec\nu|^2
- 2\,\vec y\,.\,\vec\varkappa,\nabs\,.\,\vec\chi \right\rangle_{\Gamma(t)}
 + \left\langle \alpha(\phasec)\,\spont(\phasec)\,(\vec\varkappa -\spont(\phasec)\,\vec\nu),
[\nabs\,\vec\chi]^T\,\vec \nu \right\rangle_{\Gamma(t)}
\nonumber \\ & \quad 
+ \beta 
\left\langle 
b_{CH}(\phasec), \nabs\,.\,\vec\chi \right\rangle_{\Gamma(t)}
- \beta\,\gamma \left\langle (\partial_s\,\phasec)^2 , \nabs\,.\,\vec\chi
\right\rangle_{\Gamma(t)} 
 = 0 
\qquad \forall\ \vec\chi\in \Vt\,. \label{eq:2dLM3a} 
\end{align}
Clearly, the last two terms in {\rm (\ref{eq:2dLM3a})} can be absorbed by the 
surface pressure $p_\Gamma$ in {\rm (\ref{eq:weakGDa})}. Hence, for constant
$\alpha$ and constant $\spont$, the evolution of the interface is totally
independent of the Cahn--Hilliard system.
Of course, for $d=3$ even for constant $\alpha$, $\spont$ and $\alpha^G$, the
line tension term 
$\beta\,\gamma\,\langle (\nabs\,\phasec)\otimes(\nabs\,\phasec),
\nabs\,\vec\chi\rangle_{\Gamma(t)}$
in {\rm (\ref{eq:LM3a})} means that nonconstant values of $\phasec$ do
have an influence on the membrane evolution.
\end{rem}

\setcounter{equation}{0}
\section{Semidiscrete finite element approximation} \label{sec:4}
For simplicity we consider $\Omega$ to be a polyhedral domain. Then 
let  ${\mathcal{T}}^h$ 
be a regular partitioning of $\Omega$ into disjoint open simplices
$\sigmaO^h_j$, $j = 1,\ldots, J_\Omega$.
Associated with ${\mathcal{T}}^h$ are the finite element spaces
\begin{equation*} 
 S^h_k := \{\chi \in C(\overline{\Omega}) : \chi\!\mid_{\sigmaO} \in
\mathcal{P}_k(\sigmaO) \quad \forall\ \sigmaO \in {\mathcal{T}}^h\} 
\subset H^1(\Omega)\,, \qquad k \in \mathbb{N}\,,
\end{equation*}
where $\mathcal{P}_k(\sigmaO)$ denotes the space of polynomials of degree $k$
on $\sigmaO$. We also introduce $S^h_0$, the space of  
piecewise constant functions on ${\mathcal{T}}^h$.
Let $\{\varphi_{k,j}^h\}_{j=1}^{K_k^h}$ be the standard basis functions 
for $S^h_k$, $k\geq 0$.
We introduce $\vec I^h_k:[C(\overline{\Omega})]^d\to [S^h_k]^d$, $k\geq 1$, 
the standard interpolation
operators, such that $(\vec I^h_k\,\vec\eta)(\vec{p}_{k,j}^h)= 
\vec\eta(\vec{p}_{k,j}^h)$ for $j=1,\ldots, K_k^h$; where
$\{\vec p_{k,j}^h\}_{j=1}^{K_k^h}$ denotes the coordinates of the degrees of
freedom of $S^h_k$, $k\geq 1$. In addition we define the standard projection
operator $I^h_0:L^1(\Omega)\to S^h_0$, such that
\begin{equation*}
(I^h_0 \eta)\!\mid_{o} = \frac1{\mathcal{L}^d(o)}\,\int_{o}
\eta \dL{d} \qquad \forall\ o \in \mathcal{T}^h\,.
\end{equation*}
Our approximation to the velocity and pressure on ${\mathcal{T}}^h$
will be based on standard finite element spaces
$\uspace^h(\vec g)\subset\uspace(\vec I^h_k\,\vec g)$, for some $k \geq 2$,
and $\pspace^h(t)\subset\pspace$, recall (\ref{eq:UVV},b). 
Here, for the former we assume from now on that 
$\vec g \in [C(\overline\Omega)]^d$.
We require also the space $\widehat\pspace^h(t):= 
\pspace^h(t) \cap \widehat\pspace$. 
Here, in general, we will choose pairs of 
velocity/pressure finite element spaces 
that satisfy the LBB inf-sup condition,
see e.g.\ \citet[p.~114]{GiraultR86}.
For example, we may choose the lowest order Taylor-Hood element 
P2--P1 for $d=2$ and $d=3$, the P2--P0 element or the 
P2--(P1+P0) element for $d=2$ on setting 
$\uspace^h(\vec g)=[S^h_2]^d\cap\uspace(\vec I^h_2\,\vec g)$, 
and $\pspace^h = S^h_1,\,S^h_0$ or $S^h_1+S^h_0$, respectively.

The parametric finite element spaces in order to approximate e.g.\ 
$\vec\varkappa$ and $\phasec$ are defined as follows.
Similarly to \cite{gflows3d}, we introduce the following discrete
spaces, based on the work of 
\cite{Dziuk91}. 
Let $\Gamma^h(t)\subset\R^d$ be a $(d-1)$-dimensional {\em polyhedral surface}, 
i.e.\ a union of non-degenerate $(d-1)$-simplices with no
hanging vertices (see \citet[p.~164]{DeckelnickDE05} for $d=3$),
approximating the closed surface $\Gamma(t)$. In
particular, let $\Gamma^h(t)=\bigcup_{j=1}^{J_\Gamma}
\overline{\sigma^h_j(t)}$, where $\{\sigma^h_j(t)\}_{j=1}^{J_\Gamma}$ is a
family of mutually disjoint open $(d-1)$-simplices with vertices
$\{\vec{q}^h_k(t)\}_{k=1}^{K_\Gamma}$.
Then let
\begin{align*}
\Wht & := \{\chi \in C(\Gamma^h(t)):\chi\!\mid_{\sigma^h_j}
\mbox{ is linear}\ \forall\ j=1,\ldots, J_\Gamma\} \,, \\ 
\Vht & := \{\vec\chi \in [C(\Gamma^h(t))]^d:\vec\chi\!\mid_{\sigma^h_j}
\mbox{ is linear}\ \forall\ j=1,\ldots, J_\Gamma\} \,,\\ 
\matVht & := \{\mat\chi \in [C(\Gamma^h(t))]^{d\times d}:
\mat\chi\!\mid_{\sigma^h_j} \mbox{ is linear}\ \forall\ j=1,\ldots, J_\Gamma\}
 \,.
\end{align*}
Hence $\Wht$ is the space of scalar continuous
piecewise linear functions on $\Gamma^h(t)$, with 
$\{\chi^h_k(\cdot,t)\}_{k=1}^{K_\Gamma}$ 
denoting the standard basis of $\Wht$, i.e.\
\begin{equation} \label{eq:bf}
\chi^h_k(\vec q^h_l(t),t) = \delta_{kl}\qquad
\forall\ k,l \in \{1,\ldots,K_\Gamma\}\,, t \in [0,T]\,.
\end{equation}
We require that $\Gamma^h(t) = \vec X^h(\Gamma^h(0), t)$ with
$\vec X^h \in \Vhtz$, and that $\vec q^h_k \in [H^1(0,T)]^d$,
$k = 1,\ldots,K_\Gamma$.
For later purposes, we also introduce 
$\pi^h(t): C(\Gamma^h(t))\to \Wht$, the standard interpolation operator
at the nodes $\{\vec{q}_k^h(t)\}_{k=1}^{K_\Gamma}$, and similarly
$\vec\pi^h(t): [C(\Gamma^h(t))]^d\to \Vht$.

For scalar and vector 
functions $\eta,\zeta$ on $\Gamma^h(t)$ 
we introduce the $L^2$--inner 
product $\langle\cdot,\cdot\rangle_{\Gamma^h(t)}$ over
the polyhedral surface $\Gamma^h(t)$ 
as follows
\begin{equation*} 
\left\langle \eta, \zeta \right\rangle_{\Gamma^h(t)} := 
\int_{\Gamma^h(t)} \eta\,.\,\zeta \dH{d-1}\,.
\end{equation*}
In order to derive a stable numerical method, it is crucial to consider
numerical integration in the discrete energy, see (\ref{eq:Eh}) below. Hence,
for piecewise continuous functions $v,w$, with possible jumps
across the edges of $\{\sigma_j^h(t)\}_{j=1}^{J_\Gamma}$,
we introduce the mass lumped inner product
$\langle\cdot,\cdot\rangle_{\Gamma^h(t)}^h$ as
\begin{equation} \label{eq:defNI}
\left\langle \eta, \phi \right\rangle^h_{\Gamma^h(t)} 
= \sum_{j=1}^J \left\langle \eta, \phi \right\rangle^h_{\sigma^h_j(t)} 
 :=
\sum_{j=1}^{J}\tfrac1d\,\mathcal{H}^{d-1}(\sigma^h_j(t))\,\sum_{k=1}^{d} 
(\eta\,\phi)((\vec{q}^h_{j_k}(t))^-)\,,
\end{equation}
where $\{\vec{q}^h_{j_k}(t)\}_{k=1}^{d}$ are the vertices of $\sigma^h_j(t)$,
and where
we define $\eta((\vec{q}^h_{j_k}(t))^-):= $ \linebreak $
\underset{\sigma^h_j(t)\ni \vec{p}\to \vec{q}^h_{j_k}(t)}{\lim}\, \eta(\vec{p})$.
We
naturally extend this definition to vector and tensor functions.

Following \citet[(5.23)]{DziukE13}, we define the discrete material velocity
for $\vec z \in \Gamma^h(t)$ by
\begin{equation} \label{eq:Xht}
\vec{\mathcal{V}}^h(\vec z, t) := \sum_{k=1}^{K_\Gamma}
\left[\ddt\,\vec q^h_k(t)\right] \chi^h_k(\vec z, t) \,.
\end{equation}
For later use, we also introduce the finite element spaces
\begin{align*}
W_T(\GhT) := \{ \phi \in C(\GhT) & : 
\phi(\cdot, t) \in \Wht \quad \forall\ t \in [0,T]\,,\nonumber \\ & \quad
\phi(\vec q^h_k(t), t) \in H^1(0,T) \quad \forall\ k \in\{1,\ldots,K\}
 \}\,,
\end{align*}
where $\GhT := \bigcup_{t \in [0,T]} \Gamma^h(t) \times \{t\}$,
as well as the vector- and tensor-valued analogues $\underline V_T(\GhT)$ and
$\mat V_T(\GhT)$.
In a similar fashion, we introduce $W_T(\sigma^h_{j,T})$ via
\begin{align*}
W_T(\sigma^h_{j,T}) := \{ \phi \in C(\overline{\sigma^h_{j,T}}) & : 
\phi(\cdot, t) \text{~ is linear~} \quad \forall\ t \in [0,T]\,,
\nonumber \\ & \quad
\phi(\vec q^h_{j_k}(t), t) \in H^1(0,T) \quad k = 1,\ldots,d
 \}\,,
\end{align*}
where 
$\{\vec{q}^h_{j_k}(t)\}_{k=1}^{d}$ are the vertices of $\sigma^h_j(t)$,
and where
$\sigma^h_{j,T} := \bigcup_{t \in [0,T]} \sigma^h_j(t) \times \{t\}$,
for $j\in\{1,\ldots,J\}$.

Then, similarly to (\ref{eq:matpartx}), we define the discrete material
derivatives on $\Gamma^h(t)$ element-by-element via the equations
\begin{equation} \label{eq:matpartxh}
(\matpartxh\, \phi)\!\mid_{\sigma^h_j(t)} = 
(\phi_t + \vec{\mathcal{V}}^h\,.\,\nabla\,\phi) \!\mid_{\sigma^h_j(t)}
\qquad\forall\ \phi\in W_T(\sigma^h_{j,T})\,,\ j \in \{1,\ldots,J\}\,.
\end{equation}
Moreover, similarly to (\ref{eq:Gammadelta}), 
for any given $\vec \chi \in \Vht$ we introduce 
\begin{align} \label{eq:Gammahdelta}
& 
\Gamma^h_\epsilon(t) := \{ \vec\Phi^h(\vec z,\epsilon) : \vec z \in \Gamma^h(t)
 \} \,, \quad\text{where}\quad
\vec\Phi^h(\vec z, 0) = \vec z \text{~~and~~}
\nonumber \\ & \hspace{8cm}
\tfrac{\partial\vec\Phi^h}{\partial\epsilon}(\vec z, 0) = \vec\chi(\vec z)
\quad \forall\ \vec z \in \Gamma^h(t)\,,
\end{align}
as well as $\partial^{0,h}_\epsilon$ defined by
{\rm (\ref{eq:deltaderiv})} with $\Gamma(t)$ and $\vec\Phi$ replaced by 
$\Gamma^h(t)$ and $\vec\Phi^h$, respectively.
We also introduce
\begin{equation} \label{eq:VhG}
\utimespace^h_{\Gamma^h}(\vec g) := \{
\vec \phi \in H^1(0,T; \uspace^h(\vec g)) : \exists\,
\vec\chi \in \underline{V}_T(\GhT) \text{, s.t. }  \vec\chi(\cdot,t) = 
\vec\pi^h\,[\vec\phi\!\mid_{\Gamma^h(t)}]\ \forall\ t\in[0,T]\}\,.
\end{equation}

On differentiating (\ref{eq:bf}) with respect to $t$, it immediately follows
that
\begin{equation} \label{eq:mpbf}
\matpartxh\, \chi^h_k = 0
\quad\forall\ k \in \{1,\ldots,K_\Gamma\}\,,
\end{equation}
see \citet[Lem.\ 5.5]{DziukE13}. 
It follows directly from (\ref{eq:mpbf}) that
\begin{equation*} 
\matpartxh\,\zeta(\cdot,t) = \sum_{k=1}^{K_\Gamma} \chi^h_k(\cdot,t)\,
\ddt\,\zeta_k(t) \quad \text{on}\ \Gamma^h(t)
\end{equation*}
for $\zeta(\cdot,t) = \sum_{k=1}^{K_\Gamma} \zeta_k(t)\,\chi^h_k(\cdot,t)
\in \Wht$, and hence
$\matpartxh\,\vec\id = \vec{\mathcal{V}}^h$ on $\Gamma^h(t)$.

We recall from \citet[Lem.~5.6]{DziukE13} that
\begin{equation} \label{eq:DEeq5.28}
\ddt\, \int_{\sigma^h_j(t)} \zeta \dH{d-1} 
= \int_{\sigma^h_j(t)} \matpartxh\,\zeta + \zeta\,\nabs\,.\,\vec{\mathcal{V}}^h 
\dH{d-1} \quad \forall\ \zeta \in W_T(\sigma^h_{j,T})\,, j \in
\{1,\ldots,J_\Gamma\}\,.
\end{equation}
Moreover, on recalling (\ref{eq:defNI}), we have that
\begin{align}
& \ddt \left\langle \eta, \zeta \right\rangle_{\sigma^h_j(t)}^h
 = \left\langle \matpartxh\,\eta, \zeta \right\rangle_{\sigma^h_j(t)}^h
+ \left\langle \eta, \matpartxh\,\zeta \right\rangle_{\sigma^h_j(t)}^h
+ \left\langle \eta\,\zeta,\nabs\,.\,\vec{\mathcal{V}}^h 
\right\rangle_{\sigma^h_j(t)}^h \nonumber \\ & \hspace{8cm}
\quad \forall\ \eta, \zeta \in W_T(\sigma^h_{j,T})\,, j \in
\{1,\ldots,J_\Gamma\}\,.
\label{eq:dtsigma}
\end{align}

Given $\Gamma^h(t)$, we 
let $\Omega^h_+(t)$ denote the exterior of $\Gamma^h(t)$ and let
$\Omega^h_-(t)$ denote the interior of $\Gamma^h(t)$, so that
$\Gamma^h(t) = \partial \Omega^h_-(t) = \overline{\Omega^h_-(t)} \cap 
\overline{\Omega^h_+(t)}$. 
We then partition the elements of the bulk mesh 
$\mathcal{T}^h$ into interior, exterior and interfacial elements as follows.
Let
\begin{align*}
\mathcal{T}^h_-(t) & := \{ o \in \mathcal{T}^h : o \subset
\Omega^h_-(t) \} \,, \nonumber \\
\mathcal{T}^h_+(t) & := \{ o \in \mathcal{T}^h : o \subset
\Omega^h_+(t) \} \,, \nonumber \\
\mathcal{T}^h_{\Gamma^h}(t) & := \{ o \in \mathcal{T}^h : o \cap
\Gamma^h(t) \not = \emptyset \} \,. 
\end{align*}
Clearly $\mathcal{T}^h = \mathcal{T}^h_-(t) \cup \mathcal{T}^h_+(t) \cup
\mathcal{T}^h_{\Gamma}(t)$ is a disjoint partition.
In addition, we define the piecewise constant unit normal 
$\vec{\nu}^h(t)$ to $\Gamma^h(t)$ such that $\vec\nu^h(t)$ points into
$\Omega^h_+(t)$.
Moreover, we introduce the discrete density 
$\rho^h(t) \in S^h_0$ and the discrete viscosity $\mu^h(t) \in S^h_0$ as
\begin{equation*} 
\rho^h(t)\!\mid_{o} = \begin{cases}
\rho_- & o \in \mathcal{T}^h_-(t)\,, \\
\rho_+ & o \in \mathcal{T}^h_+(t)\,, \\
\tfrac12\,(\rho_- + \rho_+) & o \in \mathcal{T}^h_{\Gamma^h}(t)\,,
\end{cases}
\quad\text{and}\quad
\mu^h(t)\!\mid_{o} = \begin{cases}
\mu_- & o \in \mathcal{T}^h_-(t)\,, \\
\mu_+ & o \in \mathcal{T}^h_+(t)\,, \\
\tfrac12\,(\mu_- + \mu_+) & o \in \mathcal{T}^h_{\Gamma^h}(t)\,.
\end{cases}
\end{equation*}

Similarly to (\ref{eq:Ps},b), we introduce
\begin{subequations}
\begin{equation} \label{eq:Psh}
\mat{\mathcal{P}}_{\Gamma^h} = \mat\Id - \vec\nu^h \otimes \vec\nu^h
\quad\text{on}\ \Gamma^h(t)\,,
\end{equation}
and
\begin{equation} \label{eq:Dsh}
\mat D_s^h(\vec \eta) = \tfrac12\,\mat{\mathcal{P}}_{\Gamma^h}\,
(\nabs\,\vec\eta + (\nabs\,\vec \eta)^T)\,\mat{\mathcal{P}}_{\Gamma^h}
\quad\text{on}\ \Gamma^h(t)\,,
\end{equation}
\end{subequations}
where here $\nabs = \mat{\mathcal{P}}_{\Gamma^h} \,\nabla$ 
denotes the surface gradient on $\Gamma^h(t)$.
Moreover, we introduce the vertex normal function
$\vec\omega^h(\cdot, t) \in\Vht$ with
\begin{equation} \label{eq:omegah}
\vec\omega^h(\vec{q}^h_k(t),t ) := 
\frac{1}{\mathcal{H}^{d-1}(\Lambda^h_k(t))}
\sum_{j\in\Theta_k^h} \mathcal{H}^{d-1}(\sigma^h_j(t))\,
\vec{\nu}^h\!\mid_{\sigma^h_j(t)}\,,
\end{equation}
where for $k= 1 ,\ldots, K_\Gamma^h$ we define
$\Theta_k^h:= \{j : \vec{q}^h_k(t) \in \overline{\sigma^h_j(t)}\}$
and set
\begin{equation*}
\Lambda_k^h(t) := \cup_{j \in \Theta_k^h} \overline{\sigma^h_j(t)}
\,. 
\end{equation*}
For later use we note that 
\begin{equation} 
\left\langle \vec z, w\,\vec\nu^h\right\rangle_{\Gamma^h(t)}^h =
\left\langle \vec z, w\,\vec\omega^h\right\rangle_{\Gamma^h(t)}^h 
\qquad \forall\ \vec z \in \Vht\,,\ w \in \Wht \,,
\label{eq:NIh}
\end{equation}
and so, in particular, 
$\left\langle \vec z, \vec\nu^h\right\rangle_{\Gamma^h(t)} =
\left\langle \vec z, \vec\nu^h\right\rangle_{\Gamma^h(t)}^h =
\left\langle \vec z, \vec\omega^h\right\rangle_{\Gamma^h(t)}^h$ 
for all $\vec z \in \Vht$.

In what follows we will introduce a finite element approximation
for the weak formulation (P), recall (\ref{eq:P}).
By repeating on the discrete level the steps in \S\ref{sec:32},
we will now derive a discrete analogue of (\ref{eq:LM3a}--e).

Similarly to the continuous setting in (\ref{eq:Enew}) and (\ref{eq:side3},b),
we consider the first variation of the discrete energy
\begin{align} \label{eq:Eh}
E^h(\Gamma^h(t), \phaseC^h(t)) & 
= \tfrac12 \left\langle \alpha(\phaseC^h), |\vec\kappa^h - \spont(\phaseC^h)\,\vec\nu^h|^2
\right\rangle_{\Gamma^h(t)}^h
+ \tfrac12 \left\langle \alpha^G(\phaseC^h), |\vec\kappa^h|^2 - |\mat W^h|^2
\right\rangle_{\Gamma^h(t)}^h \nonumber \\ & \qquad
+ \beta\,\left\langle b_{CH}(\phaseC^h), 1 \right\rangle_{\Gamma^h(t)}^h, 
\end{align}
where $\vec\kappa^h\in\Vht$ and $\mat W^h\in \matVht$ have to 
satisfy side constraints
\begin{subequations}
\begin{align}
& \left\langle \vec\kappa^h, \vec\eta \right\rangle_{\Gamma^h(t)}^h
+ \left\langle
\nabs \,\vec\id,\nabs\, \vec{\eta}\right\rangle_{\Gamma^h(t)}= 0 \qquad 
\forall\ \vec\eta \in \Vht\,,\label{eq:side3h}  \\
& \left\langle \mat W^h , \mat\zeta \right\rangle_{\Gamma^h(t)}^h
+ \tfrac12\left\langle \vec\nu^h, [\mat\zeta + \mat\zeta^T]\,\vec\kappa^h
+ \nabs\,.\,[\mat\zeta + \mat\zeta^T] \right\rangle_{\Gamma^h(t)}^h =0 
\qquad \forall\ \mat\zeta \in \matVht \,. \label{eq:sidezh}
\end{align}
\end{subequations}
Similarly to (\ref{eq:Lag3}), we define the Lagrangian 
\begin{align*} &
L^h(\Gamma^h, \vec\kappa^h, \vec Y^h, \mat W^h, \mat Z^h, \phaseC^h) 
\nonumber \\ & \quad
=
\tfrac12 \left\langle \alpha(\phaseC^h), |\vec\kappa^h - \spont(\phaseC^h)\,\vec\nu^h|^2 
\right\rangle_{\Gamma^h(t)}^h
+ \tfrac12 \left\langle \alpha^G(\phaseC^h), |\vec\kappa^h|^2 - |\mat W^h|^2
\right\rangle_{\Gamma^h(t)}^h
\nonumber \\ & \qquad \qquad
+ \beta \left\langle b_{CH}(\phaseC^h), 1 \right\rangle_{\Gamma^h(t)}^h
- \left\langle \vec\kappa^h, \vec Y^h \right\rangle_{\Gamma^h(t)}^h 
- \left\langle \nabs \,\vec\id, \nabs\,\vec Y^h \right\rangle_{\Gamma^h(t)}
\nonumber \\ & \qquad \qquad
- \left\langle \mat W^h , \mat Z^h \right\rangle_{\Gamma^h(t)}^h
- \tfrac12 \left\langle \vec\nu^h, [\mat Z^h + (\mat Z^h)^T]\,\vec\kappa^h
+ \nabs\,.\,[\mat Z^h + (\mat Z^h)^T] \right\rangle_{\Gamma^h(t)}^h,
\end{align*}
where $\vec\kappa^h\in \Vht$, $\mat W^h \in \matVht$,
$\phaseC^h \in \Wht$, 
with $\vec Y^h\in \Vht$ and $\mat Z^h \in \matVht$
being Lagrange multipliers for (\ref{eq:side3h},b), respectively.
Similarly to (\ref{eq:LM3a}--c), on recalling the formal calculus of PDE
constrained optimization, we obtain the gradient 
$\vec F_\Gamma^h \in \Vht$ of 
$E^h(\Gamma^h(t),\phaseC^h(t))$ with respect to $\Gamma^h(t)$ subject to the
side constraints (\ref{eq:side3h},b) by setting
$[\deldel{\Gamma^h}\,L^h](\vec\chi)=
-\left\langle \vec F_\Gamma^h, \vec\chi \right\rangle_{\Gamma^h(t)}^h$
for $\vec\chi\in\Vht$, 
where we have recalled the definition (\ref{eq:Gammahdelta}),
and by setting the remaining variations with respect to
$\vec\kappa^h$, $\vec Y^h$, $\mat W^h$ and $\mat Z^h$ to zero.
On noting (\ref{eq:bendingb}), (\ref{eq:NIh}) and the variation
analogue of (\ref{eq:dtsigma}), 
as well as the obvious discrete variants of 
(\ref{eq:DElem5.2eps})--(\ref{eq:secvar}), 
(\ref{eq:secvarlocal}) and (\ref{eq:nabsw2}), we then obtain that
\begin{subequations}
\begin{align}
& \left\langle \vec F_\Gamma^h, \vec\chi \right\rangle_{\Gamma^h(t)}^h
- \left\langle \nabs\,\vec Y^h, \nabs\,\vec\chi \right\rangle_{\Gamma^h(t)}-
\left\langle \nabs\,.\,\vec Y^h, \nabs\,.\,\vec\chi \right\rangle_{\Gamma^h(t)}
\nonumber \\ & \qquad 
+\tfrac12\left\langle \alpha(\phaseC^h)\,|\vec\kappa^h - \spont(\phaseC^h)\,\vec\nu^h|^2
- 2\,\vec Y^h\,.\,\vec\kappa^h,\nabs\,.\,\vec\chi\right\rangle_{\Gamma^h(t)}^h
\nonumber \\ & \qquad 
+2 \left\langle (\nabs\,\vec Y^h)^T , \mat D_s^h(\vec \chi)\,(\nabs\,\vec\id)^T
\right\rangle_{\Gamma^h(t)} 
+ \left\langle \alpha(\phaseC^h)\,\spont(\phaseC^h)\,(\vec\kappa^h -
\spont(\phaseC^h)\,\vec\nu^h), 
[\nabs\,\vec\chi]^T\,\vec\nu^h \right\rangle_{\Gamma^h(t)}^h
 \nonumber \\ & \qquad
+ \beta 
\left\langle 
b_{CH}(\phaseC^h), \nabs\,.\,\vec\chi \right\rangle_{\Gamma^h(t)}^h
- \beta\,\gamma \left\langle (\nabs\,\phaseC^h)\otimes(\nabs\,\phaseC^h), \nabs\,\vec\chi
\right\rangle_{\Gamma^h(t)} 
\nonumber \\ & \qquad 
+ \tfrac12 \left\langle \alpha^G(\phaseC^h) \,( |\vec\kappa^h|^2 - |\mat W^h|^2 ),
\nabs\,.\,\vec\chi \right\rangle_{\Gamma^h(t)}^h
- \left\langle \mat W^h : \mat Z^h,\nabs\,.\,\vec\chi
\right\rangle_{\Gamma^h(t)}^h 
\nonumber \\ & \qquad 
- \tfrac12 \left\langle \vec\nu^h\,.\,(
[\mat Z^h + (\mat Z^h)^T]\,\vec\kappa^h + \nabs\,.\,[\mat Z^h + (\mat Z^h)^T]),
\nabs\,.\,\vec\chi \right\rangle_{\Gamma^h(t)}^h
\nonumber \\ & \qquad 
- \sum_{i=1}^d 
\left\langle \nu^h_i\,\nabs\,\vec Z^h_i,\nabs\,\vec\chi
- 2\,\mat D_s^h(\vec\chi) \right\rangle_{\Gamma^h(t)} 
\nonumber \\ & \qquad 
+ \tfrac12 \left\langle [\mat Z^h + (\mat Z^h)^T]\,\vec\kappa^h
+\nabs\,.\,[\mat Z^h + (\mat Z^h)^T] , [\nabs\,\vec\chi]^T\,\vec\nu^h
\right\rangle_{\Gamma^h(t)}^h 
 = 0 \qquad
 \forall\ \vec\chi\in \Vht\,, \label{eq:LMh3a} \\
& \left\langle \alpha(\phaseC^h)\,(\vec\kappa^h - \spont(\phaseC^h)\,\vec\nu^h) 
+ \alpha^G(\phaseC^h)\,\vec\kappa^h 
- \tfrac12\,[\mat Z^h + (\mat Z^h)^T]\,\vec\nu^h\,
- \vec Y^h, \vec\xi \right\rangle_{\Gamma^h(t)}^h  = 0 
\nonumber \\ & \hspace{10cm}
\qquad \forall\ \vec\xi \in \Vht \,, \label{eq:LMh3b} \\
& \mat Z^h = \mat\pi^h[-\alpha^G(\phaseC^h)\,\mat W^h]\,,\label{eq:LMh3c}
\end{align} 
\end{subequations}
as well as (\ref{eq:side3h},b) from the variations with respect to $\vec Y^h$
and $\mat Z^h$.
Here we have introduced $\vec Z^h_i = \tfrac12\,[\mat Z^h + (\mat Z^h)^T]\,
\vec\ek_i$, $i=1\to d$, as well as $\nu^h_i = \vec\nu^h\,.\,\vec\ek_i$, 
$i=1\to d$.
Similarly to (\ref{eq:wsymm}) it clearly follows from (\ref{eq:sidezh}) that
\begin{equation} \label{eq:WZsymm}
(\mat W^h)^T = \mat W^h \quad \Rightarrow \quad 
(\mat Z^h)^T = \mat Z^h \,,
\end{equation}
and so many terms in (\ref{eq:LMh3a},b) can be simplified. We will perform
these simplifications when we introduce the semidiscrete finite element
approximation, see (\ref{eq:sd3a}--d), (\ref{eq:sd3e}--d) below.
The Cahn--Hilliard dynamics are defined by
\begin{subequations}
\begin{align}
& \vartheta\,\ddt \left\langle \phaseC^h, \chi^h_k \right\rangle_{\Gamma^h(t)}^h
+ \left\langle \nabs\,\Chempot^h, \nabs\,\chi^h_k\right\rangle_{\Gamma^h(t)} =0
\qquad \forall\ k \in \{1,\ldots,K_\Gamma\}\,,\label{eq:CHha} \\
& \left\langle \Chempot^h, \xi \right\rangle_{\Gamma^h(t)}^h 
= \beta\,\gamma \left\langle \nabs\,\phaseC^h, \nabs\,\xi
\right\rangle_{\Gamma^h(t)}
+ \beta\,\gamma^{-1} \left\langle \Psi'(\phaseC^h), \xi \right\rangle_{\Gamma^h(t)}^h
\nonumber \\ & \qquad\qquad\qquad
 + \tfrac12 \left\langle
\alpha'(\phaseC^h)\,|\vec\kappa^h - \spont(\phaseC^h)\,\vec\nu^h|^2 
- 2\,\spont'(\phaseC^h)\,\alpha(\phaseC^h)\,(\vec\kappa^h - \spont(\phaseC^h)\,\vec\nu^h)
\,.\,\vec\nu^h,\xi \right\rangle_{\Gamma^h(t)}^h
\nonumber \\ & \qquad\qquad\qquad
+ \tfrac12 \left\langle (\alpha^G)'(\phaseC^h) \,( |\vec\kappa^h|^2 - |\mat W^h|^2 ),
\xi \right\rangle_{\Gamma^h(t)}^h 
\quad \forall\ \xi \in \Wht\,,
\label{eq:CHhb}
\end{align}
\end{subequations}
where, similarly to the continuous setting (\ref{eq:CHa},b), we have defined 
$\Chempot^h\in\Wht$ by
$\left\langle \Chempot^h, \xi \right\rangle_{\Gamma^h(t)}^h
=[\deldel{\phaseC^h}\,L^h](\xi)$ for all $\xi \in \Wht$.

Overall, we then obtain the following semidiscrete
continuous-in-time finite element approximation, which is the semidiscrete
analogue of the weak formulation (P), recall (\ref{eq:P}). 
Given $\Gamma^h(0)$, $\vec U^h(\cdot,0) \in \uspace^h(\vec g)$ 
and $\phaseC^h(\cdot, 0) \in \Whtz$,
find $(\Gamma^h(t))_{t\in(0,T]}$ such that
$\vec\id\!\mid_{\Gamma^h(\cdot)} \in \underline{V}_T(\GhT)$, with
$\vec{\mathcal{V}}^h = \matpartxh\,\vec\id\!\mid_{\Gamma^h(t)} \in\Vht$ 
for all $t\in(0,T]$, and
$\vec U^h \in \utimespace^h_{\Gamma^h}(\vec g)$,
$\phaseC^h \in W_T(\GhT)$, and,
for all $t\in(0,T]$,
$P^h(t) \in 
\widehat\pspace^h(t)$, 
$P_\Gamma^h(T) \in \Wht$, $\mat W^h(t) \in \matVht$ and
$\vec\kappa^h(t),\,\vec Y^h(t),\,\vec F_\Gamma^h(t) \in \Vht$,
$\Chempot^h \in \Wht$ such that 
(\ref{eq:CHha},b) holds, as well as
\begin{subequations}
\begin{align}
&
\tfrac12 \left[ \ddt \left( \rho^h\,\vec U^h , \vec \xi \right)
+ \left( \rho^h\,\vec U^h_t , \vec \xi \right)
- (\rho^h\,\vec U^h, \vec \xi_t) 
+ \rho_+\left\langle \vec U^h \,.\,\unitn , \vec U^h\,.\,\vec\xi 
 \right\rangle_{\partial_2\Omega}
\right]
 \nonumber \\ & \qquad
+ 2\left(\mu^h\,\mat D(\vec U^h), \mat D(\vec \xi) \right)
+ \tfrac12\left(\rho^h, 
 [(\vec U^h\,.\,\nabla)\,\vec U^h]\,.\,\vec \xi
- [(\vec U^h\,.\,\nabla)\,\vec \xi]\,.\,\vec U^h \right)
- \left(P^h, \nabla\,.\,\vec \xi\right)
\nonumber \\ & \qquad
+ \rho_\Gamma \left\langle \matpartxh\,\vec\pi^h\,\vec U^h, \vec \xi 
\right\rangle_{\Gamma^h(t)}^h
+ 2\,\mu_\Gamma \left\langle \mat D_s^h (\vec\pi^h\,\vec U^h) , 
\mat D_s^h (\vec\pi^h\, \vec \xi ) \right\rangle_{\Gamma^h(t)}
\nonumber \\ & \qquad
- \left\langle P_\Gamma^h , 
\nabs\,.\,(\vec\pi^h\,\vec \xi) \right\rangle_{\Gamma^h(t)}
= \left(\rho^h\,\vec f^h, \vec \xi\right)
+ \left\langle \vec F_\Gamma^h , \vec \xi \right\rangle_{\Gamma^h(t)}^h 
\qquad \forall\ \vec\xi \in H^1(0,T;\uspace^h(\vec 0)) \,, \label{eq:sd3a}\\
& \left(\nabla\,.\,\vec U^h, \varphi\right) = 0 
\qquad \forall\ \varphi \in \widehat\pspace^h(t)\,,
\label{eq:sd3b} \\
& \left\langle \nabs\,.\,(\vec\pi^h\,\vec U^h), \eta 
\right\rangle_{\Gamma^h(t)}  = 0 
\qquad \forall\ \eta \in \Wht\,,
\label{eq:sd3c} \\
& \left\langle \vec{\mathcal{V}}^h ,
\vec\chi \right\rangle_{\Gamma^h(t)}^h
= \left\langle \vec U^h, \vec\chi \right\rangle_{\Gamma^h(t)}^h
 \qquad\forall\ \vec\chi \in \Vht\,,
\label{eq:sd3d}
\end{align}
\end{subequations}
where we recall (\ref{eq:Xht}), and
\begin{subequations}
\begin{align}
& \left\langle \vec\kappa^h, \vec\eta \right\rangle_{\Gamma^h(t)}^h
+ \left\langle \nabs\,\vec\id, \nabs\,\vec \eta \right\rangle_{\Gamma^h(t)}
= 0 \quad\forall\ \vec\eta \in \Vht\,, \label{eq:sd3e} \\
& \left\langle \mat W^h , \mat\zeta \right\rangle_{\Gamma^h(t)}^h
+\tfrac12 \left\langle \vec\nu^h, 
[\mat\zeta + \mat\zeta^T] \,\vec\kappa^h 
+ \nabs\,.\,[\mat\zeta + \mat\zeta^T] \right\rangle_{\Gamma^h(t)}^h =0 
\qquad \forall\ \mat\zeta \in \matVht \,, \label{eq:sd3ee} \\
& \left\langle \alpha(\phaseC^h)\,(\vec\kappa^h - \spont(\phaseC^h)\,\vec\nu^h) 
+ \alpha^G(\phaseC^h)\,(\vec\varkappa + \mat W^h\,\vec\nu^h)\,
- \vec Y^h, \vec\xi \right\rangle_{\Gamma^h(t)}^h  = 0 
\qquad \forall\ \vec\xi \in \Vht \,, \label{eq:sd3f} \\
& \left\langle \vec F_\Gamma^h, \vec\chi \right\rangle_{\Gamma^h(t)}^h
= \left\langle \nabs\,\vec Y^h, \nabs\,\vec\chi \right\rangle_{\Gamma^h(t)}
+ \left\langle\nabs\,.\,\vec Y^h,\nabs\,.\,\vec\chi \right\rangle_{\Gamma^h(t)}
\nonumber \\ &\qquad
-\tfrac12\left\langle [ \alpha(\phaseC^h)\,|\vec\kappa^h - \spont(\phaseC^h)\,\vec\nu^h|^2
- 2\,\vec Y^h\,.\,\vec\kappa^h],\nabs\,.\,\vec\chi\right\rangle_{\Gamma^h(t)}^h
\nonumber \\ & \qquad 
-2 \left\langle (\nabs\,\vec Y^h)^T , \mat D_s^h(\vec \chi)\,(\nabs\,\vec\id)^T
\right\rangle_{\Gamma^h(t)} 
- \left\langle \alpha(\phaseC^h)\,\spont(\phaseC^h)\,\vec\kappa^h , 
[\nabs\,\vec\chi]^T\,\vec\nu^h \right\rangle_{\Gamma^h(t)}^h
 \nonumber \\ & \qquad
- \beta 
\left\langle 
b_{CH}(\phaseC^h), \nabs\,.\,\vec\chi \right\rangle_{\Gamma^h(t)}^h
+ \beta\,\gamma \left\langle (\nabs\,\phaseC^h)\otimes(\nabs\,\phaseC^h), \nabs\,\vec\chi
\right\rangle_{\Gamma^h(t)} 
\nonumber \\ & \qquad 
- \tfrac12 \left\langle \alpha^G(\phaseC^h) \,( |\vec\kappa^h|^2 + |\mat W^h|^2 ),
\nabs\,.\,\vec\chi \right\rangle_{\Gamma^h(t)}^h
+ \left\langle \vec\nu^h\,.\,(\mat Z^h\,\vec\kappa^h + \nabs\,.\,\mat Z^h),
\nabs\,.\,\vec\chi \right\rangle_{\Gamma^h(t)}^h
\nonumber \\ & \qquad 
+ \sum_{i=1}^d 
\left\langle \nu^h_i\,\nabs\,\vec Z^h_i,\nabs\,\vec\chi
- 2\,\mat D_s^h(\vec\chi) \right\rangle_{\Gamma^h(t)} 
- \left\langle \mat Z^h\,\vec\kappa^h
+ \nabs\,.\,\mat Z^h , [\nabs\,\vec\chi]^T\,\vec\nu^h
\right\rangle_{\Gamma^h(t)}^h 
\nonumber \\ & \hspace{10cm}
\qquad \forall\ \vec\chi\in \Vht\,, \label{eq:sd3g}
\end{align}
\end{subequations}
where $\mat Z^h = \mat\pi^h[-\alpha^G(\phaseC^h)\,\mat W^h]$
and $\vec Z^h_i = \mat Z^h\,\vec\ek_i$, $i=1\to d$.
In addition, we have noted (\ref{eq:WZsymm}) and
that
$\alpha(\phaseC^h)\,\spont^2(\phaseC^h)\,\vec\nu^h\,.\, [\nabs\,\vec\chi]^T\,\vec\nu^h = 0$ 
on $\Gamma^h(t)$.
Here we have defined 
$\vec f^h(\cdot,t) := \vec I^h_2\,\vec f(\cdot,t)$, 
where here and throughout we assume that $\vec f \in
L^2(0,T;[C(\overline\Omega)]^d)$.
We note that in the special case of uniform $\alpha$ and $\spont$,
and if $\alpha^G = \beta=0$, 
the scheme (\ref{eq:sd3a}--d), \mbox{(\ref{eq:sd3e}--d)} 
collapses to the semidiscrete
approximation (4.15a--g), with $\beta=0$, from \cite{nsnsade}. 

The following lemma is crucial in establishing a 
direct discrete analogue of (\ref{eq:disseq}).

\begin{lem} \label{lem:sd3stab}
Let $\{(\Gamma^h, \vec U^h, P^h, P^h_\Gamma, \vec\kappa^h, \vec Y^h, 
\vec F_\Gamma^h, \mat W^h, \mat Z^h, \phaseC^h, \Chempot^h)(t)\}_{t\in[0,T]}$ 
be a solution to {\rm (\ref{eq:CHha},b)}, {\rm (\ref{eq:sd3a}--d)}, 
{\rm (\ref{eq:sd3e}--d)}.
In addition, we assume that 
$\vec\kappa^h \in \underline{V}_T(\GhT)$ and
$\mat W^h \in \mat V_T(\GhT)$.
Then 
\begin{equation}
\ddt \, E^h(\Gamma^h(t), \phaseC^h(t))
= - \left\langle \vec F_\Gamma^h, \vec{\mathcal{V}}^h 
\right\rangle_{\Gamma^h(t)}^h 
+ \left\langle \Chempot^h, \matpartxh\,\phaseC^h \right\rangle_{\Gamma^h(t)}^h .
\label{eq:sd3stab}
\end{equation}
\end{lem}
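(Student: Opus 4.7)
\emph{Plan.} The identity is a discrete analogue of the first-variation identity that underpins (\ref{eq:disseq}), and the natural approach is to differentiate in time the Lagrangian $L^h$ introduced in Section~\ref{sec:4} precisely for this purpose. I would proceed in three steps.

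First, observe that along any solution to the scheme the side constraints (\ref{eq:sd3e}) and (\ref{eq:sd3ee}) hold for all $t \in (0,T]$, and these are exactly the conditions that make the $\vec Y^h$- and $\mat Z^h$-multiplier contributions in $L^h$ vanish. Consequently
\[
E^h(\Gamma^h(t), \phaseC^h(t)) = L^h(\Gamma^h, \vec\kappa^h, \vec Y^h, \mat W^h, \mat Z^h, \phaseC^h)(t)
\]
identically in $t$, and so it suffices to compute $\ddt L^h$.

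Second, I would expand $\ddt L^h$ by the chain rule over all of its time-dependent arguments, viewing $\Gamma^h(t)$ as evolving with velocity $\vec{\mathcal V}^h = \matpartxh\,\vec\id$ (recall (\ref{eq:Xht})) and treating the remaining nodal variables via their discrete material derivatives $\matpartxh\,(\cdot)$. The regularity assumptions $\vec\kappa^h \in \underline V_T(\GhT)$ and $\mat W^h \in \mat V_T(\GhT)$, together with the analogous membership of the other variables, ensure that these derivatives are well-defined. Applying the transport identities (\ref{eq:dtsigma}) and (\ref{eq:DEeq5.28}) together with the discrete analogues of the domain-variation formulas (\ref{eq:DElem5.2eps})--(\ref{eq:nabsw2}) then yields
\begin{multline*}
\ddt L^h = \left[\deldel{\Gamma^h}L^h\right]\!(\vec{\mathcal V}^h)
+ \left[\deldel{\vec\kappa^h}L^h\right]\!(\matpartxh\,\vec\kappa^h)
+ \left[\deldel{\vec Y^h}L^h\right]\!(\matpartxh\,\vec Y^h) \\
+ \left[\deldel{\mat W^h}L^h\right]\!(\matpartxh\,\mat W^h)
+ \left[\deldel{\mat Z^h}L^h\right]\!(\matpartxh\,\mat Z^h)
+ \left[\deldel{\phaseC^h}L^h\right]\!(\matpartxh\,\phaseC^h).
\end{multline*}

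Third, the optimality relations built into the scheme collapse this sum to just two terms: (\ref{eq:sd3f}) is exactly $[\deldel{\vec\kappa^h}L^h] = 0$; (\ref{eq:sd3e}) is $[\deldel{\vec Y^h}L^h] = 0$; the assignment $\mat Z^h = \mat\pi^h[-\alpha^G(\phaseC^h)\,\mat W^h]$, i.e.\ (\ref{eq:LMh3c}), is $[\deldel{\mat W^h}L^h] = 0$; and (\ref{eq:sd3ee}) is $[\deldel{\mat Z^h}L^h] = 0$. The two surviving contributions are, by the very definitions of $\vec F_\Gamma^h$ and $\Chempot^h$ as variational derivatives of $L^h$,
\[
\left[\deldel{\Gamma^h}L^h\right]\!(\vec{\mathcal V}^h) = -\langle \vec F_\Gamma^h, \vec{\mathcal V}^h\rangle_{\Gamma^h(t)}^h, \qquad
\left[\deldel{\phaseC^h}L^h\right]\!(\matpartxh\,\phaseC^h) = \langle \Chempot^h, \matpartxh\,\phaseC^h\rangle_{\Gamma^h(t)}^h,
\]
which together give (\ref{eq:sd3stab}).

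The main obstacle is the second step: one has to verify that the time derivatives of every $\Gamma^h$-dependent piece of $L^h$ (the mass-lumped inner products, the $\vec\nu^h$- and $\nabs$-contractions, and the $\langle\nabs\,\vec\id,\nabs\,\vec Y^h\rangle$ term) collapse to precisely the variational expression $[\deldel{\Gamma^h}L^h](\vec{\mathcal V}^h)$ that was used to define $\vec F_\Gamma^h$ through the derivation of (\ref{eq:LMh3a}). Since the mass-lumped quadrature is element-local and (\ref{eq:dtsigma}) is its exact transport identity, every manipulation used to derive (\ref{eq:LMh3a}) applies term-by-term with the test direction $\vec\chi$ replaced by $\vec{\mathcal V}^h$, so no genuinely new computation is required beyond careful bookkeeping.
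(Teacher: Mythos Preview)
Your argument is correct and is, at its core, the same Lagrangian-based reasoning the paper uses, but packaged at a higher level of abstraction. Because all quantities are determined by finitely many nodal values on $\Gamma^h(t)$, the chain rule you write down for $\ddt L^h$ is just the ordinary finite-dimensional chain rule, so your ``main obstacle'' is in fact immediate; the optimality identities you invoke are exactly (\ref{eq:sd3e}), (\ref{eq:sd3ee}), (\ref{eq:sd3f}) and (\ref{eq:LMh3c}).

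The paper's proof carries out the same computation explicitly rather than invoking the chain rule: it differentiates the side constraints (\ref{eq:side3h},b) in time (with test functions satisfying $\matpartxh\,\vec\eta=\vec 0$, $\matpartxh\,\mat\zeta=\mat 0$), tests the resulting identities with $\vec Y^h$ and $\mat Z^h$, adds them to (\ref{eq:LMh3a}) evaluated at $\vec\chi=\vec{\mathcal V}^h$, and then substitutes (\ref{eq:LMh3b},c) and (\ref{eq:CHhb}) to recognise $\ddt E^h-\langle\Chempot^h,\matpartxh\,\phaseC^h\rangle^h$. Your approach is cleaner and makes the structure transparent; the paper's approach has the advantage of displaying exactly which transport identity is used for each term, which may be useful when adapting the argument to the fully discrete scheme.
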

\begin{proof}
Taking the time derivatives of (\ref{eq:side3h},b), where we choose
discrete test functions $ \vec\eta$ and $\mat\zeta$ such that
$\matpartxh\,\vec\eta = \vec0$ and $\matpartxh\,\mat\zeta = \mat 0$,
respectively, yields that
\begin{subequations}
\begin{align} \label{eq:s31h}
& \left\langle \matpartxh\,\vec\kappa^h, \vec\eta\right\rangle_{\Gamma^h(t)}^h
+ \left\langle \vec\kappa^h\,.\,\vec \eta,
 \nabs\,.\,\vec{\mathcal{V}}^h \right\rangle_{\Gamma^h(t)}^h
+ \left\langle \nabs\,.\, \vec{\mathcal{V}}^h, \nabs\,.\,\vec\eta\right\rangle_{\Gamma^h(t)} 
+ \left\langle \nabs\,\vec{\mathcal{V}}^h, \nabs\,\vec\eta\right\rangle_{\Gamma^h(t)} 
\nonumber \\ & \qquad
- 2 \left\langle \mat D_s^h(\vec{\mathcal{V}}^h)\, (\nabs\,\vec\id)^T, 
 (\nabs\,\vec\eta)^T \right\rangle_{\Gamma^h(t)} 
 = 0\,, \\
& \left\langle \matpartxh\,\mat W^h , \mat\zeta \right\rangle_{\Gamma^h(t)}^h
+ \left\langle \mat W^h : \mat\zeta , \nabs\,.\,\vec{\mathcal{V}}^h 
\right\rangle_{\Gamma^h(t)}^h
+ \tfrac12 \left\langle \matpartxh\,\vec\nu^h,
[\mat\zeta + \mat\zeta^T] \,\vec\kappa^h + 
\nabs\,.\,[\mat\zeta + \mat\zeta^T] \right\rangle_{\Gamma^h(t)}^h
\nonumber \\ & \qquad
+ \tfrac12 \left\langle \vec\nu^h\,.\,
([\mat\zeta + \mat\zeta^T]\,\vec\kappa^h +
\nabs\,.\,[\mat\zeta + \mat\zeta^T]), 
\nabs\,.\,\vec{\mathcal{V}}^h \right\rangle_{\Gamma^h(t)}^h
\nonumber \\ & \qquad
+ \sum_{i=1}^d 
\left\langle \nu^h_i\,\nabs\,\vec \zeta_i,\nabs\,\vec{\mathcal{V}}^h
-2\,\mat D_s^h(\vec{\mathcal{V}}^h) \right\rangle_{\Gamma^h(t)} 
+ \tfrac12 \left\langle \vec\nu^h , 
[\mat\zeta + \mat\zeta^T]\,\matpartxh\,\vec\kappa^h 
\right\rangle_{\Gamma^h(t)}^h  =0  \,, \label{eq:s31hb}
\end{align}
\end{subequations}
where $\vec\zeta_i = \tfrac12\,[\mat\zeta + \mat\zeta^T]\,\vec\ek_i$, 
$i=1,\ldots,d$. 
Here we have noted $\vec\kappa^h \in \underline{V}_T(\GhT)$,
$\mat W^h \in \mat V_T(\GhT)$, 
(\ref{eq:dtsigma}) and the discrete versions of (\ref{eq:secvar2}) 
and (\ref{eq:secvarlocal}). 
Choosing $\vec\chi = \vec{\mathcal{V}}^h$ in (\ref{eq:LMh3a}),
$\vec\eta = \vec Y^h$ in (\ref{eq:s31h}) 
$\mat\zeta = \mat Z^h$ in (\ref{eq:s31hb}) and combining yields,
on recalling (\ref{eq:dtsigma})
and the discrete variants of (\ref{eq:normvar}) and (\ref{eq:nabsw2}), that
\begin{align} \label{eq:s32h}
& - \left\langle \vec F_\Gamma^h, \vec{\mathcal{V}}^h 
\right\rangle_{\Gamma^h(t)}^h =
\tfrac12\left\langle \alpha(\phaseC^h)\,|\vec\kappa^h - \spont(\phaseC^h)\,\vec\nu^h|^2,
\nabs\,.\,\vec{\mathcal{V}}^h\right\rangle_{\Gamma^h(t)}^h
\nonumber \\ &\qquad
-  \left\langle \alpha(\phaseC^h)\,\spont(\phaseC^h)\, (\vec\kappa^h
-\spont(\phaseC^h)\,\vec\nu^h),\matpartxh\,\vec\nu^h
\right\rangle_{\Gamma^h(t)}^h 
+ \left\langle \matpartxh\,\vec\kappa^h,\vec Y^h \right\rangle_{\Gamma^h(t)}^h 
\nonumber \\ & \qquad
+ \beta 
\left\langle 
b_{CH}(\phaseC^h), \nabs\,.\,\vec{\mathcal{V}}^h \right\rangle_{\Gamma^h(t)}^h
- \beta\,\gamma \left\langle (\nabs\,\phaseC^h)\otimes(\nabs\,\phaseC^h), 
\nabs\,\vec{\mathcal{V}}^h\right\rangle_{\Gamma^h(t)} 
\nonumber \\ & \qquad 
+ \tfrac12 \left\langle \alpha^G(\phaseC^h) \,( |\vec\kappa^h|^2 - |\mat W^h|^2 ),
\nabs\,.\,\vec{\mathcal{V}}^h \right\rangle_{\Gamma^h(t)}^h 
+ \left\langle \matpartxh\,\mat W^h, \mat Z^h\right\rangle_{\Gamma^h(t)}^h 
\nonumber \\ & \qquad 
+ \tfrac12
 \left\langle \vec\nu^h, [\mat Z^h + (\mat Z^h)^T]\,\matpartxh\,\vec\kappa^h
\right\rangle_{\Gamma^h(t)}^h \nonumber \\ &
= \tfrac12\left\langle \alpha(\phaseC^h)\,|\vec\kappa^h - \spont(\phaseC^h)\,\vec\nu^h|^2,
\nabs\,.\,\vec{\mathcal{V}}^h\right\rangle_{\Gamma^h(t)}^h
\nonumber \\ &\qquad
-  \left\langle \alpha(\phaseC^h)\,\spont(\phaseC^h)\, (\vec\kappa^h
-\spont(\phaseC^h)\,\vec\nu^h),\matpartxh\,\vec\nu^h
\right\rangle_{\Gamma^h(t)}^h 
\nonumber \\ &\qquad
+ \left\langle \alpha(\phaseC^h)\,(\vec\kappa^h - \spont(\phaseC^h)\,\vec\nu^h) 
+ \alpha^G(\phaseC^h)\,\vec\kappa^h, \matpartxh\,\vec\kappa^h
 \right\rangle_{\Gamma^h(t)}^h 
\nonumber \\ & \qquad
+ \beta 
\left\langle 
b_{CH}(\phaseC^h), \nabs\,.\,\vec{\mathcal{V}}^h \right\rangle_{\Gamma^h(t)}^h
- \beta\,\gamma \left\langle (\nabs\,\phaseC^h)\otimes(\nabs\,\phaseC^h), 
\nabs\,\vec{\mathcal{V}}^h\right\rangle_{\Gamma^h(t)} 
\nonumber \\ & \qquad 
+ \tfrac12 \left\langle \alpha^G(\phaseC^h) \,( |\vec\kappa^h|^2 - |\mat W^h|^2 ),
\nabs\,.\,\vec{\mathcal{V}}^h \right\rangle_{\Gamma^h(t)}^h 
- \left\langle \alpha^G(\phaseC^h)\, \matpartxh\,\mat W^h, \mat W^h
\right\rangle_{\Gamma^h(t)}^h \nonumber \\ & 
= \ddt \left[
\tfrac12 \left\langle \alpha(\phaseC^h), |\vec\kappa^h - \spont(\phaseC^h)\,\vec\nu^h|^2
\right\rangle_{\Gamma^h(t)}^h
+ \tfrac12 \left\langle \alpha^G(\phaseC^h), |\vec\kappa^h|^2 - |\mat W^h|^2
\right\rangle_{\Gamma^h(t)}^h \right. \nonumber \\ & \qquad \left.
+ \beta\,\left\langle b_{CH}(\phaseC^h), 1 \right\rangle_{\Gamma^h(t)}^h
\right] \nonumber \\ & \qquad
- \tfrac12 \left\langle
\alpha'(\phaseC^h)\,|\vec\kappa^h - \spont(\phaseC^h)\,\vec\nu^h|^2 
- 2\,\spont'(\phaseC^h)\,\alpha(\phaseC^h)\,(\vec\kappa^h - \spont(\phaseC^h)\,\vec\nu^h)
\,.\,\vec\nu^h, \matpartxh\,\phaseC^h \right\rangle_{\Gamma^h(t)}^h
\nonumber \\ & \qquad
- \tfrac12 \left\langle (\alpha^G)'(\phaseC^h) \,( |\vec\kappa^h|^2 - |\mat W^h|^2 ),
\matpartxh\,\phaseC^h \right\rangle_{\Gamma^h(t)}^h 
-\beta\,\gamma \left\langle \nabs\,\phaseC^h, \nabs\,\matpartxh\,\phaseC^h
\right\rangle_{\Gamma^h(t)}
\nonumber \\ & \qquad
- \beta\,\gamma^{-1} \left\langle \Psi'(\phaseC^h), \matpartxh\,\phaseC^h 
\right\rangle_{\Gamma^h(t)}^h \nonumber \\ &
= \ddt E^h(\Gamma^h(t), \phaseC^h(t)) - \left\langle \Chempot^h,
\matpartxh\,\phaseC^h \right\rangle_{\Gamma^h(t)}^h
\end{align}
where we have noted (\ref{eq:LMh3b},c) and (\ref{eq:CHhb}), as well as
$\phaseC^h \in W_T(\GhT)$.
This yields the desired result (\ref{eq:sd3stab}). 
\qquad\end{proof}

In the following theorem we derive discrete analogues of (\ref{eq:disseq}), 
(\ref{eq:areacons}) and (\ref{eq:ccons}) 
for the scheme (\ref{eq:CHha},b), (\ref{eq:sd3a}--d), (\ref{eq:sd3e}--d). 
\begin{thm} \label{thm:stab}
Let the assumptions of {\rm Lemma~\ref{lem:sd3stab}} hold.
Then, in the case $\vec g = \vec 0$, it holds that
\begin{align} \label{eq:thm}
& \ddt \left( \tfrac12\,\|[\rho^h]^\frac12\,\vec U^h \|_0^2 
+ \tfrac12\,
  \rho_\Gamma \left\langle \vec U^h, \vec U^h \right\rangle_{\Gamma^h(t)}^h
+ E^h(\Gamma^h(t), \phaseC^h(t)) \right)
+ 2\,\| [\mu^h]^\frac12\,\mat D(\vec U^h)\|_0^2 
\nonumber \\ & \quad
+ 2\,\mu_\Gamma \left\langle \mat D_s^h (\vec\pi^h\,\vec U^h) , 
\mat D_s^h (\vec\pi^h\, \vec U^h ) \right\rangle_{\Gamma^h(t)}
+ \tfrac12\,\rho_+\left\langle \vec U^h \,.\,\unitn , |\vec U^h|^2 
 \right\rangle_{\partial_2\Omega}
\nonumber \\ & \qquad
+ \vartheta^{-1}
\left\langle \nabs\,\Chempot^h,\nabs\,\Chempot^h \right\rangle_{\Gamma^h(t)}
= (\rho^h\,\vec f^h, \vec U^h) \,.
\end{align}
Moreover, it holds that
\begin{subequations}
\begin{equation} \label{eq:supportconsh}
\ddt\left\langle \chi^h_k, 1 \right\rangle_{\Gamma^h(t)} = 0
\quad\forall\ k \in \{1,\ldots,K_\Gamma\}
\end{equation}
and hence that
\begin{equation} \label{eq:areaconsh}
\ddt\, \mathcal{H}^{d-1} (\Gamma^h(t)) = 0 \,.
\end{equation}
Finally, we have that
\begin{equation} \label{eq:cconsh}
\ddt\left\langle \phaseC^h, 1 \right\rangle_{\Gamma^h(t)} = 0\,.
\end{equation}
\end{subequations}
\end{thm}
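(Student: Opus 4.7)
The plan is to recover (\ref{eq:thm}) by testing the weak system with the solution itself, and to derive (\ref{eq:supportconsh})--(\ref{eq:cconsh}) by direct manipulation of (\ref{eq:DEeq5.28}), (\ref{eq:sd3c}), (\ref{eq:sd3d}) and (\ref{eq:CHha}). For (\ref{eq:thm}) I would first pick $\vec\xi = \vec U^h$ in (\ref{eq:sd3a}), $\varphi = P^h$ in (\ref{eq:sd3b}) and $\eta = P_\Gamma^h$ in (\ref{eq:sd3c}), which cancels the three pressure pairings, and observe that the convective trilinear form in (\ref{eq:sd3a}) is skew-symmetric in its last two arguments and therefore vanishes identically. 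The symmetrised temporal bulk structure together with the outflow contribution $\tfrac12\rho_+\langle\vec U^h\,.\,\unitn,|\vec U^h|^2\rangle_{\partial_2\Omega}$ and the viscous term produce $\tfrac12\ddt\|[\rho^h]^{\frac12}\vec U^h\|_0^2$ plus the bulk viscous dissipation and boundary term in (\ref{eq:thm}), exactly as carried out in \cite{nsns}.

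Next I would treat the surface inertia term by applying the element-wise identity (\ref{eq:dtsigma}) to $\vec U^h\,.\,\vec U^h$, which converts $\rho_\Gamma\langle\matpartxh\vec\pi^h\vec U^h,\vec U^h\rangle^h_{\Gamma^h(t)}$ into $\tfrac12\rho_\Gamma\ddt\langle\vec U^h,\vec U^h\rangle^h_{\Gamma^h(t)}$ together with a spurious term $\tfrac12\rho_\Gamma\langle|\vec U^h|^2,\nabs\,.\,\vec{\mathcal{V}}^h\rangle^h_{\Gamma^h(t)}$. Since $\nabs\,.\,\vec{\mathcal{V}}^h$ is piecewise constant on $\Gamma^h(t)$, mass-lumped pairings against it coincide with exact pairings against the $\pi^h$-interpolant of the other factor, so this spurious term reduces to $\tfrac12\rho_\Gamma\langle\pi^h|\vec U^h|^2,\nabs\,.\,\vec{\mathcal{V}}^h\rangle_{\Gamma^h(t)}$, which vanishes by (\ref{eq:sd3c}) with $\eta = \pi^h|\vec U^h|^2\in\Wht$, together with $\vec{\mathcal{V}}^h = \vec\pi^h\vec U^h$ from (\ref{eq:sd3d}). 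The right-hand side force contribution $\langle\vec F_\Gamma^h,\vec U^h\rangle^h_{\Gamma^h(t)}$ equals $\langle\vec F_\Gamma^h,\vec{\mathcal{V}}^h\rangle^h_{\Gamma^h(t)}$ by (\ref{eq:sd3d}) with $\vec\chi = \vec F_\Gamma^h\in\Vht$, and Lemma~\ref{lem:sd3stab} identifies this as $-\ddt E^h(\Gamma^h(t),\phaseC^h(t)) + \langle\Chempot^h,\matpartxh\phaseC^h\rangle^h_{\Gamma^h(t)}$.

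To close (\ref{eq:thm}) I still need to transform $\langle\Chempot^h,\matpartxh\phaseC^h\rangle^h_{\Gamma^h(t)}$ into $-\vartheta^{-1}\langle\nabs\Chempot^h,\nabs\Chempot^h\rangle_{\Gamma^h(t)}$. I would multiply (\ref{eq:CHha}) by the nodal coefficient $\Chempot^h_k(t)$, sum over $k$, and use $\matpartxh\chi^h_k = 0$ together with (\ref{eq:dtsigma}) to obtain
\begin{equation*}
\vartheta\,\langle\matpartxh\phaseC^h,\Chempot^h\rangle^h_{\Gamma^h(t)} + \vartheta\,\langle\phaseC^h\,\Chempot^h,\nabs\,.\,\vec{\mathcal{V}}^h\rangle^h_{\Gamma^h(t)} + \langle\nabs\Chempot^h,\nabs\Chempot^h\rangle_{\Gamma^h(t)} = 0.
\end{equation*}
The middle spurious term vanishes by the same mass-lumping/interpolation argument as above, applied with $\eta = \pi^h[\phaseC^h\,\Chempot^h]\in\Wht$ in (\ref{eq:sd3c}). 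Assembling all contributions then yields (\ref{eq:thm}).

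The conservation identities follow by shorter arguments. For (\ref{eq:supportconsh}), applying (\ref{eq:DEeq5.28}) with $\zeta = \chi^h_k$ and $\matpartxh\chi^h_k = 0$ reduces $\ddt\langle\chi^h_k,1\rangle_{\Gamma^h(t)}$ to $\langle\chi^h_k,\nabs\,.\,\vec{\mathcal{V}}^h\rangle_{\Gamma^h(t)}$, which is zero by (\ref{eq:sd3c}) with $\eta = \chi^h_k$; summing over $k$ gives (\ref{eq:areaconsh}). For (\ref{eq:cconsh}), summing (\ref{eq:CHha}) over $k$ annihilates the Laplace--Beltrami term because $\sum_k\nabs\chi^h_k = \nabs 1 = \vec 0$, while $\vartheta\,\ddt\langle\phaseC^h,1\rangle^h_{\Gamma^h(t)} = \vartheta\,\ddt\langle\phaseC^h,1\rangle_{\Gamma^h(t)}$, since mass-lumped and exact pairings of a piecewise linear function against $1$ agree on a polyhedral surface. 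The principal obstacle throughout is the disappearance of the two spurious $\nabs\,.\,\vec{\mathcal{V}}^h$ terms in the energy calculation: the discrete incompressibility (\ref{eq:sd3c}) is stated in exact-inner-product form, and the proof hinges on the identity that mass-lumped pairings against a piecewise constant equal exact pairings against the $\pi^h$-interpolant of the remaining factor.
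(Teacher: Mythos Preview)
Your proposal is correct and follows essentially the same approach as the paper's proof: test (\ref{eq:sd3a})--(\ref{eq:sd3c}) with the solution, convert the surface inertia via (\ref{eq:dtsigma}) and eliminate the resulting $\nabs\,.\,\vec{\mathcal{V}}^h$ term through (\ref{eq:sd3c}) with $\eta=\pi^h[|\vec U^h|^2]$, invoke (\ref{eq:sd3d}) and Lemma~\ref{lem:sd3stab} for the forcing, and handle the chemical potential term via (\ref{eq:CHha}) and the same mass-lumping/interpolation identity. The only organisational difference is that the paper kills the spurious term $\langle\phaseC^h\,\chi^h_k,\nabs\,.\,\vec{\mathcal{V}}^h\rangle^h_{\Gamma^h(t)}$ for each $k$ individually (with $\eta=\pi^h[\phaseC^h\,\chi^h_k]$) before multiplying by $\Chempot^h_k$ and summing, whereas you first sum and then kill the combined term $\langle\phaseC^h\,\Chempot^h,\nabs\,.\,\vec{\mathcal{V}}^h\rangle^h_{\Gamma^h(t)}$; both are valid.
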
 
\begin{proof}
Choosing
$\vec \xi = \vec U^h$ in (\ref{eq:sd3a}), 
recall that $\vec g = \vec 0$, $\varphi = P^h$ in (\ref{eq:sd3b})
and $\eta = P^h_\Gamma$ in (\ref{eq:sd3c}) yields that
\begin{align} \label{eq:lemGD}
& \tfrac12\,\ddt \,\|[\rho^h]^\frac12\,\vec U^h \|_0^2 +
2\,\| [\mu^h]^\frac12\,\mat D(\vec U^h)\|_0^2 
+ \rho_\Gamma \left\langle \matpartxh\,\vec\pi^h\,\vec U^h, \vec U^h
\right\rangle_{\Gamma^h(t)}^h
+ \tfrac12\,\rho_+\left\langle \vec U^h \,.\,\unitn , |\vec U^h|^2 
 \right\rangle_{\partial_2\Omega}
\nonumber \\ & \hspace{1cm}
+ 2\,\mu_\Gamma \left\langle \mat D_s^h (\vec\pi^h\,\vec U^h) , 
\mat D_s^h (\vec\pi^h\, \vec U^h ) \right\rangle_{\Gamma^h(t)}
= (\rho^h\,\vec f^h, \vec U^h)
+ \left\langle\vec F_\Gamma^h,\vec U^h \right\rangle_{\Gamma^h(t)}^h.
\end{align}
Moreover, we note that (\ref{eq:dtsigma}), (\ref{eq:sd3d}) and 
(\ref{eq:sd3c}) with $\eta = 
\pi^h\,[|\vec U^h \! \mid_{\Gamma^h(t)}|^2]$ 
imply that
\begin{align} \label{eq:dtUU}
\tfrac12\,\rho_\Gamma \,\ddt
\left\langle \vec U^h, \vec U^h \right\rangle_{\Gamma^h(t)}^h
& = \tfrac12\,\rho_\Gamma \left\langle \matpartxh\,\vec\pi^h\,[|\vec U^h|^2],1
\right\rangle_{\Gamma^h(t)}^h
+ \tfrac12\,\rho_\Gamma \left\langle \nabs\,.\,\vec{\mathcal{V}}^h, |\vec U^h|^2
\right\rangle_{\Gamma^h(t)}^h
\nonumber \\ &
 = \rho_\Gamma \left\langle \matpartxh\,\vec\pi^h\,\vec U^h, \vec U^h
\right\rangle_{\Gamma^h(t)}^h
+ \tfrac12\,\rho_\Gamma \left\langle \nabs\,.\,(\vec\pi^h\, \vec U^h), 
|\vec\pi^h\,\vec U^h|^2 \right\rangle_{\Gamma^h(t)}
\nonumber \\ &
= \rho_\Gamma \left\langle \matpartxh\,\vec\pi^h\,\vec U^h, \vec U^h
\right\rangle_{\Gamma^h(t)}^h \,,
\end{align}
where we have recalled $\vec U^h \in \utimespace^h_{\Gamma^h}(\vec g)$,
see (\ref{eq:VhG}). 
Choosing $\vec\chi = \vec F^h_\Gamma$ in
(\ref{eq:sd3d}), 
and combining with (\ref{eq:sd3stab}), yields that
\begin{align} \label{eq:pwfsurff}
 \left\langle \vec F_\Gamma^h,\vec U^h \right\rangle_{\Gamma^h(t)}^h & =
 \left\langle \vec F_\Gamma^h,\vec{\mathcal{V}}^h \right\rangle_{\Gamma^h(t)}^h
= -  \ddt E^h(\Gamma^h(t), \phaseC^h(t))
+ \left\langle \Chempot^h, \matpartxh\,\phaseC^h \right\rangle_{\Gamma^h(t)}^h .
\end{align}
Moreover, similarly to (\ref{eq:dtUU}), 
it follows from (\ref{eq:dtsigma}), (\ref{eq:mpbf}) and
(\ref{eq:sd3c},d), on recalling $\phaseC^h \in W_T(\GhT)$, that
\begin{align}
& \ddt \left\langle \phaseC^h, \chi^h_k \right\rangle_{\Gamma^h(t)}^h
= \left\langle \matpartxh\,\phaseC^h, \chi^h_k \right\rangle_{\Gamma^h(t)}^h
+ \left\langle \phaseC^h\, \chi^h_k, \nabs\,.\,\vec{\mathcal{V}}^h 
\right\rangle_{\Gamma^h(t)}^h \nonumber \\ & \qquad
= \left\langle \matpartxh\,\phaseC^h, \chi^h_k \right\rangle_{\Gamma^h(t)}^h
+ \left\langle \pi^h\,[\phaseC^h\, \chi^h_k], \nabs\,.\,\vec{\mathcal{V}}^h 
\right\rangle_{\Gamma^h(t)} \nonumber \\ & \qquad
= \left\langle \matpartxh\,\phaseC^h, \chi^h_k \right\rangle_{\Gamma^h(t)}^h
+ \left\langle \pi^h\,[\phaseC^h\, \chi^h_k], \nabs\,.\,(\vec\pi^h\,\vec U^h)
\right\rangle_{\Gamma^h(t)}
= \left\langle \matpartxh\,\phaseC^h, \chi^h_k \right\rangle_{\Gamma^h(t)}^h\,,
\label{eq:dtCh}
\end{align}
for $k = 1,\ldots,K_\Gamma$. Hence we obtain from (\ref{eq:CHha}) that
\begin{equation} \label{eq:CM}
- \left\langle \Chempot^h,
\matpartxh\,\phaseC^h \right\rangle_{\Gamma^h(t)}^h = 
\vartheta^{-1}
\left\langle \nabs\,\Chempot^h,
\nabs\,\Chempot^h \right\rangle_{\Gamma^h(t)} .
\end{equation}
The desired result (\ref{eq:thm}) now directly follows from combining
(\ref{eq:lemGD}), (\ref{eq:dtUU}), (\ref{eq:pwfsurff}) and (\ref{eq:CM}).

Similarly to (\ref{eq:areacons}), 
it immediately follows from (\ref{eq:DEeq5.28}) and (\ref{eq:mpbf}), 
on choosing $\eta = \chi^h_k$ in (\ref{eq:sd3c}), 
and on recalling from (\ref{eq:sd3d}) 
that $\vec{\mathcal{V}}^h = \vec\pi^h\,[\vec U^h\!\mid_{\Gamma^h(t)}]$, that
\begin{equation} \label{eq:supportproof}
\ddt\left\langle \chi^h_k, 1 \right\rangle_{\Gamma^h(t)} = 
\left\langle \chi^h_k, \nabs\,.\,\vec{\mathcal{V}}^h \right\rangle_{\Gamma^h(t)}
= 0 \,,
\end{equation}
which proves the desired result (\ref{eq:supportconsh}). Summing
(\ref{eq:supportconsh}) for all $k = 1,\ldots,K_\Gamma$ then yields
the desired result (\ref{eq:areaconsh}). 
Similarly, summing (\ref{eq:CHha}) for $k = 1,\ldots,K_\Gamma$ yields
the desired result (\ref{eq:cconsh}). 
\end{proof}

We observe that it does not appear possible to prove a discrete analogue of
(\ref{eq:conserved}) for the scheme 
(\ref{eq:CHha},b), (\ref{eq:sd3a}--d), (\ref{eq:sd3e}--d),
even if the pressure
space $\pspace^h(t)$ is enriched by the characteristic function of the inner
phase, $\charfcn{\Omega_-^h(t)}$. 
Following the approach introduced in \cite{nsns,nsnsade}, we enforce 
\begin{equation} \label{eq:Udotomega}
\left\langle \vec U^h, \vec \omega^h \right\rangle_{\Gamma^h(t)}^h = 
0\,,
\end{equation}
which will lead to volume conservation for the two phases on the discrete
level.
As (\ref{eq:Udotomega}) cannot be interpreted in terms of enriching
$\pspace^h(t)$, we enforce it separately with the help of a Lagrange
multiplier, which we denote by $\Psing^h$.
We are now in a position to propose the following adaptation of
(\ref{eq:CHha},b), \mbox{(\ref{eq:sd3a}--d)}, (\ref{eq:sd3e}--d)

Given $\Gamma^h(0)$, $\vec U^h(\cdot,0) \in \uspace^h(\vec g)$ 
and $\phaseC^h(\cdot, 0) \in \Whtz$,
find $(\Gamma^h(t))_{t\in(0,T]}$ such that
$\vec\id\!\mid_{\Gamma^h(\cdot)} \in \underline{V}_T(\GhT)$, with
$\vec{\mathcal{V}}^h = \matpartxh\,\vec\id\!\mid_{\Gamma^h(t)} \in\Vht$ 
for all $t\in(0,T]$, and
$\vec U^h \in \utimespace^h_{\Gamma^h}(\vec g)$,
$\phaseC^h \in W_T(\GhT)$, and,
for all $t\in(0,T]$,
$P^h(t) \in 
\widehat\pspace^h(t)$, $\Psing^h(t) \in \R$,
$P_\Gamma^h(T) \in \Wht$, $\mat W^h(t) \in \matVht$ and
$\vec\kappa^h(t),\,\vec Y^h(t),\,\vec F_\Gamma^h(t) \in \Vht$,
$\Chempot^h \in \Wht$ such that 
(\ref{eq:CHha},b) holds, as well as
\begin{subequations}
\begin{align}
&
\tfrac12 \left[ \ddt \left( \rho^h\,\vec U^h , \vec \xi \right)
+ \left( \rho^h\,\vec U^h_t , \vec \xi \right)
- (\rho^h\,\vec U^h, \vec \xi_t) 
+ \rho_+\left\langle \vec U^h \,.\,\unitn , \vec U^h\,.\,\vec\xi 
 \right\rangle_{\partial_2\Omega}
\right]
 \nonumber \\ & \qquad
+ 2\left(\mu^h\,\mat D(\vec U^h), \mat D(\vec \xi) \right)
+ \tfrac12\left(\rho^h, 
 [(\vec U^h\,.\,\nabla)\,\vec U^h]\,.\,\vec \xi
- [(\vec U^h\,.\,\nabla)\,\vec \xi]\,.\,\vec U^h \right)
- \left(P^h, \nabla\,.\,\vec \xi\right)
\nonumber \\ & \qquad
- \Psing^h \left\langle\vec\omega^h, \vec \xi \right\rangle_{\Gamma^h(t)}^h
+ \rho_\Gamma \left\langle \matpartxh\,\vec\pi^h\,\vec U^h, \vec \xi 
\right\rangle_{\Gamma^h(t)}^h
+ 2\,\mu_\Gamma \left\langle \mat D_s^h (\vec\pi^h\,\vec U^h) , 
\mat D_s^h (\vec\pi^h\, \vec \xi ) \right\rangle_{\Gamma^h(t)}
\nonumber \\ & \qquad
- \left\langle P_\Gamma^h , 
\nabs\,.\,(\vec\pi^h\,\vec \xi) \right\rangle_{\Gamma^h(t)}
= \left(\rho^h\,\vec f^h, \vec \xi\right)
+ \left\langle \vec F_\Gamma^h , \vec \xi \right\rangle_{\Gamma^h(t)}^h 
\qquad \forall\ \vec\xi \in H^1(0,T;\uspace^h(\vec 0)) \,, \label{eq:sd2a}\\
& \left(\nabla\,.\,\vec U^h, \varphi\right) = 0 
\qquad \forall\ \varphi \in \widehat\pspace^h(t)
\qquad\text{and}\qquad
\left\langle\vec U^h, \vec \omega^h \right\rangle_{\Gamma^h(t)}^h = 0
\label{eq:sd2b} 
\end{align}
\end{subequations}
and (\ref{eq:sd3c},d), (\ref{eq:sd3e}--d) hold. 
We now have the following result.

\begin{thm} \label{thm:stab2}
Let $\{(\Gamma^h,\vec U^h, P^h, \Psing^h, P^h_\Gamma, \vec\kappa^h,\vec Y^h, 
\vec F_\Gamma^h, \mat W^h, \mat Z^h, \phaseC^h, \Chempot^h)(t)
\}_{t\in[0,T]}$ 
be a solution to {\rm (\ref{eq:CHha},b)}, 
{\rm (\ref{eq:sd2a},b)}, {\rm (\ref{eq:sd3c},d)}, {\rm (\ref{eq:sd3e}--d)}.
In addition, we assume that 
$\vec\kappa^h \in \underline{V}_T(\GhT)$ and
$\mat W^h \in \mat V_T(\GhT)$.
Then {\rm (\ref{eq:thm})} holds if $\vec g = \vec 0$. In addition, 
{\rm (\ref{eq:supportconsh}--c)} and
\begin{equation}
\ddt\, \vol(\Omega_-^h(t)) = 0\, \label{eq:cons2}
\end{equation}
hold.
\end{thm}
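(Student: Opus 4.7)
The plan is to follow the proof of Theorem \ref{thm:stab} almost verbatim, with two small adjustments: observing that the newly introduced Lagrange-multiplier term in (\ref{eq:sd2a}) is annihilated by the new side-constraint from (\ref{eq:sd2b})$_2$ when we test with $\vec U^h$, and then using this same side-constraint to derive the volume-conservation identity (\ref{eq:cons2}).

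For (\ref{eq:thm}), I test (\ref{eq:sd2a}) with $\vec\xi = \vec U^h$ (which is admissible since $\vec g = \vec0$), (\ref{eq:sd2b})$_1$ with $\varphi = P^h$, and (\ref{eq:sd3c}) with $\eta = P_\Gamma^h$. Compared to the proof of Theorem \ref{thm:stab}, the only extra contribution is $-\Psing^h\,\langle\vec\omega^h,\vec U^h\rangle_{\Gamma^h(t)}^h$, which vanishes by (\ref{eq:sd2b})$_2$. From this point on the argument is identical to (\ref{eq:lemGD})--(\ref{eq:CM}): I apply (\ref{eq:dtsigma}) together with (\ref{eq:sd3d}) and (\ref{eq:sd3c}) (as in (\ref{eq:dtUU})) to express $\rho_\Gamma\,\ddt\langle\vec U^h,\vec U^h\rangle^h_{\Gamma^h(t)}$, then invoke Lemma \ref{lem:sd3stab} after testing (\ref{eq:sd3d}) with $\vec\chi = \vec F_\Gamma^h$, and finally use (\ref{eq:dtCh}) in combination with (\ref{eq:CHha}) to absorb the term $\langle\Chempot^h,\matpartxh\,\phaseC^h\rangle^h_{\Gamma^h(t)}$.

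The conservation properties (\ref{eq:supportconsh}), (\ref{eq:areaconsh}) and (\ref{eq:cconsh}) are inherited verbatim from Theorem \ref{thm:stab}, because they rely only on (\ref{eq:sd3c},d) and (\ref{eq:CHha}), none of which is altered in the new scheme. The new volume-conservation property (\ref{eq:cons2}) is the heart of the theorem: by the discrete analogue of \citet[Lemma~2.1]{DeckelnickDE05},
\begin{equation*}
\ddt\,\vol(\Omega_-^h(t)) = \left\langle \vec{\mathcal{V}}^h, \vec\nu^h \right\rangle_{\Gamma^h(t)}.
\end{equation*}
Since $\vec{\mathcal{V}}^h \in \Vht$ is piecewise linear while $\vec\nu^h$ is piecewise constant, the exact and the mass-lumped inner products coincide, and (\ref{eq:NIh}) therefore gives $\langle\vec{\mathcal{V}}^h,\vec\nu^h\rangle_{\Gamma^h(t)} = \langle\vec{\mathcal{V}}^h,\vec\omega^h\rangle_{\Gamma^h(t)}^h$. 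Choosing $\vec\chi = \vec\omega^h \in \Vht$ in (\ref{eq:sd3d}) converts this further into $\langle\vec U^h,\vec\omega^h\rangle_{\Gamma^h(t)}^h$, which vanishes by the second equation in (\ref{eq:sd2b}).

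There is no serious obstacle: the only substantive design choice is the Lagrange multiplier $\Psing^h$, which is tailored precisely so that the constraint $\langle\vec U^h,\vec\omega^h\rangle_{\Gamma^h(t)}^h = 0$ plays two complementary roles at once, namely making the new term in (\ref{eq:sd2a}) drop out in the energy estimate while simultaneously producing the discrete incompressibility of the enclosed phase through (\ref{eq:NIh}) and (\ref{eq:sd3d}). The mild technical point to keep in mind is that (\ref{eq:NIh}) applies to piecewise linear test functions, which is why one is forced to observe that $\vec{\mathcal{V}}^h$, rather than $\vec u$, is the correct object through which to pass from the exact to the mass-lumped inner product.
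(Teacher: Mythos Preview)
Your proposal is correct and follows essentially the same argument as the paper: the energy identity and the properties (\ref{eq:supportconsh}--c) are deferred to the proof of Theorem~\ref{thm:stab} with the single observation that the new Lagrange-multiplier term vanishes by (\ref{eq:sd2b})$_2$, and (\ref{eq:cons2}) is obtained exactly as you describe, via the chain $\langle\vec{\mathcal V}^h,\vec\nu^h\rangle_{\Gamma^h(t)} = \langle\vec{\mathcal V}^h,\vec\nu^h\rangle_{\Gamma^h(t)}^h = \langle\vec{\mathcal V}^h,\vec\omega^h\rangle_{\Gamma^h(t)}^h = \langle\vec U^h,\vec\omega^h\rangle_{\Gamma^h(t)}^h = 0$ using \citet[Lemma~2.1]{DeckelnickDE05}, (\ref{eq:NIh}), (\ref{eq:sd3d}) and (\ref{eq:sd2b}).
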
 
\begin{proof}
The proofs for (\ref{eq:thm}) and {\rm (\ref{eq:supportconsh}--c)} 
are analogous to the proofs in Theorem~\ref{thm:stab}. In order to prove
(\ref{eq:cons2}) we choose $\vec\chi = \vec\omega^h \in \Vht$ in 
(\ref{eq:sd3d}) to yield that
\begin{equation*}
\frac{\rm d}{{\rm d}t} \vol(\Omega_-^h(t)) = 
\left\langle \vec{\mathcal{V}}^h , \vec\nu^h \right\rangle_{\Gamma^h(t)}
= \left\langle \vec{\mathcal{V}}^h , \vec\nu^h \right\rangle^h_{\Gamma^h(t)}
= \left\langle \vec{\mathcal{V}}^h , \vec\omega^h \right\rangle^h_{\Gamma^h(t)}
= \left\langle \vec U^h, \vec\omega^h \right\rangle_{\Gamma^h(t)}^h
=0\,,
\end{equation*}
where we have used \citet[Lemma~2.1]{DeckelnickDE05}, 
(\ref{eq:NIh}) and (\ref{eq:sd2b}). 
\end{proof}

\setcounter{equation}{0}
\section{Fully discrete finite element approximation} \label{sec:5}

We consider the partitioning $t_m = m\,\tau$, $m = 0,\ldots, M$, 
of $[0,T]$ into uniform time steps $\tau = T / M$.
The time discrete spatial discretizations then directly follow from the finite
element spaces introduced in \S\ref{sec:4}, where in order to allow for
adaptivity in space we consider bulk finite element spaces that change in time.
For all $m\ge 0$, let $\mathcal{T}^m$ 
be a regular partitioning of $\Omega$ into disjoint open simplices
$\sigmaO^m_j$, $j = 1 ,\ldots, J^m_\Omega$. 
Associated with ${\mathcal{T}}^m$ are the finite element spaces
$S^m_k(\Omega)$ for $k\geq 0$.
We introduce also $\vec I^m_k:[C(\overline{\Omega})]^d\to [S^m_k(\Omega)]^d$, 
$k\geq 1$, the standard interpolation operators, and the standard projection
operator $I^m_0:L^1(\Omega)\to S^m_0(\Omega)$.
The parametric finite element spaces are given by
\begin{equation*} 
\Vh := \{\vec\chi \in [C(\Gamma^m)]^d:\vec\chi\!\mid_{\sigma^m_j}
\mbox{ is linear}\ \forall\ j=1,\ldots, J_\Gamma\} 
=: [\Wh]^d \,,
\end{equation*}
for $m=0 ,\ldots, M-1$, and similarly for $\matVh$. Here
$\Gamma^m=\bigcup_{j=1}^{J_\Gamma} 
\overline{\sigma^m_j}$,
where $\{\sigma^m_j\}_{j=1}^{J_\Gamma}$ is a family of mutually disjoint open 
$(d-1)$-simplices 
with vertices $\{\vec{q}^m_k\}_{k=1}^{K_\Gamma}$. 
We denote the standard basis of $\Wh$ by
$\{\chi^m_k(\cdot,t)\}_{k=1}^{K_\Gamma}$.
We also introduce 
$\pi^m: C(\Gamma^m)\to \Wh$, the standard interpolation operator
at the nodes $\{\vec{q}_k^m\}_{k=1}^{K_\Gamma}$,
and similarly $\vec\pi^m: [C(\Gamma^m)]^d\to \Vh$.
Throughout this paper, we will parameterize the new closed surface 
$\Gamma^{m+1}$ over $\Gamma^m$, with the help of a parameterization
$\vec X^{m+1} \in \Vh$, i.e.\ $\Gamma^{m+1} = \vec X^{m+1}(\Gamma^m)$.
Moreover, let
\begin{equation} \label{eq:WhpVI}
\WhVI := \{\chi \in \Wh : |\chi| \leq 1 \}\,.
\end{equation}

Given $\Gamma^m$, we 
let $\Omega^m_+$ denote the exterior of $\Gamma^m$ and let
$\Omega^m_-$ denote the interior of $\Gamma^m$, so that
$\Gamma^m = \partial \Omega^m_- = \overline{\Omega^m_-} \cap 
\overline{\Omega^m_+}$. 
In addition, we define the piecewise constant unit normal 
$\vec\nu^m$ to $\Gamma^m$ such that $\vec\nu^m$ points into
$\Omega^m_+$.
We then partition the elements of the bulk mesh 
$\mathcal{T}^m$ into interior, exterior and interfacial elements as before, and
we introduce  
$\rho^m,\,\mu^m \in S^m_0(\Omega)$, for $m\geq 0$, as 
\begin{equation*} 
\rho^m\!\mid_{o^m} = \begin{cases}
\rho_- & o^m \in \mathcal{T}^m_-\,, \\
\rho_+ & o^m \in \mathcal{T}^m_+\,, \\
\tfrac12\,(\rho_- + \rho_+) & o^m \in \mathcal{T}^m_{\Gamma^m}\,,
\end{cases}
\quad\text{and}\quad
\mu^m\!\mid_{o^m} = \begin{cases}
\mu_- & o^m \in \mathcal{T}^m_-\,, \\
\mu_+ & o^m \in \mathcal{T}^m_+\,, \\
\tfrac12\,(\mu_- + \mu_+) & o^m \in \mathcal{T}^m_{\Gamma^m}\,.
\end{cases}
\end{equation*}

We also introduce the $L^2$--inner 
product $\langle\cdot,\cdot\rangle_{\Gamma^m}$ over
the current polyhedral surface $\Gamma^m$, as well as the 
the mass lumped inner product
$\langle\cdot,\cdot\rangle_{\Gamma^m}^h$.
We introduce, similarly to (\ref{eq:Psh},b), 
\begin{equation*} 
\mat{\mathcal{P}}_{\Gamma^m} = \mat\Id - \vec \nu^m \otimes \vec \nu^m
\quad\text{on}\ \Gamma^m\,,
\end{equation*}
and
\begin{equation*} 
\mat D_s^m(\vec \eta) = \tfrac12\,\mat{\mathcal{P}}_{\Gamma^m}\,
(\nabs\,\vec\eta + (\nabs\,\vec \eta)^T)\,\mat{\mathcal{P}}_{\Gamma^m}
\quad\text{on}\ \Gamma^m\,,
\end{equation*}
where here $\nabs = \mat{\mathcal{P}}_{\Gamma^m} \,\nabla$ 
denotes the surface gradient on $\Gamma^m$.

We introduce the following pushforward operator for the discrete interfaces
$\Gamma^m$ and $\Gamma^{m-1}$, for $m=0,\ldots,M$. Here we set
$\Gamma^{-1}:=\Gamma^0$. Let $\vec\Pi_{m-1}^m : [C(\Gamma^{m-1})]^d \to
\Vh$ such that
\begin{equation} \label{eq:Pi}
(\vec\Pi_{m-1}^m\,\vec z)(\vec q^m_k) = \vec z(\vec q^{m-1}_k)\,,
\qquad k = 1,\ldots,K_\Gamma\,,\qquad
\forall\ \vec z \in [C(\Gamma^{m-1})]^d\,,
\end{equation}
for $m=1,\ldots,M$, and set $\vec\Pi_{-1}^0 := \vec \pi^0$.
Analogously to (\ref{eq:Pi}) we also introduce 
$\Pi_{m-1}^m : C(\Gamma^{m-1}) \to \Wh$ and
$\mat \Pi_{m-1}^m : [C(\Gamma^{m-1})]^{d\times d} \to \matVh$.
We also introduce the short hand notations 
\begin{align}
\alpha^m = \pi^m\,[\alpha(\phaseC^m)]\,,\quad
\spont^m = \pi^m\,[\spont(\phaseC^m)]\,,\quad
\alpha^{G,m} = \pi^m\,[\alpha^G(\phaseC^m)]\,,
\label{eq:alpham}
\end{align}
for $m = 0,\ldots,M-1$.
We note, similarly to (\ref{eq:NIh}), that 
\begin{equation*} 
\left\langle \vec z, w\,\vec\nu^m\right\rangle_{\Gamma^m}^h =
\left\langle \vec z, w\,\vec\omega^m\right\rangle_{\Gamma^m}^h 
\qquad \forall\ \vec z \in \Vh\,,\ w \in \Wh \,, 
\end{equation*}
where
$\vec\omega^m := \sum_{k=1}^{K_\Gamma} \chi^m_k\,\vec\omega^m_k \in \Vh$,
and where for $k= 1 ,\ldots, K_\Gamma$ we let
$\Theta_k^m:= \{j : \vec{q}^m_k \in \overline{\sigma^m_j}\}$
and set
$\Lambda_k^m := \cup_{j \in \Theta_k^m} \overline{\sigma^m_j}$
and
$\vec\omega^m_k := \frac{1}{\mathcal{H}^{d-1}(\Lambda^m_k)}
\sum_{j\in \Theta_k^m} \mathcal{H}^{d-1}(\sigma^m_j)
\;\vec{\nu}^m_j$.

For the approximation to the velocity and pressure on ${\mathcal{T}}^m$
we use the finite element spaces
$\uspace^m(\vec g)$ and $\pspace^m$, which are the direct
time discrete analogues of $\uspace^h(\vec g)$ and $\pspace^h(t_m)$,
as well as $\widehat\pspace^m \subset \widehat\pspace$.

Analogously to (\ref{eq:LBBG}), we recall the following 
discrete LBB$_\Gamma$ inf-sup assumption from \cite{nsnsade}. 
Let there exist a $C_0 \in \R_{>0}$, independent of $\mathcal{T}^m$ and
$\{\sigma^m_j\}_{j=1}^{J_\Gamma}$, such that
\begin{equation} \label{eq:LBBGamma0}
\inf_{(\varphi,\lambda,\eta) \in \widehat\pspace^m \times \R \times \Wh} 
\sup_{\vec \xi \in \uspace^m(\vec 0)}
\frac{( \varphi, \nabla \,.\,\vec \xi) 
+ \lambda \left\langle \vec\omega^m, \vec \xi \right\rangle_{\Gamma^m}^h
+ \left\langle \eta, \nabs \,.\,(\vec \pi^m\,\vec \xi\!\mid_{\Gamma^m}) 
\right\rangle_{\Gamma^m}}
{(\|\varphi\|_0 + |\lambda| + \| \eta \|_{0,\Gamma^m})
\,(\|\vec \xi\|_1 + 
 \| \mat{\mathcal{P}}_{\Gamma^m}\,(\vec\pi^m\,\vec\xi\!\mid_{\Gamma^m}) 
 \|_{1,\Gamma^m,h})} \geq C_0\,,
\end{equation}
where 
$\| \eta \|_{0,\Gamma^m}^2 :=
\left\langle \eta,\eta \right\rangle_{\Gamma^m}$
and $\| \vec\eta \|_{1,\Gamma^m,h}^2 :=
\left\langle \vec\eta,\vec\eta \right\rangle_{\Gamma^m}
+ \sum_{j = 1}^{J_\Gamma} \int_{\sigma_j^m} |\nabs\,\vec\eta|^2 \dH{d-1}$.
See \citet[(5.2)]{nsnsade} for more details.

Our proposed fully discrete approximation is given as follows.
Let $\Gamma^0$, an approximation to $\Gamma(0)$, as well as
$\vec\kappa^0 \in \Vhz$, $\phaseC^0 \in \Whz$ and 
$\vec U^0\in \uspace^0(\vec g)$ be given.
For $m=0,\ldots, M-1$, find $\vec U^{m+1} \in \uspace^m(\vec g)$, 
$P^{m+1} \in \widehat\pspace^m$, $\Psing^{m+1} \in \R$, 
$P_\Gamma^{m+1} \in \Wh$,
$\vec X^{m+1}\in\Vh$,
$\vec\kappa^{m+1}\in\Vh$, $\mat W^{m+1} \in \matVh$ and
$\vec Y^{m+1},\,\vec F_\Gamma^{m+1} \in \Vh$ such that
\begin{subequations}
\begin{align}
&
\tfrac12 \left( \frac{\rho^m\,\vec U^{m+1} - (I^m_0\,\rho^{m-1})
\,\vec I^m_2\,\vec U^m}{\tau}
+(I^m_0\,\rho^{m-1}) \,\frac{\vec U^{m+1}- \vec I^m_2\,\vec{U}^m}{\tau}, \vec \xi 
\right)
 \nonumber \\ & \qquad
+ 2\left(\mu^m\,\mat D(\vec U^{m+1}), \mat D(\vec \xi) \right)
+ \tfrac12\left(\rho^m, 
 [(\vec I^m_2\,\vec U^m\,.\,\nabla)\,\vec U^{m+1}]\,.\,\vec \xi
- [(\vec I^m_2\,\vec U^m\,.\,\nabla)\,\vec \xi]\,.\,\vec U^{m+1} \right)
\nonumber \\ & \qquad
- \left(P^{m+1}, \nabla\,.\,\vec \xi\right)
- \Psing^{m+1} \left\langle\vec\omega^m, \vec \xi \right\rangle_{\Gamma^m}^h
+ \rho_\Gamma \left\langle \frac{\vec U^{m+1} - 
 \vec\Pi_{m-1}^m\,(\vec I^m_2\,\vec U^m)\!\mid_{\Gamma^{m-1}}}{\tau}, 
\vec \xi \right\rangle_{\Gamma^m}^h \nonumber \\ & \qquad
+ 2\,\mu_\Gamma \left\langle \mat D_s^m (\vec\pi^m\,\vec U^{m+1}) , 
\mat D_s^m ( \vec\pi^m\,\vec \xi ) \right\rangle_{\Gamma^m}
- \left\langle P_\Gamma^{m+1} , 
\nabs\,.\,(\vec\pi^m\,\vec \xi) \right\rangle_{\Gamma^m}
\nonumber \\ & \qquad\qquad
= \left(\rho^m\,\vec f^{m+1}, \vec \xi\right)
+ \left\langle \vec F_\Gamma^{m+1}, \vec \xi \right\rangle_{\Gamma^m}^h 
- \tfrac12\,\rho_+\left\langle \vec U^m \,.\,\unitn , \vec U^m\,.\,\vec\xi 
 \right\rangle_{\partial_2\Omega}
\qquad \forall\ \vec\xi \in \uspace^m(\vec 0) \,, \label{eq:GDa}\\
& \left(\nabla\,.\,\vec U^{m+1}, \varphi\right) = 0 
\qquad \forall\ \varphi \in \widehat\pspace^m
\qquad\text{and}\qquad
\left\langle\vec U^{m+1}, \vec \omega^m \right\rangle_{\Gamma^m}^h = 0\,,
\label{eq:GDb} \\
& \left\langle \nabs\,.\,(\vec\pi^m\,\vec U^{m+1}), \eta 
\right\rangle_{\Gamma^m}  = 0 
\qquad \forall\ \eta \in \Wh\,,
\label{eq:GDc} \\
& \left\langle \frac{\vec X^{m+1} - \vec\id}{\tau} ,
\vec\chi \right\rangle_{\Gamma^m}^h
= \left\langle \vec U^{m+1}, \vec\chi \right\rangle_{\Gamma^m}^h
 \qquad\forall\ \vec\chi \in \Vh\,,
\label{eq:GDd}
\end{align}
\end{subequations}
\begin{subequations}
\begin{align}
& \left\langle \vec\kappa^{m+1} , \vec\eta \right\rangle_{\Gamma^m}^h
+ \left\langle \nabs\,\vec X^{m+1}, \nabs\,\vec \eta \right\rangle_{\Gamma^m}
 = 0  \qquad\forall\ \vec\eta \in \Vh\,,\label{eq:GDe} \\
& \left\langle \mat W^{m+1} , \mat\zeta \right\rangle_{\Gamma^m}^h
+\tfrac12 \left\langle \vec\nu^m, 
[\mat\zeta + \mat\zeta^T] \,\vec\kappa^{m+1} 
+ \nabs\,.\,[\mat\zeta + \mat\zeta^T] \right\rangle_{\Gamma^m}^h =0 
\qquad \forall\ \mat\zeta \in \matVh \,, \label{eq:GDee} \\
& \left\langle \vec Y^{m+1} , \vec\xi \right\rangle_{\Gamma^m}^h
- \left\langle \alpha^m\,(\vec\kappa^{m+1} - \spont^m\,\vec\nu^m)
, \vec\xi \right\rangle_{\Gamma^m}^h
\nonumber \\ & \hspace{2cm}
- \left\langle \alpha^{G,m}\,(\vec\Pi_{m-1}^m\,\vec\kappa^m + 
\mat\Pi_{m-1}^m\,\mat W^m\,\vec\nu^m), \vec\xi \right\rangle_{\Gamma^m}^h
 = 0  
\qquad\forall\ \vec\xi \in \Vh\,,\label{eq:GDf} \\
& \left\langle \vec F_\Gamma^{m+1}, \vec\chi \right\rangle_{\Gamma^m}^h =
\left\langle \nabs\,\vec Y^{m+1} , \nabs\,\vec\chi \right\rangle_{\Gamma^m}
+ \left\langle \nabs\,.\,(\vec\Pi_{m-1}^m\,\vec Y^m), \nabs\,.\,\vec\chi 
\right\rangle_{\Gamma^m}
 \nonumber \\ & \quad
-\tfrac12 \left\langle 
\alpha^m\,|\vec\Pi_{m-1}^m\,\vec\kappa^m - \spont^m\,\vec\nu^m|^2
- 2\,\vec\Pi_{m-1}^m\,\vec Y^m\,.\,\vec\Pi_{m-1}^m\,\vec\kappa^m,
\nabs\,.\,\vec\chi \right\rangle_{\Gamma^m}^h
 \nonumber \\ & \quad
-2 \left\langle [\nabs\,(\vec\Pi_{m-1}^m\,\vec Y^m)]^T, 
 \mat D_s^m(\vec\chi)\,(\nabs\,\vec\id)^T \right\rangle_{\Gamma^m}
- \left\langle \alpha^m\,\spont^m\,\vec\Pi_{m-1}^m\,\vec\kappa^m , 
[\nabs\,\vec\chi]^T\,\vec\nu^m \right\rangle_{\Gamma^m}^h
 \nonumber \\ & \quad
- \beta 
\left\langle 
b_{CH}(\phaseC^m), \nabs\,.\,\vec\chi \right\rangle_{\Gamma^m}^h
+ \beta\,\gamma \left\langle (\nabs\,\phaseC^m)
\otimes(\nabs\,\phaseC^m), \nabs\,\vec\chi \right\rangle_{\Gamma^m} 
\nonumber \\ & \quad 
- \tfrac12 \left\langle \alpha^{G,m} \,( |\vec\Pi_{m-1}^m\,\vec\kappa^m|^2 + 
|\mat\Pi_{m-1}^m\,\mat W^m|^2 ),
\nabs\,.\,\vec\chi \right\rangle_{\Gamma^m}^h
\nonumber \\ & \quad 
+ \left\langle \vec\nu^m\,.\,(\mat Z^m\,\vec\Pi_{m-1}^m\,\vec\kappa^m 
+ \nabs\,.\,\mat Z^m),\nabs\,.\,\vec\chi \right\rangle_{\Gamma^m}^h
\nonumber \\ & \quad 
+ \sum_{i=1}^d 
\left\langle \nu^m_i\,\nabs\,\vec Z^m_i,\nabs\,\vec\chi
- 2\,\mat D^m_s(\vec\chi) \right\rangle_{\Gamma^m} 
- \left\langle \mat Z^m\,\vec\kappa^m + \nabs\,.\,\mat Z^m, 
[\nabs\,\vec\chi]^T\,\vec\nu^m \right\rangle_{\Gamma^m}^h 
\nonumber \\ & \hspace{11cm}
\quad \forall\ \vec\chi \in \Vh\,, \label{eq:GDg} 
\end{align}
\end{subequations}
and set $\Gamma^{m+1} = \vec X^{m+1}(\Gamma^m)$.
Here we have defined $\vec f^{m+1} := \vec I^m_2\,\vec f(\cdot,t_{m+1})$,
\linebreak
$\mat Z^m = \mat\pi^m[-\alpha^G(\phaseC^m)\,\mat\Pi_{m-1}^m\,\mat W^m]$ and
$\vec Z^m_i = \mat Z^m\,\vec\ek_i$, $i=1\to d$.
Having computed $\Gamma^{m+1}$,
find $\phaseC^{m+1} \in \WhpVI$ and $\Chempot^{m+1} \in \Whp$ such that
\begin{subequations}
\begin{align}
& \frac\vartheta{\tau}
\left\langle \phaseC^{m+1}, \chi^{m+1}_k \right\rangle_{\Gamma^{m+1}}^h
+ \left\langle \nabs\, \Chempot^{m+1}, \nabs\, \chi^{m+1}_k
\right\rangle_{\Gamma^{m+1}}
= \frac\vartheta{\tau}
\left\langle \phaseC^m, \chi^m_k \right\rangle_{\Gamma^m}^h 
\quad\forall\ k \in \{1,\ldots,K_\Gamma\}\,.
\label{eq:FDCHa} \\
& \beta\,\gamma \left\langle \nabs\, \phaseC^{m+1}, \nabs\, [\chi - \phaseC^{m+1}]
\right\rangle_{\Gamma^{m+1}}
 \geq \left\langle \Chempot^{m+1} + \beta\,\gamma^{-1}\,
\Pi_m^{m+1}\,\phaseC^m,\chi - \phaseC^{m+1}
\right\rangle_{\Gamma^{m+1}}^h
\nonumber \\ & \
- \tfrac12 \left\langle
\alpha'(\Pi_m^{m+1}\,\phaseC^m)\,|\vec\Pi_m^{m+1}\,\vec\kappa^{m+1} 
- \spont(\Pi_m^{m+1}\,\phaseC^m)\,\vec\nu^{m+1}|^2 
,\chi - \phaseC^{m+1}\right\rangle_{\Gamma^{m+1}}^h
\nonumber \\ & \
+\left\langle
\spont'(\Pi_m^{m+1}\,\phaseC^m)\,\alpha(\Pi_m^{m+1}\,\phaseC^m)\,(
\vec\Pi_m^{m+1}\,\vec\kappa^{m+1} 
- \spont(\Pi_m^{m+1}\,\phaseC^m)\,\vec\nu^{m+1})\,.\,
\vec\nu^{m+1},\chi - \phaseC^{m+1} \right\rangle_{\Gamma^{m+1}}^h
\nonumber \\ & \
- \tfrac12 \left\langle (\alpha^G)'(\Pi_m^{m+1}\,\phaseC^m) 
\,( |\vec\Pi_m^{m+1}\vec\kappa^{m+1}|^2 - |\mat\Pi_m^{m+1}\,\mat W^{m+1}|^2 ),
\chi - \phaseC^{m+1} \right\rangle_{\Gamma^{m+1}}^h 
\nonumber \\ & \hspace{10cm}
\quad\forall\ \chi \in \WhpVI\,. \label{eq:FDCHb} 
\end{align}
\end{subequations}
Here we note that (\ref{eq:FDCHa},b) is a fully discrete approximation of
(\ref{eq:CHha},b) for the obstacle potential (\ref{eq:obener}). 

In the absence of the LBB$_\Gamma$ condition 
(\ref{eq:LBBGamma0}) 
we need to consider the
reduced system (\ref{eq:GDa},d), (\ref{eq:GDe}--d), 
where $\uspace^m(\vec 0)$ in 
(\ref{eq:GDa}) is replaced by $\uspace^m_0(\vec 0)$. Here we define
\begin{align} \label{eq:uspace00}
\uspace^m_0(\vec a) := 
\left\{ \vec U \in \uspace^m(\vec a) : \right. & \left. 
(\nabla\,.\,\vec U, \varphi) = 0 \ \
\forall\ \varphi \in \widehat\pspace^m\,,
\left\langle \nabs\,.\,(\vec\pi^m\,\vec U), \eta 
\right\rangle_{\Gamma^m}  = 0 \ \ \forall\ \eta \in \Wh
\right. \nonumber \\ & \left. \text{ and } 
\left\langle\vec U, \vec \omega^m \right\rangle_{\Gamma^m}^h = 0
 \right\} \,,
\end{align}
for given data $\vec a \in [C(\overline\Omega)]^d$.

In order to prove the existence of a unique solution to (\ref{eq:GDa}--d),
(\ref{eq:GDe}--d) we make the following very mild well-posedness assumption.

\begin{itemize}
\item[$(\mathcal{A})$]
We assume for $m=0,\ldots, M-1$ that $\mathcal{H}^{d-1}(\sigma^m_j) > 0$ 
for all $j=1,\ldots, J_\Gamma$,
and that $\Gamma^m \subset \Omega$.
\end{itemize}

\begin{thm} \label{thm:GD}
Let the assumption $(\mathcal{A})$ hold.
If the LBB$_\Gamma$ condition {\rm (\ref{eq:LBBGamma0})} 
holds, then there exists a unique solution 
$(\vec U^{m+1}, P^{m+1}, \Psing^{m+1}, P_\Gamma^{m+1}, \vec X^{m+1}, 
\vec\kappa^{m+1}, \vec Y^{m+1}, \vec F_\Gamma^{m+1}, \mat W^{m+1})$ $
\in \uspace^m(\vec g)\times\widehat\pspace^m \times \R \times \Wh 
\times [\Vh]^4 \times \matVh$ 
to {\rm (\ref{eq:GDa}--d)}, {\rm (\ref{eq:GDe}--d)}. In all other cases,
on assuming that $\uspace^m_0(\vec g)$ is nonempty, 
there exists a unique solution 
$(\vec U^{m+1}, \vec X^{m+1},$ $\vec\kappa^{m+1}, \vec Y^{m+1},$ $ 
\vec F_\Gamma^{m+1}, \mat W^{m+1}) \in \uspace^m_0(\vec g) 
\times [\Vh]^4 \times \matVh$ to the
reduced system {\rm (\ref{eq:GDa},d)}, {\rm (\ref{eq:GDe}--d)} 
with $\uspace^m(\vec 0)$ replaced by $\uspace^m_0(\vec 0)$.
\end{thm}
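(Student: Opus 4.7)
Since every coefficient appearing with an index $m$ or lower (namely $\alpha^m$, $\spont^m$, $\alpha^{G,m}$, $\vec\nu^m$, $\rho^m$, $I^m_0\rho^{m-1}$, $\mu^m$, $\Gamma^m$, and the previous-step quantities $\vec U^m$, $\vec\Pi_{m-1}^m\,\vec\kappa^m$, $\mat\Pi_{m-1}^m\,\mat W^m$, $\phaseC^m$, $\mat Z^m$, $\vec Y^m$) is data, the coupled system (\ref{eq:GDa}--d), (\ref{eq:GDe}--d) is a \emph{square} linear system in the unknowns carrying the index $m+1$. Hence existence and uniqueness are equivalent, and it suffices to show that its homogeneous version has only the trivial solution. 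Write $(\vec U, P, \Psing, P_\Gamma, \vec X, \vec\kappa, \vec Y, \vec F_\Gamma, \mat W)$ for the difference of two hypothetical solutions. The homogeneous reductions of the geometric equations are then (\ref{eq:GDf}) $\Rightarrow\ \langle\vec Y - \alpha^m\,\vec\kappa, \vec\xi\rangle^h_{\Gamma^m}=0$; (\ref{eq:GDee}) $\Rightarrow\ \langle\mat W, \mat\zeta\rangle^h_{\Gamma^m} + \tfrac12\langle\vec\nu^m, [\mat\zeta+\mat\zeta^T]\vec\kappa\rangle^h_{\Gamma^m}=0$; and, crucially, (\ref{eq:GDg}) $\Rightarrow\ \langle\vec F_\Gamma, \vec\chi\rangle^h_{\Gamma^m}=\langle\nabs\vec Y, \nabs\vec\chi\rangle_{\Gamma^m}$, since every other term in (\ref{eq:GDg}) is manifestly data-only and cancels on subtraction.

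The plan then mirrors the stability computations of Lemma~\ref{lem:sd3stab} and Theorem~\ref{thm:stab2} at the fully discrete level. First I would test (\ref{eq:GDa}) with $\vec\xi=\vec U$, (\ref{eq:GDb}) with $\varphi=P$ and with the scalar $\Psing$, and (\ref{eq:GDc}) with $\eta=P_\Gamma$; all pressure contributions cancel, and the convective trilinear form vanishes by skew-symmetry (alternatively, because $\vec U^m$ is zero data). This yields
\begin{align*}
& \tfrac{1}{2\tau}\bigl((\rho^m+I^m_0\rho^{m-1})\vec U, \vec U\bigr)
+ 2(\mu^m\,\mat D(\vec U), \mat D(\vec U))
+ \tfrac{\rho_\Gamma}{\tau}\langle\vec U, \vec U\rangle^h_{\Gamma^m} \\
& \qquad + 2\mu_\Gamma\langle\mat D^m_s(\vec\pi^m\vec U), \mat D^m_s(\vec\pi^m\vec U)\rangle_{\Gamma^m}
= \langle\vec F_\Gamma, \vec U\rangle^h_{\Gamma^m}.
\end{align*}
Next, the chain of substitutions: taking $\vec\chi=\vec F_\Gamma$ in the homogeneous (\ref{eq:GDd}) turns the right-hand side into $\tau^{-1}\langle\vec F_\Gamma, \vec X\rangle^h_{\Gamma^m}$; the reduced (\ref{eq:GDg}) with $\vec\chi=\vec X$ converts it to $\tau^{-1}\langle\nabs\vec Y, \nabs\vec X\rangle_{\Gamma^m}$; (\ref{eq:GDe}) with $\vec\eta=\vec Y$ rewrites this as $-\tau^{-1}\langle\vec\kappa, \vec Y\rangle^h_{\Gamma^m}$; and the reduced (\ref{eq:GDf}) with $\vec\xi=\vec\kappa$ identifies it with $-\tau^{-1}\langle\alpha^m, |\vec\kappa|^2\rangle^h_{\Gamma^m}\le0$, using $\alpha>0$. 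The resulting identity is a sum of nonnegative terms equalling a nonpositive quantity, so every term vanishes. Using $\mu_\pm>0$, discrete Korn's inequality, and $\mathcal{H}^{d-1}(\partial_1\Omega)>0$ one concludes $\vec U=\vec 0$; positivity of $\alpha^m$ at all nodes forces $\vec\kappa=\vec 0$. Cascading, (\ref{eq:GDd}) then forces $\vec X=\vec 0$ nodally, and the reduced (\ref{eq:GDf}), (\ref{eq:GDee}), (\ref{eq:GDg}) successively yield $\vec Y=\vec 0$, $\mat W=\mat 0$, $\vec F_\Gamma=\vec 0$.

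With all other unknowns now vanishing, the homogeneous (\ref{eq:GDa}) reduces to
\[
(P, \nabla\,.\,\vec\xi) + \Psing\,\langle\vec\omega^m, \vec\xi\rangle^h_{\Gamma^m} + \langle P_\Gamma, \nabs\,.\,(\vec\pi^m\vec\xi)\rangle_{\Gamma^m} = 0 \qquad \forall\,\vec\xi\in\uspace^m(\vec 0),
\]
and the discrete LBB$_\Gamma$ condition (\ref{eq:LBBGamma0}) instantly delivers $P=0$, $\Psing=0$, $P_\Gamma=0$, completing the proof of uniqueness (and hence existence) of the full system. In the absence of (\ref{eq:LBBGamma0}), one works from the outset with the constrained space $\uspace^m_0(\vec 0)$, which removes the three pressure unknowns together with equations (\ref{eq:GDb}) and (\ref{eq:GDc}); every step of the energy argument above then goes through unchanged on the reduced system. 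The main technical obstacle is the bookkeeping needed to verify that in the homogeneous reduction of (\ref{eq:GDg}) every contribution other than $\langle\nabs\vec Y, \nabs\vec\chi\rangle_{\Gamma^m}$ depends purely on data (so cancels), and careful sign tracking through the four-step chain so that the right-hand side of the energy identity genuinely appears as $-\tau^{-1}\langle\alpha^m, |\vec\kappa|^2\rangle^h_{\Gamma^m}$ and can be absorbed.
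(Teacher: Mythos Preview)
Your proposal is correct and follows essentially the same route as the paper: reduce to the homogeneous linear system, test with $\vec\xi=\vec U$, $\varphi=P$, $\eta=P_\Gamma$, then chain through $\vec\chi=\vec F_\Gamma$ in (\ref{eq:GDd}), $\vec\chi=\vec X$ in the reduced (\ref{eq:GDg}), $\vec\eta=\vec Y$ in (\ref{eq:GDe}), $\vec\xi=\vec\kappa$ in the reduced (\ref{eq:GDf}), arriving at the sign-definite identity and cascading back; the LBB$_\Gamma$ condition then kills the pressures. One small slip: your parenthetical ``alternatively, because $\vec U^m$ is zero data'' is not right---$\vec I^m_2\,\vec U^m$ is a \emph{coefficient} in the bilinear form and survives in the homogeneous system; the convective term vanishes solely by the skew-symmetry you state first.
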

\begin{proof}
As the system (\ref{eq:GDa}--d), (\ref{eq:GDe}--d) 
is linear, existence follows from uniqueness.
In order to establish the latter, we consider the homogeneous 
system.
Find $(\vec U, P, \Psing, P_\Gamma, \vec X, \vec\kappa, \vec Y, \vec F_\Gamma,$
$\mat W) \in \uspace^m(\vec 0)\times\widehat\pspace^m \times \R \times \Wh
\times [\Vh]^4 \times \matVh$ such that
\begin{subequations}
\begin{align}
&
\tfrac1{2\,\tau} \left( (\rho^m+I^m_0\,\rho^{m-1})\,\vec U, \vec \xi \right)
+ 2\left(\mu^m\,\mat D(\vec U), \mat D(\vec \xi) \right)
- \left(P, \nabla\,.\,\vec \xi\right)
- \Psing \left\langle\vec\omega^m, \vec \xi \right\rangle_{\Gamma^m}^h
\nonumber \\ & \qquad
+ \tfrac12\left(\rho^m, [(\vec I^m_2\,\vec U^m\,.\,\nabla)\,\vec U]\,.\,\vec \xi
- [(\vec I^m_2\,\vec U^m\,.\,\nabla)\,\vec \xi]\,.\,\vec U \right)
\nonumber \\ & \quad
+ \tfrac1{\tau}\, \rho_\Gamma \left\langle \vec U , \vec\xi
 \right\rangle_{\Gamma^m}^h
+ 2\,\mu_\Gamma \left\langle \mat D_s^m (\vec\pi^m\,\vec U) , 
\mat D_s^m ( \vec\pi^m\,\vec \xi ) \right\rangle_{\Gamma^m}
\nonumber \\ & \quad
- \left\langle P_\Gamma, 
\nabs\,.\, (\vec\pi^m\,\vec \xi) \right\rangle_{\Gamma^m}
- \left\langle \vec F_\Gamma, \vec \xi \right\rangle_{\Gamma^m}^h 
= 0 
 \qquad \forall\ \vec\xi \in \uspace^m(\vec 0) \,, \label{eq:proofa}\\
& \left(\nabla\,.\,\vec U, \varphi\right)  = 0 
\qquad \forall\ \varphi \in \widehat\pspace^m
\qquad\text{and}\qquad
\left\langle\vec U, \vec \omega^m \right\rangle_{\Gamma^m}^h = 0\,,
\label{eq:proofb} \\
& \left\langle \nabs\,.\,(\vec\pi^m\,\vec U), \eta 
\right\rangle_{\Gamma^m}  = 0 
\qquad \forall\ \eta \in \Wh\,,
\label{eq:proofc} \\
& \tfrac1{\tau} \left\langle \vec X,\vec\chi \right\rangle_{\Gamma^m}^h
= \left\langle \vec U, \vec\chi \right\rangle_{\Gamma^m}^h
 \qquad\forall\ \vec\chi \in \Vh\,,
\label{eq:proofd} \\
& \left\langle \vec\kappa , \vec\eta \right\rangle_{\Gamma^m}^{h}
+ \left\langle \nabs\,\vec X, \nabs\,\vec \eta \right\rangle_{\Gamma^m}
 = 0  \qquad\forall\ \vec\eta \in \Vh \,,\label{eq:proofe}  \\
& \left\langle \mat W , \mat\zeta \right\rangle_{\Gamma^m}^h
+ \tfrac12 \left\langle \vec\nu^m , 
[\mat\zeta + \mat\zeta^T]\,\vec\kappa \right\rangle_{\Gamma^m}^h =0 
\qquad \forall\ \mat\zeta \in \matVh \,, \label{eq:proofee} \\
& \left\langle \vec Y , \vec\eta \right\rangle_{\Gamma^m}^{h}
- \left\langle \alpha^m\,\vec\kappa , \vec \eta \right\rangle_{\Gamma^m}^h
 = 0  \qquad\forall\ \vec\eta \in \Vh \,,\label{eq:prooff}  \\
& \left\langle \vec F_\Gamma, \vec\chi \right\rangle_{\Gamma^m}^h -
\left\langle \nabs\,\vec Y , \nabs\,\vec\chi \right\rangle_{\Gamma^m}
= 0 \qquad \forall\ \vec\chi \in \Vh\,. \label{eq:proofg} 
\end{align}
\end{subequations}
Choosing $\vec\xi=\vec U$ in (\ref{eq:proofa}),
$\varphi = P$ in (\ref{eq:proofb}), $\eta = P_\Gamma$ in (\ref{eq:proofc}), 
$\vec\chi = \vec F_\Gamma$ in (\ref{eq:proofd}), 
$\vec\chi = \vec X$ in (\ref{eq:proofg}), 
$\vec\eta=\vec Y$ in (\ref{eq:proofe}) and
$\vec\eta=\vec \kappa$ in (\ref{eq:prooff})
yields that
\begin{align}
& \tfrac12\left((\rho^m + I^m_0\,\rho^{m-1})\,\vec U, \vec U \right) + 
2\,\tau\left(\mu^m\,\mat D(\vec U), \mat D(\vec U) \right)
+ \rho_\Gamma \left\langle \vec U , \vec U \right\rangle_{\Gamma^m}^h
\nonumber \\ & \qquad\quad
+ 2\,\tau\,\mu_\Gamma \left\langle \mat D_s^m (\vec\pi^m\,\vec U) , 
\mat D_s^m ( \vec\pi^m\,\vec U) \right\rangle_{\Gamma^m}
\nonumber \\ &\quad
= \tau \left\langle \vec F_\Gamma, \vec U \right\rangle_{\Gamma^m}^h
= \left\langle \vec F_\Gamma, \vec X \right\rangle_{\Gamma^m}^h
= \left\langle \nabs\,\vec Y, \nabs\,\vec X \right\rangle_{\Gamma^m}
= - \left\langle \vec\kappa, \vec Y \right\rangle_{\Gamma^m}^h
\nonumber \\ &\quad
= - \left\langle \alpha^m\,\vec\kappa, \vec\kappa \right\rangle_{\Gamma^m}^h
. \label{eq:proof2GD}
\end{align}
It immediately follows from (\ref{eq:proof2GD}), 
Korn's inequality and $\alpha^m > 0$,
that $\vec U = \vec 0 \in \uspace^m(\vec 0)$ and $\vec \kappa = \vec 0$.
(For the application of Korn's inequality we recall that
$\mathcal{H}^{d-1}(\partial_1\Omega) > 0$.)
Hence (\ref{eq:proofd},f,g,h) yield that $\vec X = \vec 0$, 
$\mat W = \mat 0$, $\vec Y = \vec 0$ and
$\vec F_\Gamma = \vec0$, respectively.
Finally, if (\ref{eq:LBBGamma0}) holds then
(\ref{eq:proofa}) with $\vec U = \vec 0$ 
and $\vec F_\Gamma = \vec0$ implies
that $P = 0 \in \widehat\pspace^m$, $\Psing = 0$ 
and $P_\Gamma = 0 \in \Wh$. 
This shows existence and uniqueness of 
$(\vec U^{m+1}, P^{m+1}, \Psing^{m+1},
P_\Gamma^{m+1}, \vec X^{m+1}, \vec\kappa^{m+1},$ $\vec Y^{m+1},
\vec F_\Gamma^{m+1}, \mat W^{m+1}) 
\in \uspace^m(\vec g)\times\widehat\pspace^m \times\R\times \Wh \times [\Vh]^4
\times \matVh$ to {\rm (\ref{eq:GDa}--d)}, {\rm (\ref{eq:GDe}--d)}.
The proof for the reduced system is very similar. The homogeneous system to
consider is (\ref{eq:proofa},d--h) with $\uspace^m(\vec 0)$ replaced by
$\uspace^m_0(\vec 0)$. As before, we infer that (\ref{eq:proof2GD}) holds,
which yields that $\vec U = \vec 0 \in \uspace^m_0(\vec 0)$, 
$\vec \kappa = \vec 0$, and hence $\vec X = \vec F_\Gamma = \vec Y = \vec0$.

In order to prove the existence of a unique solution to (\ref{eq:FDCHa},b), 
we adapt the argument in \cite{BloweyE92} for the Cahn--Hilliard equation with
obstacle potential on a bounded fixed domain in $\R^d$.
We introduce the discrete inverse surface Laplacian 
$\mathcal{G}^{m+1} : \Whpint \to \Whpint$ defined by
\begin{equation} \label{eq:mathcalG}
\left\langle \nabs\,\mathcal{G}^{m+1}\, v, \nabs\,\xi
\right\rangle_{\Gamma^{m+1}} = 
\left\langle v, \xi \right\rangle^h_{\Gamma^{m+1}} 
\qquad\forall\ \xi \in \Whpint\,,
\end{equation}
where $\Whpint := 
\{ \xi \in \Whp : \langle \xi, 1 \rangle_{\Gamma^{m+1}} = 0\}$.
It immediately follows from 
$\langle \nabs\,v, \nabs\,v\rangle_{\Gamma^{m+1}} = 0 \Rightarrow
v = 0$ for all $v \in \Whpint$ that $\mathcal{G}^{m+1}$ is well-posed.
Next we rewrite (\ref{eq:FDCHa},b) as
\begin{subequations}
\begin{align}
& \frac\vartheta{\tau}
\left\langle \phaseC^{m+1}- \widehat \phaseC^m, 
\chi^{m+1}_k \right\rangle_{\Gamma^{m+1}}^h
+ \left\langle \nabs\, \Chempot^{m+1}, \nabs\, \chi^{m+1}_k
\right\rangle_{\Gamma^{m+1}}
= 0\quad\forall\ k \in \{1,\ldots,K_\Gamma\}\,.
\label{eq:proofCHa} \\
& \beta\,\gamma \left\langle \nabs\, \phaseC^{m+1}, \nabs\, [\chi - \phaseC^{m+1}]
\right\rangle_{\Gamma^{m+1}}
 \geq \left\langle \Chempot^{m+1} + g,\chi - \phaseC^{m+1}
\right\rangle_{\Gamma^{m+1}}^h
\quad\forall\ \chi \in \WhpVI\,, \label{eq:proofCHb} 
\end{align}
\end{subequations}
where $\widehat \phaseC^m \in \Whp$ is such that
$\langle \widehat \phaseC^m, 
\chi^{m+1}_k \rangle_{\Gamma^{m+1}}^h =
\langle \phaseC^m, 
\chi^m_k\rangle_{\Gamma^m}^h$ for $k\in\{1,\ldots,K_\Gamma\}$.
We note that
\begin{equation} \label{eq:Cint}
\left\langle \phaseC^{m+1}, 1 \right\rangle_{\Gamma^{m+1}}
= \left\langle \widehat \phaseC^m, 1 \right\rangle_{\Gamma^{m+1}}
= \left\langle \phaseC^m, 1 \right\rangle_{\Gamma^m}\,.
\end{equation}
It follows from (\ref{eq:Cint}), (\ref{eq:proofCHa}) and (\ref{eq:mathcalG}) 
that
\begin{equation} \label{eq:proofCP}
\Chempot^{m+1} = - \frac\vartheta\tau\,\mathcal{G}^{m+1}\,
(\phaseC^{m+1}- \widehat \phaseC^m) + \lambda^{m+1}\,,
\end{equation}
where $\lambda^{m+1} \in \R$ is a Lagrange multiplier associated with the
constraint (\ref{eq:Cint}). 
Hence $\phaseC^{m+1} \in \WhpVI$ is such that 
$\langle \phaseC^{m+1}, 1 \rangle_{\Gamma^{m+1}}
= \langle \phaseC^m, 1 \rangle_{\Gamma^m}$ and
\begin{align}
& \beta\,\gamma \left\langle \nabs\, \phaseC^{m+1}, \nabs\, [\chi - \phaseC^{m+1}]
\right\rangle_{\Gamma^{m+1}}
+ \frac\vartheta\tau
 \left\langle \mathcal{G}^{m+1}\,(\phaseC^{m+1}- \widehat \phaseC^m) 
-\lambda^{m+1} - g,\chi - \phaseC^{m+1} \right\rangle_{\Gamma^{m+1}}^h 
\nonumber \\ & \hspace{10cm}
\geq 0
\quad\forall\ \chi \in \WhpVI\,.
\label{eq:proofC}
\end{align}
Clearly, (\ref{eq:proofC}) is the Euler--Lagrange variational inequality 
for the strictly convex minimization problem
\begin{align}
\min_{\substack{\chi \in \WhpVI \\
\langle \chi, 1 \rangle_{\Gamma^{m+1}} = \langle \phaseC^m, 1 \rangle_{\Gamma^m}}}
& 
\left[
\frac{\beta\,\gamma}2 \left\langle \nabs\, \chi, \nabs\,\chi
\right\rangle_{\Gamma^{m+1}}
+ \frac{\vartheta}{2\,\tau} \left\langle 
\nabs\, \mathcal{G}^{m+1}\,(\chi - \widehat \phaseC^m) ,
\nabs\, \mathcal{G}^{m+1}\,(\chi - \widehat \phaseC^m)
\right\rangle_{\Gamma^{m+1}} \right. \nonumber \\ & \qquad\qquad \left.
- \frac\vartheta\tau
\left\langle g,\chi \right\rangle_{\Gamma^{m+1}}^h \right] .
\label{eq:proofmin}
\end{align}
Hence there exists a unique $\phaseC^{m+1} \in \WhpVI$ with
$\langle \phaseC^{m+1}, 1 \rangle_{\Gamma^{m+1}}
= \langle \phaseC^m, 1 \rangle_{\Gamma^m}$ 
and solving (\ref{eq:proofC}). 
Existence of the Lagrange multiplier $\lambda^{m+1}$ in (\ref{eq:proofCP}) 
then follows from a fixed point argument, see
\citet[p.\ 151]{BloweyE92}.
\end{proof}

\setcounter{equation}{0}
\section{Solution methods} \label{sec:6}
In this section we briefly describe possible solution methods for the linear
system \mbox{(\ref{eq:GDa}--d)}, (\ref{eq:GDe}--d), 
where we note that (\ref{eq:GDee}) decouples from the
remaining equations, and for the nonlinear system (\ref{eq:FDCHa},b).

In order to derive the linear system of equations for the coefficient vectors
of the finite element functions $(\vec U^{m+1},P^{m+1}, \Psing^{m+1}, 
P_\Gamma^{m+1}, \delta \vec X^{m+1}, \vec \kappa^{m+1}, \vec Y^{m+1},
\vec F^{m+1}_\Gamma)$ corresponding to (\ref{eq:GDa}--d), (\ref{eq:GDe},c,d),
where $\delta \vec X^{m+1} = \vec X^{m+1} - \vec\id\!\mid_{\Gamma^m}$,
we begin by introducing the following
matrices and vectors, where we closely follow our previous work in 
\cite{nsns}.
Let $i,\,j = 1 ,\ldots, K_\uspace^m$,
$n,q = 1 ,\ldots, K_\pspace^m$ and $k,l = 1 ,\ldots, K_\Gamma$. Then
\begin{align}
& [\vec B_\Omega]_{ij} := 
\left( \tfrac{\rho^m+I^m_0\,\rho^{m-1}}{2\,\tau}\, 
\phi_j^{\uspace^m} , \phi_i^{\uspace^m} 
\right)\,\mat\Id
+ 2\left(\left(\mu^m\,\mat D(\phi_j^{\uspace^m} \vec \ek_r), 
\mat D( \phi_i^{\uspace^m}\,\vec \ek_s) \right) \right)_{r,s=1}^d 
\nonumber \\ & \qquad\qquad
+ \tfrac12\left(\rho^m, [(\vec I^m_2\,\vec U^m\,.\,\nabla)\,\phi_j^{\uspace^m}
]\,\phi_i^{\uspace^m} - 
[(\vec I^m_2\,\vec U^m\,.\,\nabla)\,\phi_i^{\uspace^m}]
\,\phi_j^{\uspace^m} \right)\,\mat\Id \,,
\nonumber \\ & \qquad\qquad
+\frac{\rho_\Gamma}\tau
 \left\langle \varphi^{\uspace^m}_j, \varphi^{\uspace^m}_i
 \right\rangle_{\Gamma^m}^h\,\mat\Id
+ 2\mu_\Gamma \left(\left\langle \mat D^m_s (\pi^m\,\phi_j^{\uspace^m}\,
\vec \ek_r), \mat D^m_s(\pi^m\,\phi_i^{\uspace^m}\,\vec \ek_s) 
\right\rangle_{\Gamma^m} \right)_{r,s=1}^d \nonumber \\
& [\vec C_\Omega]_{iq} := - \left( \phi_q^{\pspace^m} ,
\left(\nabla\,.\,(\phi_i^{\uspace^m}\,\vec \ek_r)
\right) \right)_{r=1}^d,\qquad
[\Sbulk]_{il} :=  - \left(
\left\langle \chi^m_l, \nabs\,.\,(\pi^m\,\phi_i^{\uspace^m}\,\vec\ek_r)  
\right\rangle_{\Gamma^m} \right)_{r=1}^d \,, \nonumber \\
& \vec b_i = \left( \tfrac{I^m_0\,\rho^{m-1}}{\tau}\,\vec I^m_2\,\vec U^m + 
\rho^m\,\vec f^{m+1}, \phi_i^{\uspace^m}\right) 
+ \frac{\rho_\Gamma}\tau
 \left\langle \vec\Pi^m_{m-1}\,\vec U^m\!\mid_{\Gamma^{m-1}}, 
\varphi^{\uspace^m}_i  \right\rangle_{\Gamma^m}^h \nonumber \\ & \qquad
- \tfrac12\,\rho_+\left\langle (\vec U^m \,.\,\unitn)\,\vec U^m, 
 \phi_i^{\uspace^m} \right\rangle_{\partial_2\Omega}
\,;
\label{eq:mats}
\end{align}
where 
$\{\vec \ek_r\}_{r=1}^d$ denotes the standard basis in $\R^d$,
and where we have used the convention that the subscripts in the matrix
notations refer to the test and trial domains, respectively. 
A single subscript is used where the two domains are the same.
The entries of
$\vec D_\Omega$, for $i = 1 ,\ldots, K_\uspace^m$, are given by
$[\vec D_\Omega]_{i,1} := - \langle \phi_i^{\uspace^m}, 
\vec \omega^m \rangle_{\Gamma^m}^h$.

In order to provide a matrix-vector formulation for the full system 
(\ref{eq:GDa}--d), (\ref{eq:GDe},c,d), 
and in particular in view of (\ref{eq:GDf}), 
we recall from \citet[p.~64]{Dziuk08} that
\begin{align*} 
& 2\left\langle (\nabs\,\vec\xi)^T , \mat D_s^m(\vec\chi)\,
 (\nabs\,\vec\id)^T \right\rangle_{\Gamma^m}
\nonumber \\ & \
 = \sum_{i,j=1}^d \left\langle (\nabs)_j\,(\vec\xi)_i, (\nabs)_i\,(\vec\chi)_j 
 \right\rangle_{\Gamma^m}
- \sum_{i,j=1}^d \left\langle (\vec\nu^m)_i\,(\vec\nu^m)_j\, 
 \nabs\,(\vec\xi)_j, \nabs\,(\vec\chi)_i \right\rangle_{\Gamma^m}
\nonumber \\ & \hspace{2cm}
+ \left\langle \nabs\,\vec{\xi} , \nabs\,\vec\chi \right\rangle_{\Gamma^m}
\nonumber \\ & \
= \sum_{i,j=1}^d \left\langle (\nabs)_j\,(\vec\xi)_i, (\nabs)_i\,(\vec\chi)_j 
\right\rangle_{\Gamma^m}
+ \sum_{i,j=1}^d \left\langle (\delta_{ij} - (\vec\nu^m)_i\,(\vec\nu^m)_j)\, 
\nabs\,(\vec\xi)_j, \nabs\,(\vec\chi)_i \right\rangle_{\Gamma^m} .
\end{align*}
Moreover, we observe that
$\langle \nabs\,.\,\vec{\xi} , \nabs\,.\,\vec\chi \rangle_{\Gamma^m}
= \sum_{i,j=1}^d$ $\langle (\nabs)_j\,(\vec\xi)_j, (\nabs)_i\,(\vec\chi)_i 
\rangle_{\Gamma^m}$. Hence, in addition to (\ref{eq:mats}), we introduce the
following matrices and vectors, where $q = 1 ,\ldots, K_\uspace^m$,
and $k,l = 1 ,\ldots, K_\Gamma$
\begin{align*}
& [\vec{\mathcal{B}}_\Gamma]_{kl} := 
\left( \left\langle [\nabs]_j\,\chi^m_l, [\nabs]_i\,\chi^m_k 
\right\rangle_{\Gamma^m} \right)_{i,j=1}^d\,, \qquad
[\vec{\mathcal{R}}_\Gamma]_{kl} := \left\langle 
\nabs\,\chi_l^m\,.\,\nabs\,\chi_k^m , \mat\Id - \vec\nu^m \otimes \vec\nu^m
\right\rangle_{\Gamma^m} ,\nonumber \\
 &
[\Mbulk]_{ql} :=  
\left\langle \chi^m_l , \phi_q^{\uspace^m} \right\rangle_{\Gamma^m}\,\mat\Id 
\,, \qquad
[\vec{M}_\Gamma]_{kl} := 
\left\langle \chi^m_l, \chi^m_k \right\rangle_{\Gamma^m}^{h}
\,\mat\Id \,,  \nonumber \\ &
[\vec{M}_{\Gamma,\alpha}]_{kl} := 
\left\langle \alpha^m\,\chi^m_l, \chi^m_k \right\rangle_{\Gamma^m}^{h}
\,\mat\Id \,,  \qquad
[A_\Gamma]_{kl} := 
\left\langle \nabs\,\chi^m_l, \nabs\,\chi^m_k \right\rangle_{\Gamma^m}
\,, \qquad
[\vec{A}_\Gamma]_{kl} := [A_\Gamma]_{kl}\,\mat\Id \,, \nonumber \\ &
\vec c_k := - \left\langle \alpha^m\,\spont^m\,\vec\nu^m, 
\chi^m_k \right\rangle_{\Gamma^m}^{h}
+ \left\langle \alpha^{G,m}\,(\vec\Pi_{m-1}^m\,\vec\kappa^m 
+ \mat\Pi_{m-1}^m\,\mat W^m\,\vec\nu^m), 
\chi^m_k \right\rangle_{\Gamma^m}^{h}, \nonumber \\ &
[\vec d_\alpha]_k := \left\langle \alpha^m\,\spont^m, 
(\vec\Pi_{m-1}^m\,\vec\kappa^m\,.\, \nabs\,\chi_k^m)\, \vec\nu^m
\right\rangle_{\Gamma^m}^h , \nonumber \\ &
 [\vec d_\kappa]_{k} := \tfrac12\left\langle 
\alpha^m\,|\vec\Pi_{m-1}^m\,\vec\kappa^m - \spont^m\,\vec\nu^m|^2
- 2\,\vec\Pi_{m-1}^m\,\vec Y^m\,.\,\vec\Pi_{m-1}^m\,\vec\kappa^m 
, \nabs\,\chi_k^m \right\rangle_{\Gamma^m}^h \,,\nonumber \\ & 
[\vec d_\beta]_k := 
\beta \left\langle 
b_{CH}(\phaseC^m), \nabs\,\chi^m_k \right\rangle_{\Gamma^m}^h
- \beta\,\gamma \left( 
\left\langle (\nabs\,\phaseC^m)\otimes(\nabs\,\phaseC^m), 
\vec \ek_r \otimes \nabs\,\chi^m_k 
\right\rangle_{\Gamma^m} \right)_{r = 1}^d \nonumber \\ & \qquad = 
\beta \left\langle 
b_{CH}(\phaseC^m), \nabs\,\chi^m_k \right\rangle_{\Gamma^m}^h
- \beta\,\gamma \left\langle \nabs\,\phaseC^m\,.\,\nabs\,\chi^m_k, \nabs\,\phaseC^m 
\right\rangle_{\Gamma^m} , \nonumber \\ &
 [\vec d_G]_{k} := \tfrac12\left\langle 
\alpha^{G,m}\,(|\vec\Pi_{m-1}^m\,\vec\kappa^m|^2 
+ |\mat\Pi_{m-1}^m\,\mat W^m|^2), \nabs\,\chi_k^m \right\rangle_{\Gamma^m}^h 
, \nonumber \\ &
 [\vec d_Z]_{k} := 
\left\langle (\mat Z^m\,\vec\Pi_{m-1}^m\,\vec\kappa^m + \nabs\,.\,\mat Z^m)\,.\,
 \nabs\,\chi_k^m, \vec\nu^m \right\rangle_{\Gamma^m}^h
- \left\langle (\mat Z^m\,\vec\Pi_{m-1}^m\,\vec\kappa^m
 + \nabs\,.\,\mat Z^m)\,.\,\vec\nu^m,
\nabs\,\chi_k^m \right\rangle_{\Gamma^m}^h \nonumber \\ & \qquad\
- \sum_{i=1}^d \left(
\left\langle 
\nu^m_i\,\nabs\,\vec Z^m_i, \nu^m_r\,[\vec\nu^m \otimes \nabs\,\chi^m_k]
- \nabs\,\chi^m_k \otimes \vec\ek_r
 \right\rangle_{\Gamma^m} \right)_{r=1}^d . 
\end{align*}
Here we have made use of the facts that
\begin{align*}
[\vec{\mathcal{B}}_\Gamma]_{kl} & =
\left(
\left\langle \nabs\,.\,(\chi^m_l\,\vec\ek_j) , 
\nabs\,.\,(\chi^m_k\,\vec\ek_i) \right\rangle_{\Gamma^m}\right)_{i,j=1}^d
= \left(
\left\langle (\nabs\,\chi^m_l)\,.\,\vec\ek_j , 
(\nabs\,\chi^m_k)\,.\,\vec\ek_i \right\rangle_{\Gamma^m}\right)_{i,j=1}^d
\nonumber \\ & 
= \left( \left\langle [\nabs]_j\,\chi^m_l, [\nabs]_i\,\chi^m_k 
\right\rangle_{\Gamma^m} \right)_{i,j=1}^d 
\end{align*}
and that
\begin{align*}
& \left(
\left\langle 
\nu^m_i\,\nabs\,\vec Z^m_i, \vec\ek_r \otimes \nabs\,\chi^m_k
- \mat{\mathcal{P}}_{\Gamma^m}\,[\vec\ek_r \otimes \nabs\,\chi^m_k]
- [\nabs\,\chi^m_k \otimes \vec\ek_r]\, \mat{\mathcal{P}}_{\Gamma^m}
 \right\rangle_{\Gamma^m} \right)_{r=1}^d \nonumber \\ & \
= \left(
\left\langle 
\nu^m_i\,\nabs\,\vec Z^m_i, 
[\vec\nu^m\otimes\vec\nu^m]\,[\vec\ek_r \otimes \nabs\,\chi^m_k]
- \nabs\,\chi^m_k \otimes \vec\ek_r
+ [\nabs\,\chi^m_k \otimes \vec\ek_r]\, [\vec\nu^m\otimes\vec\nu^m]
 \right\rangle_{\Gamma^m} \right)_{r=1}^d \nonumber \\ & \
= \left(
\left\langle 
\nu^m_i\,\nabs\,\vec Z^m_i, \nu^m_r\,[\vec\nu^m \otimes \nabs\,\chi^m_k]
- \nabs\,\chi^m_k \otimes \vec\ek_r
+ \nu^m_r\,[\nabs\,\chi^m_k \otimes \vec\nu^m]
 \right\rangle_{\Gamma^m} \right)_{r=1}^d
\nonumber \\ & \
= \left(
\left\langle 
\nu^m_i\,\nabs\,\vec Z^m_i, \nu^m_r\,[\vec\nu^m \otimes \nabs\,\chi^m_k]
- \nabs\,\chi^m_k \otimes \vec\ek_r
 \right\rangle_{\Gamma^m} \right)_{r=1}^d
\end{align*}
for $i = 1,\ldots,d$, on noting that 
$\nabs\,\vec Z^m_i : [\nabs\,\chi^m_k \otimes \vec\nu^m] =
[(\nabs\,\vec Z^m_i)\,\vec\nu^m]\,.\,\nabs\,\chi^m_k = 
\vec 0\,.\,\nabs\,\chi^m_k = 0$.
Moreover, it clearly holds that $([\vec{\mathcal{B}}_\Gamma]_{kl})^T = 
[\vec{\mathcal{B}}_\Gamma]_{lk} =: [\vec{\mathcal{B}}^\star_\Gamma]_{kl}$.

Denoting the system matrix
\[
\begin{pmatrix}
 \vec B_\Omega & \vec C_\Omega & \vec D_\Omega & \Sbulk \\
 \vec C^T_\Omega & 0 & 0 & 0 \\
 \vec D^T_\Omega & 0 & 0 & 0 \\
 \SbulkT &  0 & 0 & 0 
\end{pmatrix} 
\]
as {\scriptsize$\begin{pmatrix}
\vec B_\Omega & \vec{\mathcal{C}} \\
\vec{\mathcal{C}}^T & 0 \end{pmatrix}$}, 
and letting $\widetilde P^{m+1} = (P^{m+1}, \Psing^{m+1},$ $P_\Gamma^{m+1})^T$, 
then the linear system (\ref{eq:GDa}--d), (\ref{eq:GDe},c,d) can be written as
\begin{equation}
\begin{pmatrix}
 \vec B_\Omega & \vec{\mathcal{C}} & 0 & 0 & 0 & -\Mbulk \\
 \vec {\mathcal{C}}^T & 0 & 0 & 0 & 0 & 0\\
 \MbulkT & 0 & 0 & \!\!\!-\frac1{\tau}\,\vec{M}_\Gamma\!\! & 0 & 0\\
0 & 0 & \vec{M}_\Gamma & \vec{A}_\Gamma & 0 & 0\\
0 & 0 & -\vec{M}_{\Gamma,\alpha} & 0 & \vec{M}_\Gamma & 0 \\
0 & 0 & 0 & 0 & -\vec A_\Gamma & \vec M_\Gamma 
\end{pmatrix} 
\begin{pmatrix} \vec U^{m+1} \\ \widetilde P^{m+1} \\ \vec\kappa^{m+1} \\ 
\delta\vec{X}^{m+1} \\ \vec Y^{m+1} \\ \vec F^{m+1}_\Gamma \end{pmatrix}
=
\begin{pmatrix} \vec b \\ 0 \\
0 \\ -\vec{A}_\Gamma\,\vec{X}^m \\ 
\vec c \\
 \vec{\mathcal{Z}}_\Gamma\,\vec Y^m - \vec d \end{pmatrix} ,
\label{eq:lin}
\end{equation}
where $\vec {\mathcal{Z}}_\Gamma := \vec{\mathcal{B}}_\Gamma - 
\vec{\mathcal{B}}_\Gamma^\star - \vec {\mathcal{R}}_\Gamma$
and
$\vec d =  \vec d_{\kappa} + \vec d_\alpha + \vec d_\beta + \vec d_G$.
For the solution of (\ref{eq:lin}) a Schur complement approach similar to
\cite{nsns} can be used. In particular, the Schur approach for eliminating 
$(\vec\kappa^{m+1},\delta \vec X^{m+1},\vec Y^{m+1},\vec F^{m+1}_\Gamma)$ from 
(\ref{eq:lin}) can be obtained as follows. Let 
\begin{equation*} 
\Theta_\Gamma:= \begin{pmatrix}
 0 & - \frac1{\tau}\,\vec{M}_\Gamma & 0 &  0\\
\vec{M}_\Gamma & \vec{A}_\Gamma & 0 & 0 \\
-\vec{M}_{\Gamma,\alpha} & 0 & \vec{M}_\Gamma & 0 \\
 0 & 0 & -\vec A_\Gamma & \vec M_\Gamma 
\end{pmatrix} \,.
\end{equation*}
Then (\ref{eq:lin}) can be reduced to
\begin{subequations}
\begin{align} \label{eq:schur}
&
\begin{pmatrix}
\vec B_\Omega + \alpha\,\vec T_\Omega
& \vec {\mathcal{C}} \\
\vec {\mathcal{C}}^T & 0 
\end{pmatrix}
\begin{pmatrix}
\vec U^{m+1} \\ \widetilde P^{m+1} 
\end{pmatrix}
= \begin{pmatrix}
\vec b + \alpha\,\vec g
 \\
0
\end{pmatrix}
\end{align}
and
\begin{equation} \label{eq:schurb}
\begin{pmatrix}
\vec\kappa^{m+1} \\ \delta\vec{X}^{m+1} \\ \vec Y^{m+1} \\ \vec F^{m+1}_\Gamma
\end{pmatrix}
 = \Theta_\Gamma^{-1}\,
\begin{pmatrix}
-\MbulkT\,\vec U^{m+1} \\ -\vec{A}_\Gamma\,\vec{X}^m \\ \vec c \\
\vec{\mathcal{Z}}_\Gamma\,\vec Y^m - \vec d
\end{pmatrix}
\,.
\end{equation}
\end{subequations}
In (\ref{eq:schur}) we have used the definitions
$$\vec T_\Omega = (0\ 0\ 0\ \Mbulk)\,\Theta_\Gamma^{-1}\, 
{\scriptsize \begin{pmatrix} \MbulkT \\ 0 \\ 0 \\ 0 \end{pmatrix}}
= \tau\,\Mbulk\,\vec M_\Gamma^{-1}\,\vec A_\Gamma\,\vec M_\Gamma^{-1}\,
\vec M_{\Gamma,\alpha}\,\vec M_\Gamma^{-1}\,
\vec A_\Gamma\,\vec M_\Gamma^{-1}\,\MbulkT$$
and 
\begin{align*}
\vec g & = (0\ 0\ 0\ \Mbulk)\,\Theta_\Gamma^{-1}\,
{\scriptsize \begin{pmatrix}
0 \\ - \vec{A}_\Gamma\,\vec{X}^m \\ \vec c \\
\vec{\mathcal{Z}}_\Gamma\,\vec Y^m - \vec d
\end{pmatrix}} .
\end{align*}
For the linear system (\ref{eq:schur})
well-known solution methods for finite element discretizations for the 
standard Navier--Stokes equations may be employed. We refer to 
\citet[\S5]{fluidfbp}, where we describe such solution methods in 
detail for a very similar situation.

The nonlinear system of algebraic equations arising from the discrete 
surface Cahn--Hilliard equation (\ref{eq:FDCHa},b) can be solved in the same
way that such variational inequalities for standard Cahn--Hilliard equations
are solved. In practice we employ the projection Gauss--Seidel method from
\cite{voids}, or the Uzawa-type iteration from \cite{vch}. 

\setcounter{equation}{0}
\section{Numerical results} \label{sec:7}

We implemented the scheme (\ref{eq:GDa}--d), (\ref{eq:GDe}--d), 
(\ref{eq:FDCHa},b) 
with the help of the finite element toolbox ALBERTA, see \cite{Alberta}. 
For the bulk mesh adaptation in our numerical computations 
we use the strategy from \cite{fluidfbp}, which results in a fine mesh
around $\Gamma^m$ and a coarse mesh further away from it. 

Given the initial triangulation $\Gamma^0$ and $\phaseC^0 \in \Whz$, 
with $\phaseC^0 \in [-1,1]$,
the initial data $\vec Y^0 \in \Vhz$, $\vec\kappa^0 \in \Vhz$ 
and $\mat W^0 \in \matVhz$ are always computed as 
\begin{equation*} 
 \left\langle \vec Y^{0} , \vec\eta \right\rangle_{\Gamma^0}^{h}
= \left\langle \alpha(\phaseC^0)\,(\vec\kappa^0 - \spont(\phaseC^0)\,\vec\nu^0)
- \alpha^G(\phaseC^0)\,(\vec\kappa^0 + \mat W^0\,\vec\nu^0), 
\vec\eta \right\rangle_{\Gamma^0}^h
\qquad\forall\ \vec\eta \in \Vhz\,,
\end{equation*}
where $\vec\kappa^0 \in \Vhz$ is the solution to
\begin{equation*}
 \left\langle \vec\kappa^{0} , \vec\eta \right\rangle_{\Gamma^0}^{h}
+ \left\langle \nabs\,\vec\id, \nabs\,\vec \eta \right\rangle_{\Gamma^0}
 = 0  \qquad\forall\ \vec\eta \in \Vhz\,,
\end{equation*}
and where $\mat W^0 \in \matVhz$ is the solution to
\[
 \left\langle \mat W^0 , \mat\zeta \right\rangle_{\Gamma^0}^h
+\tfrac12 \left\langle \vec\nu^0, 
[\mat\zeta + \mat\zeta^T] \,\vec\kappa^0
+ \nabs\,.\,[\mat\zeta + \mat\zeta^T] \right\rangle_{\Gamma^0}^h =0 
\qquad \forall\ \mat\zeta \in \matVhz \,.
\]

Throughout this section we set
\begin{subequations}
\begin{align} 
\alpha(s) & = \alpha_L(s) := \tfrac12\, (\alpha_+ + \alpha_-) 
+ \tfrac12\, (\alpha_+ - \alpha_-) \,s\,, \label{eq:alpha} \\
\spont(s) & = \tfrac12\, (\spont_+ + \spont_-) 
+ \tfrac12\, (\spont_+ - \spont_-) \,s\,, \label{eq:spont} \\
\alpha^G(s) & = \tfrac12\, (\alpha^G_+ + \alpha^G_-) 
+ \tfrac12\, (\alpha^G_+ - \alpha^G_-) \,s\,. \label{eq:alphaG}
\end{align}
\end{subequations}
We recall from the discussion around (\ref{eq:GB}) that it follows from
(\ref{eq:alphaG}), (\ref{eq:E}) and (\ref{eq:GB}) that 
only the difference
$(\alpha^G_+ - \alpha^G_-)$ plays a role in the evolutions with Gaussian
curvature. Moreover, for the choices (\ref{eq:alpha},c) the constraint
(\ref{eq:alphaGbound}) reduces to
\begin{equation} \label{eq:alphaGbound2}
\min\{\alpha_-,\alpha_+\} \geq \tfrac12\,|\alpha^G_+ - \alpha^G_-|\,.
\end{equation}
Unless otherwise stated, we use $\rho_\pm = 0$, $\mu_\pm = 1$, 
$\mu_\Gamma = 1$, $\rho_\Gamma = 0$, $\alpha_\pm = 1$, $\spont_\pm = 0$
and $\alpha^G_\pm = 0$.
Moreover, we normally use $\vartheta = \beta = 1$.

At times we will discuss the discrete energy of the numerical solutions. On
recalling Theorem~\ref{thm:stab} and (\ref{eq:alpham}), 
the discrete energy is defined by
$$
\mathcal{E}^h_{total} = 
\mathcal{E}_{kin}^h + \mathcal{E}^h_{\kappa} + \mathcal{E}^h_{CH}\,,
$$
where
\begin{align*}
\mathcal{E}_{kin}^h & 
= \tfrac12\, \|[\rho^m]^\frac12\,\vec U^{m+1} \|_0^2 
+ \tfrac12\, \rho_\Gamma \left\langle \vec U^{m+1}, \vec U^{m+1} 
\right\rangle_{\Gamma^m}^h, \\
\mathcal{E}_\kappa^h & 
= \tfrac12 \left\langle \alpha^m, |\vec\kappa^{m+1} - 
\spont^m\,\vec\nu^m|^2 \right\rangle_{\Gamma^m}^h
+ \tfrac12 \left\langle \alpha^{G,m}, |\vec\kappa^{m+1}|^2 - |\mat W^{m+1}|^2
\right\rangle_{\Gamma^m}^h, \nonumber \\ 
\mathcal{E}_{CH}^h & 
= \beta\,\left\langle b_{CH}(\phaseC^m), 1 \right\rangle_{\Gamma^m}^h, 
\end{align*}
represent the kinetic, curvature and Cahn--Hilliard parts of the 
discrete energy.

In plots where we show the concentration $\phaseC^m$ in grey scales, 
the colour scales linearly with $\phaseC^m$ 
ranging from -1 (white) to 1 (black).

\subsection{Numerical simulations in 2d}
We start with an initial shape in the form of a smooth letter ``C''.
The curve has length $2.823$ and we use $257$ elements on it.
For our choice of $\gamma = 0.02$ this 
yields on average about $6$ elements across the
interface, which asymptotically has thickness $\gamma\,\pi$. 
The time step size is $\tau=5\times10^{-4}$.
For the computational domain we choose $\Omega = (-1,1)^2$,
and we choose a random distribution for $\phaseC^0$ with mean value $-0.4$.
An experiment for $\spont_- = -\tfrac12$ and
$\spont_+ = -2$ is shown in Figures~\ref{fig:fig6}. We observe that due to the
choice of $\spont_\pm$, the phase $+1$ occupies the regions with smaller 
principal radius, while the phase $-1$ can be found where the membrane is
rather flat.
We show some more detail of the initial binodal decomposition in
Figure~\ref{fig:fig6_binodal}.
\begin{figure}
\center
\includegraphics[angle=-90,width=0.45\textwidth]{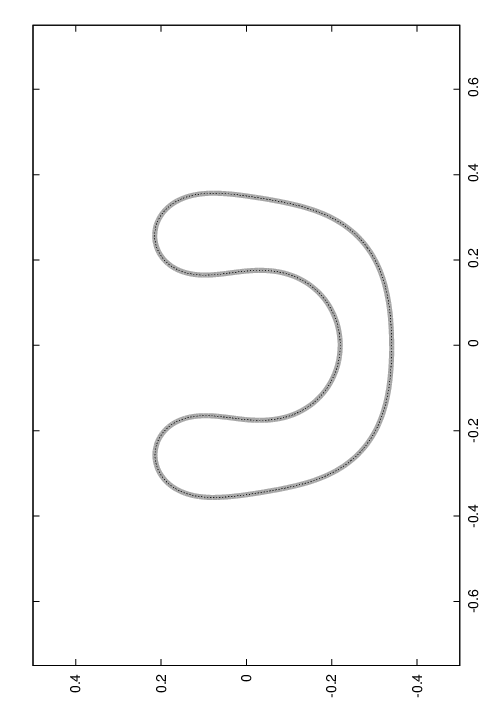} 
\includegraphics[angle=-90,width=0.45\textwidth]{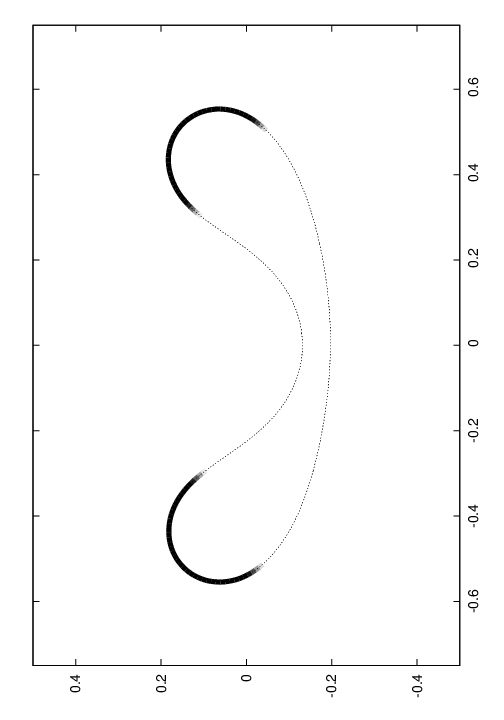} 
\includegraphics[angle=-90,width=0.45\textwidth]{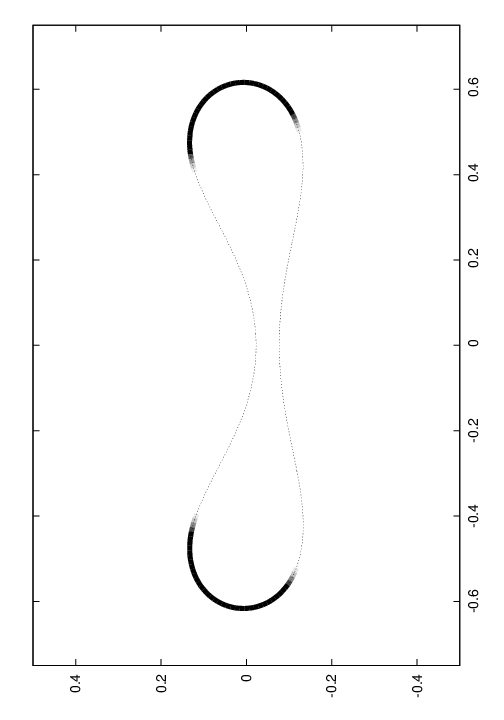} 
\includegraphics[angle=-90,width=0.45\textwidth]{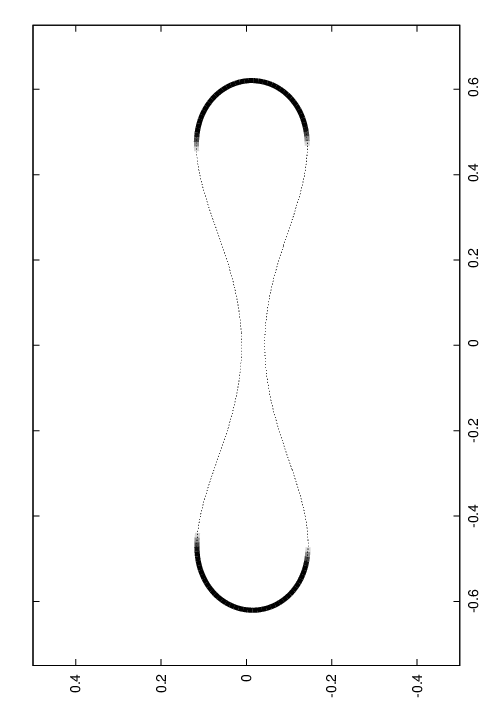} 
\includegraphics[angle=-90,width=0.45\textwidth]{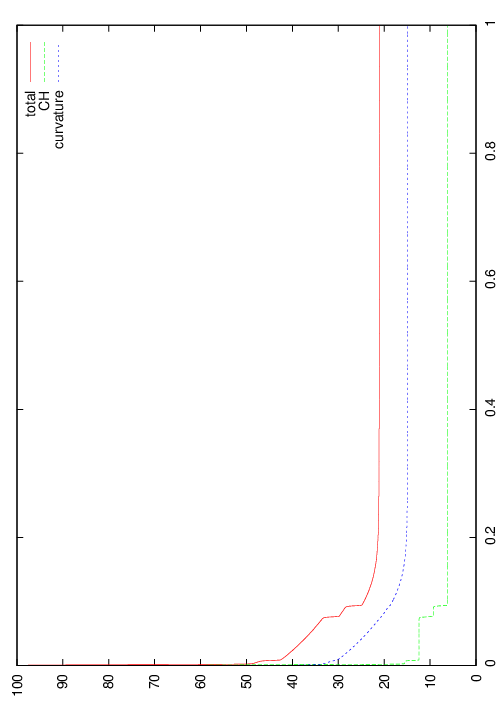} 
\caption{($\alpha_\pm = 1$, $\spont_- = -\tfrac12$, $\spont_+=-2$, $\beta = 1$)
Flow for a smooth letter ``C''.
We show $\phaseC^m$ on $\Gamma^m$ at times $t=0,\,0.1,\,0.2,\,1$.
Below a superimposed plot of the total discrete energy $\mathcal{E}^h_{total}$, 
the discrete Cahn--Hilliard energy, and the discrete curvature 
energy over $[0,1]$.
}
\label{fig:fig6}
\end{figure}%
\begin{figure}
\center
\center
\includegraphics[angle=-90,width=0.45\textwidth]{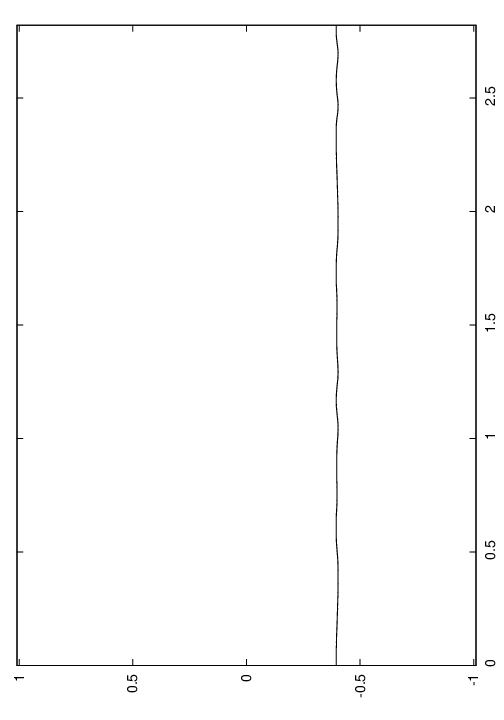} 
\includegraphics[angle=-90,width=0.45\textwidth]{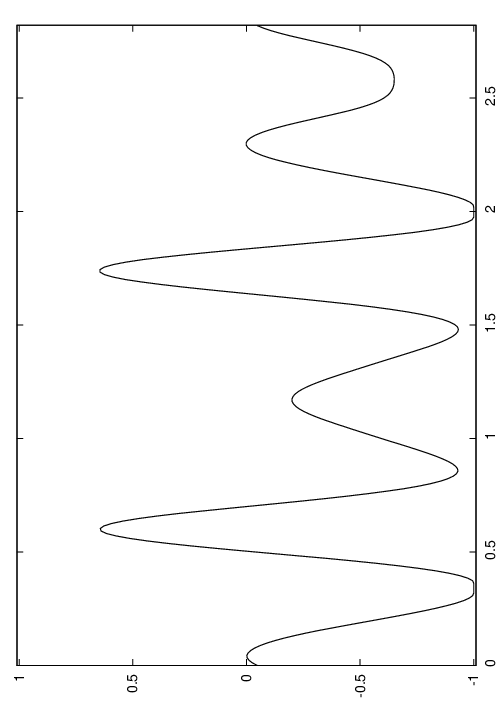} 
\includegraphics[angle=-90,width=0.45\textwidth]{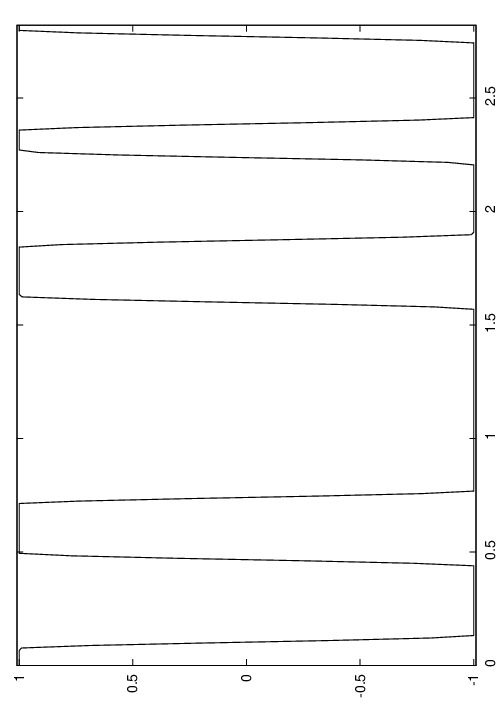} 
\includegraphics[angle=-90,width=0.45\textwidth]{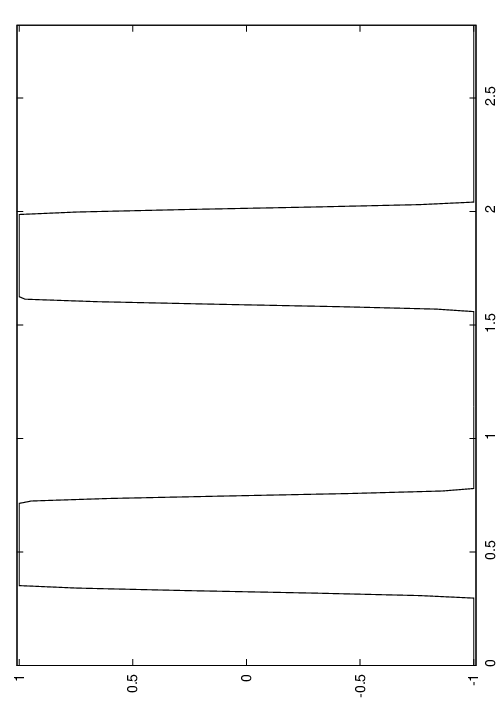}
\includegraphics[angle=-90,width=0.45\textwidth]{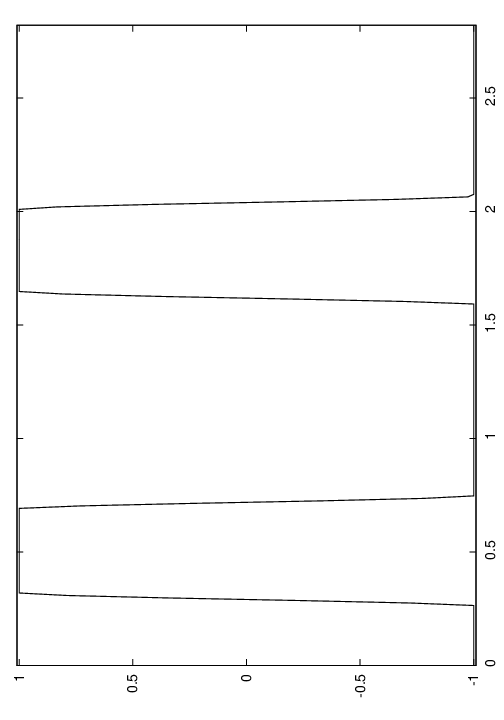}
\includegraphics[angle=-90,width=0.45\textwidth]{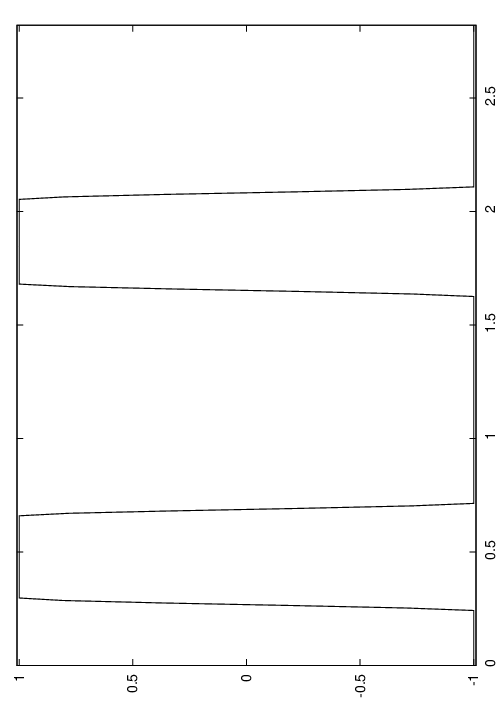} 
\caption{($\alpha_\pm = 1$, $\spont_- = -\tfrac12$, $\spont_+=-2$, $\beta = 1$)
Flow for a smooth letter ``C''.
We show arclength plots of $\phaseC^m$ at times $t=0,\,0.001,\,0.01,\,0.1,\,0.2,\,1$.
}
\label{fig:fig6_binodal}
\end{figure}%

We conducted the following shearing experiments on the domain $\Omega =
(-2,2)^2$ for an initial interface in the form of an ellipse, centred at the
origin, with axis lengths 1 and 2.5. 
The length of the polygonal interface is $5.75$, and it has $257$ elements.
For our choice of $\gamma = 0.05$ this 
yields on average about $7$ elements across the interface.
The time step size is $\tau=5\times10^{-4}$.
Once again we choose a random distribution for $\phaseC^0$ with mean value $-0.4$.
In particular, we
prescribe the inhomogeneous Dirichlet boundary condition
$\vec g(\vec z) = (z_2, 0)^T$ on $\partial_1\Omega = [-2,2] \times \{\pm2\}$.
The remaining parameters are given by $\rho=\rho_\Gamma=1$,
$\alpha_- = 0.05$, $\alpha_+ = 0.2$ and either
\begin{align} 
\text{(a)}\quad \mu_+ = 1,\quad \mu_- = 1\,,\quad \text{or}\quad
\text{(b)}\quad \mu_+ = 1,\quad \mu_- = 10\,. \label{eq:shear}
\end{align}
The results can be seen in Figures~\ref{fig:shear_rho0_mu1} 
and \ref{fig:shear_rho0_mu1_10}, and they should be compared to the
corresponding computations in the absence of any species effect, i.e.\
for $\phaseC^0 = -1$ constant, which can be seen in Figures~2 and 3 in \cite{nsns}.
As there, we observe tank treading when there is no viscosity contrast between
inner and outer phase, and we observe tumbling when there is a viscosity
contrast. The main difference to the computations in \cite{nsns}, though, 
is that here the regions occupied by the +1 phase on the vesicle remain
relatively straight throughout. This means that the tank treading motion in 
Figure~\ref{fig:shear_rho0_mu1} leads to concave shapes at times. 
Similarly, the phase distributions on the tumbling
vesicle in Figure~\ref{fig:shear_rho0_mu1_10} have a notable effect on the
vesicle shape, when compared with Figure~3 in \cite{nsns}.
\begin{figure}
\center
\includegraphics[angle=-90,width=0.45\textwidth]{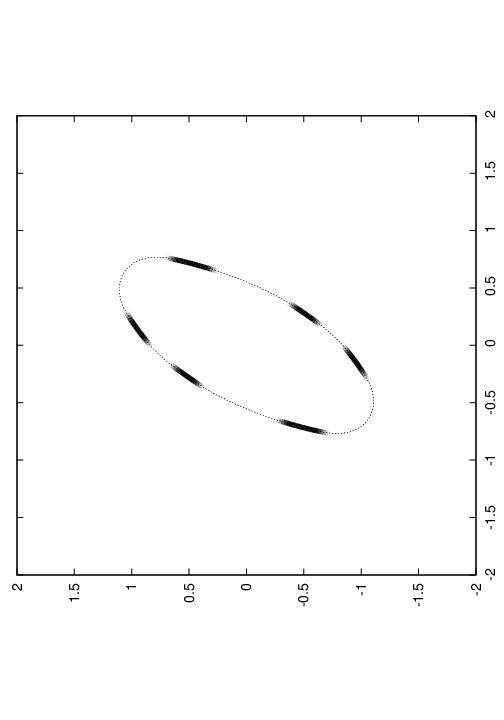}
\includegraphics[angle=-90,width=0.45\textwidth]{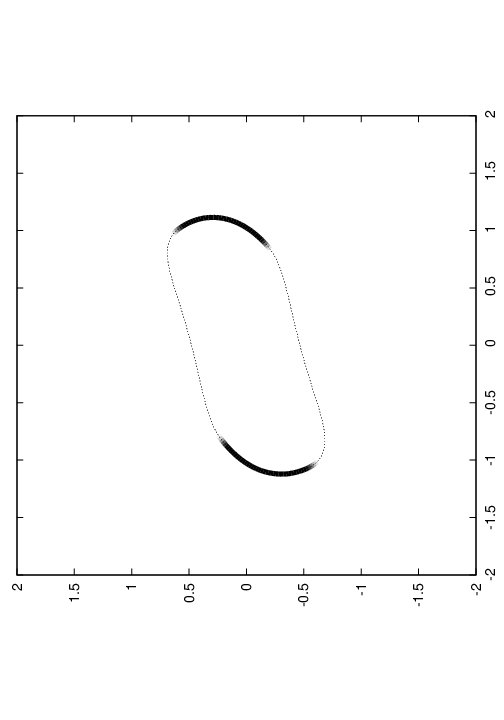}
\includegraphics[angle=-90,width=0.45\textwidth]{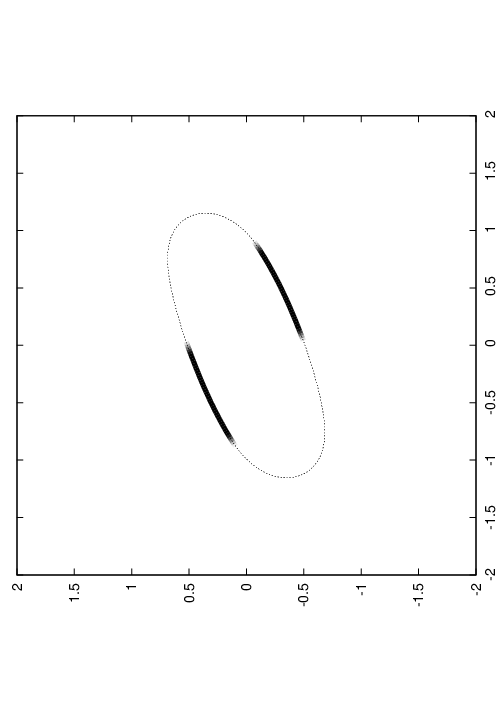}
\includegraphics[angle=-90,width=0.45\textwidth]{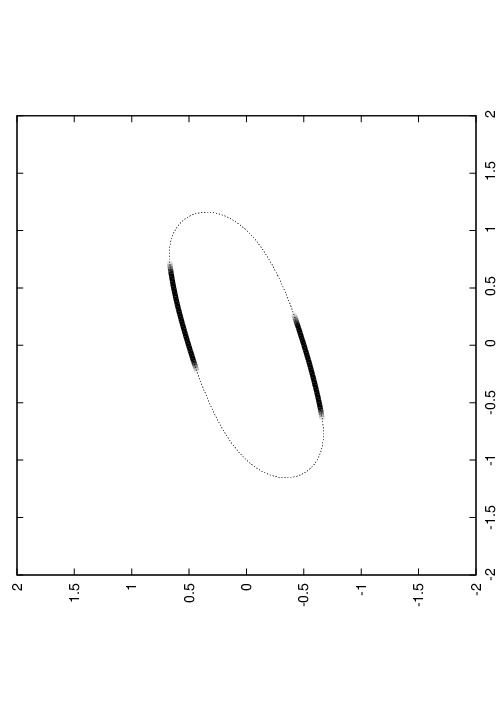}
\caption{($\alpha_- = 0.05$, $\alpha_+ = 0.2$, $\spont_\pm=0$, $\beta = 1$)
Shear flow with parameters as in (\ref{eq:shear}a), leading to tank treading.
The plots show the interface $\Gamma^m$, together with the concentration
$\phaseC^m$ at times $t=1,\ 11,\ 13,\ 15$ (top left to bottom right).
}
\label{fig:shear_rho0_mu1}
\end{figure}%
\begin{figure}
\center
\includegraphics[angle=-90,width=0.45\textwidth]{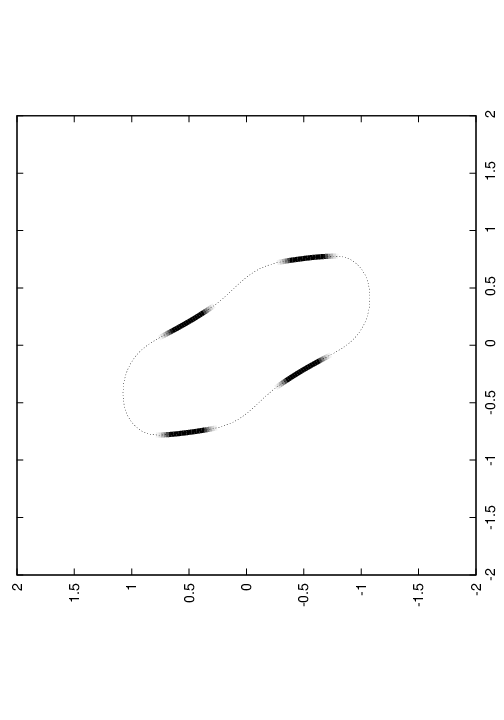}
\includegraphics[angle=-90,width=0.45\textwidth]{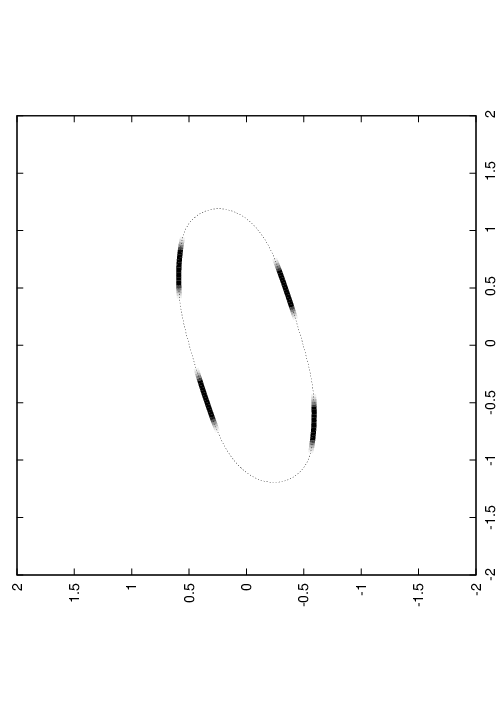}
\includegraphics[angle=-90,width=0.45\textwidth]{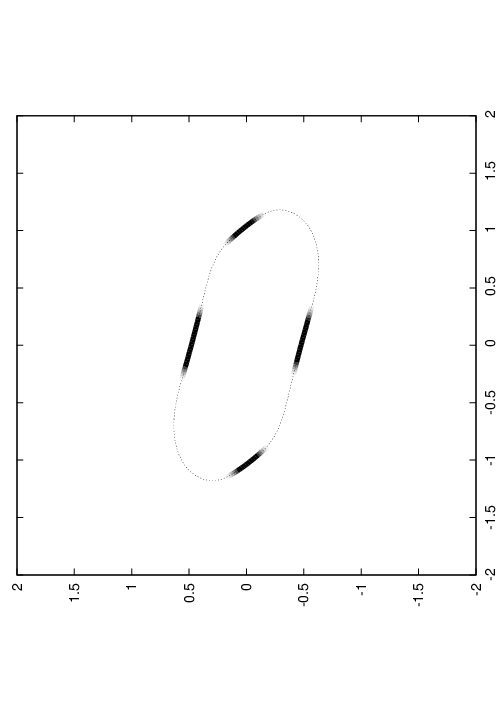}
\includegraphics[angle=-90,width=0.45\textwidth]{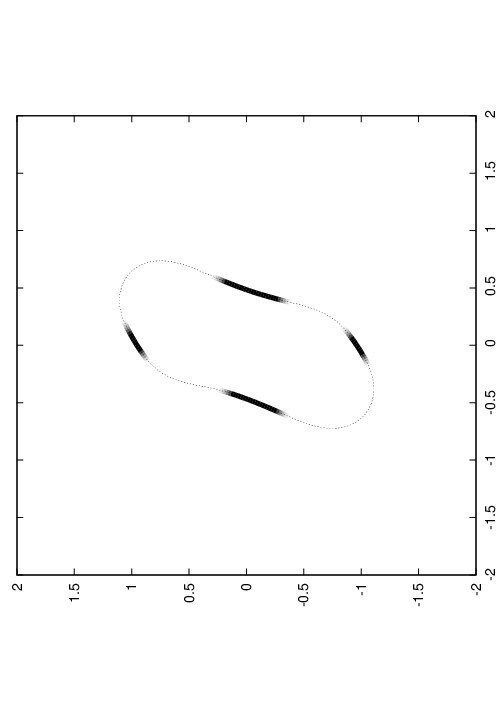}
\caption{($\alpha_- = 0.05$, $\alpha_+ = 0.2$, $\spont_\pm=0$, $\beta = 1$)
Shear flow with parameters as in (\ref{eq:shear}b), leading to tumbling.
The plots show the interface $\Gamma^m$, together with the concentration
$\phaseC^m$ at times $t=8,\ 11,\ 14,\ 17$ (top left to bottom right).}
\label{fig:shear_rho0_mu1_10}
\end{figure}%

Next we show a computation that highlights the Marangoni-type effects due to
the tangential terms in (\ref{eq:fGamma}). To this end, we start off with an
initial interface that has an elliptic shape, on which the two phases are
already well separated. The values of $\spont_\pm$ are then chosen such that a
tangential movement of the phases leads to a decrease in energy. 
In particular, we let $\spont_- = 0.5$, $\spont_+ = 2$ and
$\beta = 10$.
The length of the polygonal interface is $5.75$, and it has $257$ elements.
For our choice of $\gamma = 0.05$ this 
yields on average about $7$ elements across the interface.
The computational domain is $\Omega = (-2,2)^2$,
and the chosen time step size is $\tau=5\times10^{-4}$.
The results of the simulation are shown in Figure~\ref{fig:marangoni}.
It can be seen that due to the choice of $\spont_\pm$, the $+1$ phase moves
away from an area of large convex bending to an area that is at first almost
flat, and then settles on an area with a small concave bending.
In Figure~\ref{fig:marangoni_flow} we visualize the flow field for this
computation, and compare it with a computation when $\phaseC^0 = 1$ constant, 
so that there are no tangential forces in (\ref{eq:fGamma}).
One clearly sees the effect of the tangential force which induces
flow close to the interface also at later times.
\begin{figure}
\center
\mbox{
\includegraphics[angle=-90,width=0.33\textwidth]{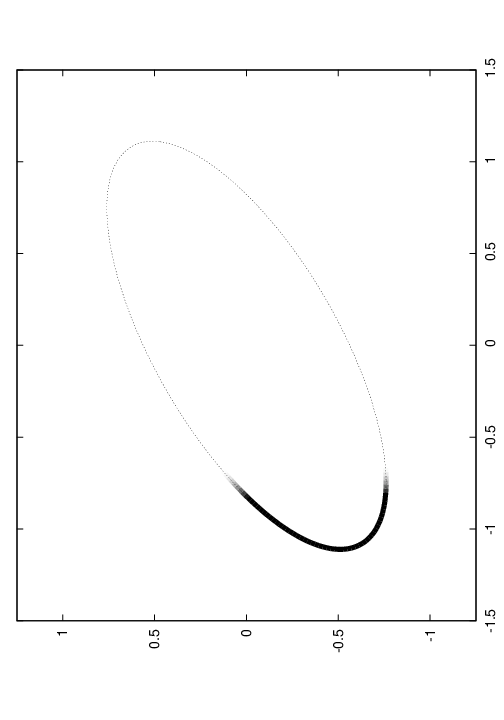} 
\includegraphics[angle=-90,width=0.33\textwidth]{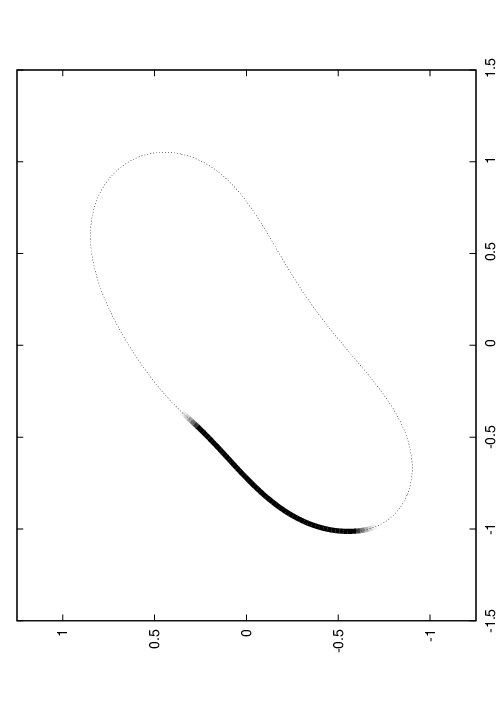} 
\includegraphics[angle=-90,width=0.33\textwidth]{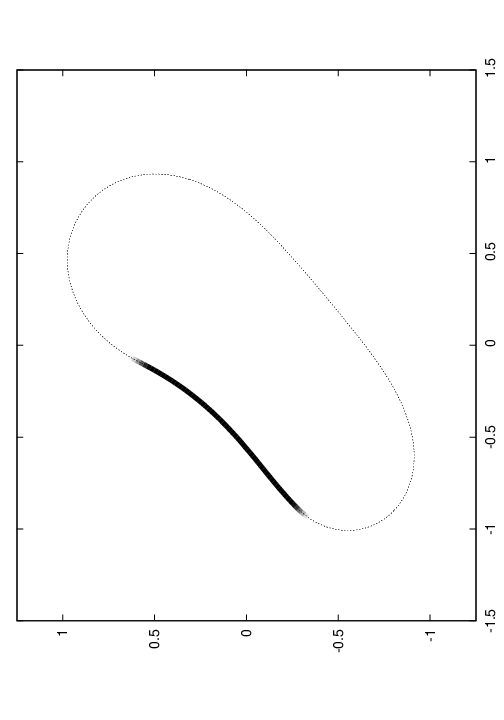}}
\mbox{
\includegraphics[angle=-90,width=0.33\textwidth]{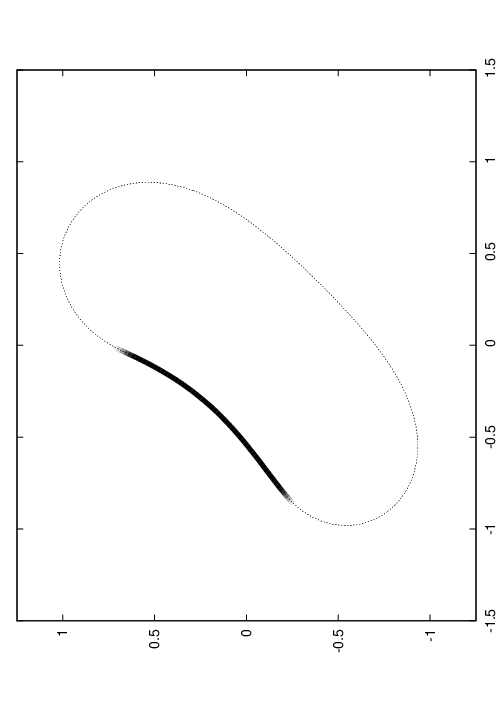} 
\includegraphics[angle=-90,width=0.33\textwidth]{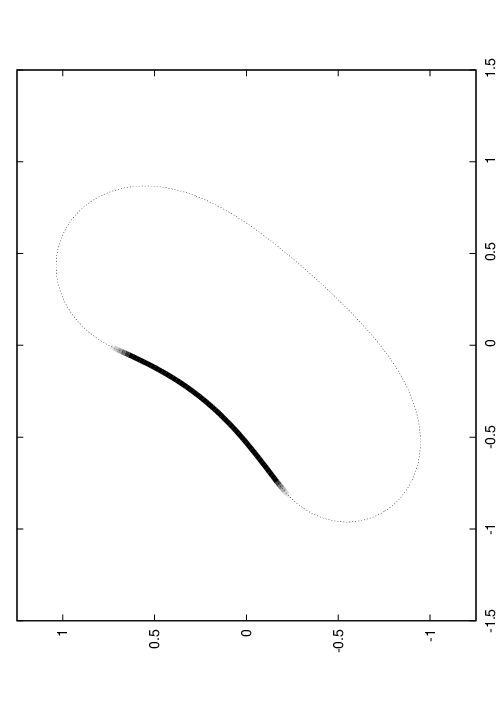} 
\includegraphics[angle=-90,width=0.33\textwidth]{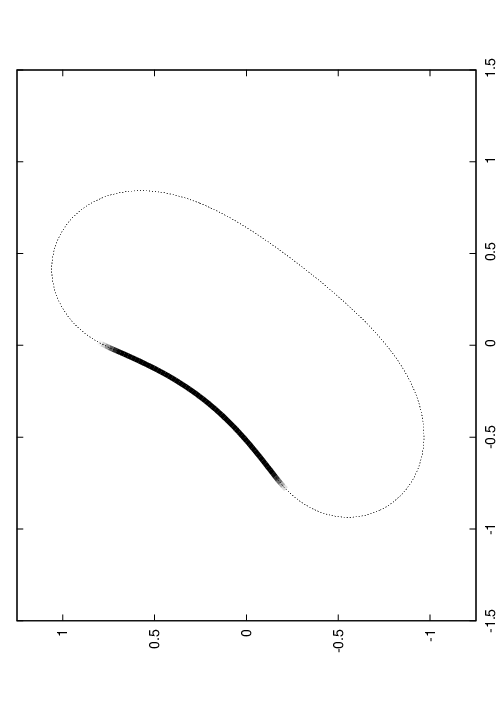}}
\caption{($\alpha_\pm = 1$, $\spont_- = 0.5$, $\spont_+ = 2$, $\beta = 10$)
Flow for an ellipse.
We show $\phaseC^m$ on $\Gamma^m$ at times $t=0,\,1,\,2,\,3,\,4,\,10$.
}
\label{fig:marangoni}
\end{figure}%
\begin{figure}
\center
\mbox{
\includegraphics[angle=-0,width=0.33\textwidth]{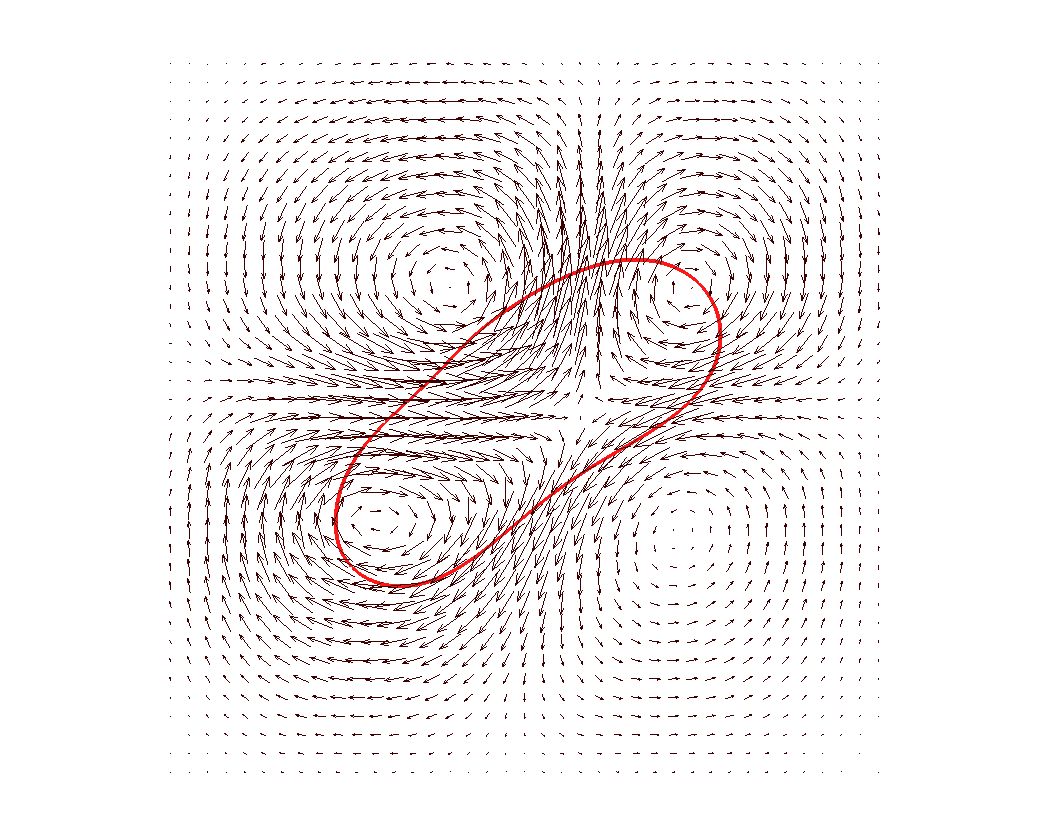} 
\includegraphics[angle=-0,width=0.33\textwidth]{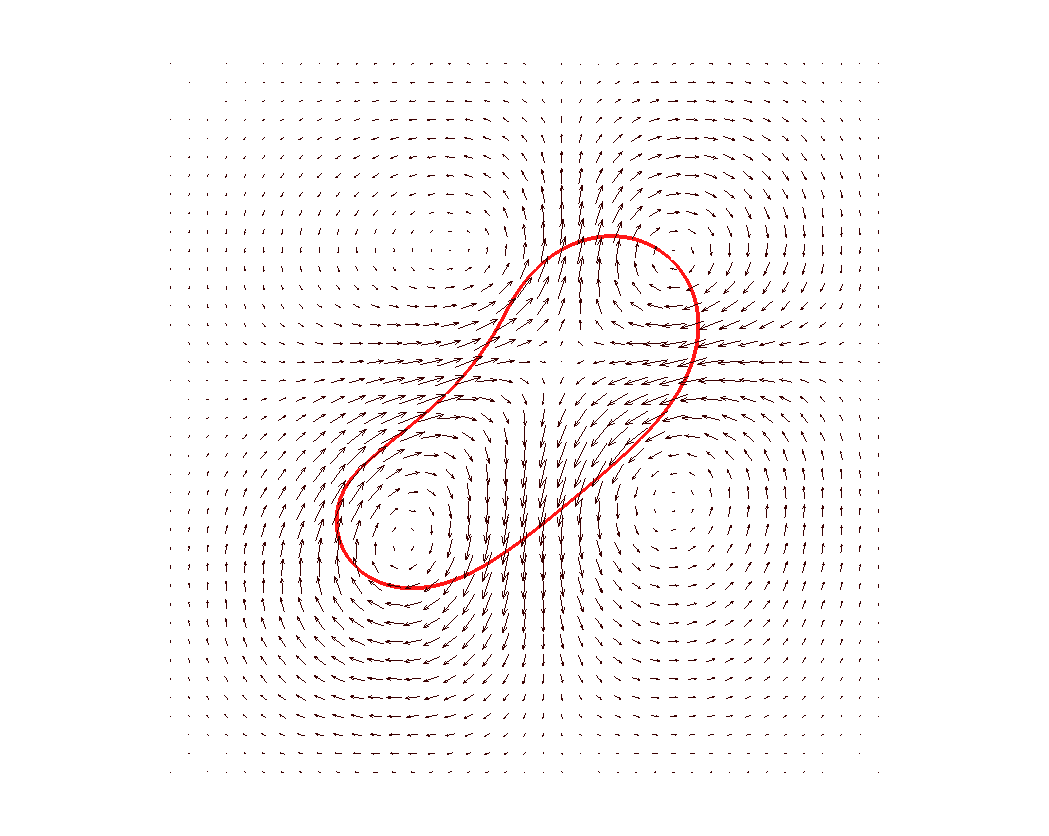} 
\includegraphics[angle=-0,width=0.33\textwidth]{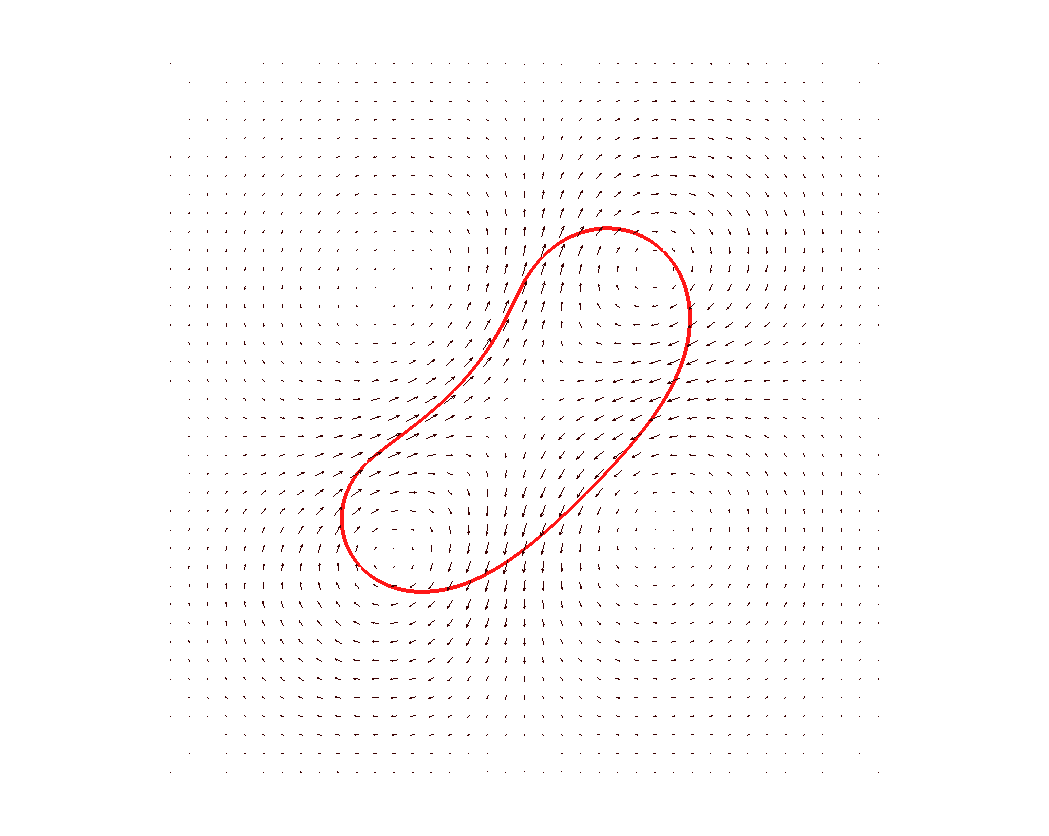}}
\mbox{
\includegraphics[angle=-0,width=0.33\textwidth]{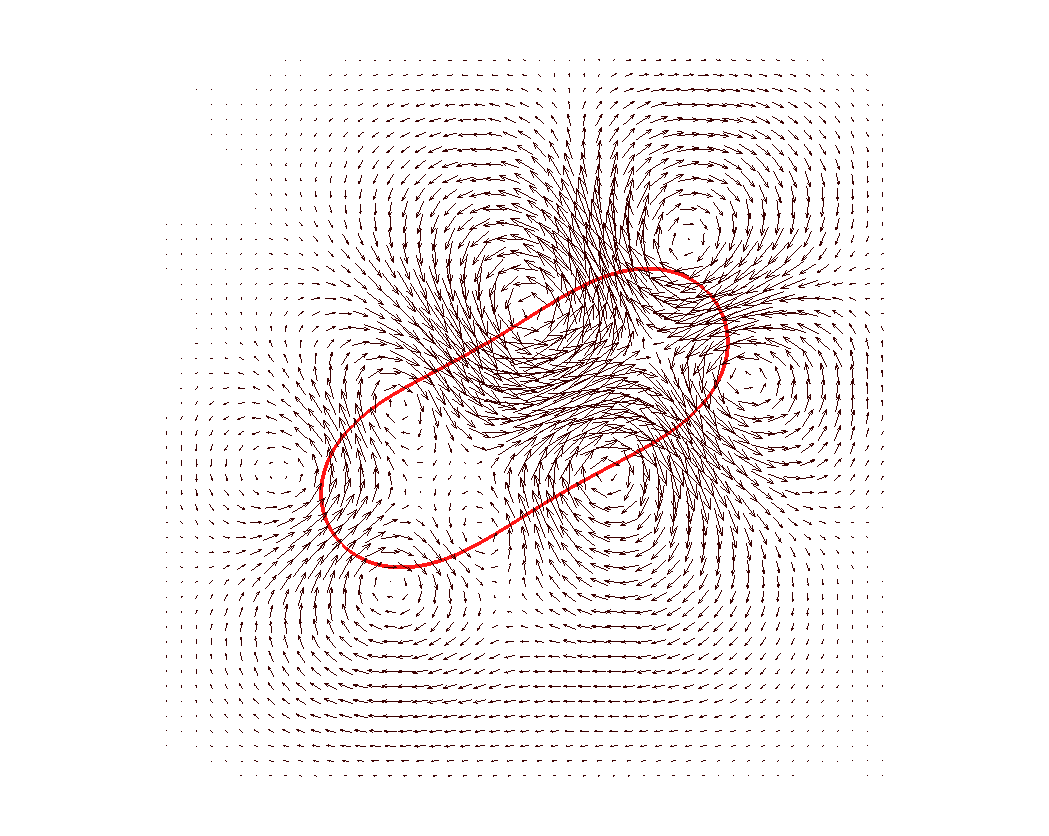} 
\includegraphics[angle=-0,width=0.33\textwidth]{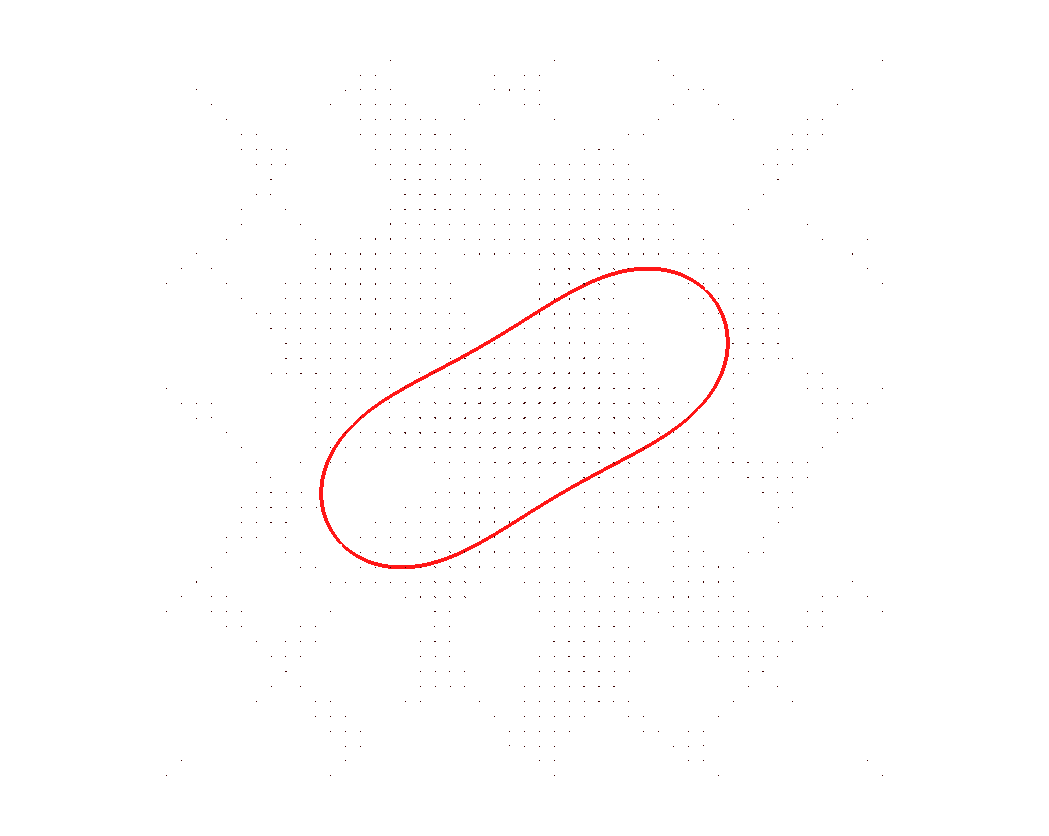}
\includegraphics[angle=-0,width=0.33\textwidth]{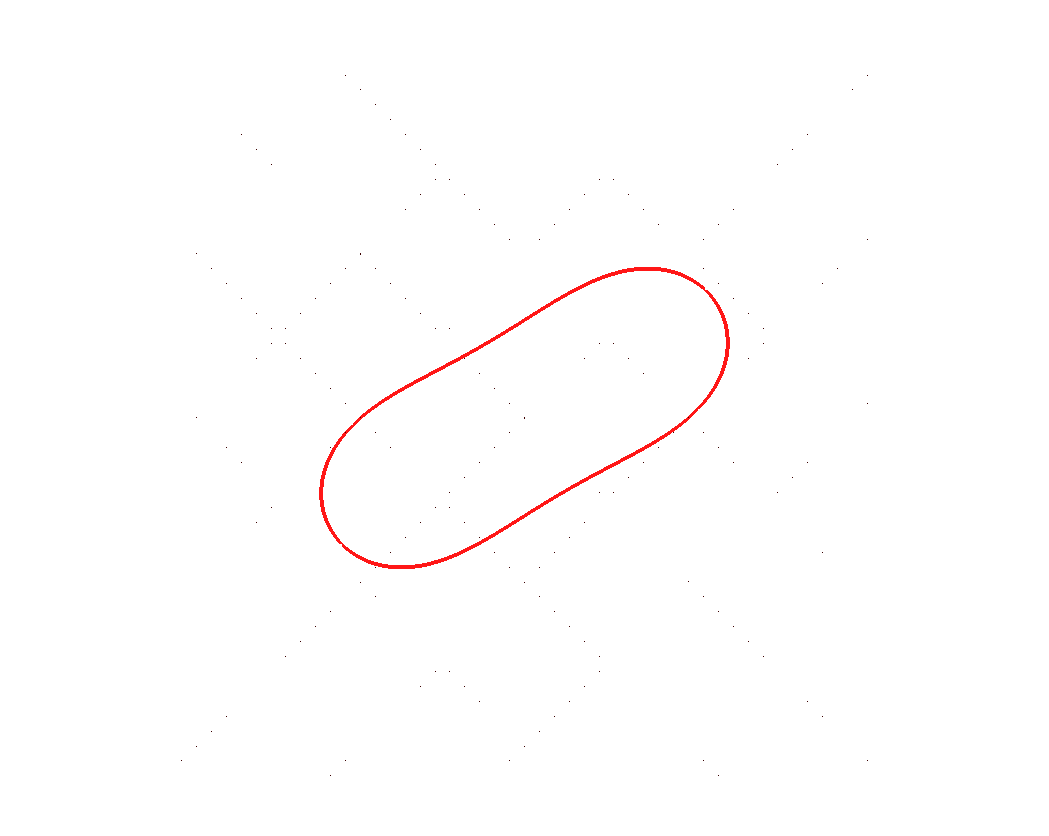}}
\caption{($\alpha_\pm = 1$, $\spont_- = 0.5$, $\spont_+ = 2$, $\beta = 10$)
Visualization of the flow field $\vec U^m$ at times $t=1,\,2, 3$
for the computation in Figure~\ref{fig:marangoni} (top), compared to the same
computation with $\phaseC^m = 1$ constant throughout (bottom).
}
\label{fig:marangoni_flow}
\end{figure}%

On replacing the definition in (\ref{eq:alpha}) with
\begin{subequations}
\begin{align} \label{eq:alphaC0}
\alpha(s) & = s^2\,\alpha_L(s) = 
\tfrac12\, (\alpha_+ + \alpha_-)\,s^2 
+ \tfrac12\, (\alpha_+ - \alpha_-) \,s^3\,, \\
\text{or}\quad
\alpha(s) & = (s^2 + \delta)\,\alpha_L(s) \,,\qquad \delta > 0\,,
\label{eq:alphaC0delta}
\end{align}
\end{subequations}
we can simulate $C^0$--junctions, see also \cite{Helmers13}, as long as
$\delta \to 0$ for $\gamma \to 0$.
We obtain interesting results starting from an ellipse, on which the two 
phases are already well separated, and using
$\spont_- = -0.2$, $\spont_+ = -2$ and $\beta=10$. 
The length of the polygonal interface is $5.75$, and it has $257$ elements.
For our choice of $\gamma = 0.05$ this 
yields on average about $7$ elements across the interface.
The computational domain is $\Omega = (-2,2)^2$,
and the chosen time step size is $\tau=5\times10^{-4}$.
In Figure~\ref{fig:2dC0C1new} we show the numerical steady states for the
two different evolutions. The nature of the $C^0$--junction can clearly be
seen, which allows for tangent discontinuities at the interface. This allows
the $+1$ phase to reduce its contribution to the overall
curvature energy. As a result, the total energy for the $C^0$--steady state
is $33.52$, which is smaller than the value $33.97$ for the $C^1$--case. 
For the curvature energy contributions the comparison is $2.32$ versus $2.83$, 
again in favour of the $C^0$--junction.
\begin{figure}
\center
\includegraphics[angle=-90,width=0.45\textwidth]{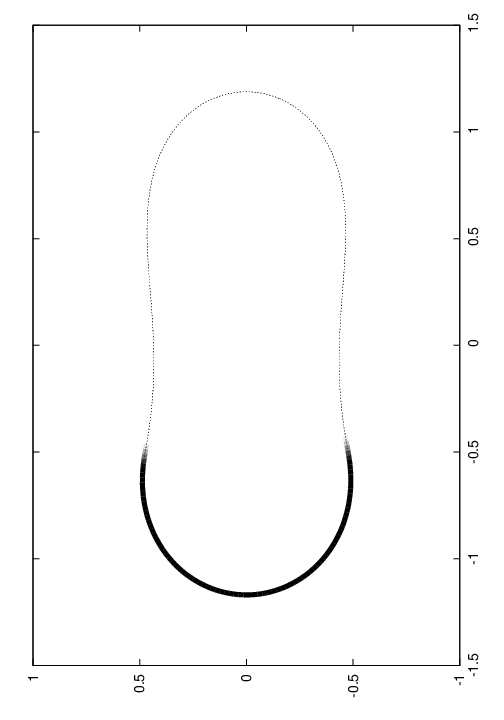} 
\includegraphics[angle=-90,width=0.45\textwidth]{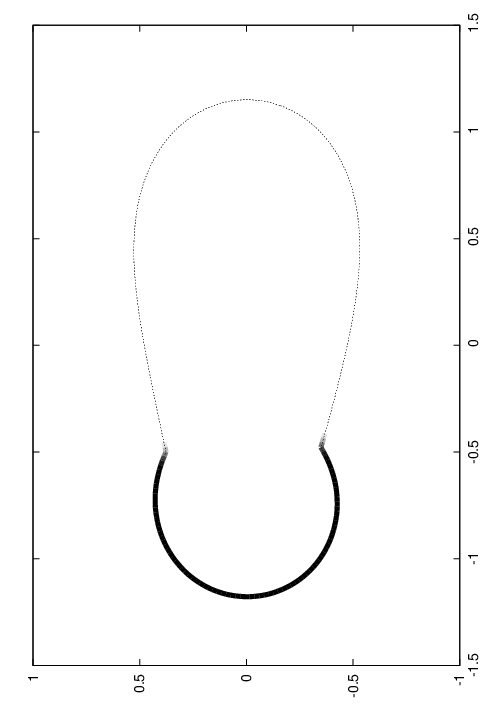} 
\includegraphics[angle=-90,width=0.45\textwidth]{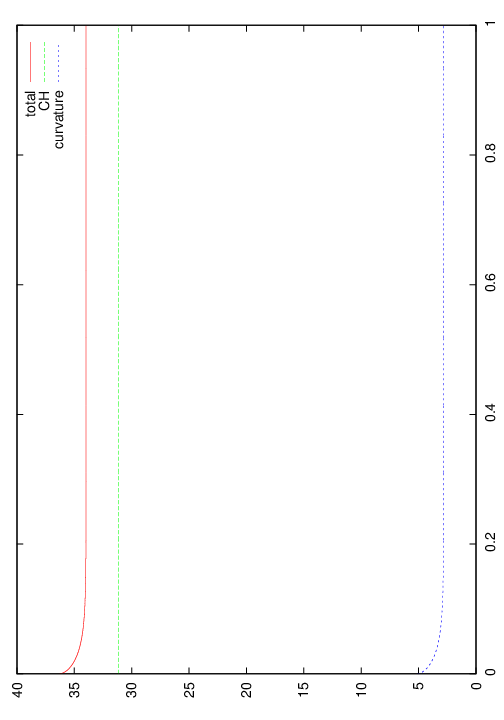} 
\includegraphics[angle=-90,width=0.45\textwidth]{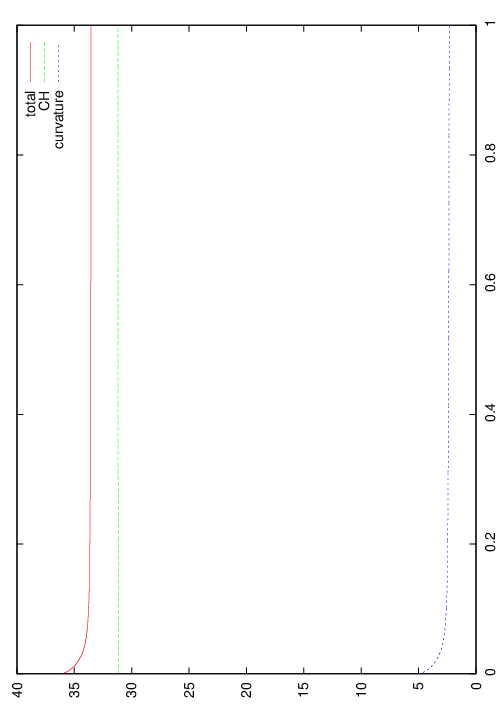} 
\caption{($\alpha_\pm = 1$, $\spont_- = -0.2$, $\spont_+ = -2$, 
$\beta = 10$)
Solution at time $t=1$ for the $C^1$--case (left) and the 
$C^0$--case (right).
Below a superimposed plot of the total discrete energy $\mathcal{E}^h_{total}$, 
the discrete Cahn--Hilliard energy, and the discrete curvature 
energy over $[0,1]$.
}
\label{fig:2dC0C1new}
\end{figure}%

\subsection{Numerical simulations in 3d}

As a first example for a three-dimensional simulation, 
we consider the evolution for an initially flat plate of total dimension 
$4\times4\times1$, similarly to \citet[Fig.\ 8]{nsns}. 
The triangulations $\Gamma^m$ satisfy
$(K_\Gamma,J_\Gamma) = (1538, 3072)$, and the polygonal surfaces have 
a surface area of $35.7$. 
This means that for our chosen value of $\gamma=0.2$, there are on
average about $5$ elements across the interfacial region on $\Gamma^m$.
As the computational domain we choose $\Omega = (-2.5,2.5)^3$,
and we use the time step size $\tau=10^{-3}$.
First we set $\alpha_\pm = 1$, $\spont_\pm = 0$ and $\beta = 1$,
so that the only effect of the two phase aspect is given by the
line energy contributions in the free energy.
The initial distribution for $\phaseC^0$ is random with mean value $-0.4$.
See Figure~\ref{fig:fig12} for the evolution in this case.
\begin{figure}
\center
\mbox{
\includegraphics[angle=-0,width=0.25\textwidth]{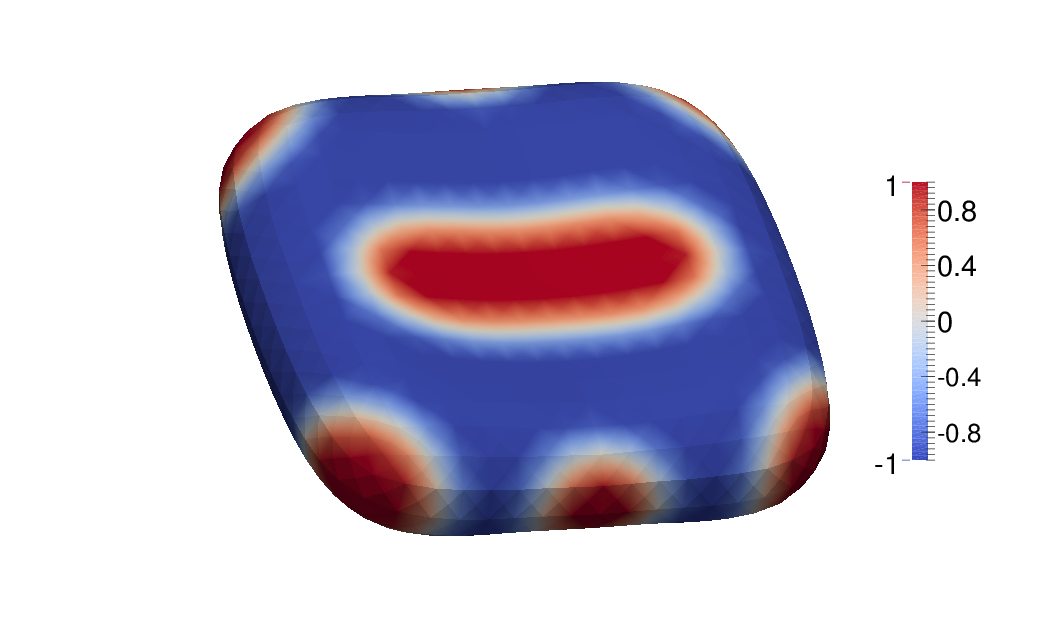}
\includegraphics[angle=-0,width=0.25\textwidth]{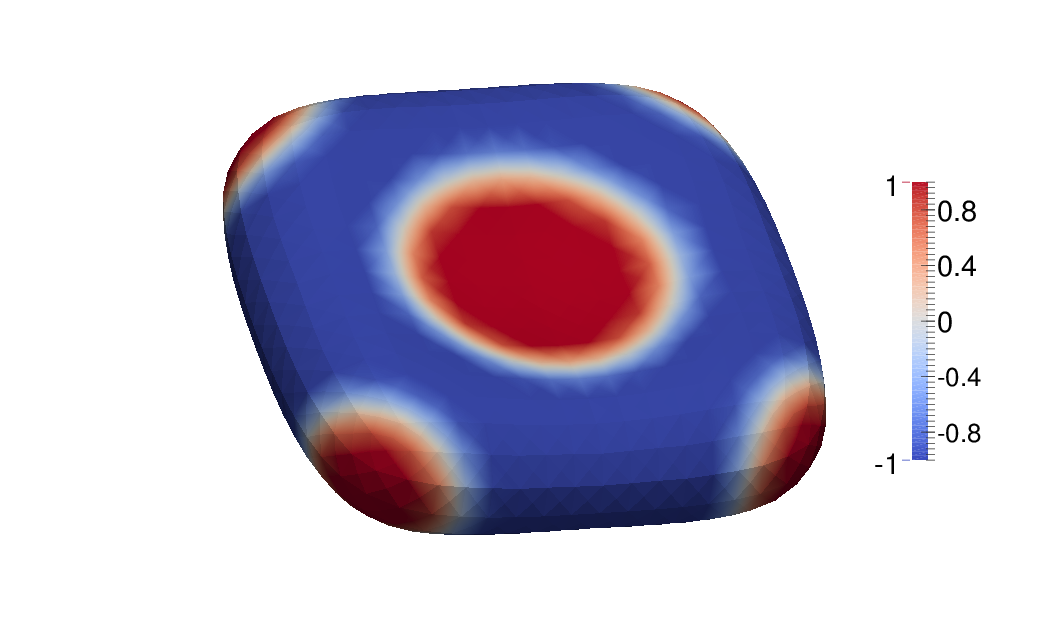}
\includegraphics[angle=-0,width=0.25\textwidth]{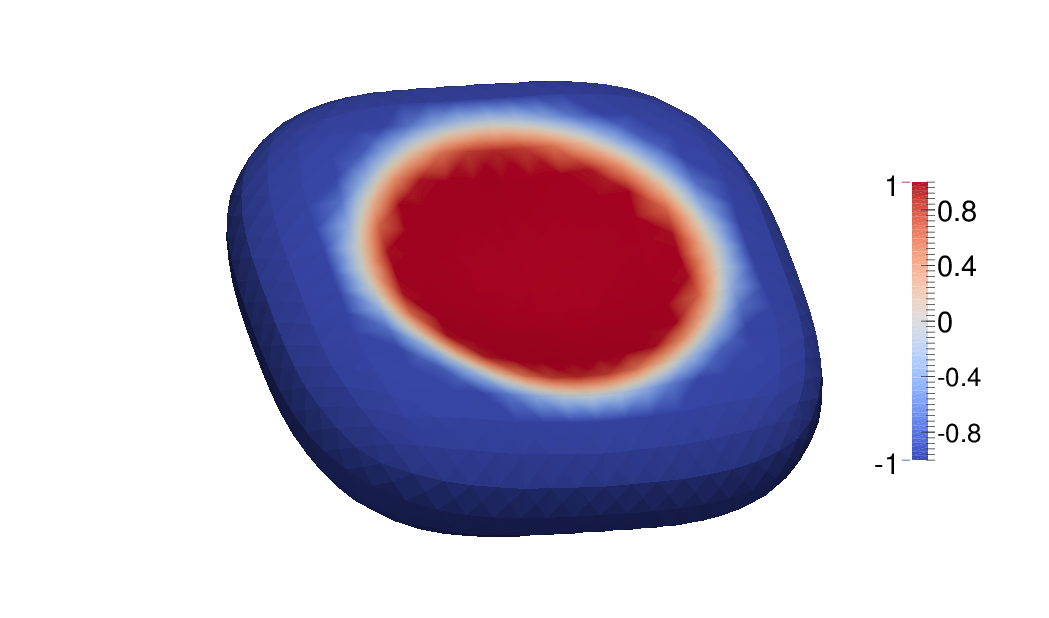}
\includegraphics[angle=-0,width=0.25\textwidth]{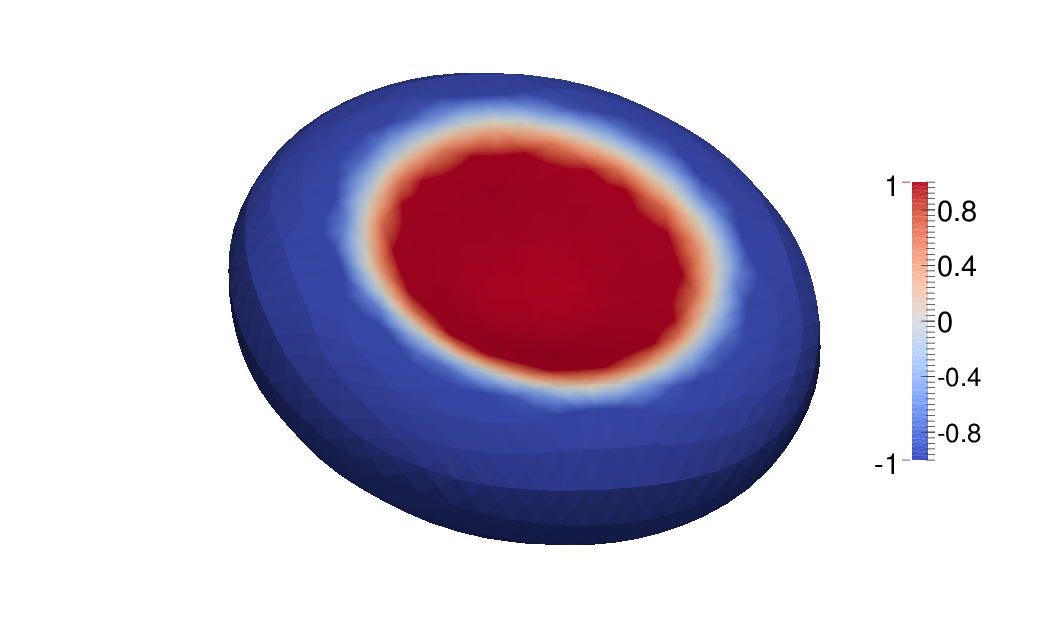}
}
\includegraphics[angle=-90,width=0.33\textwidth]{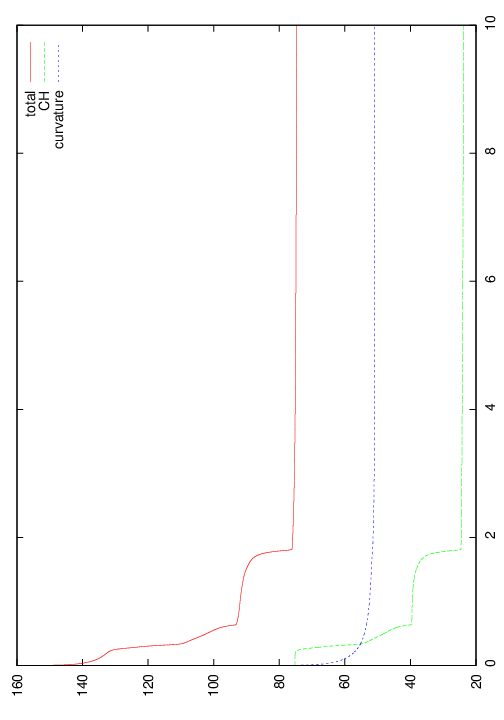}
\caption{($\alpha_\pm = 1$, $\spont_\pm = 0$, $\beta = 1$)
Plots of $\phaseC^m$ on $\Gamma^m$ at times $t=0.5,\ 1,\ 2,\ 10$.
Below a superimposed plot of the total discrete energy $\mathcal{E}^h_{total}$, 
the discrete Cahn--Hilliard energy, and the discrete curvature 
energy over $[0,10]$.
}
\label{fig:fig12}
\end{figure}%
Repeating the same experiment for $\alpha_- = \tfrac12$, $\alpha_+ = 2$ gives
the results in Figure~\ref{fig:fig13}. We note that the final shape is now a
bit flatter, since the $+1$ phase does not allow the inner part of the membrane
to become very concave.
\begin{figure}
\center
\mbox{
\includegraphics[angle=-0,width=0.25\textwidth]{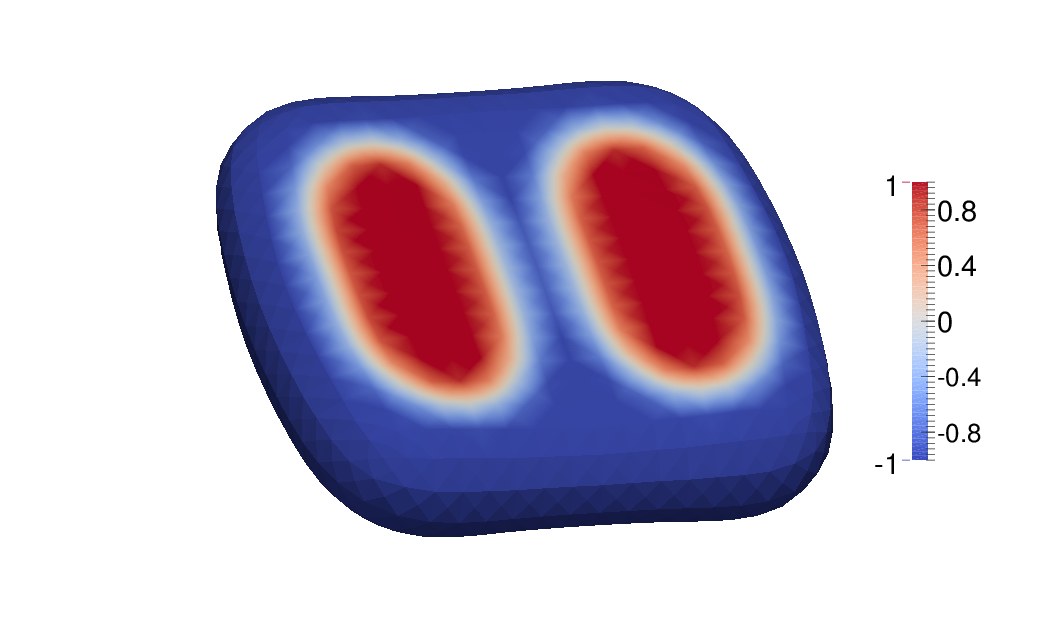}
\includegraphics[angle=-0,width=0.25\textwidth]{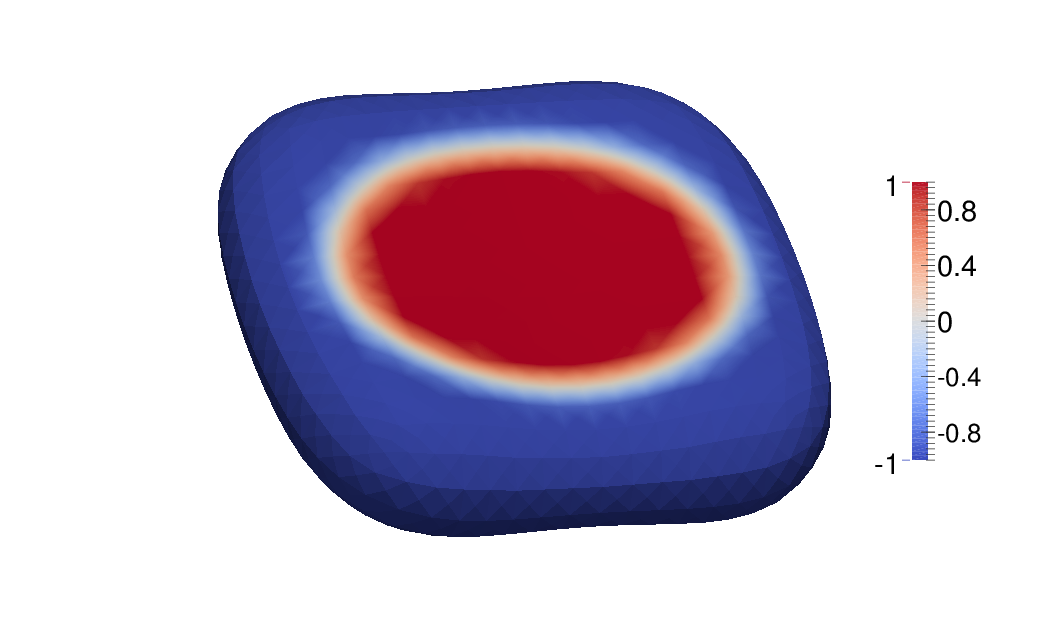}
\includegraphics[angle=-0,width=0.25\textwidth]{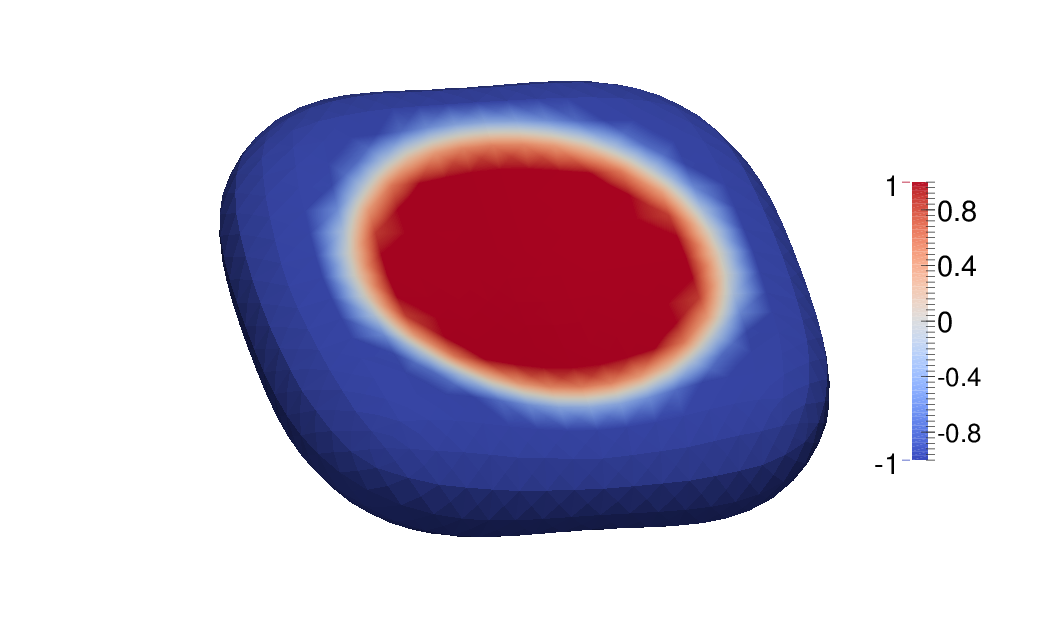}
\includegraphics[angle=-0,width=0.25\textwidth]{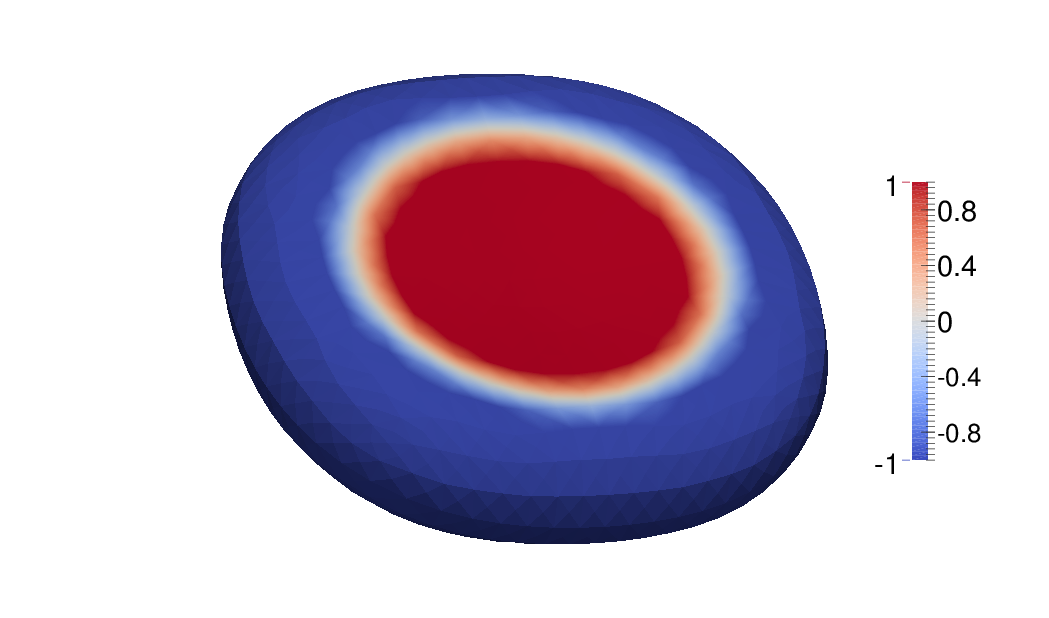}
}
\includegraphics[angle=-90,width=0.33\textwidth]{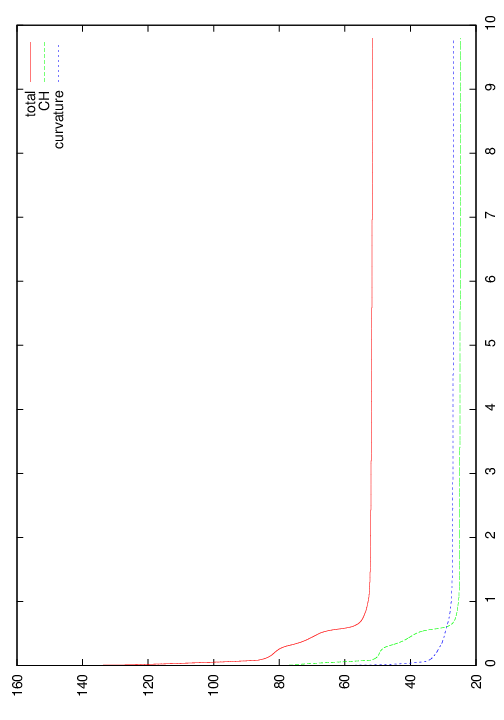}
\caption{($\alpha_- = \tfrac12$, $\alpha_+ = 2$, $\spont_\pm = 0$, $\beta = 1$)
Plots of $\phaseC^m$ on $\Gamma^m$ at times $t=0.5,\ 1,\ 2,\ 10$. Compared to
Figure~\ref{fig:fig12}, the final plot is less concave.
Below a superimposed plot of the total discrete energy $\mathcal{E}^h_{total}$, 
the discrete Cahn--Hilliard energy, and the discrete curvature 
energy over $[0,10]$.
}
\label{fig:fig13}
\end{figure}%

In order to investigate budding, we start from a four-armed shape with
well-developed interfaces between the two surface phases.
As we use a finer mesh with $(K_\Gamma,J_\Gamma) = (3074, 6144)$, we now choose
$\gamma = 0.1$. Moreover, we have set
$\alpha_\pm = 1$, $\spont_- = -\tfrac12$, $\spont_+ = -2$ to encourage the
forming of the buds. In the first experiment we set $\beta = 1$ and 
observe the results shown in Figure~\ref{fig:restart_beta1}.
\begin{figure}
\center
\mbox{
\includegraphics[angle=-0,width=0.33\textwidth]{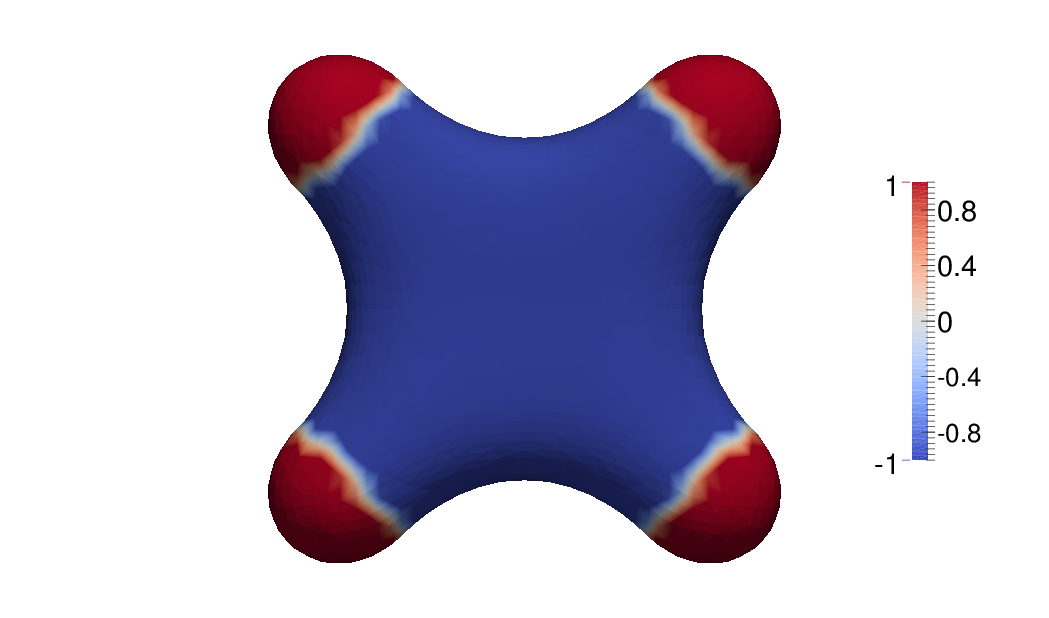}
\includegraphics[angle=-0,width=0.33\textwidth]{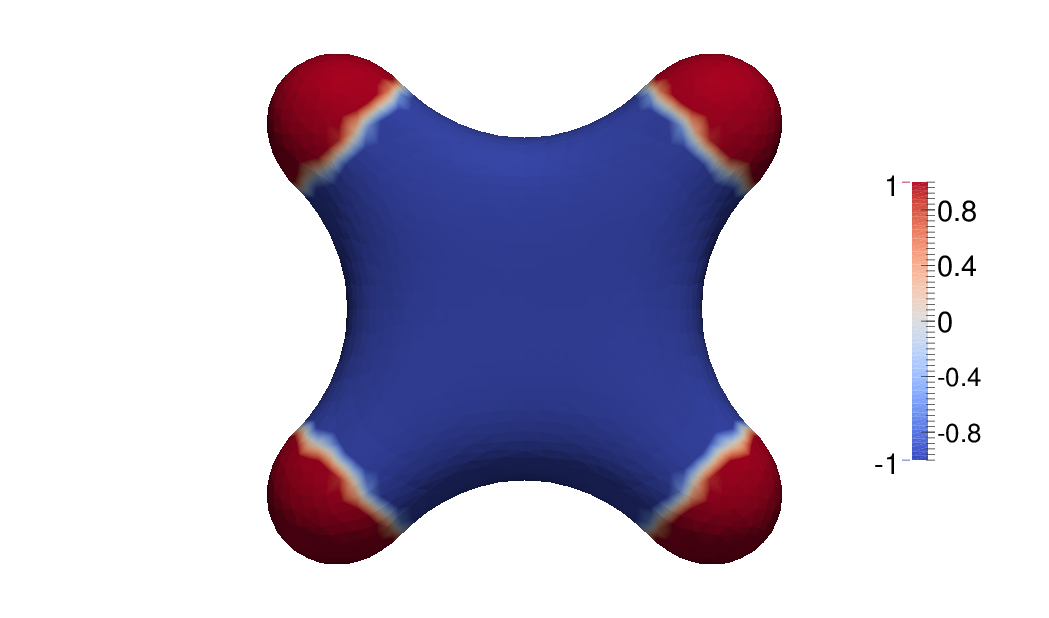}
\includegraphics[angle=-0,width=0.33\textwidth]{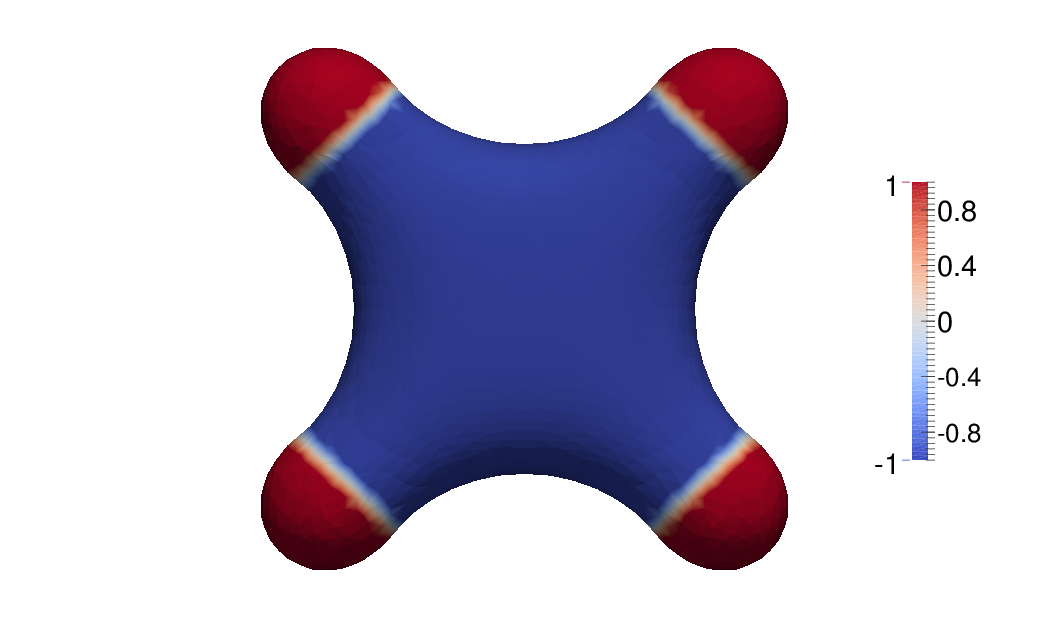}
}
\caption{($\alpha_\pm = 1$, $\spont_- = -\tfrac12$, $\spont_+ = -2$, 
$\beta = 1$)
Plots of $\phaseC^m$ on $\Gamma^m$ at times $t=0.5,\ 1,\ 5$.
}
\label{fig:restart_beta1}
\end{figure}%
The same experiment with $\beta = 5$ is shown in 
Figure~\ref{fig:restart_beta5}, where we observe budding behaviour now. In
particular, the $+1$ phase would like to pinch off the membrane at the four
corners.
\begin{figure}
\center
\mbox{
\includegraphics[angle=-0,width=0.33\textwidth]{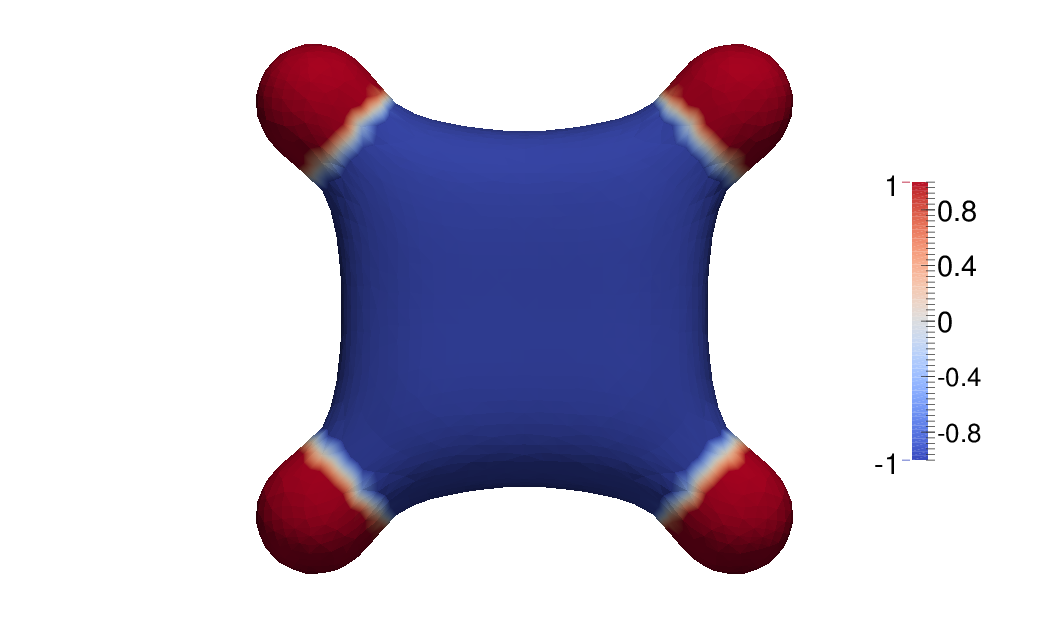}
\includegraphics[angle=-0,width=0.33\textwidth]{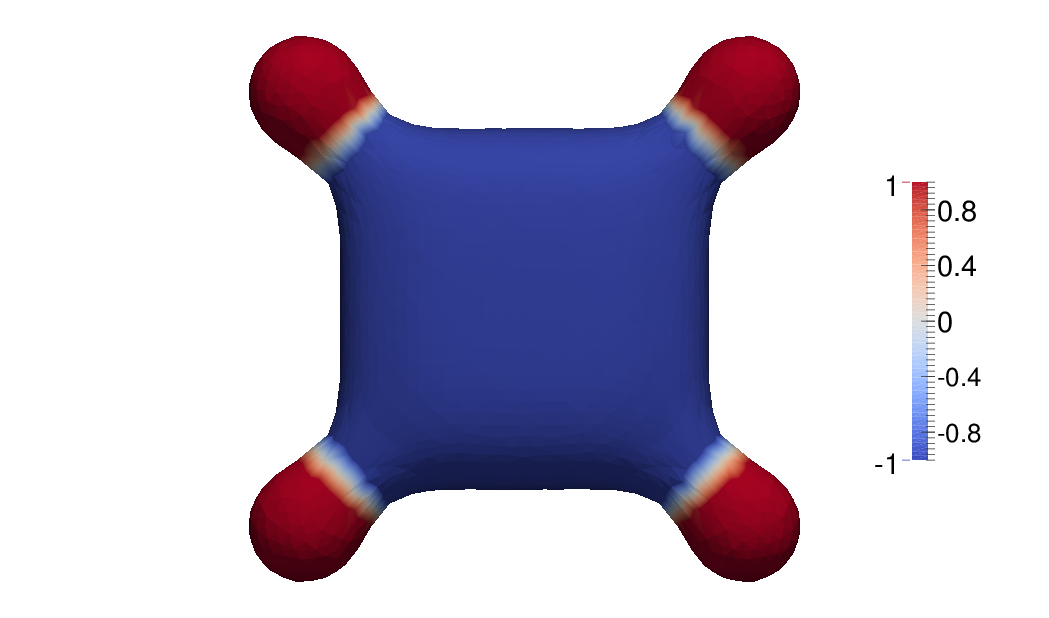}
\includegraphics[angle=-0,width=0.33\textwidth]{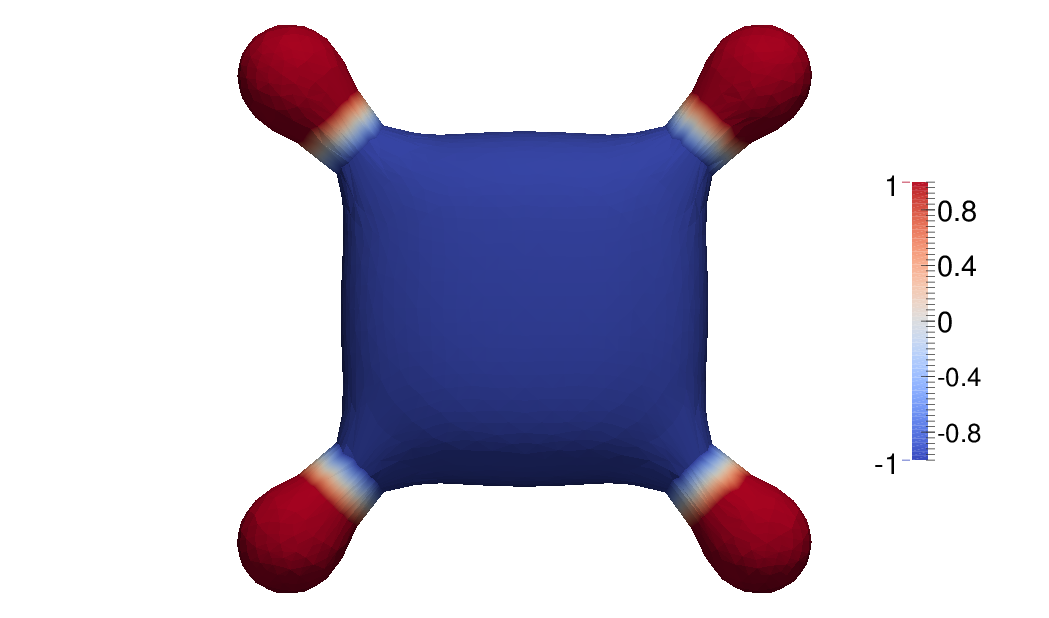}
}
\caption{($\alpha_\pm = 1$, $\spont_- = -\tfrac12$, $\spont_+ = -2$, 
$\beta = 5$)
Plots of $\phaseC^m$ on $\Gamma^m$ at times $t=0.5,\ 1,\ 5$.
}
\label{fig:restart_beta5}
\end{figure}%

The numerical simulation of a vesicle flowing through a constriction can be
seen in Figure~\ref{fig:vartheta_cons3d}. This is a two-phase analogue
of the simulation shown in \citet[Figure~9]{nsns}. 
Here we choose the initial shape of
the interface to be a biconcave surface resembling a human red blood cell.
The shape has surface area $2.23$, and the triangulations $\Gamma^m$ satisfy
$(K_\Gamma,J_\Gamma) = (3074, 6144)$.
This means that for our chosen value of $\gamma=0.05$, there are on
average about $6$ elements across the interfacial region on $\Gamma^m$.
As the computational domain we choose $\Omega = (-2,-1) \times (-1,1)^2 \cup
[-1,1]\times(-0.5,0.5)^2 \cup (1,2) \times (-1,1)^2$.
We define $\partial_2\Omega= \{2\} \times (-1,1)^2$ and on 
$\partial_1\Omega$ 
we set no-slip conditions, except on the left hand part
$\{-2\} \times [-1,1]^2$, where we prescribe the inhomogeneous boundary 
conditions $\vec g(\vec z) = ([1 - z_2^2 - z_3^2]_+, 0, 0)^T$ 
in order to model a Poiseuille-type flow. For the remaining parameters we set
$\alpha_- = 0.05$, $\alpha_+ = 0.1$ and $\vartheta = 100$.
We notice that during the evolution the membrane in
Figure~\ref{fig:vartheta_cons3d} deforms more than in the corresponding
simulation with only a single phase $\phaseC^0 = 1$, 
see \citet[Figure~9]{nsns}.
In particular, we observe that the $+1$ phase, which
prefers a relatively flat surface, forces the surface to remain deformed also
long after it has left the constriction.
\begin{figure}
\center
\mbox{
\includegraphics[angle=-0,width=0.33\textwidth]{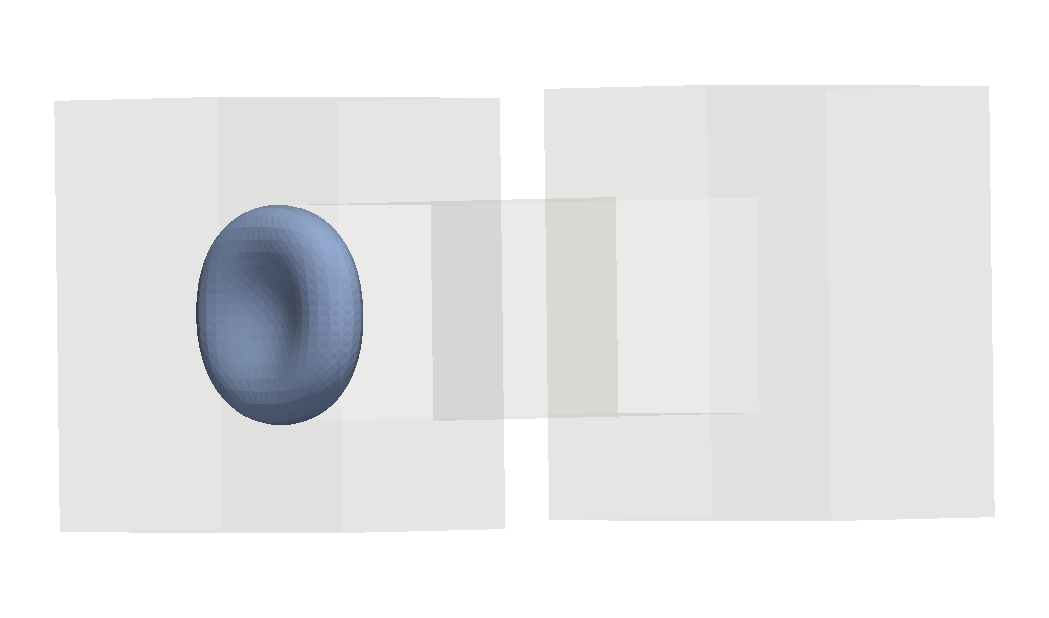}
\includegraphics[angle=-0,width=0.33\textwidth]{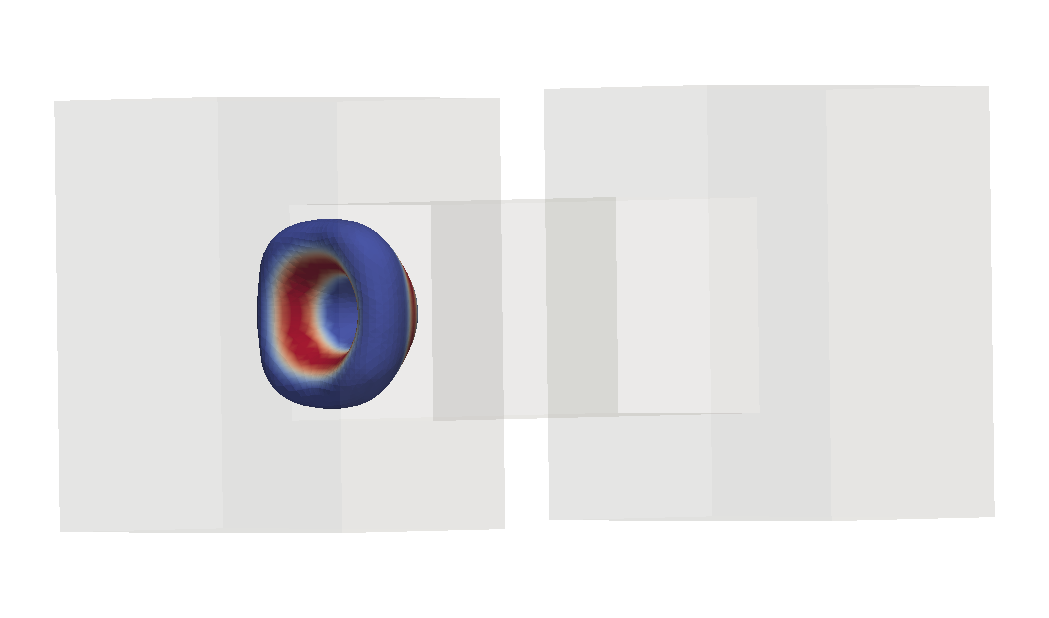}
\includegraphics[angle=-0,width=0.33\textwidth]{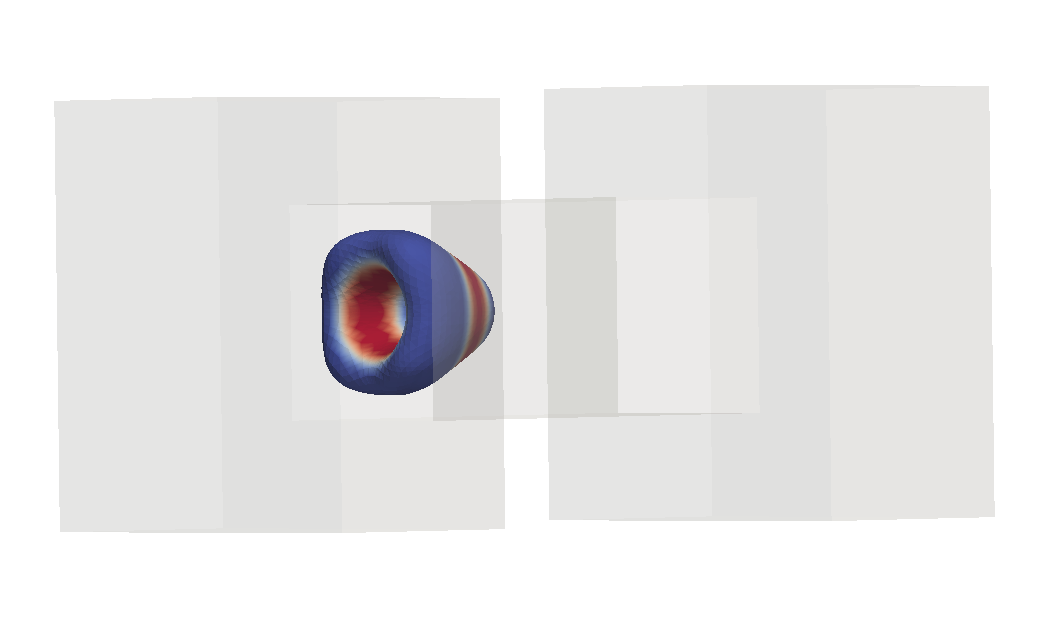}}
\mbox{
\includegraphics[angle=-0,width=0.33\textwidth]{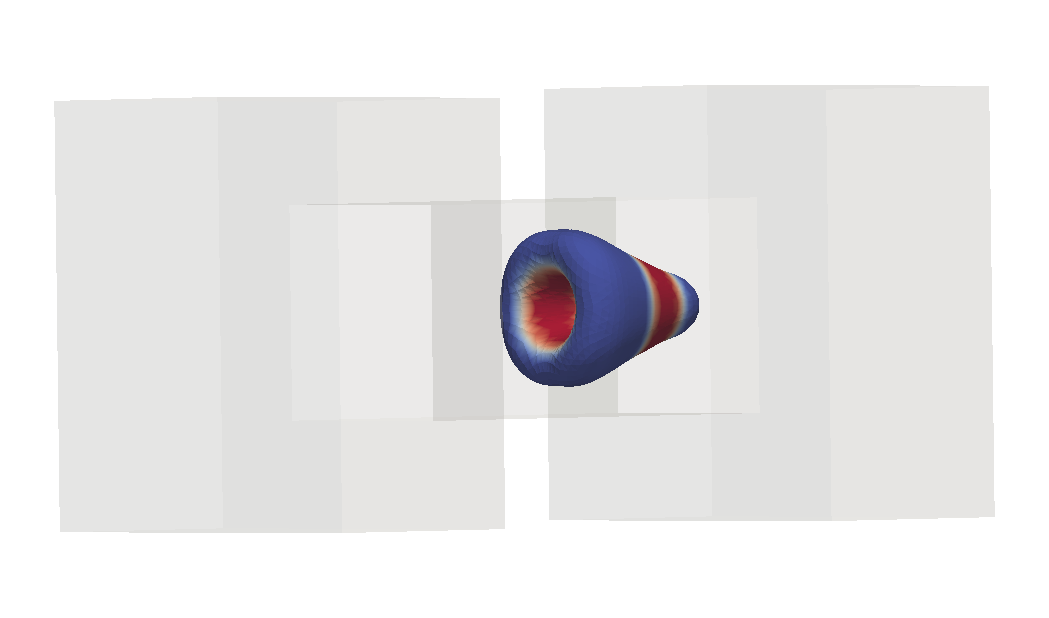}
\includegraphics[angle=-0,width=0.33\textwidth]{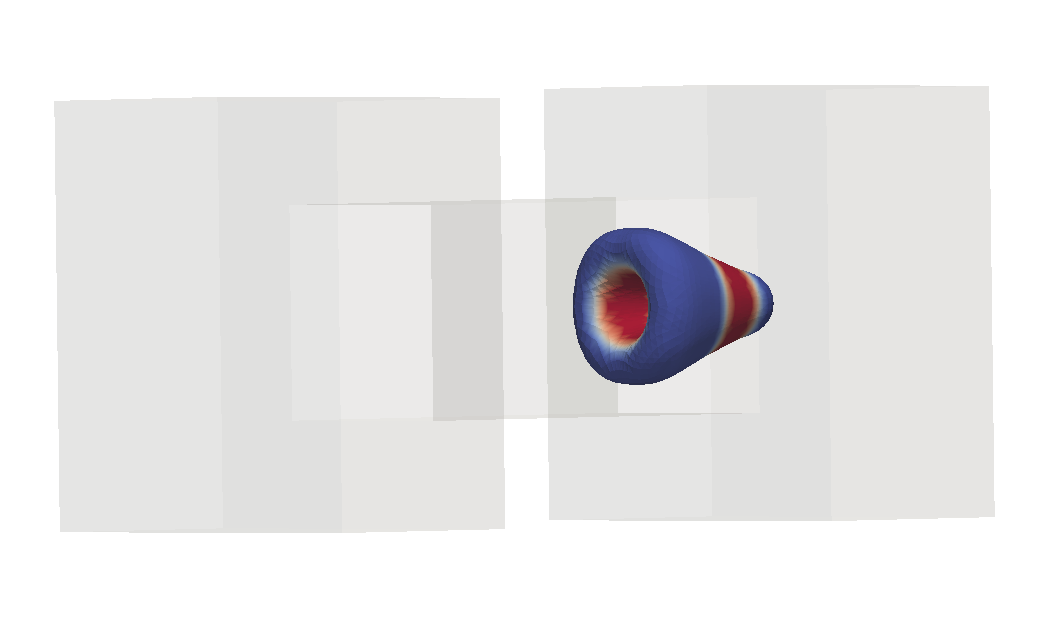}
\includegraphics[angle=-0,width=0.33\textwidth]{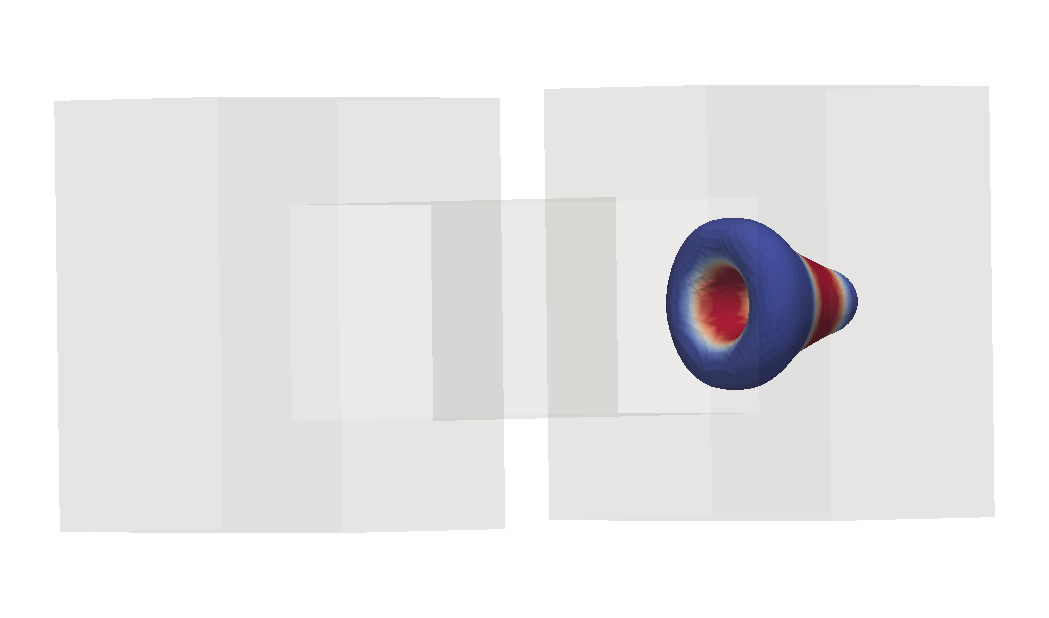}}
\mbox{
\includegraphics[angle=-0,width=0.33\textwidth]{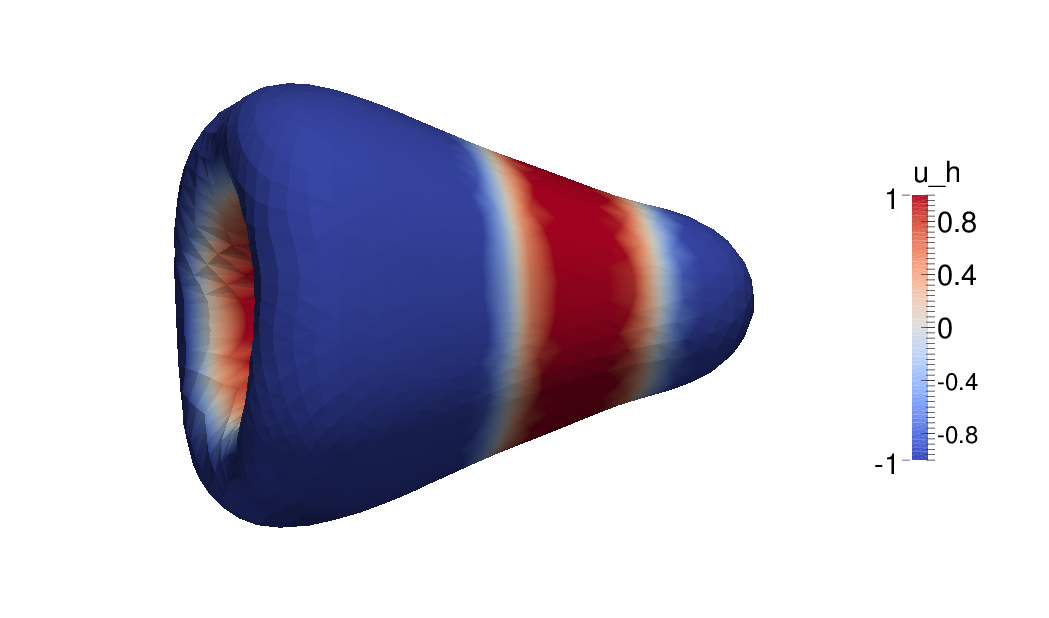}
\includegraphics[angle=-0,width=0.33\textwidth]{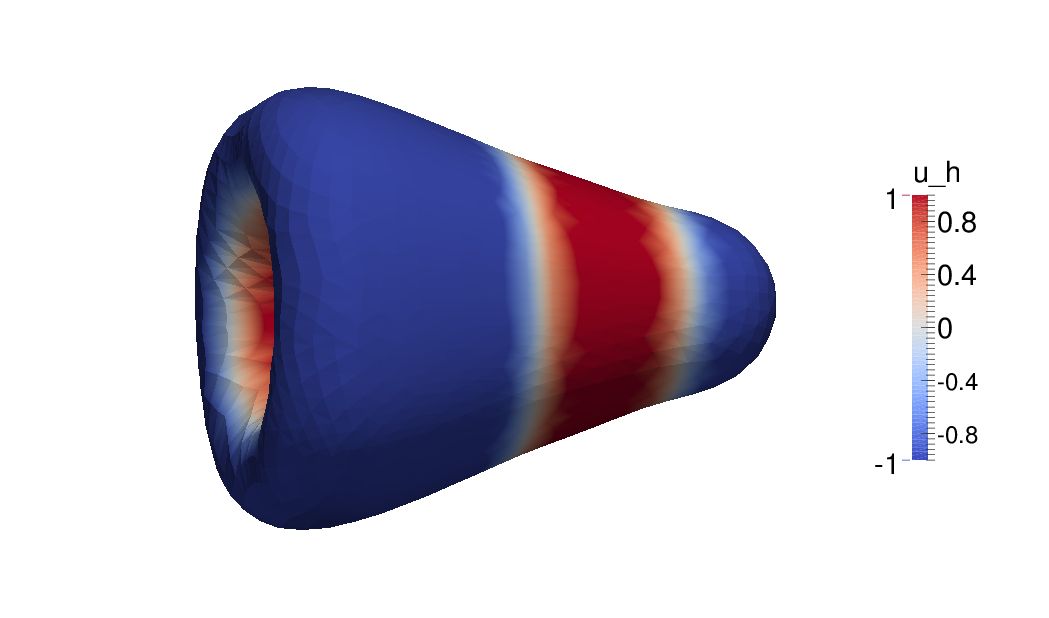}
\includegraphics[angle=-0,width=0.33\textwidth]{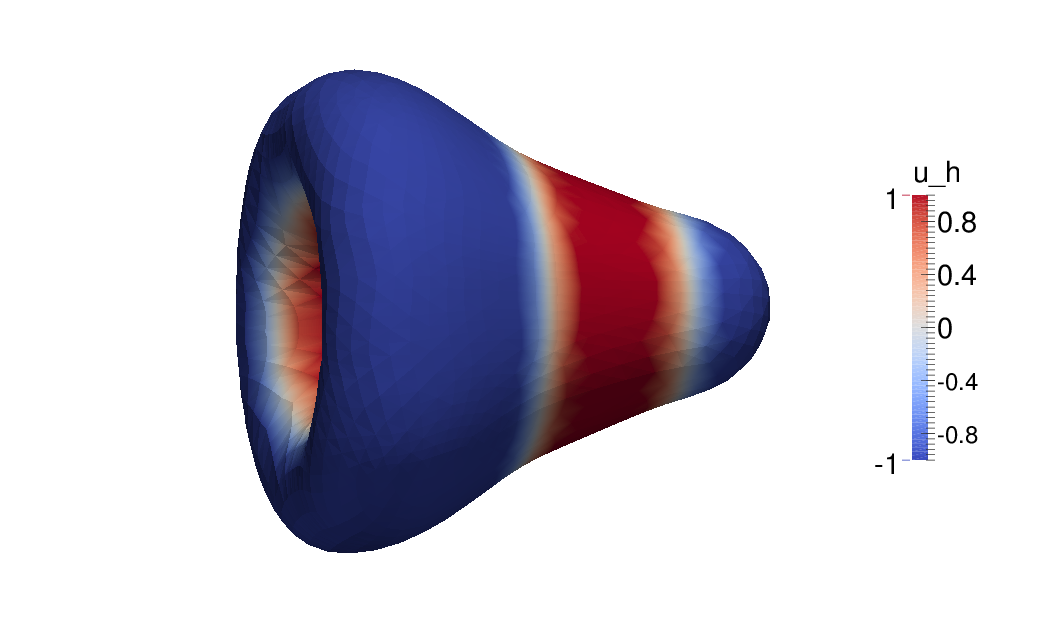}}
\center
\caption{
($\alpha_- = 0.05$, $\alpha_+ = 0.1$, $\spont_\pm = 0$, $\beta = 1$,
$\vartheta=100$)
Flow through a constriction. 
Plots of $\phaseC^m$ on $\Gamma^m$ at times $t=0,\ 0.3,\ 0.5,\ 1,\ 1.2,\ 1.5$.
Below we show enlarged plots of $\phaseC^m$ on $\Gamma^m$ at times $t=1,\ 1.2,\ 1.5$.
}
\label{fig:vartheta_cons3d}
\end{figure}%

In Figure~\ref{fig:spinodal2} we show a numerical experiment for 
spinodal decomposition on a membrane,
starting from a random distribution of phases with mean value $-0.4$.
The shape has surface area $35.7$, and the triangulations $\Gamma^m$ satisfy
$(K_\Gamma,J_\Gamma) = (6146, 12288)$.
This means that for our chosen value of $\gamma=0.1$, there are on
average about $6$ elements across the interfacial region on $\Gamma^m$.
\begin{figure}
\center
\mbox{
\includegraphics[angle=-0,width=0.33\textwidth]{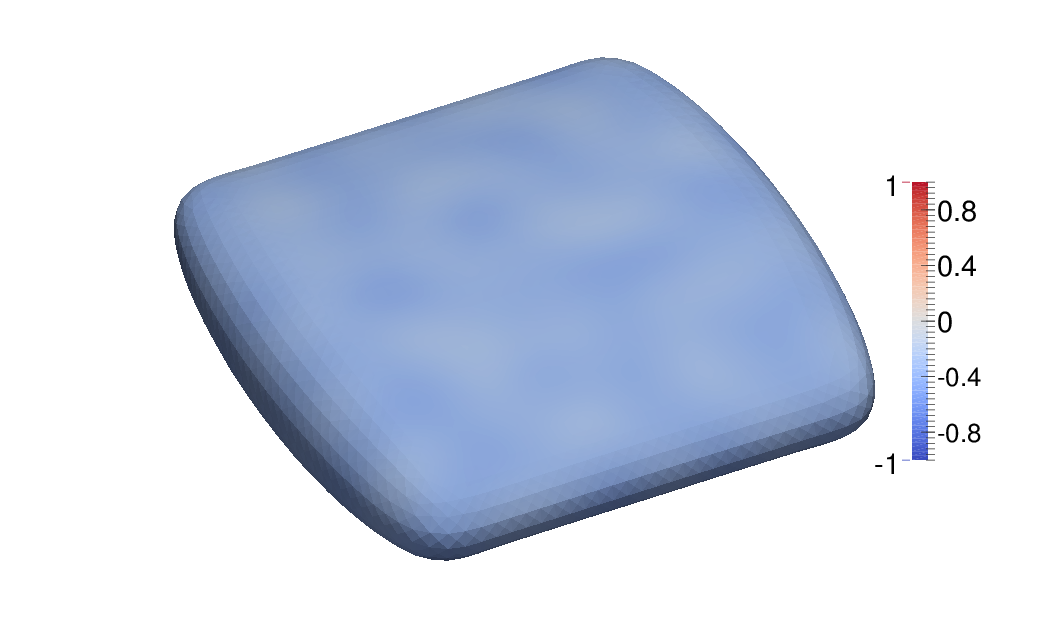}
\includegraphics[angle=-0,width=0.33\textwidth]{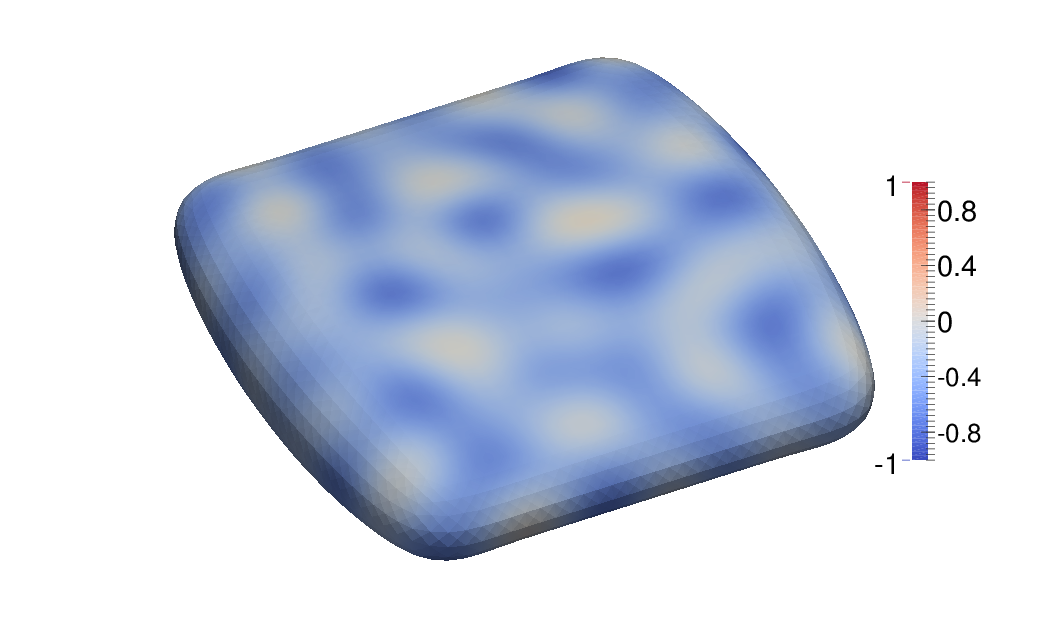}
\includegraphics[angle=-0,width=0.33\textwidth]{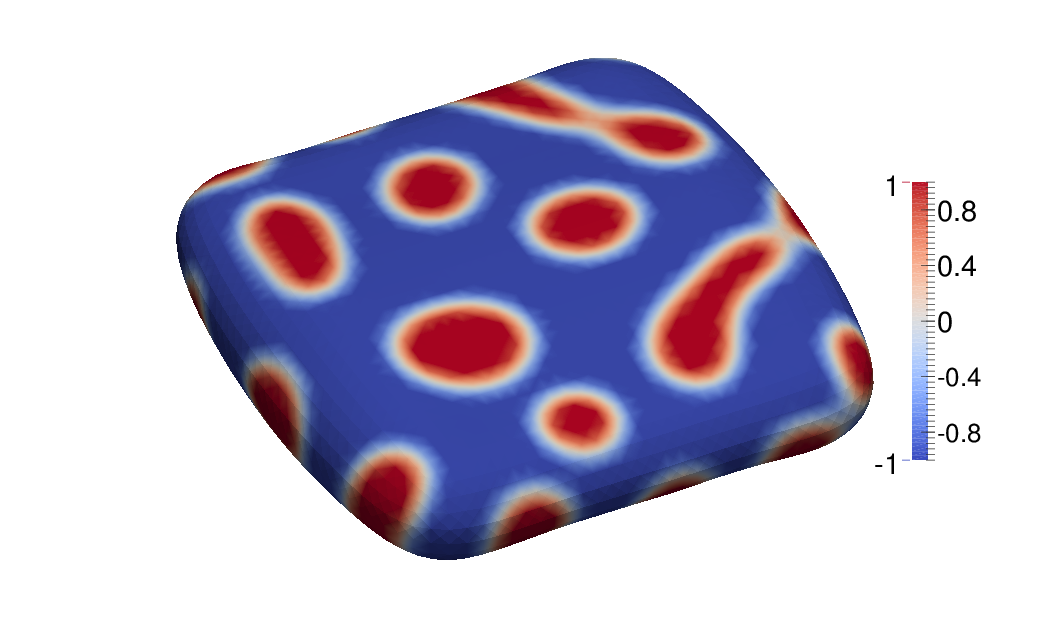}}
\mbox{
\includegraphics[angle=-0,width=0.33\textwidth]{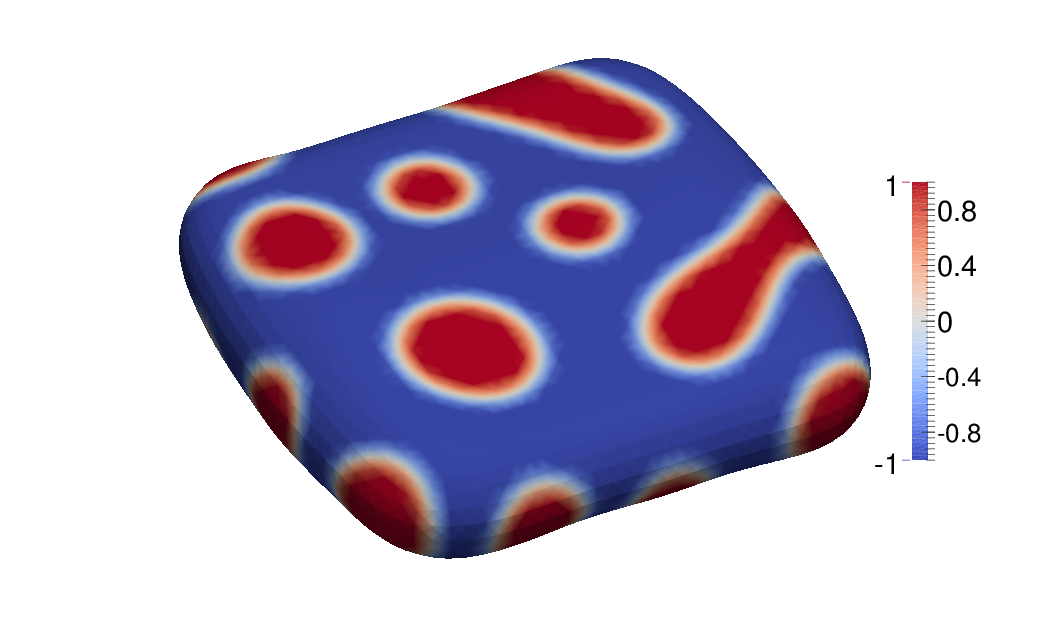}
\includegraphics[angle=-0,width=0.33\textwidth]{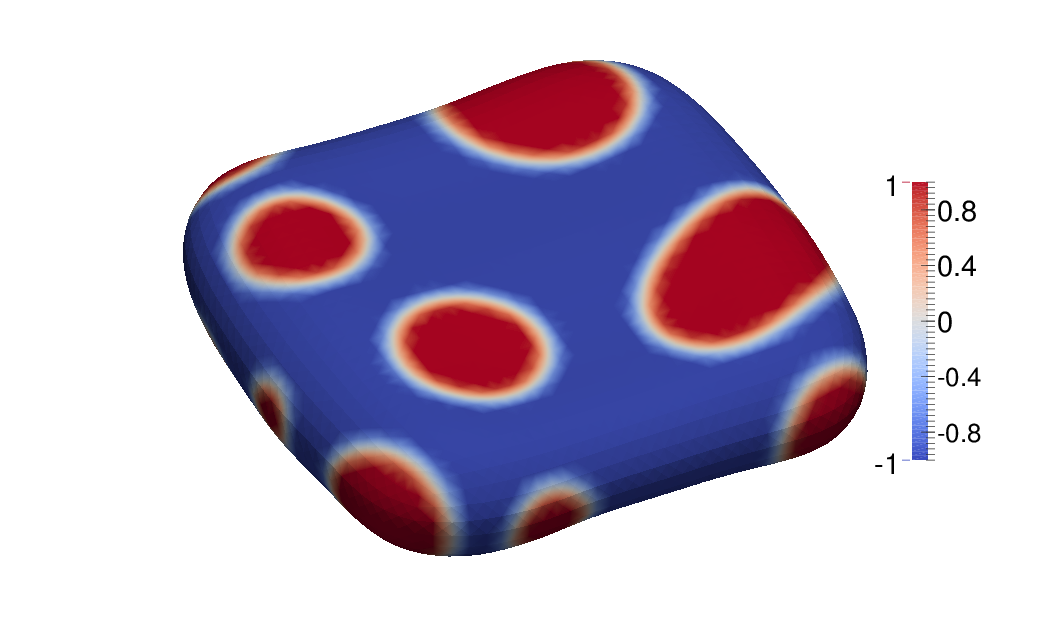}
\includegraphics[angle=-0,width=0.33\textwidth]{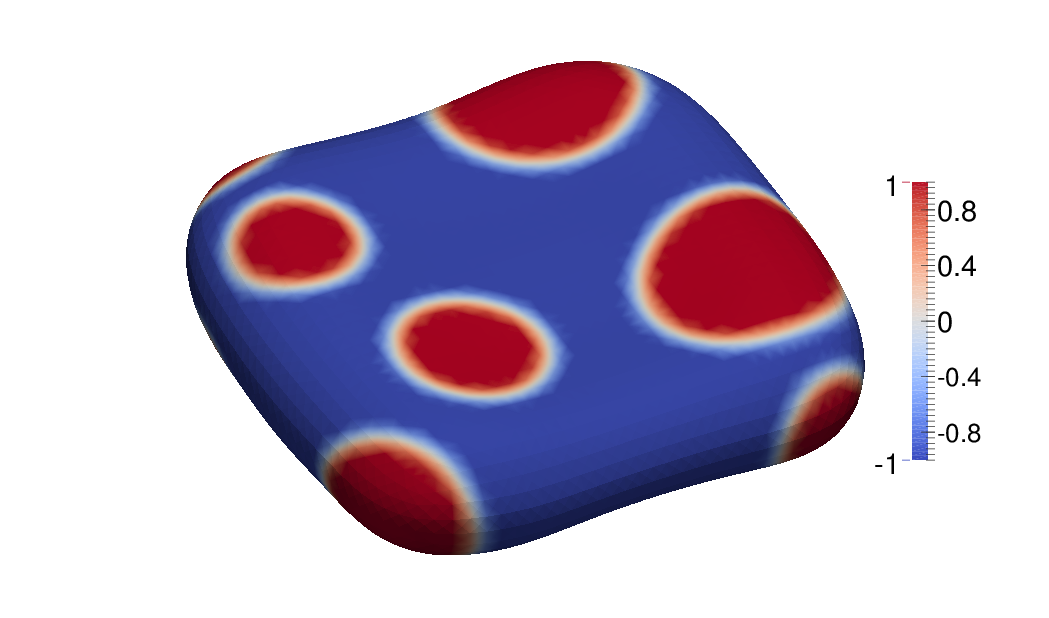}}
\includegraphics[angle=-90,width=0.45\textwidth]{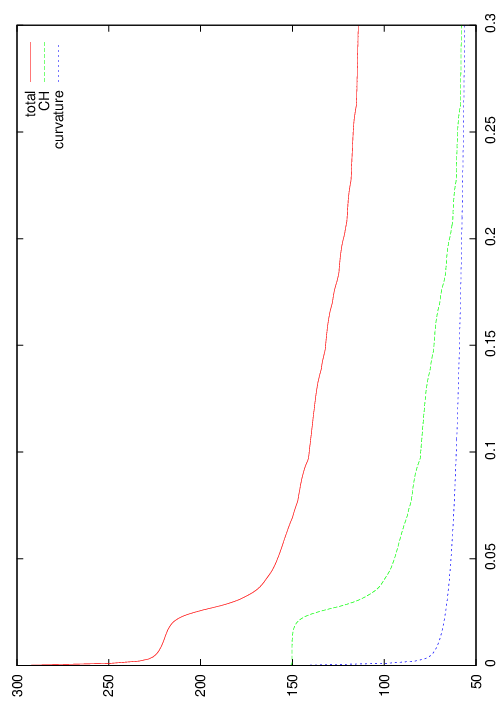}
\caption{($\alpha_\pm = 1$, $\spont_\pm = 0$, $\beta = 1$)
Spinodal decomposition on a membrane.
Plots of $\phaseC^m$ on $\Gamma^m$ at times $t=0.01,\ 0.02,\,0.05,\
0.1,\ 0.2,\ 0.3$.
Below a superimposed plot of the total discrete energy $\mathcal{E}^h_{total}$, 
the discrete Cahn--Hilliard energy, and the discrete curvature 
energy over $[0,0.3]$.
}
\label{fig:spinodal2}
\end{figure}%
Similarly, in Figure~\ref{fig:7spinodal}
we show the evolution for spinodal decomposition on a seven-arm
surface, where the initial phase variable is $\phaseC^0 = -0.4$ constant.
The shape has surface area $10.5$, and the triangulations $\Gamma^m$ satisfy
$(K_\Gamma,J_\Gamma) = (2314, 4624)$.
This means that for our chosen value of $\gamma=0.2$, there are on
average about $9$ elements across the interfacial region on $\Gamma^m$.
For the phase parameters we choose $\spont_- = -0.5$ and 
$\spont_+ = -2$.
The spontaneous curvature of the $+1$ phase leads to a preference of the $+1$
phase to be curved away from the outer normal. In accordance with this 
remark we observe that the $+1$ phase appears after the phase separation at 
the more highly curved tips of the fingers.
\begin{figure}
\center
\mbox{
\includegraphics[angle=-0,width=0.33\textwidth]{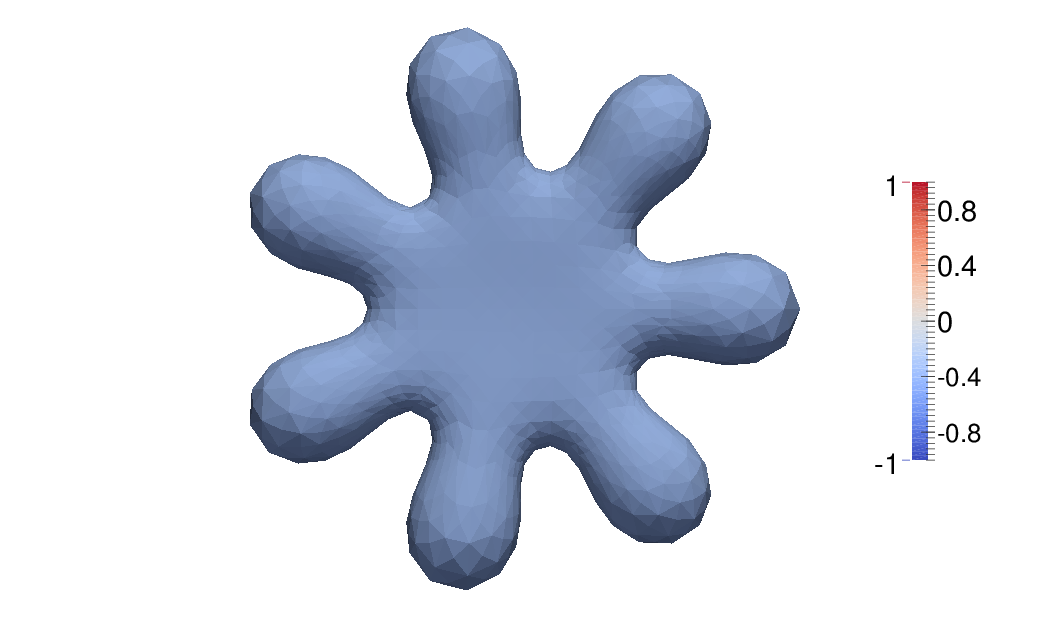}
\includegraphics[angle=-0,width=0.33\textwidth]{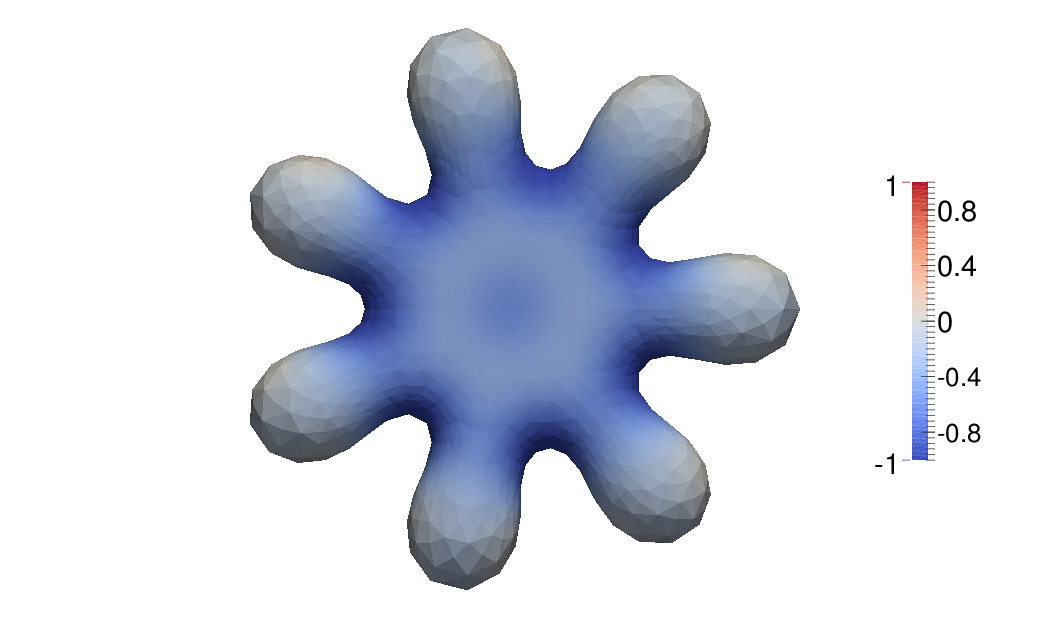}
\includegraphics[angle=-0,width=0.33\textwidth]{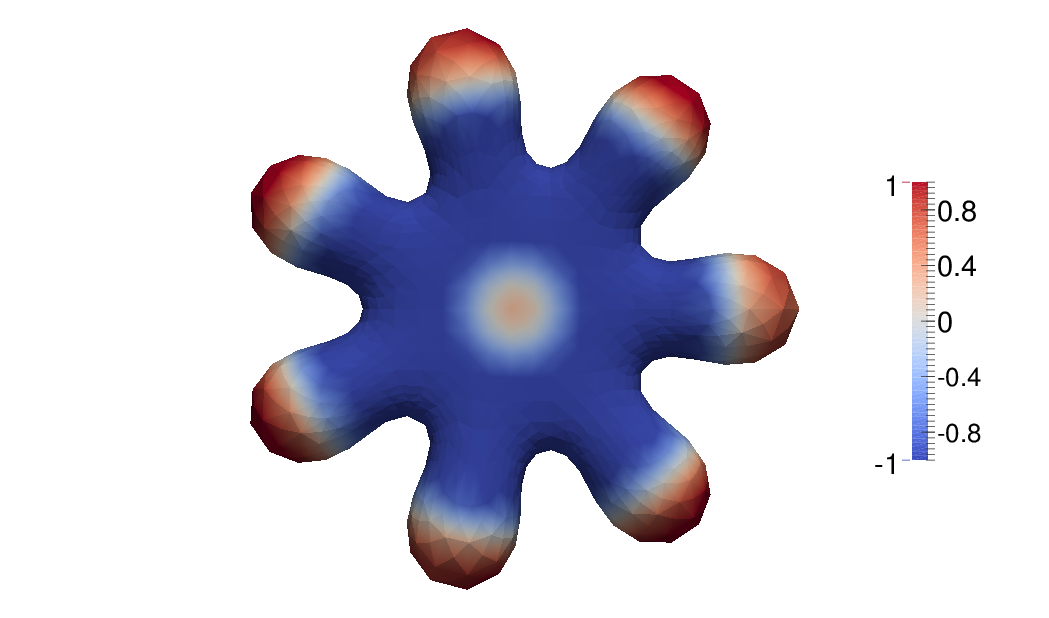}}
\mbox{
\includegraphics[angle=-0,width=0.33\textwidth]{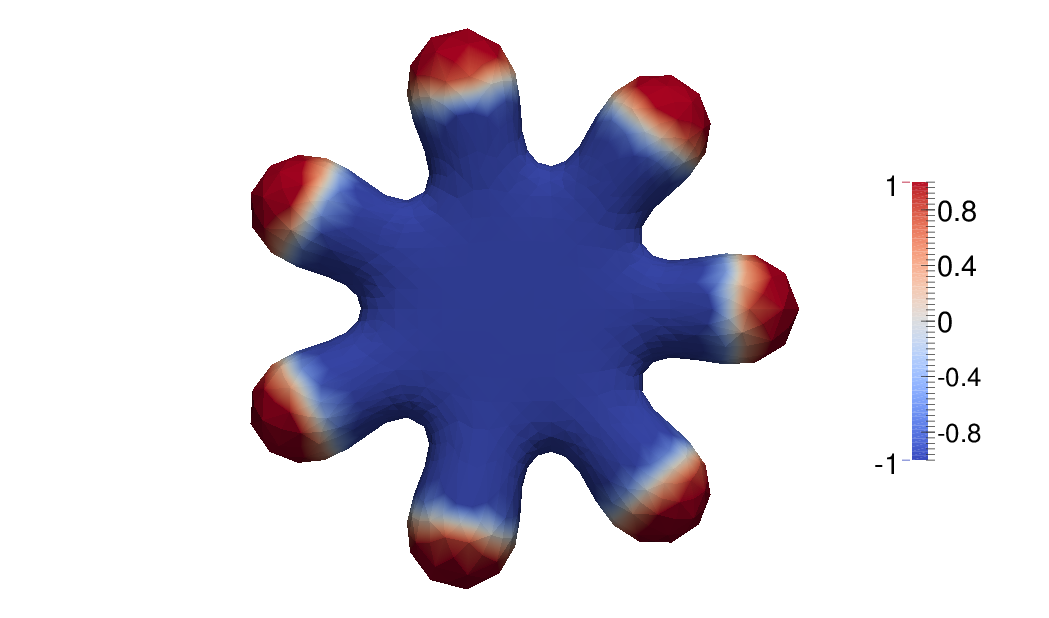}
\includegraphics[angle=-0,width=0.33\textwidth]{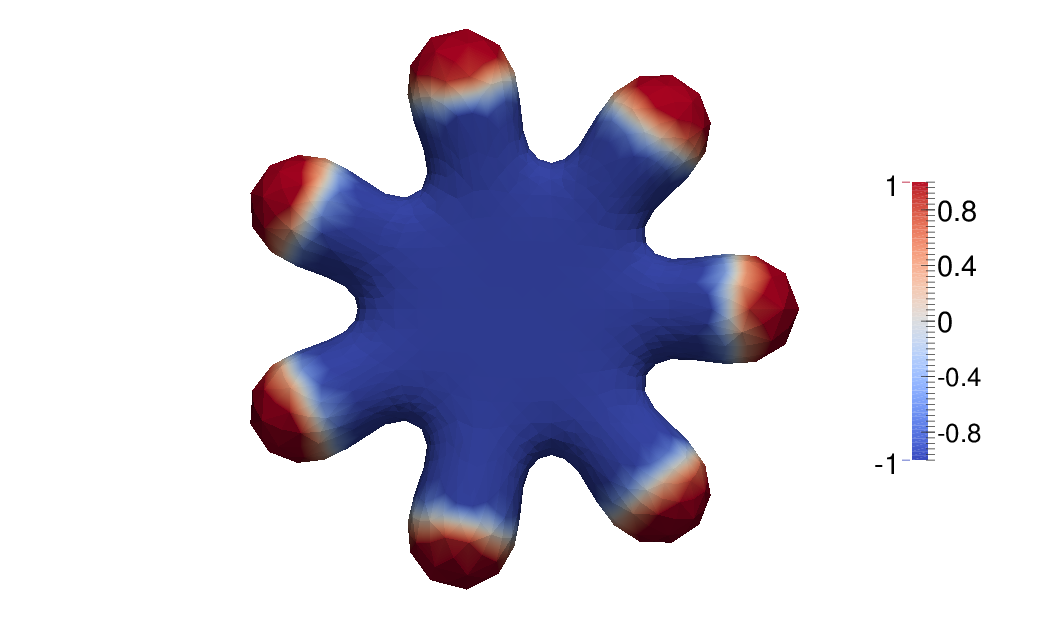}
\includegraphics[angle=-0,width=0.33\textwidth]{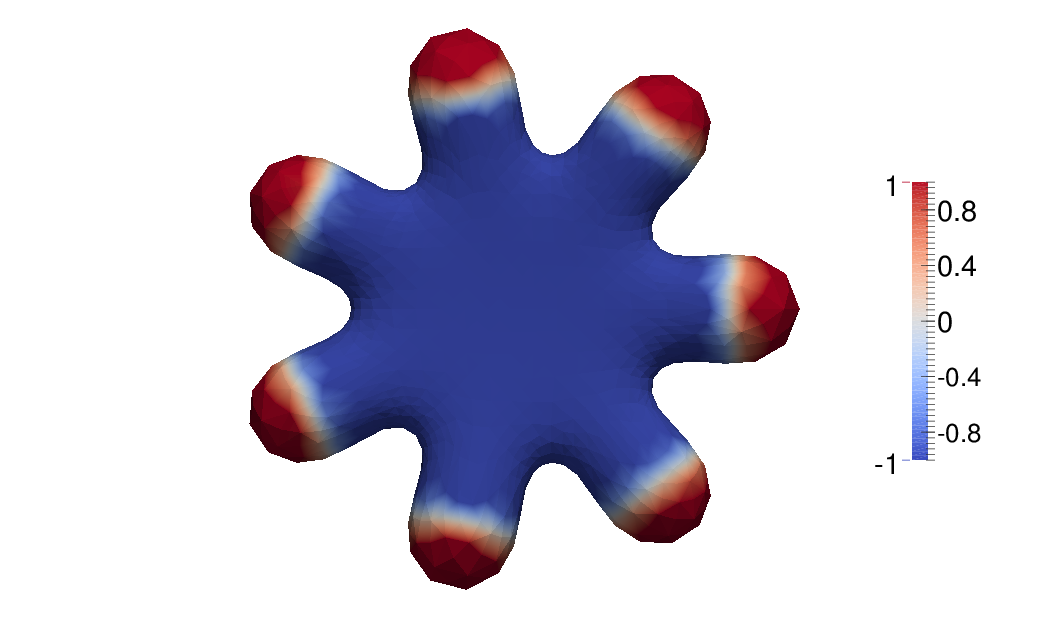}}
\includegraphics[angle=-90,width=0.45\textwidth]{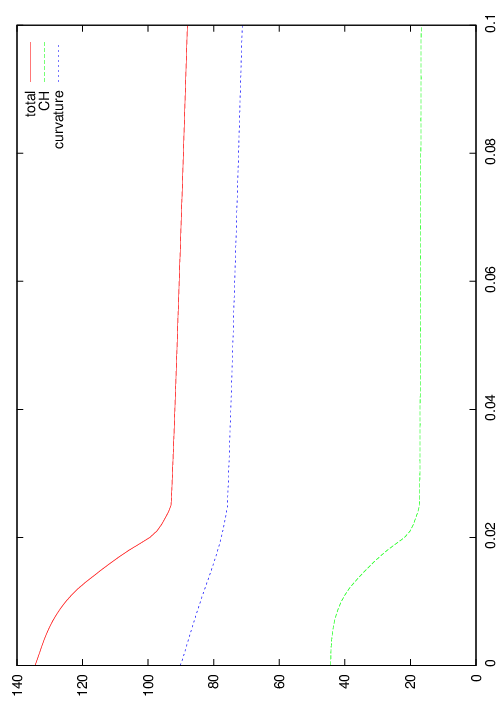}
\caption{($\alpha_\pm=1$, $\spont_- = -0.5$, $\spont_+ = -2$, $\beta=1$)
Spinodal decomposition on a seven-arm membrane.
Plots of $\phaseC^m$ on $\Gamma^m$ at times $t=0,\ 0.01,\ 0.02,\ 0.03,\
0.05,\ 0.1$.
Below a superimposed plot of the total discrete energy $\mathcal{E}^h_{total}$, 
the discrete Cahn--Hilliard energy, and the discrete curvature 
energy over $[0,0.1]$.
}
\label{fig:7spinodal}
\end{figure}%

In the following, we present some computations for $\alpha^G_\pm \not=0$. 
When we repeat the experiment in Figure~\ref{fig:fig12}
for the choices $\alpha^G_- = 0.5$, $\alpha^G_+ = 0$
and $\alpha^G_- = 0$, $\alpha^G_+ = 0.5$, we obtain the results in
Figures~\ref{fig:fig18} and \ref{fig:fig19}, respectively.
We note that for this choice of parameters, the bound (\ref{eq:alphaGbound2}) 
holds.
\begin{figure}
\center
\mbox{
\includegraphics[angle=-0,width=0.25\textwidth]{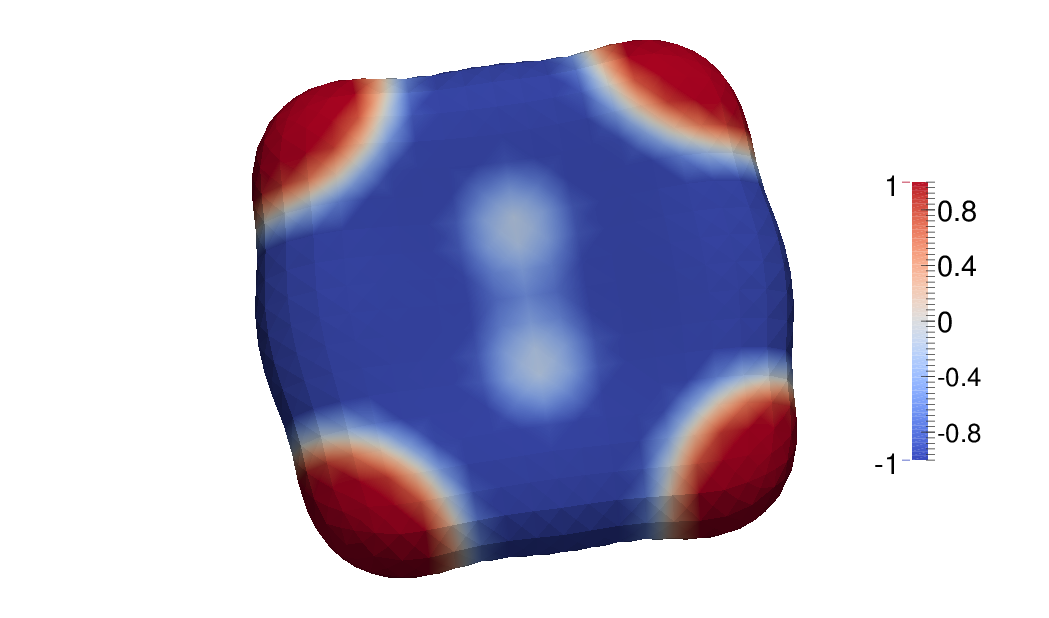}
\includegraphics[angle=-0,width=0.25\textwidth]{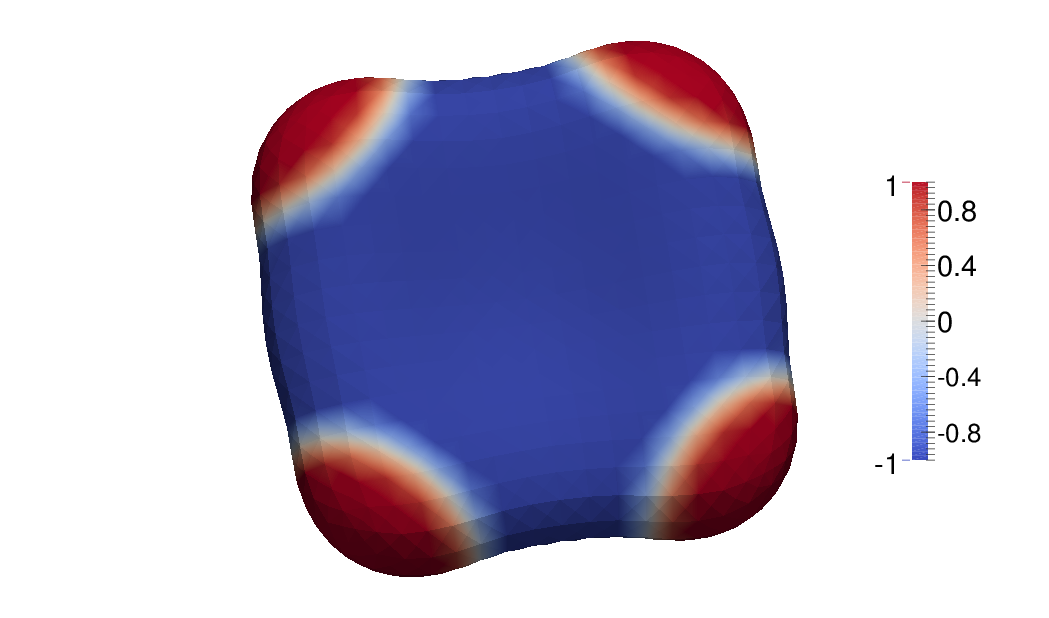}
\includegraphics[angle=-0,width=0.25\textwidth]{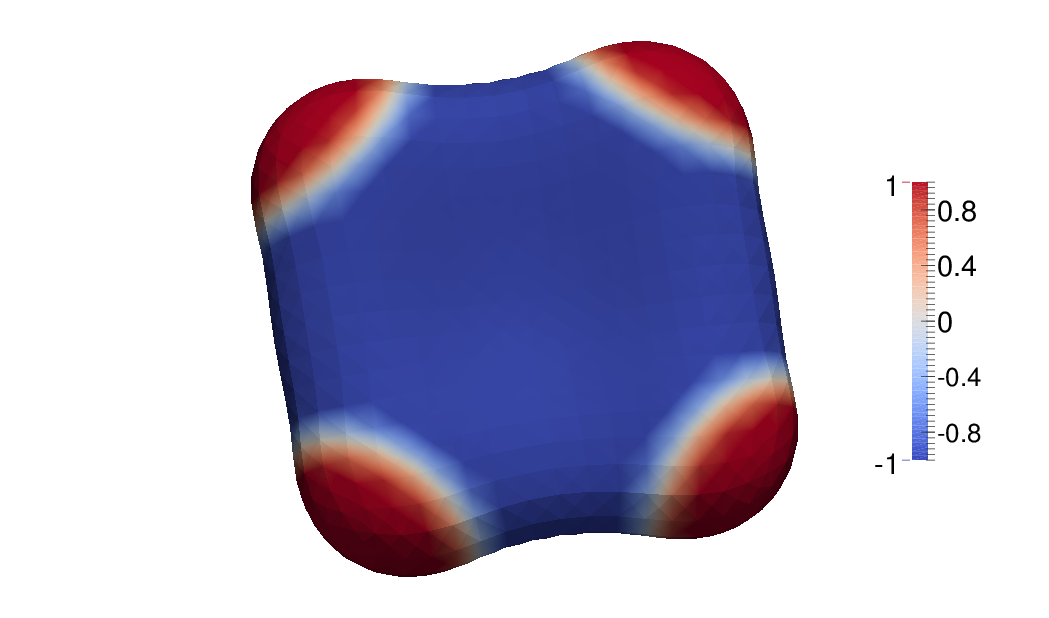}
\includegraphics[angle=-0,width=0.25\textwidth]{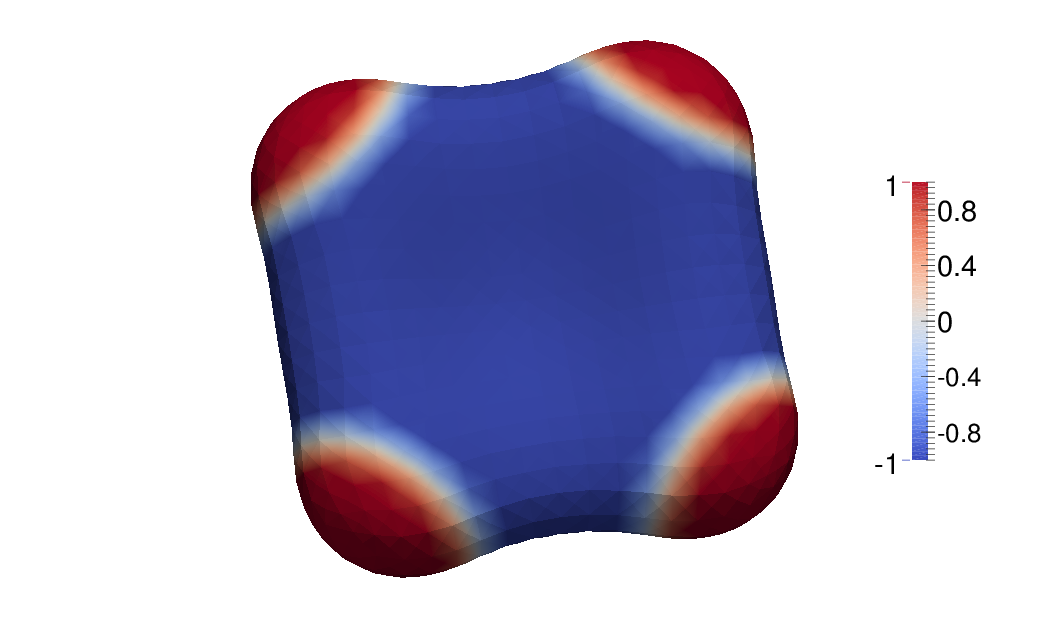}}
\includegraphics[angle=-90,width=0.33\textwidth]{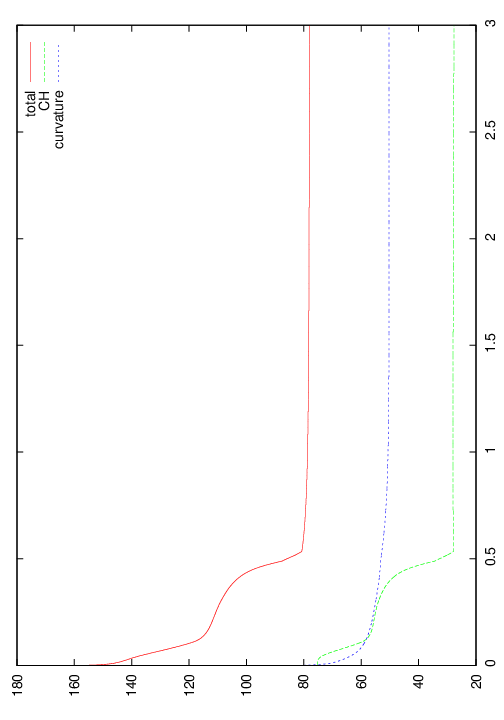}
\caption{($\alpha_\pm = 1$, $\spont_\pm = 0$, $\alpha^G_-=0.5$, $\alpha^G_+=0$,
$\beta = 1$)
Plots of $\phaseC^m$ on $\Gamma^m$ at times $t=0.5,\ 1,\ 2,\ 3$.
Below a superimposed plot of the total discrete energy $\mathcal{E}^h_{total}$, 
the discrete Cahn--Hilliard energy, and the discrete curvature 
energy over $[0,3]$.
}
\label{fig:fig18}
\end{figure}%
\begin{figure}
\center
\mbox{
\includegraphics[angle=-0,width=0.25\textwidth]{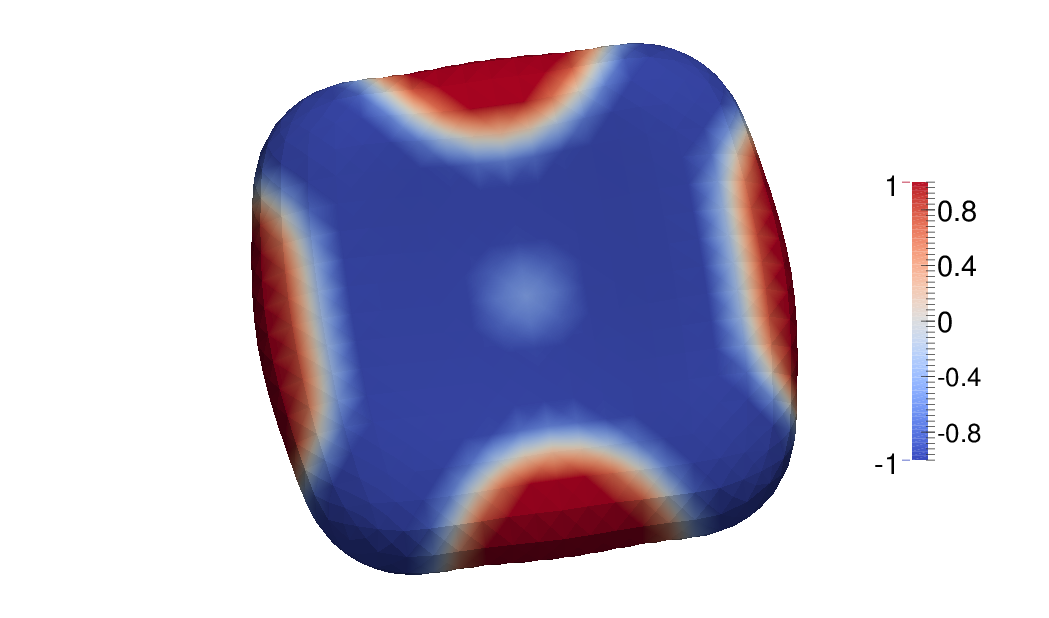}
\includegraphics[angle=-0,width=0.25\textwidth]{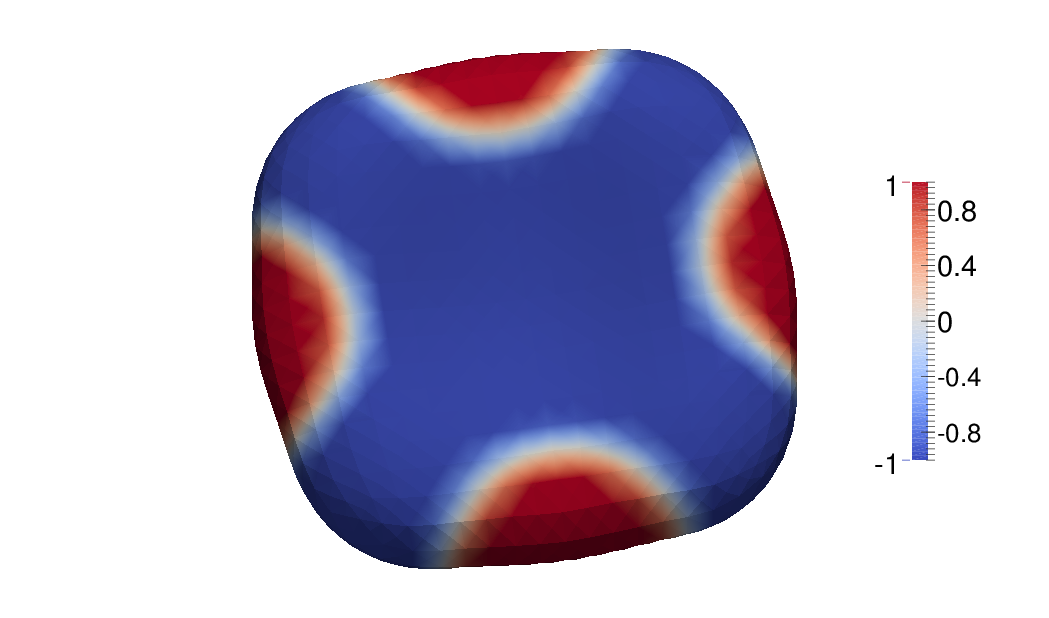}
\includegraphics[angle=-0,width=0.25\textwidth]{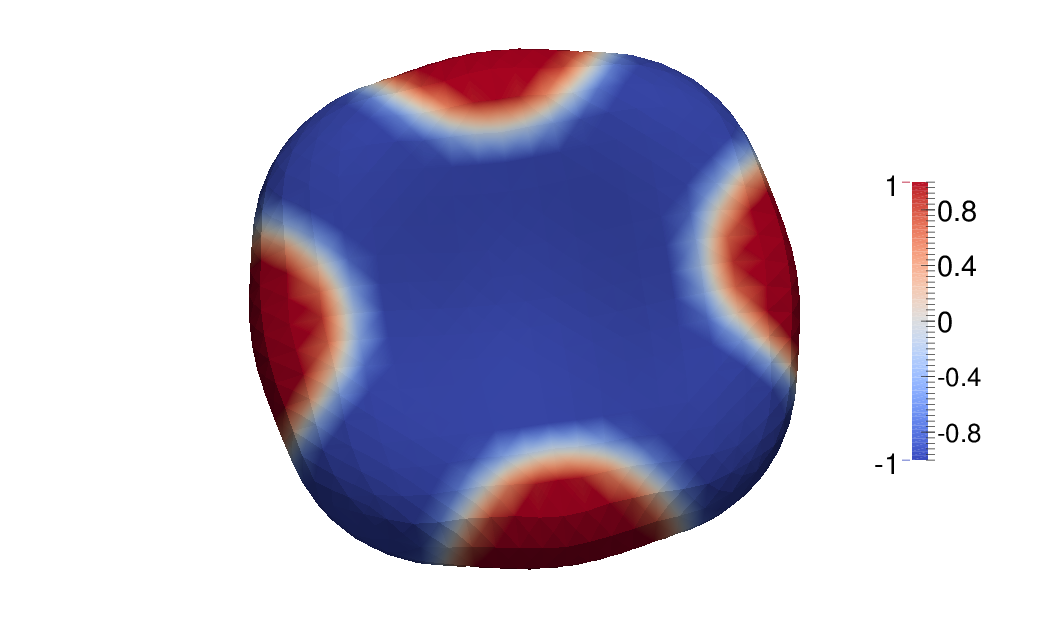}
\includegraphics[angle=-0,width=0.25\textwidth]{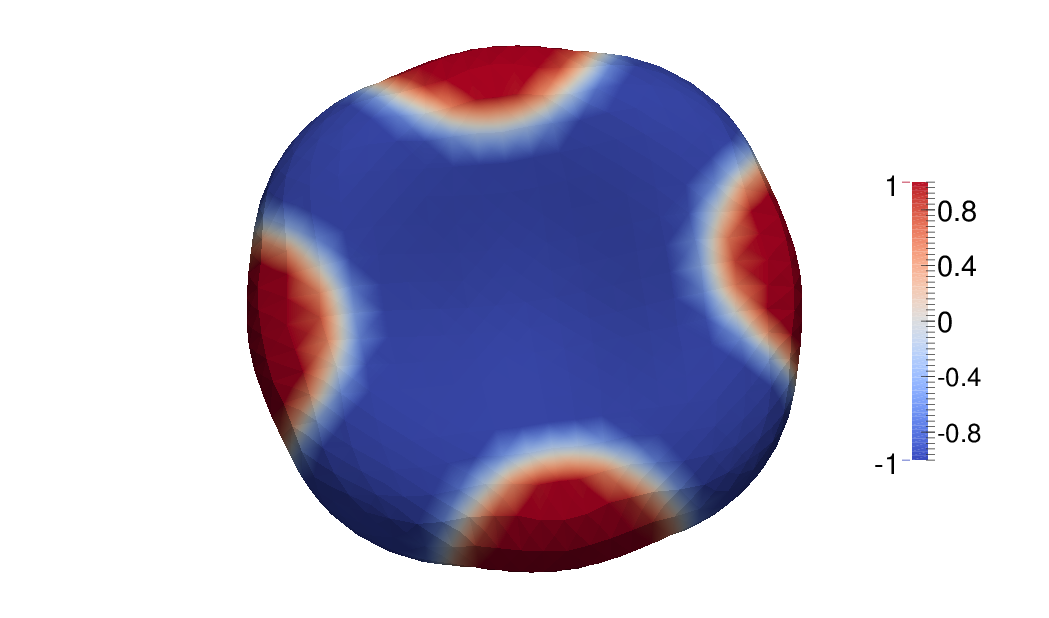}}
\includegraphics[angle=-90,width=0.33\textwidth]{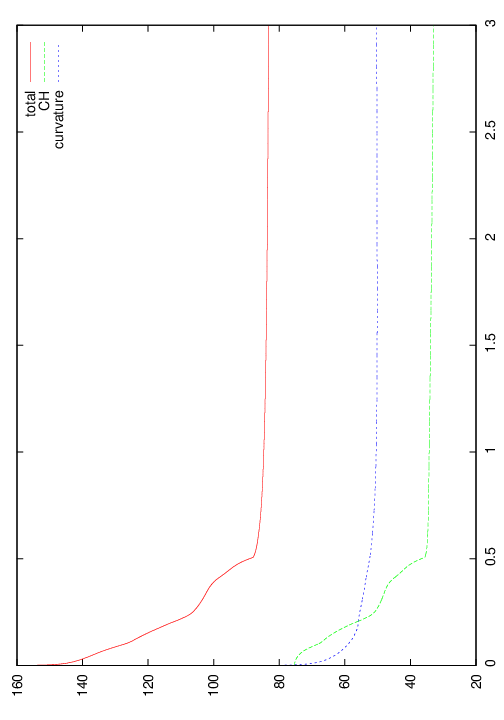}
\caption{($\alpha_\pm = 1$, $\spont_\pm = 0$, $\alpha^G_-=0$, $\alpha^G_+=0.5$,
$\beta = 1$)
Plots of $\phaseC^m$ on $\Gamma^m$ at times $t=0.5,\ 1,\ 2,\ 3$.
Below a superimposed plot of the total discrete energy $\mathcal{E}^h_{total}$, 
the discrete Cahn--Hilliard energy, and the discrete curvature 
energy over $[0,3]$.
}
\label{fig:fig19}
\end{figure}%
Comparing the results in Figure~\ref{fig:fig12} with the ones in
Figures~\ref{fig:fig18} and \ref{fig:fig19} clearly shows the
influence of the Gaussian energy terms.
In Figure~\ref{fig:fig18} the region of the largest Gaussian curvature is 
in the $+1$ phase and the region of the smallest Gaussian curvature is in 
the $-1$ phase. This is in accordance with the fact that the energy 
penalizes Gaussian curvature only in  the $-1$ phase. On the other hand, in 
Figure~\ref{fig:fig19} the region with the largest Gaussian curvature is the 
$-1$ phase and the $+1$ phase has a smaller Gaussian curvature when 
compared to Figure~\ref{fig:fig18}.

\cleardoublepage
\begin{appendix}
\setcounter{equation}{0}
\renewcommand{\theequation}{\Alph{section}.\arabic{equation}}
\section{Strong and weak formulations} \label{sec:A}
The goal of this Appendix is to relate the weak formulation,
(\ref{eq:LM3a}--e), (\ref{eq:CHb}), 
of the first variations with respect to the geometry and $\phasec$
of the energy in (\ref{eq:E}), 
to the strong formulations (\ref{eq:fGamma})
and (\ref{eq:strongCHb}), respectively. 
As we allow for
tangential motion, it is necessary to take into account variations which
are not necessarily  normal. This is in contrast to 
\cite{ElliottS10a}, where only normal variations were considered.

We recall that $\nabs = (\partial_{s_1},\ldots,\partial_{s_d})^T$, and note
from \citet[Lemma~2.6]{DziukE13} 
that for sufficiently smooth $\phi$ it holds that
\begin{equation} \label{eq:DE13}
\partial_{s_k}\,\partial_{s_i}\,\phi - 
\partial_{s_i}\,\partial_{s_k}\,\phi =
[(\nabs\,\vec\nu)\,\nabs\,\phi]_i\,\nu_k
- [(\nabs\,\vec\nu)\,\nabs\,\phi]_k\,\nu_i \qquad\forall\ i,k
  \in\{1,\ldots,d\}
\quad\text{on } \Gamma(t)\,.
\end{equation}
It follows from (\ref{eq:Weingarten}), (\ref{eq:DE13}) and 
(\ref{eq:secondform}) that
\begin{equation} \label{eq:new}
\Delta_s\,\vec\nu = \nabs\,(\nabs\,.\,\vec\nu) - |\nabs\,\vec\nu|^2\,\vec\nu=
- |\nabs\,\vec\nu|^2\,\vec\nu - \nabs\,\varkappa\,.
\end{equation}
Moreover, we have from (\ref{eq:secondform}),
(\ref{eq:secvarlocal}), (\ref{eq:normvar}), (\ref{eq:Weingarten}), 
(\ref{eq:PGamma}) and (\ref{eq:new}) that
\begin{align}
\matparteps\,\varkappa & = - \matparteps\,(\nabs\,.\,\vec\nu)
= -[ \nabs\,\vec\chi - 2\,\mat D_s(\vec\chi)] : \nabs\,\vec\nu - \nabs\,.\,
(\matparteps\,\vec\nu) \nonumber \\ &
= \nabs\,\vec\chi : \nabs\,\vec\nu + \nabs\,.\,( [\nabs\,\vec\chi]^T\,\vec\nu)
= 2\,\nabs\,\vec\chi : \nabs\,\vec\nu + (\Delta_s\,\vec\chi)\,\vec\nu
= \Delta_s\,(\vec\chi\,.\,\vec\nu) - \vec\chi\,.\, \Delta_s\,\vec\nu
\nonumber \\ &
= \Delta_s\,(\vec\chi\,.\vec\nu) +
|\nabs\,\vec\nu|^2\,(\vec\chi\,.\,\vec\nu) + \vec\chi\,.\,\nabs\,\varkappa\,.
\label{eq:matpartukappa}
\end{align}

\subsection{Derivation of the strong formulation}
We admit general variations $\vec\chi = \chi\,\vec\nu + \vec\chi_{\tan}$,
where $\vec\chi_{\tan}\,.\,\vec\nu = 0$, 
of (\ref{eq:E}) with respect to $\Gamma$,
whereas in 
\cite{ElliottS10a} only normal variations $\vec\chi = \chi\,\vec\nu$ of the
geometry are considered.

We consider first the bending energy in (\ref{eq:E}) 
and have from (\ref{eq:DElem5.2eps}),
on recalling (\ref{eq:bendingb}), that
\begin{align} &
\left[ \deldel\Gamma \left\langle b(\varkappa,\phasec) , 1
\right\rangle_{\Gamma(t)} \right] (\vec{\chi}) 
= \left\langle \alpha(\phasec)\,(\varkappa-\spont(\phasec)),\matparteps\,\varkappa
\right\rangle_{\Gamma(t)}
+ \left\langle b(\varkappa,\phasec) ,\nabs\,.\,\vec\chi
\right\rangle_{\Gamma(t)}.
\label{eq:appvar}
\end{align}
We obtain from (\ref{eq:appvar}) and (\ref{eq:matpartukappa}),
on recalling (\ref{eq:nabszeta}), that
\begin{subequations}
\begin{align} &
\left[ \deldel\Gamma \left\langle  b(\varkappa,\phasec), 1
\right\rangle_{\Gamma(t)} \right] (\vec{\chi}) \nonumber \\ &\quad
= \left\langle \Delta_s\,[\alpha(\phasec)\,(\varkappa-\spont(\phasec))] +
\alpha(\phasec)\,[(\varkappa-\spont(\phasec))\,|\nabs\,\vec\nu|^2 -
\tfrac12\,(\varkappa-\spont(\phasec))^2\,\varkappa], \vec\chi\,.\,\vec\nu
\right\rangle_{\Gamma(t)} \nonumber \\ &\qquad
- \left\langle \nabs\,b(\varkappa,\phasec), \vec\chi \right\rangle_{\Gamma(t)}
+ \left\langle \alpha(\phasec)\,(\varkappa-\spont(\phasec))\,\nabs\,\varkappa, \vec\chi
 \right\rangle_{\Gamma(t)} 
\nonumber \\ &\quad
= \left\langle \Delta_s\,[\alpha(\phasec)\,(\varkappa-\spont(\phasec))] +
\alpha(\phasec)\,[(\varkappa-\spont(\phasec))\,|\nabs\,\vec\nu|^2 -
\tfrac12\,(\varkappa-\spont(\phasec))^2\,\varkappa], \vec\chi\,.\,\vec\nu
\right\rangle_{\Gamma(t)} \nonumber \\ &\qquad
- \left\langle b_{,\phasec}(\varkappa,\phasec), 
\vec\chi \,.\,\nabs\,\phasec \right\rangle_{\Gamma(t)} .
\label{eq:appvar2}
\end{align}
In addition, it holds that
\begin{equation} \label{eq:appvar2c}
\left[ \deldel{\phasec} \left\langle b(\varkappa,\phasec) , 1
\right\rangle_{\Gamma(t)} \right] (\eta) =
\left\langle b_{,\phasec}(\varkappa,\phasec) ,\eta \right\rangle_{\Gamma(t)} .
\end{equation}
\end{subequations}
Choosing just a normal variation, $\vec\chi = \chi\,\vec\nu$, means that 
(\ref{eq:appvar2},b) collapses to the result in \citet[(4.5)]{ElliottS10a},
on noting (\ref{eq:bc}). 

Next, we consider the interfacial energy in (\ref{eq:E}). 
We have from (\ref{eq:DElem5.2eps}), (\ref{eq:nabsw2}) and (\ref{eq:nabszeta})
that
\begin{subequations}
\begin{align}
& \left[\deldel\Gamma\left\langle b_{CH}(\phasec), 1 \right\rangle_{\Gamma(t)}
\right](\vec\chi) =
- \gamma \left\langle \nabs\,\phasec, (\nabs\,\vec\chi)\,\nabs\,\phasec 
\right\rangle_{\Gamma(t)}
+ \left\langle \tfrac12\,\gamma\,|\nabs\,\phasec|^2 + \gamma^{-1}\,\Psi(\phasec),
\nabs\,.\,\vec\chi \right\rangle_{\Gamma(t)}
\nonumber \\ & =
- \left\langle (\tfrac12\,\gamma\,|\nabs\,\phasec|^2 + \gamma^{-1}\, \Psi(\phasec))
\,\varkappa, \vec\chi\,.\,\vec\nu \right\rangle_{\Gamma(t)} 
- \left\langle \nabs\,(\tfrac12\,\gamma\,|\nabs\,\phasec|^2 + \gamma^{-1}\, \Psi(\phasec))
, \vec\chi \right\rangle_{\Gamma(t)}
\nonumber \\ &\quad
+ \gamma \left\langle
\nabs\,.\,[(\nabs\,\phasec)\otimes(\nabs\,\phasec)], \vec\chi\right\rangle_{\Gamma(t)},
\label{eq:ECHvar}
\end{align}
where we have noted from (\ref{eq:nabszeta}) that
\begin{align*}
\left\langle \nabs\,\phasec, (\nabs\,\vec\chi)\,\nabs\,\phasec 
\right\rangle_{\Gamma(t)} = - \left\langle
\nabs\,.\,[(\nabs\,\phasec)\otimes(\nabs\,\phasec)], \vec\chi\right\rangle_{\Gamma(t)}.
\end{align*}
In addition, it holds that
\begin{equation} \label{eq:ECHvarc}
\left[ \deldel{\phasec} \left\langle  b_{CH}(\phasec), 1
\right\rangle_{\Gamma(t)} \right] (\eta) =
\left\langle - \gamma\,\Delta_s\,\phasec + \gamma^{-1}\, \Psi'(\phasec), 
\eta \right\rangle_{\Gamma(t)} .
\end{equation}
\end{subequations}
Once again, choosing a normal variation, $\vec\chi = \chi\,\vec\nu$, means that 
(\ref{eq:ECHvar},b) collapses to \citet[(4.8)]{ElliottS10a}, on noting that
\[
\vec\nu\,.\left(\nabs\,.\,[(\nabs\,\phasec)\otimes(\nabs\,\phasec)]\right) = 
- \nabs\,\vec\nu : [(\nabs\,\phasec)\otimes(\nabs\,\phasec)]\,.
\]

For $d=3$ only, we compute the first variation of the Gaussian curvature 
bending energy in (\ref{eq:E}). We start by deriving an expression for 
$\matparteps\,\Gauss$. On recalling (\ref{eq:Gauss3d}), we first compute
\begin{equation} \label{eq:noteA1}
\tfrac12\,\matparteps\,|\nabs\,\vec\nu|^2 = \nabs\,\vec\nu : 
\matparteps\,(\nabs\,\vec\nu)\,.
\end{equation}
{From} (\ref{eq:secvarwlocal}) we have that
\begin{equation*} 
\matparteps\,(\nabs\,\nu_i) = 
[ \nabs\,\vec\chi - 2\,\mat D_s(\vec \chi)]\,\nabs\,\nu_i +
\nabs\,(\matparteps\,\nu_i) \qquad i \in \{1,2,3\}\,,
\end{equation*}
yielding, on noting (\ref{eq:Weingarten}) and (\ref{eq:normvar}), that
\begin{align} \label{eq:noteA2}
\matparteps\,(\nabs\,\vec\nu) &
= \matparteps\,(\nabs\,\vec\nu)^T 
= [ \nabs\,\vec\chi - 2\,\mat D_s(\vec \chi)]\,(\nabs\,\vec\nu)^T +
[\nabs\,(\matparteps\,\vec\nu)]^T \nonumber \\ &
= [ \nabs\,\vec\chi - 2\,\mat D_s(\vec \chi)]\,\nabs\,\vec\nu +
[\nabs\,( [\nabs\,\vec\chi]^T\,\vec\nu)]^T\,.
\end{align}
We deduce from (\ref{eq:noteA1}), (\ref{eq:noteA2}), (\ref{eq:Weingarten}),
(\ref{eq:Ds}) and (\ref{eq:PGamma}) that
\begin{equation} \label{eq:noteA3}
\tfrac12\,\matparteps\,|\nabs\,\vec\nu|^2 = 
- \nabs\,\vec\nu : (\nabs\,\vec\chi)^T\,\nabs\,\vec\nu 
- \nabs\,\vec\nu : \nabs\,([\nabs\vec\chi]^T\,\vec\nu)
= T_1 + T_2\,.
\end{equation}
Adopting the standard summation convention, we have that
\begin{align}
T_1 & 
= - (\partial_{s_j}\,\nu_i)\,(\partial_{s_k}\,\chi_i)\,\partial_{s_j}\,\nu_k 
= - (\partial_{s_i}\,\nu_k)\,(\partial_{s_j}\,\chi_k)\,\partial_{s_i}\,\nu_j
= - (\partial_{s_k}\,\nu_i)\,(\partial_{s_j}\,\chi_k)\,\partial_{s_j}\,\nu_i
\label{eq:noteA4}
\end{align}
and, on noting (\ref{eq:Weingarten}), that 
\begin{align}
T_2 & 
= - (\partial_{s_j}\,\nu_i)\,\partial_{s_j}\,((\partial_{s_i}\,\chi_k)\,\nu_k)
= - (\partial_{s_j}\,\nu_i)\,\partial_{s_j}\,(
 \partial_{s_i}\,(\chi_k\,\nu_k) - \chi_k\,\partial_{s_i}\,\nu_k)
\nonumber \\ & 
= - \partial_{s_j}\,((\partial_{s_j}\,\nu_i)\,\partial_{s_i}\,(\chi_k\,\nu_k))
+ (\partial_{s_j}\,\partial_{s_j}\,\nu_i)\,\partial_{s_i}\,(\chi_k\,\nu_k)
+ (\partial_{s_j}\,\nu_i)\,\partial_{s_j}\,(\chi_k\,\partial_{s_k}\,\nu_i)
\nonumber \\ & 
= - \partial_{s_j}\,((\partial_{s_j}\,\nu_i)\,\partial_{s_i}\,(\chi_k\,\nu_k))
+ (\partial_{s_j}\,\partial_{s_j}\,\nu_i)\,\partial_{s_i}\,(\chi_k\,\nu_k)
\nonumber \\ & \qquad
+ (\partial_{s_j}\,\nu_i)\left[
(\partial_{s_j}\,\partial_{s_k}\,\nu_i)\,\chi_k + 
 (\partial_{s_j}\,\chi_k)\,\partial_{s_k}\,\nu_i \right]
\nonumber \\ & 
= -\nabs\,.\,( (\nabs\,\vec\nu)\,\nabs\,(\vec\chi\,.\,\vec\nu) ) 
+ (\Delta_s\,\vec\nu) \,.\, \nabs\,(\vec\chi\,.\,\vec\nu)
\nonumber \\ & \qquad
+ (\partial_{s_j}\,\nu_i)\left[
(\partial_{s_j}\,\partial_{s_k}\,\nu_i)\,\chi_k + 
 (\partial_{s_j}\,\chi_k)\,\partial_{s_k}\,\nu_i \right] .
\label{eq:noteA5}
\end{align}
Next, we note from (\ref{eq:noteA1}) and (\ref{eq:Weingarten}) that
\begin{align}
\chi_k\,(\partial_{s_j}\,\nu_i)\,
(\partial_{s_j}\,\partial_{s_k}\,\nu_i) & 
= \chi_k\,(\partial_{s_j}\,\nu_i)\left[ \partial_{s_k}\,\partial_{s_j}\,\nu_i
- [(\nabs\,\vec\nu)\,\nabs\,\nu_i]_j\,\nu_k \right] \nonumber \\ & 
= \tfrac12\,\vec\chi\,.\,\nabs\,|\nabs\,\vec\nu|^2 - 
( (\nabs\,\vec\nu)^2 : \nabs\,\vec\nu)\,\vec\chi\,.\,\vec\nu\,.
\label{eq:noteA6}
\end{align}
Combining (\ref{eq:noteA3})--(\ref{eq:noteA6}) and noting (\ref{eq:Weingarten}) 
yields that
\begin{align}
\tfrac12\,\matparteps\,|\nabs\,\vec\nu|^2 & = 
-\nabs\,.\,( (\nabs\,\vec\nu)\,\nabs\,(\vec\chi\,.\,\vec\nu) ) 
+ (\Delta_s\,\vec\nu) \,.\, \nabs\,(\vec\chi\,.\,\vec\nu)
+ \tfrac12\,\vec\chi\,.\,\nabs\,|\nabs\,\vec\nu|^2  \nonumber \\ & \qquad
- \tr( (\nabs\,\vec\nu)^3 )\,\vec\chi\,.\,\vec\nu\,.
\label{eq:noteA7}
\end{align}
As the eigenvalues of $-\nabs\vec\nu$ are $0$, $\varkappa_1$ and $\varkappa_2$,
we have from (\ref{eq:Weingarten}) and (\ref{eq:secondform}) that
\begin{align} \label{eq:noteA8}
(\nabs\,\vec\nu)^2 : (\nabs\,\vec\nu) =
\tr( (\nabs\,\vec\nu)^3 ) & = - (\varkappa_1^3 + \varkappa_2^3)
= -(\varkappa_1^2 + \varkappa_2^2 - \varkappa_1\,\varkappa_2)\,
(\varkappa_1 + \varkappa_2) \nonumber \\ & 
= (\Gauss - |\nabs\,\vec\nu|^2)\,\varkappa\,.
\end{align}
Combining (\ref{eq:noteA7}) and (\ref{eq:noteA8}), on noting (\ref{eq:new}),
yields that 
\begin{align}
\tfrac12\,\matparteps\,|\nabs\,\vec\nu|^2 & = 
-\nabs\,.\,( (\nabs\,\vec\nu)\,\nabs\,(\vec\chi\,.\,\vec\nu) ) 
- (\nabs\,\varkappa) \,.\, \nabs\,(\vec\chi\,.\,\vec\nu)
+ \tfrac12\,(\nabs\,|\nabs\,\vec\nu|^2)\,.\,\vec\chi  \nonumber \\ & \qquad
+ (|\nabs\,\vec\nu|^2 - \Gauss)\,\varkappa\,\vec\chi\,.\,\vec\nu\,.
\label{eq:noteA9}
\end{align}
On recalling (\ref{eq:Gauss3d}) and (\ref{eq:matpartukappa}), and combining
with (\ref{eq:noteA9}), we finally have that
\begin{align}
\matparteps\,\Gauss & 
= \tfrac12\,\matparteps\,(\varkappa^2 - |\nabs\,\vec\nu|^2) =
\varkappa\,\matparteps\,\varkappa - \tfrac12\,\matparteps\,|\nabs\,\vec\nu|^2
\nonumber \\ & 
= \varkappa\left[ \Delta_s\,(\vec\chi\,.\,\vec\nu) 
+ |\nabs\,\vec\nu|^2\,\vec\chi\,.\,\vec\nu + \vec\chi\,.\,\nabs\,\varkappa
\right]
+ \nabs\,.\,( (\nabs\,\vec\nu)\,\nabs\,(\vec\chi\,.\,\vec\nu) ) 
\nonumber \\ & \qquad
+ (\nabs\,\varkappa) \,.\, \nabs\,(\vec\chi\,.\,\vec\nu)
- \tfrac12\,(\nabs\,|\nabs\,\vec\nu|^2)\,.\,\vec\chi
- (|\nabs\,\vec\nu|^2 - \Gauss)\,\varkappa\,\vec\chi\,.\,\vec\nu
\nonumber \\ & 
= \varkappa\,\Delta_s\,(\vec\chi\,.\,\vec\nu) 
+ \tfrac12\,(\nabs\,\varkappa^2)\,.\,\vec\chi 
- \tfrac12\,(\nabs\,|\nabs\,\vec\nu|^2)\,.\,\vec\chi
+ \nabs\,.\,( (\nabs\,\vec\nu)\,\nabs\,(\vec\chi\,.\,\vec\nu) ) 
\nonumber \\ & \qquad
+ (\nabs\,\varkappa) \,.\, \nabs\,(\vec\chi\,.\,\vec\nu)
+ \Gauss\,\varkappa\,\vec\chi\,.\,\vec\nu
\nonumber \\ & 
= \nabs\,.\,[(\varkappa\,\mat\Id +
\nabs\,\vec\nu)\,\nabs\,(\vec\chi\,.\,\vec\nu)] + \nabs\,\Gauss\,.\,\vec\chi
+ \Gauss\,\varkappa\,\vec\chi\,.\,\vec\nu \,.
\label{eq:noteA10}
\end{align}

On noting (\ref{eq:DElem5.2eps}), (\ref{eq:nabszeta}) 
and (\ref{eq:noteA10}), we have that
\begin{subequations}
\begin{align} &
\left[ \deldel\Gamma \left\langle \alpha^G(\phasec) , \Gauss 
\right\rangle_{\Gamma(t)} \right] (\vec{\chi}) \nonumber \\ &\quad
= 
\left\langle \alpha^G(\phasec) , \matparteps\,\Gauss \right\rangle_{\Gamma(t)}
+ \left\langle \alpha^G(\phasec)\,\Gauss,\nabs\,.\,\vec\chi \right\rangle_{\Gamma(t)}
\nonumber \\ &\quad
= - \left\langle \Gauss, 
\vec\chi\,.\,\nabs\,\alpha^G(\phasec) \right\rangle_{\Gamma(t)}
+ \left\langle \alpha^G(\phasec), \matparteps\,\Gauss - \vec\chi\,.\,
\nabs\,\Gauss - \varkappa\,\Gauss\,\vec\chi\,.\vec\nu
\right\rangle_{\Gamma(t)}
\nonumber \\ &\quad
= - \left\langle \Gauss, 
\vec\chi\,.\,\nabs\,\alpha^G(\phasec) \right\rangle_{\Gamma(t)}
+ \left\langle \alpha^G(\phasec), \nabs\,.\,
[ ( \varkappa\,\mat\Id + \nabs\,\vec\nu)\,\nabs\,(\vec\chi\,.\,\vec\nu)]
\right\rangle_{\Gamma(t)}
\nonumber \\ &\quad
= - \left\langle \Gauss, 
\vec\chi\,.\,\nabs\,\alpha^G(\phasec) \right\rangle_{\Gamma(t)}
+ \left\langle \nabs\,.\, [ ( \varkappa\,\mat\Id + \nabs\,\vec\nu)\,
\nabs\,\alpha^G(\phasec) ], \vec\chi\,.\,\vec\nu]
\right\rangle_{\Gamma(t)} .
\label{eq:Gaussvar}
\end{align}
In addition, it holds that
\begin{equation} \label{eq:Gaussvarc}
\left[ \deldel{\phasec} \left\langle  \alpha^G(\phasec), \Gauss
\right\rangle_{\Gamma(t)} \right] (\eta) =
\left\langle (\alpha^G)'(\phasec)\,\eta, \Gauss \right\rangle_{\Gamma(t)} .
\end{equation}
\end{subequations}
Once again, (\ref{eq:Gaussvar},b) collapses to \citet[(4.6)]{ElliottS10a} 
if $\vec\chi = \chi\,\vec\nu$.
Finally, the Cayley--Hamilton theorem applied to 
$-\nabs\,\vec\nu$ yields, on recalling (\ref{eq:Weingarten}), that
\begin{align*}
(\nabs\,\vec\nu)^3 + \varkappa\,(\nabs\,\vec\nu)^2 +
\Gauss\,\nabs\,\vec\nu & = \mat 0  
\quad \Rightarrow \quad 
(\nabs\,\vec\nu + \varkappa\,\mat\Id)\,\mat{\mathcal{P}_\Gamma} 
= \Gauss\,(-\nabs\,\vec\nu)^{-1}\,\mat{\mathcal{P}_\Gamma} \,,
\end{align*}
where we note that $(\nabs\,\vec\nu)^{-1}$ is well-defined on the tangent 
space.
With this identity it is possible to show that $\nabs\,.\,(
[\varkappa\,\mat\Id + \nabs\,\vec\nu ]\,\nabs\,\alpha^G(\phasec))=
\Hat\Delta_s\,\alpha^G (\phasec)$, where $\Hat\Delta_s$ is the second surface
Laplacian used in the paper of
\cite{MerckerM-CRH13} to derive the first variation of the Gaussian curvature
bending energy.
However, comparing our (\ref{eq:fGamma}) and e.g.\ Lemma~5.1 in 
\cite{MerckerM-CRH13}, there appears to be a sign discrepancy in the latter.

It follows from (\ref{eq:appvar2},b), (\ref{eq:ECHvar},b),
(\ref{eq:Gaussvar},b) and (\ref{eq:E},b) that
\begin{subequations}
\begin{equation} \label{eq:A2final}
\left[\deldel{\Gamma}\,E(\Gamma(t),\phasec(t))\right](\vec\chi) 
= - \left\langle \vec f_\Gamma, \vec\chi \right\rangle_{\Gamma(t)}
\end{equation}
and
\begin{equation} \label{eq:A2finalc}
\left[\deldel{\phasec}\,E(\Gamma(t),\phasec(t))\right](\eta) 
= \left\langle \chempot, \eta \right\rangle_{\Gamma(t)},
\end{equation}
\end{subequations}
where $\vec f_\Gamma$ and $\chempot$ are defined in
(\ref{eq:fGamma}) and (\ref{eq:strongCHb}), respectively.

\subsection{Weak formulation equals strong formulation}
Here we show that the weak formulation 
(\ref{eq:LM3a}--e), (\ref{eq:CHb}) equals the strong formulation
(\ref{eq:fGamma}), (\ref{eq:strongCHb}). 

Recall from (\ref{eq:ddGLa}) and (\ref{eq:LM3a}) 
that minus the first variation of the 
Lagrangian (\ref{eq:Lag3}) with respect to the geometry is given by 
\begin{align} 
\vec f_\Gamma & = - \left[\deldel{\Gamma}\,L\right](\vec\chi) 
\nonumber \\ & =
\left\langle \nabs\,\vec y, \nabs\,\vec\chi \right\rangle_{\Gamma(t)}
+ \left\langle \nabs\,.\,\vec y, \nabs\,.\,\vec\chi \right\rangle_{\Gamma(t)}
-2\left\langle (\nabs\,\vec y)^T , \mat D_s(\vec \chi)\,(\nabs\,\vec\id)^T
\right\rangle_{\Gamma(t)}
\nonumber \\ & \qquad 
-\tfrac12\left\langle [ \alpha(\phasec)\,|\vec\varkappa - \spont(\phasec)\,\vec\nu|^2
- 2\,(\vec y\,.\,\vec\varkappa)]\,\nabs\,\vec\id,\nabs\,\vec\chi
\right\rangle_{\Gamma(t)}
\nonumber \\ & \qquad 
- \left\langle \alpha(\phasec)\,(\vec\varkappa -\spont(\phasec)\,\vec\nu)\,\spont(\phasec),
[\nabs\,\vec\chi]^T\,\vec \nu \right\rangle_{\Gamma(t)}
\nonumber \\ & \qquad 
- \beta 
\left\langle \tfrac12\,\gamma\,|\nabs\,\phasec|^2 + \gamma^{-1}\,\Psi(\phasec),
\nabs\,.\,\vec\chi \right\rangle_{\Gamma(t)}
+ \beta\,\gamma \left\langle (\nabs\,\phasec)\otimes(\nabs\,\phasec) , \nabs\,\vec\chi
\right\rangle_{\Gamma(t)} 
\nonumber \\ & \qquad 
- \tfrac12 \left\langle \alpha^G(\phasec) \,( |\vec\varkappa|^2 - |\mat w|^2 ),
\nabs\,.\,\vec\chi \right\rangle_{\Gamma(t)} 
+ \left\langle \mat w: \mat z,\nabs\,.\,\vec\chi
\right\rangle_{\Gamma(t)} 
+ \left\langle \vec\nu\,.\,(\nabs\,.\,\mat z),\nabs\,.\,\vec\chi
\right\rangle_{\Gamma(t)} 
\nonumber \\ & \qquad 
+ \left\langle \vec\nu\,.\,(\mat z\,\vec\varkappa),\nabs\,.\,\vec\chi
\right\rangle_{\Gamma(t)} 
\nonumber \\ & \qquad 
+ \sum_{i=1}^d \left[
\left\langle \nu_i\,\nabs\,\vec z_i,\nabs\,\vec\chi
\right\rangle_{\Gamma(t)} 
- 2 \left\langle \nu_i\,(\nabs\,\vec z_i)^T,\mat D_s(\vec\chi)\,
(\nabs\,\vec\id)^T \right\rangle_{\Gamma(t)} \right]
\nonumber \\ & \qquad 
- \left\langle \mat z\,\vec\varkappa, [\nabs\,\vec\chi]^T\,\vec\nu
\right\rangle_{\Gamma(t)} 
- \left\langle \nabs\,.\,\mat z, [\nabs\,\vec\chi]^T\,\vec\nu
\right\rangle_{\Gamma(t)} 
 = \sum_{\ell=1}^{14} T_\ell
\label{eq:app1}
\end{align}
for $\vec\chi \in \Vt$.

On recalling Remark~\ref{rem:simplifications2} and
$\vec\varkappa = \varkappa\,\vec\nu$, we have that
\begin{equation} \label{eq:matz}
\mat z = -\alpha^G(\phasec)\,\mat w = -\alpha^G(\phasec)\,\nabs\,\vec\nu
\quad\Rightarrow\quad
\vec z_i = \mat z\, \vec\ek_i = - \alpha^G(\phasec)\, \nabs\,\nu_i = - \alpha^G(\phasec)\,
\partial_{s_i}\,\vec\nu\,,
\end{equation}
and so it follows that $\mat z\,\vec\varkappa = \vec 0$. 
Hence $T_{11}= T_{13} = 0$. Moreover, 
$\vec\nu\,.\,[\nabs\,\vec\chi]^T\,\vec\nu = 0$, which implies that $T_5=0$.
In addition, we recall from (\ref{eq:LM3b}) 
and $\mat w^T\,\vec\nu = \mat w\,\vec\nu = \vec 0$
that 
\begin{equation} \label{eq:vecy}
\vec y = y\,\vec\nu \quad\text{with}\quad
y = \alpha(\phasec)\,(\varkappa - \spont(\phasec)) + \alpha^G(\phasec)\,\varkappa\,,
\end{equation}
and so as $\nabs\,.\,\vec\nu = - \varkappa$ it holds, on recalling
(\ref{eq:bendingb}), (\ref{eq:matw}) and (\ref{eq:nabszeta}), that
\begin{align} 
\sum_{\ell\in\{2,4,8\}} T_\ell & 
= - \left\langle y\,\varkappa, 
\nabs\,.\,\vec\chi \right\rangle_{\Gamma(t)} +T_4 + T_8
= - \left\langle b(\varkappa,\phasec) + \alpha^G(\phasec)\,\Gauss, 
\nabs\,.\,\vec\chi \right\rangle_{\Gamma(t)}
 \nonumber \\ & 
= \left\langle \nabs\left[b(\varkappa,\phasec) + \alpha^G(\phasec)\,\Gauss\right], 
\vec\chi \right\rangle_{\Gamma(t)}
+ \left\langle \left[b(\varkappa,\phasec) + \alpha^G(\phasec)\,\Gauss\right]\varkappa, 
\vec\chi\,.\,\vec\nu \right\rangle_{\Gamma(t)}
.
\label{eq:T248}
\end{align}
It also holds, on noting \citet[(A.22), (A.19)]{pwfopen} and (\ref{eq:DD}), 
where we stress that the notation $\mat D(\vec\chi)$ there differs from 
$\mat D_s(\vec\chi)$ here by a factor 2 and by the absence of the projections
$\mat{\mathcal{P}}_\Gamma$, that
\begin{align}
\sum_{\ell\in\{1,3\}} T_\ell & = 
\left\langle \nabs\,(y\,\vec\nu), \nabs\,\vec\chi \right\rangle_{\Gamma(t)}
-2\left\langle [\nabs\,(y\,\vec\nu)]^T, \mat D_s(\vec \chi)\,(\nabs\,\vec\id)^T
\right\rangle_{\Gamma(t)}
\nonumber \\ & =
\left\langle \nabs\,(y\,\vec\nu) , \nabs\,\vec\chi
\right\rangle_{\Gamma(t)}
- \left\langle [\nabs\,(y\,\vec\nu)]^T ,
( \nabs\,\vec\chi + (\nabs\,\vec\chi)^T )\,\mat{\mathcal P}_\Gamma 
\right\rangle_{\Gamma(t)}
\nonumber \\ & =
\left\langle \nabs\,(y\,\vec\nu), (\vec\nu\otimes\vec\nu)\,\nabs\,\vec\chi 
\right\rangle_{\Gamma(t)}
- \left\langle y\,\nabs\,\vec\nu, \nabs\,\vec\chi \right\rangle_{\Gamma(t)}
\nonumber \\ & =
\left\langle \nabs\,y, \nabs\,(\vec\chi \,.\,\vec\nu) \right\rangle_{\Gamma(t)}
-\left\langle \nabs\,.\,[y\,(\nabs\,\vec\nu)^T\,\vec\chi], 1 
\right\rangle_{\Gamma(t)}
- \left\langle y\, (|\nabs\,\vec\nu|^2\,\vec\nu + \nabs\,\varkappa), \vec\chi
\right\rangle_{\Gamma(t)}
\nonumber \\ & =
\left\langle \nabs\,y, \nabs\,(\vec\chi\,.\,\vec\nu)\right\rangle_{\Gamma(t)}
- \left\langle y\, (|\nabs\,\vec\nu|^2\,\vec\nu + \nabs\,\varkappa), \vec\chi
\right\rangle_{\Gamma(t)}
, \label{eq:T1and3}
\end{align}
where in the last equality we have noted that $\Gamma(t)$ is a closed surface.
Moreover, we note from (\ref{eq:vecy}) and (\ref{eq:bendingb}) that
\begin{align}
y\,\nabs\,\varkappa & = [\alpha(\phasec)\,(\varkappa - \spont(\phasec))
+ \alpha^G(\phasec)\,\varkappa]\,\nabs\,\varkappa  \nonumber\\ &
= \nabs\,(b(\varkappa,\phasec) + \tfrac12\,\alpha^G(\phasec)\,\varkappa^2)
- [b_{,\phasec}(\varkappa,\phasec)
+ \tfrac12\,(\alpha^G)'(\phasec)\,\varkappa^2]\,\nabs\,\phasec\,.
\label{eq:ynabsvarkappa}
\end{align}
Combining (\ref{eq:T248}), (\ref{eq:T1and3}) and (\ref{eq:ynabsvarkappa}) 
yields, on noting (\ref{eq:nabszeta}), (\ref{eq:vecy}) and
(\ref{eq:matw}), that
\begin{align}
\sum_{\ell\in\{1,\ldots,4,8\}} T_\ell & =
- \left\langle \Delta_s\,[\alpha(\phasec)\,(\varkappa - \spont(\phasec))
+ \alpha^G(\phasec)\,\varkappa], \vec\chi\,.\,\vec\nu \right\rangle_{\Gamma(t)}
\nonumber\\ & \qquad
+  \left\langle [b(\varkappa,\phasec) + \alpha^G(\phasec)\,\Gauss]\,\varkappa, 
\vec\chi\,.\,\vec\nu \right\rangle_{\Gamma(t)}
+ \left\langle [b_{,\phasec}(\varkappa,\phasec) + \tfrac12\,(\alpha^G)'(\phasec)\,\varkappa^2]
\,\nabs\,\phasec,\vec\chi\right\rangle_{\Gamma(t)}
\nonumber\\ & \qquad
-\left\langle (\alpha(\phasec)\,(\varkappa - \spont(\phasec)) + \alpha^G(\phasec)\,\varkappa)
\, |\nabs\,\vec\nu|^2, \vec\chi\,.\,\vec\nu
\right\rangle_{\Gamma(t)} 
\nonumber\\ & \qquad
- \tfrac12 \left\langle \nabs\,(\alpha^G(\phasec)\,|\nabs\,\vec\nu|^2), \vec\chi
\right\rangle_{\Gamma(t)} .
\label{eq:T1to48b}
\end{align}

It holds on noting (\ref{eq:nabszeta}) that
\begin{align}
\sum_{\ell\in\{6,7\}} T_\ell &=
\beta \left\langle \nabs \,[\tfrac12\,\gamma\,|\nabs\,\phasec|^2 
+ \gamma^{-1}\,\Psi(\phasec)] ,
\vec\chi \right\rangle_{\Gamma(t)} 
+ \beta \left\langle [\tfrac12\,\gamma\,|\nabs\,\phasec|^2 +
\gamma^{-1}\,\Psi(\phasec)]\,\varkappa, 
\vec\chi\,.\,\vec\nu \right\rangle_{\Gamma(t)} 
\nonumber\\ & \qquad
- \beta\,\gamma \left\langle  
\nabs\,.\,((\nabs\,\phasec) \otimes (\nabs\,\phasec)),
\vec\chi \right\rangle_{\Gamma(t)} .
\label{eq:T67}
\end{align}
In addition, we have from (\ref{eq:matz}) that
\begin{align}
\sum_{\ell\in\{9,10\}} T_\ell &=
\left\langle \mat w : \mat z + \vec\nu\,.\,(\nabs\,.\,\mat z),
\nabs\,.\,\vec\chi \right\rangle_{\Gamma(t)} \nonumber\\ & 
= - \left\langle \alpha^G(\phasec)\,|\nabs\,\vec\nu|^2 + \vec\nu\,.\,[
\nabs\,.\,(\alpha^G(\phasec)\,\nabs\,\vec\nu)], \nabs\,.\,\vec\chi 
\right\rangle_{\Gamma(t)} = 0 \,,
\label{eq:T910}
\end{align}
where we have observed from (\ref{eq:new}) that
\begin{align}
\vec\nu\,.\,[\nabs\,.\,(\alpha^G(\phasec)\,\nabs\,\vec\nu)] & =
\vec\nu\,.\,[\alpha^G(\phasec)\,\Delta_s\,\vec\nu + (\nabs\,\vec\nu)\,
\nabs\,\alpha^G(\phasec)] =
\alpha^G(\phasec)\,\vec\nu\,.\,\Delta_s\,\vec\nu \nonumber \\ & = 
\alpha^G(\phasec)\,\vec\nu\,.\,[ - |\nabs\,\vec\nu|^2\,\vec\nu - \nabs\,\varkappa]
= -\alpha^G(\phasec)\,|\nabs\,\vec\nu|^2\,.
\label{eq:alphaGweingarten}
\end{align}

It follows from (\ref{eq:DD}) and $\mat{\mathcal P}_\Gamma = \nabs\,\vec\id$ 
that
\begin{align}
T_{12} & = \sum_{i=1}^d \left[
\left\langle \nu_i\,\nabs\,\vec z_i,\nabs\,\vec\chi
\right\rangle_{\Gamma(t)} 
- 2 \left\langle \nu_i\,(\nabs\,\vec z_i)^T,\mat D_s(\vec\chi)\,
(\nabs\,\vec\id)^T \right\rangle_{\Gamma(t)} \right]
\nonumber \\ & 
= - \sum_{i=1}^d
\left\langle \nu_i\,(\nabs\,\vec z_i)^T,\nabs\,\vec\chi
\right\rangle_{\Gamma(t)} ,
\label{eq:T12a}
\end{align}
provided that we can show that
\begin{align}
& \sum_{i=1}^d \left[
\left\langle \nu_i\,\nabs\,\vec z_i,\nabs\,\vec\chi
\right\rangle_{\Gamma(t)} 
-  \left\langle \nu_i\,(\nabs\,\vec z_i)^T,(\nabs\,\vec\chi)^T\, 
\mat{\mathcal P}_\Gamma
\right\rangle_{\Gamma(t)} 
\right]
= 0\,.
\label{eq:T12proof}
\end{align}
In order to
establish (\ref{eq:T12proof}), we note, on recalling (\ref{eq:matz}) and
(\ref{eq:DE13}), that
\begin{align}
& 
\nu_i\,\nabs\,\vec z_i : \nabs\,\vec\chi - 
\nu_i\,(\nabs\,\vec z_i)^T : [(\nabs\,\vec\chi)^T\, \mat{\mathcal P}_\Gamma]
 = 
\nu_i\,(\nabs\,\vec z_i)_{kj} \left[
\partial_{s_j}\,\chi_k - (\partial_{s_j}\,\chi_l)\,(\delta_{lk} -
\nu_l\,\nu_k)\right]
\nonumber \\ & \qquad
= \nu_i\,(\nabs\,\vec z_i)_{kj}\,(\partial_{s_j}\,\chi_l)\,\nu_l\,\nu_k
= -\nu_i\,[\partial_{s_j}\,(\alpha^G(\phasec)\,\partial_{s_k}\nu_i)]
\,(\partial_{s_j}\,\chi_l)\,\nu_l\,\nu_k
\nonumber \\ & \qquad
= -\nu_i\,\nu_l\,\nu_k\,\alpha^G(\phasec)\,(\partial_{s_j}\,\partial_{s_k}\,\nu_i)\,
\,\partial_{s_j}\,\chi_l
= \nu_i\,\nu_l\,\nu_k\,\alpha^G(\phasec)\,
[(\nabs\,\vec\nu)\,\nabs\,\nu_i]_j\,\nu_k\,\partial_{s_j}\,\chi_l
\nonumber \\ & \qquad
= \nu_i\,\nu_l\,\alpha^G(\phasec)\,
[(\nabs\,\vec\nu)\,\nabs\,\nu_i]_j\,\partial_{s_j}\,\chi_l
= \nu_i\,\nu_l\,\alpha^G(\phasec)\,
(\partial_{s_k}\,\nu_j)\,(\partial_{s_k}\,\nu_i)\,\partial_{s_j}\,\chi_l
= 0\,,
\label{eq:T12proofa}
\end{align}
since $\nu_i\,\partial_{s_k}\,\nu_i = \frac12\,\partial_{s_k}\,|\vec\nu|^2=0$.

Returning to (\ref{eq:T12a}), we have on noting (\ref{eq:matz}),
(\ref{eq:nabszeta}), (\ref{eq:DE13}) and (\ref{eq:new}) that
\begin{align} &
T_{12} = - \sum_{i=1}^d 
\left\langle \nu_i\,\nabs\,\vec z_i,(\nabs\,\vec\chi)^T
\right\rangle_{\Gamma(t)} 
= \left\langle \nu_i\,\partial_{s_l}\,(\alpha^G(\phasec)\,\partial_{s_k}\,\nu_i),
\partial_{s_k}\,\chi_l \right\rangle_{\Gamma(t)} 
\nonumber \\ & \quad =
\left\langle \alpha^G(\phasec)\,\nu_i\,\partial_{s_l}\,\partial_{s_k}\,\nu_i,
\partial_{s_k}\,\chi_l \right\rangle_{\Gamma(t)} 
= - \left\langle \alpha^G(\phasec)\,(\partial_{s_l}\,\nu_i)\,\partial_{s_k}\,\nu_i,
\partial_{s_k}\,\chi_l \right\rangle_{\Gamma(t)} 
\nonumber \\ & \quad =
\left\langle \alpha^G(\phasec)\left[
(\partial_{s_k}\,\partial_{s_l}\,\nu_i)\,\partial_{s_k}\,\nu_i 
+ (\partial_{s_l}\,\nu_i)\,\partial_{s_k}\,\partial_{s_k}\,\nu_i \right]
+ (\partial_{s_k}\,\alpha^G(\phasec))\,(\partial_{s_l}\,\nu_i)\,\partial_{s_k}\,\nu_i
, \chi_l \right\rangle_{\Gamma(t)} 
\nonumber \\ & \quad =
\left\langle \alpha^G(\phasec)\,\partial_{s_k}\,\nu_i\left[
\partial_{s_l}\,\partial_{s_k}\,\nu_i - [(\nabs\,\vec\nu)\,\nabs\,\nu_i]_k\,
\nu_l\right], \chi_l \right\rangle_{\Gamma(t)} 
- \left\langle \alpha^G(\phasec)\,(\partial_{s_l}\,\nu_i)\,
\partial_{s_i}\,\varkappa, \chi_l \right\rangle_{\Gamma(t)} 
\nonumber \\ & \qquad 
+ \left\langle (\partial_{s_k}\,\alpha^G(\phasec))\,(\partial_{s_l}\,\nu_i)\,
\partial_{s_k}\,\nu_i, \chi_l \right\rangle_{\Gamma(t)} 
\nonumber \\ & \quad =
\tfrac12 \left\langle \alpha^G(\phasec)\,\nabs\,|\nabs\,\vec\nu|^2, \vec\chi
 \right\rangle_{\Gamma(t)} 
- \left\langle \alpha^G(\phasec)\,(\nabs\,\vec\nu)^2, \nabs\,\vec\nu\,
(\vec\chi\,.\,\vec\nu) \right\rangle_{\Gamma(t)} 
\nonumber \\ & \qquad 
- \left\langle \alpha^G(\phasec)\,(\nabs\,\vec\nu)\,\nabs\,\varkappa, 
\vec\chi \right\rangle_{\Gamma(t)} 
+ \left\langle (\nabs\,\vec\nu)^2\,\nabs\,\alpha^G(\phasec),
\vec\chi \right\rangle_{\Gamma(t)} . \label{eq:T12b}
\end{align}
The remaining term from (\ref{eq:app1}) can be rewritten, on noting 
(\ref{eq:matz}), (\ref{eq:nabszeta}), (\ref{eq:new}) and 
(\ref{eq:Weingarten}), as 
\begin{align} &
T_{14} =
 - \sum_{i=1}^d 
\left\langle (\partial_{s_i}\,\vec z_i) \otimes \vec\nu, (\nabs\,\vec\chi)^T
\right\rangle_{\Gamma(t)} 
\nonumber \\ & \quad =
 \left\langle \partial_{s_i}\,(\alpha^G(\phasec)\,\partial_{s_i}\,\nu_k),\nu_l\,
\partial_{s_k}\,\chi_l \right\rangle_{\Gamma(t)} 
\nonumber \\ & \quad =
 \left\langle (\partial_{s_i}\,\alpha^G(\phasec))\,\partial_{s_i}\,\nu_k 
+ \alpha^G(\phasec)\,\partial_{s_i}\,\partial_{s_i}\,\nu_k , 
\nu_l\,\partial_{s_k}\,\chi_l \right\rangle_{\Gamma(t)} 
\nonumber \\ & \quad =
 \left\langle (\partial_{s_i}\,\alpha^G(\phasec))\,\partial_{s_i}\,\nu_k 
- \alpha^G(\phasec)\,\partial_{s_k}\,\varkappa , 
\nu_l\,\partial_{s_k}\,\chi_l \right\rangle_{\Gamma(t)} 
\nonumber \\ & \quad =
- \left\langle (\partial_{s_k}\,[(\partial_{s_i}\,\alpha^G(\phasec))\,
\partial_{s_i}\,\nu_k])\,\nu_l +
(\partial_{s_i}\,\alpha^G(\phasec))\,(\partial_{s_i}\,\nu_k)\,\partial_{s_k}\,\nu_l
, \chi_l \right\rangle_{\Gamma(t)} 
\nonumber \\ & \qquad
- \left\langle (\partial_{s_i}\,\alpha^G(\phasec))\,
(\partial_{s_k}\,\nu_i)\,\varkappa\,\nu_k , \chi_l\,\nu_l
\right\rangle_{\Gamma(t)} 
\nonumber \\ & \qquad
+ \left\langle (\partial_{s_k}\,[\alpha^G(\phasec)\,\partial_{s_k}\,\varkappa])\,
\nu_l + \alpha^G(\phasec)\,(\partial_{s_k}\,\varkappa)\,\partial_{s_k}\,\nu_l, \chi_l 
\right\rangle_{\Gamma(t)} 
\nonumber \\ & \quad =
- \left\langle \nabs\,.\,[(\nabs\,\vec\nu)\,\nabs\,\alpha^G(\phasec)],
  \vec\chi\,.\,\vec\nu  \right\rangle_{\Gamma(t)} 
- \left\langle (\nabs\,\vec\nu)^2\,\nabs\,\alpha^G(\phasec), \vec\chi 
 \right\rangle_{\Gamma(t)} 
\nonumber \\ & \qquad
+ \left\langle \nabs\,.\,(\alpha^G(\phasec)\,\nabs\,\varkappa), \vec\chi\,.\,\vec\nu
 \right\rangle_{\Gamma(t)} 
+ \left\langle \alpha^G(\phasec)\,(\nabs\,\vec\nu)\,\nabs\,\varkappa, \vec\chi
 \right\rangle_{\Gamma(t)} . \label{eq:T14a}
\end{align}
Hence we have from (\ref{eq:T12b}) and (\ref{eq:T14a}), on noting
(\ref{eq:noteA8}) for $d=3$ and on recalling that $\alpha^G=0$ for $d=2$, that
\begin{align}
\sum_{\ell\in\{12,14\}} T_\ell & =
\tfrac12 
\left\langle \alpha^G(\phasec)\,\nabs\,|\nabs\,\vec\nu|^2, \vec\chi
 \right\rangle_{\Gamma(t)} 
+ \left\langle \alpha^G(\phasec)\,[ |\nabs\,\vec\nu|^2 - \Gauss]\,\varkappa, 
\vec\chi\,.\,\vec\nu \right\rangle_{\Gamma(t)} 
\nonumber \\ & \qquad 
-\left\langle \nabs\,.\,[(\nabs\,\vec\nu)\,\nabs\,\alpha^G(\phasec)], 
\vec\chi\,.\,\vec\nu \right\rangle_{\Gamma(t)} 
+ \left\langle \nabs\,.\,[\alpha^G(\phasec)\,\nabs\,\varkappa], 
\vec\chi\,.\,\vec\nu \right\rangle_{\Gamma(t)} 
.
\label{eq:T1214}
\end{align}

Combining (\ref{eq:T1214})  with (\ref{eq:T1to48b}) yields, on recalling
(\ref{eq:matw}), that
\begin{align}
\sum_{\ell\in\{1,\ldots4,8,12,14\}} T_\ell & =
- \left\langle \Delta_s\,[\alpha(\phasec)\,(\varkappa - \spont(\phasec))]
+ \alpha(\phasec)\,(\varkappa - \spont(\phasec))\,|\nabs\vec\nu|^2 
- b(\varkappa,\phasec)\,\varkappa, \vec\chi\,.\,\vec\nu \right\rangle_{\Gamma(t)}
\nonumber\\ & \qquad
- \left\langle \nabs\,.\,([\varkappa\,\mat\Id + \nabs\,\vec\nu]\,
\nabs\,\alpha^G(\phasec)), \vec\chi\,.\,\vec\nu \right\rangle_{\Gamma(t)}
\nonumber\\ & \qquad
+ \left\langle [b_{,\phasec}(\varkappa,\phasec) + (\alpha^G)'(\phasec)\,\Gauss]
\,\nabs\,\phasec,\vec\chi\right\rangle_{\Gamma(t)}
 .
\label{eq:T1to481214}
\end{align}
Summing (\ref{eq:T1to481214}) and (\ref{eq:T67}) yields the strong form 
(\ref{eq:fGamma}). 

Finally, (\ref{eq:CHb}), (\ref{eq:bc}), (\ref{eq:matz}), (\ref{eq:Gauss3d}) and
(\ref{eq:nabszeta}) immediately yield (\ref{eq:strongCHb}). 
\end{appendix}

\noindent{\bf Acknowledgements}

\noindent The authors gratefully acknowledge the support 
of the Regensburger Universit\"atsstiftung Hans Vielberth.

\providecommand\noopsort[1]{}\def\soft#1{\leavevmode\setbox0=\hbox{h}\dimen7=\ht0\advance
  \dimen7 by-1ex\relax\if t#1\relax\rlap{\raise.6\dimen7
  \hbox{\kern.3ex\char'47}}#1\relax\else\if T#1\relax
  \rlap{\raise.5\dimen7\hbox{\kern1.3ex\char'47}}#1\relax \else\if
  d#1\relax\rlap{\raise.5\dimen7\hbox{\kern.9ex \char'47}}#1\relax\else\if
  D#1\relax\rlap{\raise.5\dimen7 \hbox{\kern1.4ex\char'47}}#1\relax\else\if
  l#1\relax \rlap{\raise.5\dimen7\hbox{\kern.4ex\char'47}}#1\relax \else\if
  L#1\relax\rlap{\raise.5\dimen7\hbox{\kern.7ex
  \char'47}}#1\relax\else\message{accent \string\soft \space #1 not
  defined!}#1\relax\fi\fi\fi\fi\fi\fi}

\end{document}